\def\P{\mathbb{P}}
\def\Pr{\mathrm{\mathbf{P}}}
\def\Ex{\mathrm{\mathbf{E}}}
\def\E{\mathbb{E}}
\def\Z{\mathbb{Z}}
\def\R{\mathbb{R}}
\def\Q{\mathbb{Q}}
\def\11{\mathbbm{1}}
\def\K{\mathcal{K}}
\def\D{\mathfrak{D}}
\def\sd{\mathcal D}
\def\calC{\mathcal{C}}
\def\calV{\mathcal{V}}
\def\calW{\mathcal{W}}
\def\gpt{\mathcal G_x}
\def\ft{\mathsf{T}}
\def\ob{\mathcal{O}}
\def\bba{\mathbf{\Lambda}}
\def\tuu{{2/3}}
\def\tuuovertwo{{1/3}}
\def\fuu{{1/10}}
\def\rad{\varrho_n}
\def\cregion{{\mathcal V_\lambda}}
\def\tempset{A}
\def\cgpt{2d}
\def\locE{\mathsf{E}}
\def\locV{\mathsf{V}}
\def\for{\quad\text{for}\quad}
\def\AND{\quad \text{and}\quad}
\def\epl{\iota}
\def\epm{{\epsilon}}
\def\cucc{C_0}
\def\emp{\mathcal E}
\def\fregion{{\mathcal U}}
\def\tOmega{{\widetilde \Omega_{\epm}}}
\def\hbn{{\widehat B_n}}
\def\mabox{\mathbb K}
\def\mac{\mathscr C}
\def\mbfi{\mathbf{i}}
\def\mbfj{\mathbf{j}}
\def\ck{\mathcal C_\mathbb K}
\def\Cbar{\bar C}
\def\cul{\underline c}
\newcommand\dif{\mathop{}\!\mathrm{d}}
\newtheorem{thm}{Theorem}[section]
\newtheorem{proposition}[thm]{Proposition}
\newtheorem{lemma}[thm]{Lemma}
\newtheorem{cor}[thm]{Corollary}
\newtheorem{defn}[thm]{Definition}
\theoremstyle{definition}
\newtheorem{remark}[thm]{Remark}
\numberwithin{equation}{section}
\begin{document}

\title{Localization for random walks among random obstacles in a single Euclidean ball}

\author{ Jian Ding\thanks{Partially supported by NSF grant DMS-1757479 and an Alfred Sloan fellowship.} \\ University of Pennsylvania \and Changji Xu\footnotemark[1]  \\
University of Chicago
}
\date{}

\maketitle

\begin{abstract}
Place an obstacle with probability $1-p$ independently at each vertex of $\mathbb Z^d$, and run a simple random walk until hitting one of the obstacles. For $d\geq 2$ and $p$ strictly above the critical threshold for site percolation, we condition on the environment where the origin is contained in an infinite connected component free of obstacles, and we show that for environments with probability tending to one as $n\to \infty$ there exists a unique discrete Euclidean ball of volume $d \log_{1/p} n$ asymptotically such that the following holds: conditioned on survival up to time $n$ we have that at any time $t \in [\epsilon_n n,n]$ (for some $\epsilon_n\to_{n\to \infty} 0$) with probability tending to one the simple random walk is in this ball. This work relies on and substantially improves a previous result of the authors on  localization in a region of volume poly-logarithmic in $n$ for the same problem.
\end{abstract}

\section{Introduction}

For $d\geq 2$, we consider a random environment where  each vertex of $\mathbb Z^d$  is placed with an obstacle independently with probability  $1- p$. On this random environment, we then consider a discrete-time simple random walk $(S_t)_{t \in \mathbb N}$ started at the origin and killed at time $\tau$ when the random walk hits an obstacle for the first time. In this paper, we study the quenched behavior of the random walk conditioned on survival for a large time. For convenience of notation, throughout the paper we use $\P$ (and $\E$) for the probability measure with respect to the random environment, and use $\Pr$ (and $\Ex$) for the probability measure with respect to the random walk. Our main result in the present article is the following.  
\begin{thm}
\label{ballthm}
For any fixed $d\geq 2$ and $p > p_c(\mathbb Z^d)$ (the critical threshold for site percolation), we condition on the event that the origin is in an infinite cluster (i.e., an infinite connected component) free of obstacles. Then 
  there exists a constant $C = C(d,p)$ and
a $\P$-measurable discrete ball $B_n \subseteq \Z^d$ of cardinality at most $(1+\epsilon_n)d \log_{1/p} n$ (where $\epsilon_n$ tends to 0 as $n\to \infty$) such that the following holds: for any $t\in [Cn (\log n)^{-2/d}, n]$
\begin{equation}
\label{eq:ball - localization}
    \Pr(S_t  \in B_n \mid \tau>n) \to 1 \mbox{ in } \P\mbox{-probability} \,.
\end{equation}
Here a discrete ball (in $\mathbb Z^d$) is the set that contains all the lattice points of some Euclidean ball (in $\mathbb R^d$). 
\end{thm}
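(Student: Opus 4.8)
The guiding picture is the classical ``single clearing'' mechanism for survival in an obstacle field. Conditionally on $\{\tau>n\}$, I expect the walk, with probability tending to one, to travel quickly to a nearly obstacle-free discrete ball $B_n$ whose radius $\rho_n$ is calibrated by $\omega_d\rho_n^d\log(1/p)\approx d\log n$ --- equivalently $|B_n|\approx d\log_{1/p}n$ --- and then to sit inside $B_n$ for the remaining time. Here $\omega_d$ is the volume of the Euclidean unit ball and, writing $\nu_d$ for the appropriately normalized principal Dirichlet eigenvalue of the unit ball, $B_n$ has principal Dirichlet eigenvalue $\lambda(B_n)=(1+o(1))\nu_d\rho_n^{-2}\asymp(\log n)^{-2/d}$; it sits at distance $R_n=o\big(n(\log n)^{-2/d}\big)$ from the origin (heuristically $R_n\asymp n(\log n)^{-1-2/d}$, the scale at which the travel cost $e^{-\Theta(R_n)}$ balances the confinement cost $e^{-\lambda(B_n)n}$), and the resulting $\Pr(\tau>n)=e^{-\Theta(n(\log n)^{-2/d})}$ matches the known quenched survival rate. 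Theorem~\ref{ballthm} asserts that this heuristic is \emph{rigid}: up to $\P$-negligible events the ball $B_n$ is unique, and the conditioned walk lies in it at every $t\in[Cn(\log n)^{-2/d},n]$ --- the threshold $Cn(\log n)^{-2/d}$ being chosen to dominate both the travel time $\Theta(R_n)$ and the in-clearing mixing time $\Theta(\rho_n^2)$.

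The first step is a quenched analysis of the extremal landscape of clearings. Via first-moment estimates over the $n^{O(1)}$ distinct small discrete balls meeting the $\ell^\infty$-ball of radius $n$ about the origin, I would show that $\P$-whp: (i) there is an obstacle-free discrete ball of radius $(1-\epsilon_n)\rho_n$ within distance $n(\log n)^{-2}$ of the origin; (ii) every obstacle-free discrete ball within distance $n$ of the origin has cardinality at most $(1+\epsilon_n)d\log_{1/p}n$; and, the crucial point, (iii) a \emph{separation} property: the ``effective cost'' $\alpha\cdot\mathrm{dist}(0,\cdot)+n\cdot\lambda(\cdot)$, where $\alpha=\alpha(d,p)>0$ is the relevant travel (Lyapunov) rate, is minimized over obstacle-free balls within distance $n$ at a single ball $B_n$, and every competitor is worse by an amount diverging with $n$. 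Property (iii) rests on two facts: decreasing the cardinality of a clearing by even one site raises $n\lambda(\cdot)$ by $\asymp n(\log n)^{-1-2/d}\to\infty$, so the minimizer must have the maximal admissible cardinality; and $\P$-whp two clearings of that maximal cardinality lie at distances from the origin differing by at least $\Theta(R_n)$ (the expected number of such clearings in any shell of bounded thickness near distance $R_n$ tends to $0$), so the closer wins by $e^{-\Theta(R_n)}$. Finally, combining (i)--(ii) with the discrete Faber--Krahn inequality and a quantitative (stability) form of it on $\Z^d$, any good region within distance $n$ with principal eigenvalue $\le(1+o(1))\nu_d\rho_n^{-2}$ has cardinality $(1\pm o(1))d\log_{1/p}n$ and differs from a discrete ball of radius $(1+o(1))\rho_n$ on only an $o(\rho_n^d)$ fraction of its sites.

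The second and third steps upgrade the authors' earlier poly-logarithmic localization result, which supplies a $\P$-measurable set $\mathcal V$ of cardinality $(\log n)^{O(1)}$ with $\Pr(S_t\in\mathcal V\mid\tau>n)\to1$ in the relevant $t$-window; I will also use the quenched survival asymptotics $\Pr(\tau>n)=\exp\{-(1+o(1))\nu_d\rho_n^{-2}n\}$. Since the conditioned walk both survives to time $n$ and stays in a connected good component $\mathcal C\subseteq\mathcal V$ throughout $[Cn(\log n)^{-2/d},n]$, while the travel time lies below this threshold, the decay rate of $\Pr_x(\tau>m,\,S_{[0,m]}\subseteq\mathcal C)$ must be $\le(1+o(1))\nu_d\rho_n^{-2}$; by Step~1 this forces $\mathcal C$ to contain an almost-optimal clearing, and --- crucially, the search is now over the poly-logarithmic set $\mathcal V$ rather than all of $\Z^d$ --- the separation property identifies a \emph{unique} accessible ball $B_n$ within $o(1/n)$ of the optimal effective cost, all other clearings in $\mathcal V$ contributing a divergent extra cost and hence being $\P$-whp irrelevant. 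For $t$ in the bulk window $[Cn(\log n)^{-2/d},\,n-Cn(\log n)^{-2/d}]$, the spectral / $h$-transform decomposition $\Pr(S_t=x\mid\tau>n)\propto\Pr(S_t=x,\tau>t)\,\Pr_x(\tau>n-t)$ --- both factors being governed, once $t$ and $n-t$ exceed the mixing time $\Theta(\rho_n^2)$, by the principal Dirichlet eigenfunction $\psi_{B_n}$ of the killed walk on the (bounded, after Step~2) good component of $B_n$ --- shows the law of $S_t$ given $\{\tau>n\}$ is $(1+o(1))\,\psi_{B_n}^2/\langle\psi_{B_n},\psi_{B_n}\rangle$; since $\mathcal C\setminus B_n$ contains no clearing of eigenvalue $\le(1+o(1))\nu_d\rho_n^{-2}$, its Dirichlet eigenvalue exceeds $\lambda(B_n)$ by $\Omega(\rho_n^{-2})$, so $\psi_{B_n}$ (and $\psi_{B_n}^2$) places all but an $o(1)$ fraction of its mass in $B_n$, giving \eqref{eq:ball - localization}. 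The two endpoint windows are handled separately: in $[Cn(\log n)^{-2/d},\,2Cn(\log n)^{-2/d}]$ the walk has $\P$-whp already reached and mixed inside $B_n$; in $[n-Cn(\log n)^{-2/d},n]$ the survival conditioning is essentially already met once the walk sits near the centre of $B_n$, and since $B_n$ is an extremal clearing its boundary sites abut obstacles, so an excursion out of and back into $B_n$ within $o(n(\log n)^{-2/d})$ further steps costs a factor $e^{-\Omega(\rho_n)}$ and is negligible.

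The main obstacle, and where essentially all the new work lies, is the uniqueness/no-ties input to Steps~1--2: one must rule out, $\P$-whp, the existence of two far-apart near-optimal clearings within reach, for otherwise the conditioned walk could share its late-time occupation between two disjoint balls whose union would exceed the permitted volume $(1+\epsilon_n)d\log_{1/p}n$. This demands a reasonably sharp description of the extreme-value structure of obstacle-free balls near the origin --- notably that clearings of the environment-dependent maximal admissible volume occur at pairwise diverging distances --- together with the quantization fact that changing this volume by one site perturbs the effective rate by a divergent amount. By contrast the first-moment geometry, the discrete Faber--Krahn stability estimate, and the $h$-transform description of the conditioned walk are comparatively routine; and it is exactly the earlier poly-logarithmic localization result that makes the extremal analysis feasible, since it confines the search for the optimal clearing to a set of only poly-logarithmic size.
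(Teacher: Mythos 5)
Your overall scaffolding matches the paper's: reduce to the poly-logarithmic localization result of \cite{DX17}, then show (a) one pocket island dominates via a landscape/separation argument, and (b) the dominant island is asymptotically a ball via a quantitative Faber--Krahn inequality, and finally (c) show the conditioned walk sits in a neighbourhood of that ball. However, your Step~1 relies on an effective cost of the form $\alpha\cdot\mathrm{dist}(0,\cdot)+n\lambda(\cdot)$ with a single constant $\alpha$, and this is where the argument breaks. The travel cost is not $\alpha|v|$: it is anisotropic (a direction-dependent Lyapunov-type rate), and, more importantly, it must be computed \emph{reweighted by $\lambda^{-T}$} where $T$ is the travel time, because every step spent in transit is a step not spent accruing survival probability $\lambda$ in the clearing. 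With $\lambda=1-\Theta((\log n)^{-2/d})$ and $T=\Theta(n(\log n)^{-2/d})$, the reweighting factor $\lambda^{-T}$ contributes $\Theta(n(\log n)^{-4/d})\gg1$ to the log-cost, so it cannot be absorbed into a $\lambda$-independent $\alpha$. This is precisely why the paper works with the log-weighted Green's function $\varphi_\star(0,v;\lambda)$ of Definition~\ref{def-phistar} (with the factor $\lambda^{-\ft_v}$ and the constraint of avoiding $\sd_\lambda$) and its $\lambda$-dependent linear approximant $g(v;\lambda)$. Your separation argument has the same flaw compounded: you argue that two maximal-cardinality clearings lie at distances differing by $\Theta(R_n)$ and conclude ``the closer wins.'' Since the cost also depends on direction and on $\lambda(v)$, matching distances is neither necessary nor sufficient for matching costs. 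The paper's Lemma~\ref{topev-2} instead proves anti-concentration of $n\log\lambda(v)-g(v;\lambda(v))$ by exploiting the convexity and degree-one homogeneity of $g(\cdot;\lambda)$: the level sets $\{v:|g(v;\lambda)-x|\le\delta\}$ have volume $O(n^{d-1}\delta)$ uniformly over $\lambda$, and this is what lets a union bound over pairs of candidate islands close.

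The deeper omission is that you treat the travel rate $\alpha$ as an established quantity, but the anti-concentration argument needs to compare candidate islands against a \emph{deterministic} function with an explicit error bound, and this is the core technical content of the paper: Proposition~\ref{phi-and-g} establishes $\varphi_\star(0,v;\lambda(v))=g(v;\lambda(v))+O(n^{5/6})$, whose proof (all of Section~\ref{sec:prop3.3}) adapts Alexander's convex-hull approximation framework \cite{Alexander97} to these LWGFs. None of the difficulties there --- sub-additivity with error terms, concentration via Efron--Stein plus greedy lattice animals, the renormalization needed to handle the $\sd_\lambda$-avoidance constraint, and working uniformly in $\lambda\in[\lambda_*,1]$ --- appear in your sketch. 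Two smaller but genuine issues: your one-site eigenvalue-quantization argument presupposes the minimizer is a ball, whereas the paper's Lemma~\ref{empty} bounds the total volume of low-density boxes without that a priori shape information; and your endpoint-window argument asserting that $B_n$'s boundary sites abut obstacles, so excursions out of $B_n$ cost $e^{-\Omega(\rho_n)}$, is not correct --- $B_n$ is a discrete ball approximating the eigenfunction support, not an obstacle-free set, and the paper explicitly notes (Section~\ref{sec:open problems}) that the walk \emph{does} make excursions of order $\log n$ away from $B_n$; the actual endpoint analysis in Section~\ref{section:Localization on intermittent island} uses a last-visit decomposition at $\tOmega$ together with the decay estimate of Lemma~\ref{rstepsnt}, not an excursion-cost bound of the form you propose.
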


A key step toward proving Theorem~\ref{ballthm} is the following result which states that conditioned on survival the random walk is localized in a single local neighborhood; this is sometimes known as the one-city theorem in literature. 

\begin{thm}\label{onecity}
  For any fixed $d\geq 2$ and $p > p_c(\mathbb Z^d)$, we condition on the event that the origin is in an infinite cluster free of obstacles. Let $v_*$ be the maximizer of the variational problem \eqref{eq:variational}, which is measurable with respect to the environment. Then there exist constants $\kappa, C$ only depending on $(d,p)$ such that the following holds: Let $T$ be the first time that the random walk visits $B_{(\log n)^{\kappa/2}}(v_*)$ (i.e., a ball centered at $v_*$ of radius $(\log n)^{\kappa/2}$). Then
   \begin{equation}
\Pr\Big(T \leq C |v_*|, \{S_t: t \in [T,n]\} \subseteq B_{(\log n)^{\kappa/2}}(v_*) \mid \tau > n\Big) \to 1 \mbox{ in } \P\mbox{-probability}\,.
   \end{equation} 

\end{thm}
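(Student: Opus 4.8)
\emph{Proof strategy.} The plan is to bootstrap Theorem~\ref{onecity} from the authors' earlier poly-logarithmic localization, together with a sharp lower bound on the survival probability and a quantitative analysis of the maximizer of \eqref{eq:variational}. I will use the earlier result in the following form (available from that paper, perhaps after a minor reformulation): on a $\P$-event of probability $\to1$, conditionally on $\tau>n$ one has $S_t\in\mathcal V_\lambda$ for all $t\in[\epsilon_n n,n]$, where $\mathcal V_\lambda$ is a $\P$-measurable set of volume $(\log n)^{O(1)}$ assembled from the ``good islands'' --- the large obstacle-free components competing for \eqref{eq:variational}. Write $\Phi(v)$ for the functional in \eqref{eq:variational}, with $v_*$ its minimizer; its leading form is $\Phi(v)=(1+o(1))\big(c_1|v|+n\,\lambda(v)\big)$, where $c_1=c_1(d,p)>0$ and $\lambda(v)$ is the principal Dirichlet eigenvalue of the island at $v$, and $-\log\Pr(\tau>n)=(1+o(1))\,\Phi(v_*)$. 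Relative to the earlier result, three things must be added: (a) the walk enters $B_{(\log n)^{\kappa/2}}(v_*)$ already by time $C|v_*|$; (b) it commits to the single island $\mathcal C_*$ realising $v_*$, not to any other component of $\mathcal V_\lambda$; and (c) it never leaves $B_{(\log n)^{\kappa/2}}(v_*)$ on $[T,n]$, in particular over the early window $[T,\epsilon_n n]$ that the earlier result does not cover.

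The two quantitative inputs I would establish first are the following. \textbf{(1)} A sharp lower bound: executing the explicit strategy ``travel to $\mathcal C_*$ along a near-geodesic in $O(|v_*|)$ steps, then stay in $\mathcal C_*$'' gives, on a $\P$-event of probability $\to1$,
\[
\Pr(\tau>n)\ \ge\ \exp\!\big(-\Phi(v_*)-E_n\big),
\]
where $E_n$ is a lower-order error (the principal-eigenfunction overlap for $\mathcal C_*$, the $O(|v_*|)$ transit steps, the gap between $\Phi$ and the exact quenched cost), which I will want to keep small --- ideally $E_n=O(\log\log n)$. \textbf{(2)} A quantitative isolation of $v_*$: for some $c_\kappa>1$, on a $\P$-event of probability $\to1$ every good island $v\notin B_{(\log n)^{\kappa/2}}(v_*)$ satisfies $\Phi(v)\ge\Phi(v_*)+(\log n)^{c_\kappa}$. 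Here is where I expect the main effort: (2) is a uniqueness/separation statement for the minimizer of \eqref{eq:variational}, which --- because $\Phi$ has an essentially quadratic minimum with curvature of order $(\log n)^{O(1)}/n$ --- is equivalent to the statement that, apart from $\mathcal C_*$, there is no good island within position-distance $\sqrt{n}\,(\log n)^{O(1)}$ of $v_*$. This should follow from a first/second-moment estimate over the environment exploiting the extreme sparsity of near-optimal islands (each site hosts one with probability only $\asymp n^{-d+o(1)}$), in the spirit of, and using, the percolation-geometry estimates of the earlier paper; the delicate point is to push the separation up to a genuine power $(\log n)^{c_\kappa}$ with $c_\kappa>1$, which is needed to beat both $E_n$ and a union bound over the $\le n$ relevant times. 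A corollary of (2) (with the triangle inequality $\mathrm{dist}(\mathcal C_*,\mathcal C_j)\ge\big||v_j|-|v_*|\big|$ and $\lambda_j\ll c_1$) is a \emph{no-switching} estimate: migrating from $\mathcal C_*$ to a far island $\mathcal C_j$ after any time $T'$ --- including a late time --- costs an extra $c_1\,\mathrm{dist}(\mathcal C_*,\mathcal C_j)+(n-T')(\lambda_j-\lambda_*)\ge(\log n)^{c_\kappa}$, regardless of the sign of $\lambda_j-\lambda_*$, since (2) forces $c_1(|v_j|-|v_*|)\ge n(\lambda_*-\lambda_j)+(\log n)^{c_\kappa}$.

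With (1) and (2) in hand I would finish by a union bound over ``bad'' trajectories against the lower bound (1), fixing $\kappa>2$. For (a): if $T>C|v_*|$ the walk spends $\Omega(|v_*|)$ extra steps before entering the ball, and even the cheapest such behaviour --- loitering near a near-optimal island at distance $\asymp|v_*|$, whose eigenvalue exceeds $\lambda_*$ by $\asymp\lambda_*/\log n$ --- costs a factor $\exp(-\Omega(n(\log n)^{-O(1)}))$, which for $C$ large beats $e^{E_n}$ and $1/n$. For (b) and the $[\epsilon_n n,n]$-part of (c): on the earlier event the walk lies in $\mathcal V_\lambda$ throughout $[\epsilon_n n,n]$; by (2) every component of $\mathcal V_\lambda$ other than $\mathcal C_*$ sits outside $B_{(\log n)^{\kappa/2}}(v_*)$ and far from $\mathcal C_*$, so by the no-switching estimate the walk can neither reside in such a component nor migrate into one without paying $(\log n)^{c_\kappa}\gg\log n$; summing over the $(\log n)^{O(1)}$ components and the $\le n$ times, this is $o(\Pr(\tau>n))$, so the walk stays in $\mathcal C_*\subseteq B_{(\log n)^{\kappa/2}}(v_*)$. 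For the $[T,\epsilon_n n]$-part of (c): to leave $B_{(\log n)^{\kappa/2}}(v_*)$ the walk must either reach a far island (excluded as above) or traverse an $\asymp(\log n)^{\kappa/2}$-long stretch of the bulk without sheltering in a large island --- since every island met inside the ball apart from $\mathcal C_*$ has radius only $O((\log\log n)^{1/d})$ --- and the latter has probability $\exp(-\Omega((\log n)^{\kappa/2}))\ll1/n$ by a straight-path bound; a union over exit times completes the proof.
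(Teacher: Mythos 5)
Your overall plan---compare the cost of any ``bad'' trajectory against a lower bound on $\Pr(\tau>n)$, after first isolating the optimal island both geometrically and in objective value---is a legitimate alternative to the paper's direct comparison of $\Pr(\locE_{v_*})$ with $\Pr(\locE_u)$ for $u\neq v_*$. But your quantitative targets are off by enormous factors, and the ``quadratic minimum'' picture that underpins your separation step is the annealed-case intuition, which does not hold in the quenched setting that this theorem is actually about.

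First, $E_n=O(\log\log n)$ is not achievable. The quantity you would have to control is the quenched ``searching cost'' to the island at $v$, which in the paper is the log-weighted Green's function $\varphi_\star(0,v;\lambda)$, and which must be compared against the deterministic convex, homogeneous-of-degree-one function $g(v;\lambda)$ obtained via a subadditive argument. The paper (Proposition~\ref{phi-and-g}, proved in Section~\ref{sec:prop3.3}) gives $\varphi_\star(0,v;\lambda(v))=g(v;\lambda(v))+O(n^{5/6})$, and even that requires Alexander's convex-hull approximation machinery, a concentration inequality (Lemma~\ref{concentration}), and sub-additivity (Lemma~\ref{sub-additivity}); nothing in the paper or in \cite{DX17} comes anywhere near an $O(\log\log n)$ error. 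Consequently your intended separation $(\log n)^{c_\kappa}$ is far too weak: to beat a genuine $O(n^{5/6})$ error you need the gap between $\Phi(v_*)$ and $\Phi(v)$ for $v\neq v_*$ to be $\gg n^{5/6}$. The paper establishes exactly such a gap, of size $n(\log n)^{-C}$ (Lemma~\ref{topev-2}).

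Second, and more conceptually, your separation argument is wrong in kind. You write that $\Phi$ ``has an essentially quadratic minimum with curvature of order $(\log n)^{O(1)}/n$'' and deduce that separation reduces to a geometric claim (no competing island within distance $\sqrt{n}(\log n)^{O(1)}$ of $v_*$). That picture would be correct if $\Phi$ were a deterministic smooth function of $v$, as in the annealed problem. But here $\Phi(v)=n\log\lambda(v)-g(v;\lambda(v))$ is a random function on the discrete, sparse set $\locV$, dominated by the fluctuations of the i.i.d.-like eigenvalues $\lambda(v)$; there is no deterministic curvature, and a near-optimal competitor can sit at any distance. The actual separation in the paper comes from an anti-concentration argument (Lemma~\ref{topev-2}): conditionally on the $\lambda$-values, the map $v\mapsto g(v;e^{a})$ has thin level sets (convexity, Lipschitz bounds, homogeneity); combined with the independence of $\lambda(u),\lambda(v)$ and the sparsity $\P(v\in\D_*)\asymp k_n^{\Theta(d)}n^{-d}$, a union bound shows no two islands' $\Phi$-values fall within $n(\log n)^{-C}$ of each other. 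This is qualitatively different from your proposed ``no nearby islands'' claim, and it is essential that it yields polynomial-in-$n$ separation.

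Finally, you gloss over the single hardest part of the whole argument: approximating the searching cost by a deterministic function. The heuristic $\Phi(v)\approx c_1|v|+n\lambda(v)$ pretends that the cost to travel to $v$ is literally linear in $|v|$, but this cost is a Green's-function quantity in a random medium, and establishing that it is well-approximated by a direction-dependent linear $g$ (and controlling the error) occupies the entirety of Section~\ref{sec:prop3.3}, including a careful verification of Alexander's convex-hull approximation property for this non-FPP-like functional. You would need some equivalent of that machinery; without it, the $\Phi(v_*)$ in your lower bound and the $\Phi(v)$ in your bad-trajectory upper bound cannot be meaningfully compared.
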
  

\subsection{Background and related results}

Random walks among random obstacles has been studied extensively in literature. In the annealed case, the logarithmic asymptotics for survival probabilities are closely related to the large deviation estimates for the range of the random walk/Wiener sausage \cite{DR75, DR79, Sznitman90, Sznitman93}.  The localization problem has also been studied in the annealed case, where in \cite{Bolthausen94,Sznitman91} it was proved that in dimension two the range of the random walk/Wiener sausage is asymptotically a ball and in \cite{Povel99} it was shown that in dimension three and higher the range of the Brownian motion is contained in a ball; in both cases the asymptotics of the radii for the balls were determined. 

The same problem in the quenched case was far more challenging: in the Brownian setting it was studied first in \cite{Sznitman93b, Sznitman97} and its logarithmic asymptotics for the survival probability was first derived in \cite{Sznitman93b} (see also the celebrated monograph \cite{Sznitman98}). In \cite{Fukushima09} a simple argument for the quenched asymptotics of the survival probability was given using the Lifshitz
tail effect.  In the random walk setting, the logarithmic asymptotics of survival probability was computed in \cite{Antal95} which built upon methods developed in the Brownian setting.  Stating the result of \cite{Antal95} in our setting, we have the following: conditioned on the origin being in the infinite open cluster, one has that with $\P$-probability tending to one as $n\to \infty$
\begin{equation}\label{eq:000-1}
\Pr(\tau > n) = \exp\{-c_*n(\log n)^{-2/d}(1 + o(1))\}\,.
\end{equation}
Here $c_* = \mu_{B} (\frac{\omega_d \log (1/p)}{d})^{2/d}$, $B$ is a unit ball in $\R^d$, $\omega_d$ is the volume of  $B$ and $\mu_{B}$ is the first eigenvalue of the Dirichlet-Laplacian of $B$ which is formally defined as follows. For an open set $\Omega\subseteq \mathbb R^d$ with finite measure, 
\begin{equation}\label{eq-def-first-eigenvalue}
\mu_\Omega = \frac{1}{2d}\min_{u\in W_0^{1, 2}(\Omega)} \Big\{\int_\Omega |\nabla u|^2 \dif x: \|u\|_{L^2(\Omega) =1}\Big\}\,,
\end{equation}
where $W_0^{1, 2}(\Omega)$ is the closure of $C_0^\infty(\Omega)$ in the norm $\|u\|_{W_0^{1, 2}(\Omega)} = (\int_\Omega |\nabla u|^2 dx)^{1/2}$. 


One strategy to obtain the lower bound in \eqref{eq:000-1} is for the random walk to travel in minimal possible number of steps to the largest ball in $[-n^{1+o(1)}, n^{1+o(1)}]^d$  which is free of obstacles and connected to the origin,  and then stays within that ball afterwards. By a straightforward computation, the radius of such ball should be asymptotic to
\begin{equation}
\label{eq:def-r}
	\rad := \lfloor (\omega_d^{-1}d\log_{1/p} n)^{1/d} \rfloor\,.
\end{equation}
 This suggests that the random walk will be localized in a ball of radius asymptotically $\rad$ (or equivalently of cardinality asymptotic to $d\log_{1/p} n$), as stated in Theorem \ref{ballthm}. 


An analogous localization result to Theorem~\ref{ballthm} was obtained in \cite{Sznitman96} (see also \cite{Sznitman98}) with an upper bound of $t^{o(1)}$  on the volume of localized region (a counterpart of $|B_n|$ in Theorem~\ref{ballthm}) and with a possibility that this region is split into $t^{o(1)}$ many local regions (or cities in view of the name of ``one-city theorem'' in literature) --- we remark that many other results have been established in \cite{Sznitman98} such as Lyapunov exponents and many deep connections to variational problems. In the random walk setting, the logarithmic asymptotics of survival probability was computed in \cite{Antal95} which built upon methods developed in the Brownian setting. Recently, in \cite{DX17} we have shown that ever since $\epsilon_n n$ steps (for some $\epsilon_n\to_{n\to \infty} 0$) the random walk is localized in a region of volume that is poly-logarithmic in $n$ in the same context of the present paper.

Thus far, we have been discussing random walks with Bernoulli obstacles, i.e., at each vertex we kill the random walk with probability either 0 or a certain fixed number. More generally, one may place i.i.d.\ random potentials $\{W_v: v\in \mathbb Z^d\}$ (where $W_v$ follows a general distribution) and one assigns a random walk path probability proportional to $\exp(\sum_{i=0}^n W_{S_i})$. The case of Bernoulli obstacles is a prominent example in this family. Previously, there has been a huge amount of work devoted to the study of various localization phenomenon when the potential distribution exhibits some tail behavior ranging from heavy tail to doubly-exponential tail.  
See  \cite{Wolfgang16} for an almost up-to-date review on this subject, also known as the parabolic Anderson model. See also \cite{ADS17} for a review on random walk among mobile/immobile random traps. 

For a very partial review, in the works of \cite{GMS83, AS07, AS08, HMS08, KLMS09, LM12, ST14}, much progress has been made for heavy-tailed potential where in particular they proved localization in a single lattice point. We note that by localization in a single lattice point we meant for a single large $t$, as considered in the present article; one could alternatively consider the behavior for all large $t$ simultaneously as in \cite{KLMS09}, in which case they showed that the random walk is localized in two lattice points, almost surely as $t\to \infty$.   In a few recent papers \cite{BK16, BKS16} (which improved upon \cite{GKM07}), the case of doubly-exponential potential was tackled where detail behavior on leading eigenvalues and eigenfunctions, mass concentration as well as aging were established. In particular, it was proved that in the doubly-exponential case the mass was localized in a bounded neighborhood of a site that achieves an optimal compromise between the local Dirichlet eigenvalue of the Anderson Hamiltonian and the distance to the origin.

\subsection{From poly-logarithmic localization to sharp localization}\label{sec-poly-to-sharp}

The main result in \cite{DX17} is that the random walk is confined in at most poly-logarithmic in $n$ many \emph{islands} (where an \emph{island} is a connected subset in $\Z^d$) during time $[o(n),n]$ (that is, during time $[\epsilon_n n, n]$ for some $\epsilon_n\to_{n\to \infty} 0$) and each island has diameter at most poly-logarithmic in $n$ --- we will refer to each such island as a \emph{pocket island}. The present article is closely related to our previous work \cite{DX17}: we rely both on the results and techniques in \cite{DX17} (we note that our proof is otherwise self-contained and in particular does not rely on results in \cite{Sznitman98}).  Provided with \cite{DX17}, our  proof of Theorem~\ref{ballthm} is naturally divided into two essentially separate parts, as we describe in what follows.
\vspace{3pt}

\noindent{\bf One-city theorem.} The first step is to show that one of these pocket islands will stand out and dominate the union of the rest of them, as incorporated in Sections~\ref{sec:one-city} and \ref{sec:prop3.3}. To this end, we consider the probability cost in order for the random walk to travel from the origin to a faraway island, and we will show that the logarithm of this probability grows (roughly speaking) linearly in the distance from the origin to the island provided that the angle is fixed.  Thus, this probability cost has a large fluctuation since pocket islands occur more or less uniformly in the box under consideration. Due to fluctuation, the best  pocket island will substantially dominate all the others. A refined version of ``logarithm of the probability cost grows linearly in distance'' as incorporated in Proposition \ref{phi-and-g}, is a major ingredient in proving Theorem~\ref{onecity}. 

\smallskip

\noindent{\bf Intermittent island.} While the random walk is confined in a pocket island during time $[o(n),n]$, we will show that at each given time $t \in [o(n),n]$ it is in a much smaller region (called \emph{intermittent island}) inside the pocket island with high probability. In addition, we will show that the intermittent island is asymptotically a ball. These are contained in Sections~\ref{section:Asymptotic Ball}  and \ref{section:Localization on intermittent island}.
 An important observation here is that the total volume for regions with low obstacle density  in any pocket island is at most $d \log_{1/p} n(1 + o(1))$. This observation, combined with the celebrated Farber--Krahn inequality (see Section~\ref{sec:ingredients} below), then implies that the asymptotic shape of the intermittent island is a ball. 
 
 We would like to add a remark that in this paper, we have used the term \emph{intermittent island} in a manner that is not completely precise. For instance, we have referred to $\emp$ in Definition 
 \ref{empty-set}, $\Omega_\epsilon$ in Definition \ref{def-eign-f}, $B_\epsilon$ in Lemma~\ref{Asymptotic shape - Ball}, $\hbn$ in \eqref{eq:def-hbn} as intermittent island in informal discussions. The abuse of the terminology is justified by the fact that all of these sets have negligible pair-wise symmetric differences (and of course our mathematical statements are always precisely formulated).

\subsection{Two important proof ingredients}\label{sec:ingredients}

In Section~\ref{sec-poly-to-sharp} we described the high-level structure of our proof in two essentially separate steps;
in this subsection we will discuss two important proof ingredients, which provides  a glance at some highlights of our proof.\ Discussions on more detailed proof ideas can be found at the beginning of  Sections~\ref{sec:one-city}, \ref{sec:prop3.3}, \ref{section:Asymptotic Ball}, \ref{section:Localization on intermittent island}.

\noindent{\bf Convergence rates for sub-additive functions.} Rate of convergence for sub-additive functionals has received much attention in the past. See \cite{Alexander93, Kesten93, Alexander97, AZ13, ADH15} for progresses on bounds for rate of convergence for sub-additive functionals with prominent application in first-passage percolation.  In particular, a general theory was given in \cite{Alexander97} via the ingenious convex hull approximation property which applies to several processes on lattices including first-passage percolation. For instance, it was shown that in first-passage percolation the expected length of the shortest path connecting $0$ and $x$  can be approximated by a function of $x$ that is convex and homogeneous of order 1, with approximate error upper bounded by $O(|x|^\nu)$ for $\nu<1$.
Our proof in Section~\ref{section:Convergence rate} follows the framework developed in \cite{Alexander97} and is dedicated to verifying the convexity hull condition in \cite{Alexander97} for our log-weighted Green's functions defined as in \eqref{eq:phi-def} --- an incorrect but heuristically useful interpretation of  log-weighted Green's function is the logarithmic of the probability for the random walk to travel from one point to another point without hitting an obstacle (in fact, this is simply the logarithmic of the Green's function without reweighting, which converge to Lyapunov exponents \cite{Sznitman98}). While this resembles the first-passage percolation problem (as already noted in \cite{Sznitman98}), our context is more complicated since our function in a vague sense takes average over many (not necessarily self-avoiding) paths rather than takes the length of the single shortest path as in first-passage percolation. Furthermore, the real definition of log-weighted Green's function is even more complicated: for instance, it has to take into account  the travel time for the random walk as well as to incorporate the requirement that the random walk has to avoid certain regions. These incur substantial challenges in implementing the proof framework in \cite{Alexander97}.

\medskip

\noindent{\bf Faber--Krahn inequality.}  
 A classic result, known as the celebrated Faber--Krahn inequality, states that among sets with given volume balls are the only sets which minimize the first eigenvalue (we remark that Faber--Krahn inequality is the fundamental reason behind the phenomenon that the localization occurs in a ball). Various versions of quantitative Faber--Krahn inequality have been proved in the past \cite{HN94, Melas92, Bha01, FMP09, BDV15}. 
In particular, the  following sharp quantitative Faber--Krahn inequality was proved in \cite[Main Theorem]{BDV15} (here ``sharp'' means that the lower bound is achievable (up to constant) for some choice of $\Omega$; recall \eqref{eq-def-first-eigenvalue} for the definition of $\mu_\Omega$)
\begin{equation}
\label{eq:FB}
	|\Omega|^{2/d}\mu_\Omega - |B|^{2/d} \mu_B \geq \sigma_d \mathcal (\mathcal A(\Omega))^2 \mbox{ for a constant } \sigma_d >0 \mbox{ depending only on } d\,,
\end{equation}
where $B$ is an Euclidean ball, $|\Omega|$ denotes for the volume of $\Omega$, and $\mathcal A(\Omega)$ is the \textit{Fraenkel asymmetry} defined as (below $\bigtriangleup$ denotes the symmetric difference)
\begin{equation} \label{eq-def-mathcal-A}
\mathcal A(\Omega) = \inf \left\{ \frac{|\Omega \bigtriangleup B|}{|B|}: B \text{ is a ball such that }|B| = |\Omega| \right\}\,.	
\end{equation}
 Our proof that the localization region is asymptotically a ball uses \eqref{eq:FB}. In fact, for the purpose of our proof, we do not need the full power of the sharp inequality as in \eqref{eq:FB} --- the inequalities in \cite{Bha01, FMP09} would suffice. 

We remark that ours is not the first application of Faber--Krahn type of inequality in the study of localization of random walks. For instance: in \cite{Bolthausen94} a key ingredient was a version of this type of inequality in two-dimensions which was proved in the same paper; in \cite{Sznitman97} another quantitative version of Faber--Krahn was proved independently;  in \cite{Povel99} a quantitative version of isoperimetric inequality (related to Faber--Krahn inequality) from \cite{Hall92} was a key ingredient in the proof.

\subsection{Future directions and open problems}\label{sec:open problems}
We say a vertex is open if no obstacle is placed there, and thus each vertex is open with probability $p$. In this paper,  we have chosen $p>p_c(\mathbb Z^d)$ to make sure that with positive probability the open cluster containing the origin is infinite.    A variation of the model is to place obstacles on edges rather than on vertices. In addition, in this paper the obstacles are chosen to be hard, i.e., trapping a random walk with probability 1. One could alternatively consider soft obstacles. That is, every time the random walk hits an obstacle it has a certain fixed probability (which is strictly less than 1) to be killed. Furthermore, one could also consider the continuous analogue, i.e., Brownian motion with Poissonian obstacles as in \cite{Sznitman98}. While we believe our methods useful in these settings, we leave these for future study.  In what follows, we wish to emphasize a number of open problems in our context for which we believe serious efforts are required and deserved. 

\medskip

\noindent{\bf Refined results on localization.} While from Theorem~\ref{onecity} we see that conditioned on survival ever since $o(n)$ steps the simple random walk is localized in a ball of volume poly-logarithmic in $n$, we should not expect that ever since $o(n)$ steps the simple random walk stays within $B_n$ (as defined in Theorem~\ref{ballthm}). The reason is that, due to entropy the random walk will occasionally have excursions of order $\log n$ away from $B_n$.  It would be interesting to get a more refined description on the path localization.

Another direction is to prove a lower bound on the localization. We expect that for any $\P$-measurable set $A_n$ with $\frac{A_n}{d\log_{1/p} n} \leq a<1$ for a fixed constant $a$  and any fixed time $t_n$ one should have that $\Pr(S_{t_n}\in A_n \mid \tau > n)$ is strictly bounded away from 1. In addition, if $\frac{A_n}{d\log_{1/p} n} \to 0$, we expect that $\Pr(S_{t_n}\in A_n \mid \tau > n)\to 0$ as $n\to \infty$.

In addition, it would be very interesting to determine the correct order of the Euclidean distance between the origin and $B_n$.

\medskip

\noindent{\bf Scaling limit of  principal eigenvalues.} In both the present article and \cite{DX17}, we did not obtain precise limiting law of the principal eigenvalue, let alone the order statistics of eigenvalues near the extremum. We note that in \cite{BK16} the authors managed to prove a Poisson convergence for the upper order statistics of eigenvalues when the random potential in the environment has doubly-exponential tails, and this serves as an input in \cite{BKS16} for the proof of localization (as well as other properties such as aging) in this context. One may argue that one advantage of our technique is to prove localization without a full understanding of the extremal principal eigenvalues; but admittedly it is a disadvantage of our work that we did not develop techniques to understand the precise behavior of principal eigenvalues. We think it is a very interesting open question to determine the scaling limit of the extremal process of principal eigenvalues.

\medskip

\noindent {\bf Geometric aspects of $B_n$.} It is interesting to study more geometric aspects of the the set $B_n$ as in Theorem~\ref{ballthm}. While we know that it is asymptotically an Euclidean ball, some fine details are missing. For instance, does $B_n$ contain obstacles inside away from its boundary? Questions of this type has been studied in the annealed case: it was shown in \cite{Bolthausen94}  that ``no obstacle exists in the bulk of the random walk range'' for $d=2$; recently, a similar result for $d\geq 3$ was established in \cite{DFSX18}. 
It remains to be an open question in the quenched case (either continuous or discrete). 

Finally, we would like to mention a challenging open problem, which is on the surface fluctuation of the localization ball. The formulation of the problem in the quenched case requires some thinking (as the random walk does have excursions away from the localization ball which is rather long in comparison of the diameter of the ball), but in the annealed case one can simple ask for the surface fluctuation of the random walk range. At the moment, we do not even have a clear physics picture on what is the surface fluctuation exponent, say in the case of two-dimensions.

\subsection{Organization}
The remaining sections of the paper are organized as follows. In Section~\ref{sec:prelim} we review results from \cite{DX17} and also record a few useful lemmas. In Section \ref{sec:one-city}, we give a proof of Theorem \ref{onecity} assuming a major ingredient as incorporated in Proposition \ref{phi-and-g}.  Section \ref{sec:prop3.3} is devoted to the proof of Proposition \ref{phi-and-g}. In Section \ref{section:Asymptotic Ball}, we prove that there is a ball in the pocket island of cardinality asymptotically $d\log_{1/p} n$ such that the principal eigenvalue of this ball is close to that of the pocket island. Finally, we prove in Section \ref{section:Localization on intermittent island} that the random walk will be localized in the intermittent island and hence complete the proof of Theorem \ref{ballthm}.

\subsection{Notation convention} \label{sec:notation}
For $v \in \Z^d$, we recall that $v$ is open if there is no obstacle placed at $v$. We define the $\ell_2$-norm $|\cdot|$ by $|v| = (\sum_{i=1}^d v_i^2)^{1/2}$, the $\ell_1$-norm $|\cdot|_1$ by $|v|_1 = \sum_{i=1}^d |v_i|$, and  the $\ell_\infty$-norm $|\cdot|_\infty$ by $|v|_\infty = \max_{1\leq i\leq d} |v_i|$ . For $r>0,v\in\Z^d$, we define $B_r(v) = \{x \in \Z^d:|x-v|\leq r\}$ and $K_r(v) = \{x \in \Z^d : |x-v|_\infty \leq r\}$.  For $A \subseteq \Z^d$, $|A|$ denotes the cardinality of $A$. Write $\partial A = \{x\in A^c: y\sim x \mbox{ for some } y\in A\}$, where $x\sim y$ means that $x$ is a neighbor of $y$ (i.e. $|x-y|_1 = 1$) and $\partial_i A = \{x\in A: y\sim x \mbox{ for some } y\in A^c\}$. We let $\lambda_{A}$ denote the principal eigenvalue (i.e., the largest eigenvalue)  of $P|_{A}$, which is the transition matrix of simple random walk on $\Z^d$ killed upon exiting $A$. For Lebesgue measurable set $A$ in $\R^d$,  we use $|A|$ to denote the Lebesgue measure of $A$. For $x, y\in A\subseteq \mathbb Z^d$, we define $D_A(x, y)$ to be the length of the shortest path which stays within $A$ and joins $x$ and $y$.

For $x\in \mathbb Z^d$, we denote by $\Pr^x$ and $\Ex^x$ for probability and expectation for random walks with respect to starting point $x$. When $x$ is omitted from the superscript, by default the random walk starts from the origin (This convention has been used already earlier in the introduction).

We denote by $\ob$ the collection of all obstacles (some times referred as closed vertices). For $v\in \Z^d$, we denote by $\calC(v)$ the open cluster containing $v$ and by $\calC(\infty)$ the infinite open cluster. If $v$ is closed, then $\calC(v) = \varnothing$. We denote by $\xi_A = \inf\{t \geq 0: S_t \not\in A\}$ the first time for the random walk to exit from $A$, and by $\tau_A = \inf\{t \geq 0: S_t\in A\}$ the hitting time to $A$. In particular, we denote $\tau_x = \tau_{\{x\}}$ for $x \in \Z^d$. As having appeared earlier, we let $\tau = \tau_{\ob}$  be the survival time of the random walk. For a subset of non-negative integers $I$, we denote $S_I = \{S_t: t\in I\}$.


Throughout the rest of the paper, $C, c$ denote  positive constants depending only on $(d, p)$ whose numerical values may vary from line to line (and we do not introduce them anymore). We have in mind that $C$ is a large constant while $c$ is a small constant. For constants with decorations such as $c_*$, $C_1$ or $\kappa$ (which also depend only $(d, p)$), their values will stay the same in the whole paper. 
\begin{center}
\begin{tabular}{ llllll } 
 \hline
 $\mu_\Omega$&  \eqref{eq-def-first-eigenvalue}&
 $\rad$   & \eqref{eq:def-r}&
 $k_n,R,\calC_R(v)$& \textsc{Sec.} \ref{sec:prelim}\\
  $\D_*,\lambda(v),\lambda_*$& \textsc{Sec.} \ref{sec:prelim}& 
  $\locV$ & \textsc{Def.} \ref{locV-def}&
 $\ft_v,\locE_v,\sd_\lambda$ & \textsc{Def.} \ref{def-Ev} \\
 $\varphi_\star$&\textsc{Def.} \ref{def-phistar}&
 $g$& \eqref{eq:phi-g-g-g}\eqref{eq:g-def-lim}&
 $v_*$& \eqref{eq:variational}\\
 $\fregion$& \eqref{eq:def-fregion}&
 $G_A(x,y;\lambda)$& \textsc{Def.} \ref{def-green}& 
 $r,\cregion,\varphi,\varphi_*$ & \textsc{Def.} \ref{def-phi}\\
 $L,\mabox_{\mathbf{i}}$& \textsc{Def.} \ref{def-box}&
white/black, $\mac_\mathbf{i}$ & \textsc{Def.} \ref{de-kwhite} & 
&\\
  \multicolumn{3}{l}{tilde-white/tilde-black $\mathbf{i}_x,\calC_{\mabox},\mabox_\mathbf{A},\mac_\mathbf{A}$}&\multicolumn{3}{l}{around \eqref{eq:def-K-A}}\\
$\bba(x,r)$& \textsc{Lem.} \ref{cutset}  &
$\bar \varphi,\varphi^\circ,\varphi^\circ_*,\bar \varphi^\circ_*,R_{\circ}$& \textsc{Def.} \ref{def-phic}& 
$g_x,Q_x$& \textsc{Def.} \ref{def-gx}\\
$h$ &\eqref{eq:def-h} &
$s_x,G_x,\Delta_x,D_x$ &\eqref{eq:def-alex}&
$\gpt$ &\textsc{Def.} \ref{def-gpt} \\
$(\epl r,\rho)$-empty$,\emp$& \textsc{Def.} \ref{def-smallbox}&
$f, \Omega_\epsilon$ & \textsc{Def.} \ref{def-eign-f} &
$\Omega_\epsilon^+$ &\eqref{eq:def-Omega+}\\
$B_\epsilon$ & \textsc{Lem.} \ref{Asymptotic shape - Ball}& 
$\tOmega, \hbn$&\eqref{eq:def-hbn} &
 & \\
\hline
\end{tabular}
\end{center}

\smallskip

\noindent {\bf Acknowledgment.} We warmly thank Ryoki Fukushima, Rongfeng Sun and  Alain-Sol Sznitman for many helpful discussions.

\section{Preliminaries}\label{sec:prelim}
As described in Section~\ref{sec-poly-to-sharp}, it was proved in \cite{DX17} that the random walk will be localized in poly-logarithmic in $n$ many balls of radius $(\log n)^\kappa$ (see Theorem \ref{Polylog thm}), which we refer to as pocket islands (see Lemma~\ref{locV-def} and the discussions that follows for a more formal definition for pocket islands). In this subsection, we will describe the main result of \cite{DX17} in more detail and record a number of useful lemmas. 

\noindent\textbf{Pocket Islands.} Let us first recall some notations and definitions from \cite{DX17}. 
Let $k_n = (\log n)^{4 - 2/d } (\log \log n)^{2\11_{d=2}}$. Write  $R = k_n (\log n)^2$ and denote by $\mathcal C_R(v)$ the connected component in $B_{R}(v) \backslash \ob$ that contains $v$. Let $\lambda(v)  = \lambda_{\mathcal C_R(v)}$ be the principal eigenvalue of the transition matrix $P|_{\mathcal C_R(v)}$--- we note that $(1-\lambda(v))$ is the discrete analogue the first eigenvalue of Dirichlet-Laplacian on $\mathcal C_R(v)$ defined in \eqref{eq-def-first-eigenvalue}. We call for the attention of the reader that the notation of $\lambda(v)$ and $\lambda_{\{v\}}$ have complete different meanings. 

Set \begin{equation}
\label{eq:lambda-*-def}
	\lambda_* = p_{\alpha_1}^{1/k_n}
\end{equation} where $p_{\alpha_1}$ (defined in \cite[(3.1)]{DX17}) is appropriately chosen according to some large quantile of the distribution of survival probability up to $k_n$ steps.  Denote $\D_* = \{ v\in \calC(0):\lambda(v) \geq \lambda_*\}$. We have that (\cite[Corollary 3.7]{DX17})
\begin{equation}
\label{eq:lambda-*-property}
  k_n^{2d} n^{-d} \leq \P(v \in \D_{*}) \leq k_n^{8d} n^{-d}\,.
\end{equation}
Note that the events $
\{v \in \D_{*}\}$ for $v \in \Z^d$ are rare (c.f.\ \eqref{eq:lambda-*-property}) and are only locally dependent. Thus, the set $\D_{*}$ can be divided into many isolated islands as incorporated in the next lemma. 

\begin{lemma}
\emph{(\cite[Lemma 3.8]{DX17})}~\label{locV-def}
For every constant $C_0>0$, with $\P$-probability tending to one, there exists a skeletal set $\locV \subseteq \D_* \cap \calC(0) \cap B_{\cucc n(\log n)^{-2/d}}(0)$ such that 
	\begin{equation}
\label{eq:locV-def}
  \begin{split}
  &\lambda(v) = \max \{\lambda(u):{u \in B_{3R}(v)} \}, \quad \D_{*} \cap \calC(0) \cap B_{\cucc n(\log n)^{-2/d}}(0) \subseteq \cup_{v \in \locV}B_{3R}(v);\\
  &B_{nk_n^{-100d}}(v)\text{ for }v \in \locV \cup \{ 0\}\text{ are disjoint}.
\end{split}
\end{equation}
\end{lemma}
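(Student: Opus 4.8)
The plan is to exploit that $\D_*$ is a rare, finite-range-dependent subset of $\Z^d$: inside $\hat B := B_{\cucc n(\log n)^{-2/d}}(0)$ it should, with $\P$-probability tending to $1$, decompose into polylogarithmically many well-separated ``clumps'' of diameter at most $3R$, and one then takes $\locV$ to consist of one $\lambda(\cdot)$-maximizing vertex from each clump. Write $\widehat\D := \D_*\cap\calC(0)\cap\hat B$.

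First I would record that $\{\lambda(v)\ge\lambda_*\}$ depends only on the obstacles inside $B_R(v)$, so these events are independent whenever $|v-w|>2R$; combined with the FKG inequality for $\{v\in\calC(0)\}$ and the upper bound in \eqref{eq:lambda-*-property} (whose proof does not really use the constraint $v\in\calC(0)$, and hence also gives $\P(\lambda(v)\ge\lambda_*)\le k_n^{8d}n^{-d}$), this yields $\P(v\in\D_*,\,w\in\D_*)\le (k_n^{8d}n^{-d})^2$ for $|v-w|>2R$. Three Markov/first-moment bounds then hold with $\P$-probability $\to1$: (a) $|\widehat\D|\le(\log n)^{C_1}$ for some $C_1=C_1(d)$ (since $\E|\widehat\D|\le|\hat B|\,k_n^{8d}n^{-d}$ is polylogarithmic); (b) no two vertices of $\widehat\D$ are at distance in $(3R,\,nk_n^{-99d}]$ (the expected number of such pairs is $\lesssim(\log n)^{-2}k_n^{16d-99d^2}\to0$); (c) $\widehat\D\cap B_{2nk_n^{-100d}}(0)=\varnothing$ (expected size $\lesssim k_n^{8d-100d^2}\to0$). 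Since any fixed power of $\log n$ is $o(nk_n^{-99d})$ while $R$ is itself a power of $\log n$, (a) and (b) force each connected component (``clump'') of the graph on $\widehat\D$ with edge set $\{\,|v-w|\le 3R\,\}$ to have diameter at most $3R$, and any two distinct clumps to be at distance $>nk_n^{-99d}$; (c) places all clumps outside $B_{2nk_n^{-100d}}(0)$.

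Now, on the intersection of these events, for each clump $\mathcal I$ let $v_{\mathcal I}$ be the lexicographically smallest vertex of $\mathcal I$ maximizing $\lambda(\cdot)$ over $\mathcal I$, and set $\locV=\{v_{\mathcal I}\}$. By construction $\locV\subseteq\widehat\D=\D_*\cap\calC(0)\cap\hat B$. Each clump $\mathcal I$ has diameter $\le 3R$ and $v_{\mathcal I}\in\mathcal I$, so $\mathcal I\subseteq B_{3R}(v_{\mathcal I})$; since the clumps cover $\widehat\D$, this is exactly the covering statement $\widehat\D\subseteq\bigcup_{v\in\locV}B_{3R}(v)$. The disjointness of $B_{nk_n^{-100d}}(v)$ for $v\in\locV\cup\{0\}$ follows from the $>nk_n^{-99d}>2nk_n^{-100d}$ separation of clumps together with (c). For the local-maximality $\lambda(v_{\mathcal I})=\max\{\lambda(u):u\in B_{3R}(v_{\mathcal I})\}$, observe that a competitor $u$ with $\lambda(u)>\lambda(v_{\mathcal I})\ge\lambda_*$ and $u\in\calC(0)$ would itself lie in $\widehat\D$ and, being within $3R$ of $v_{\mathcal I}$, in the clump $\mathcal I$, contradicting the choice of $v_{\mathcal I}$; a closed $u$ has $\calC_R(u)=\varnothing$ and negligible $\lambda(u)$. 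Hence it remains only to add to the good event, with $\P$-probability $\to1$, that there is no open $u\notin\calC(0)$ with $\lambda(u)\ge\lambda_*$ within distance $3R$ of $\widehat\D$: such a $u$ lies in a \emph{finite} open cluster, and $\lambda_{\calC_R(u)}\ge\lambda_*$ forces $|\calC_R(u)|\ge c\log n$ via the discrete Faber--Krahn bound $1-\lambda_A\ge c|A|^{-2/d}$ together with $1-\lambda_*\asymp(\log n)^{-2/d}$, an event of probability $\le n^{-c'}$ by the exponential decay of finite-cluster sizes for $p>p_c$.

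I expect this last exclusion to be the main obstacle: ``$u\notin\calC(0)$'' is a global event, so a naive union bound over all of $\hat B$ costs a factor $|\hat B|\asymp n^d$, which the finite-cluster estimate $n^{-c'}$ need not beat (the rate $c'$ can be small for $p$ near $p_c$). One must instead localize the union bound to the polylogarithmically many vertices near $\widehat\D$ --- e.g.\ by first revealing the obstacles in a $4R$-neighborhood of $\widehat\D$ and then invoking the exponential decay for clusters forced to exit that neighborhood --- and this decoupling is the delicate part. The bookkeeping of the powers of $k_n$ in the first-moment estimates of the second step also needs care but is otherwise routine.
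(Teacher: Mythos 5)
Your overall scheme---decompose $\widehat\D := \D_*\cap\calC(0)\cap B_{\cucc n(\log n)^{-2/d}}(0)$ into well-separated ``clumps'' of diameter at most $3R$ via first- and second-moment bounds, then take the $\lambda(\cdot)$-maximizer of each clump---is the natural route to this lemma, and your estimates (a), (b), (c) and the FKG-style reduction of $\P(v\in\D_*,w\in\D_*)$ to $(k_n^{8d}n^{-d})^2$ for $|v-w|>2R$ are essentially right. The present paper only cites \cite[Lemma 3.8]{DX17} and does not reproduce a proof, so the review can only address your attempt on its own.

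The genuine gap is exactly where you suspect it, but it is worse than you say. To establish the local-maximality $\lambda(v)=\max\{\lambda(u):u\in B_{3R}(v)\}$ over \emph{all} of $B_{3R}(v)$ (not just $B_{3R}(v)\cap\calC(0)$, which your clump construction handles for free), you must rule out open $u\notin\calC(0)$ with $\lambda(u)\ge\lambda_*$ near $\widehat\D$, and you claim the relevant event has probability $\le n^{-c'}$ ``by the exponential decay of finite-cluster sizes.'' That decay is in fact sub-exponential in the size: Lemma~\ref{FiniteClusterSize} gives $\P(|\calC(0)|=m)\le e^{-cm^{(d-1)/d}}$, so with the Faber--Krahn-forced size $|\mathcal C_R(u)|\ge c\log n$ you get only $\P(c\log n\le|\calC(u)|<\infty)\le Ce^{-c(\log n)^{(d-1)/d}}$, which decays \emph{slower} than any $n^{-c'}$ because $(d-1)/d<1$. (Passing to the cluster-radius tail $e^{-cr}$ does not help either, since a near-ball of volume $\asymp\log n$ only has radius $\asymp(\log n)^{1/d}$, yielding $e^{-c(\log n)^{1/d}}$.) Hence the union bound over the $\asymp n^d R^d$ candidate $u$'s in $B_{3R}(\widehat\D)\subseteq B_{2\cucc n(\log n)^{-2/d}}(0)$ does not close. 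Your proposed repair---localize the union bound to the $(\log n)^{O(1)}$ many vertices in a $4R$-neighborhood of $\widehat\D$, for which $(\log n)^{C}\cdot e^{-c(\log n)^{(d-1)/d}}\to0$---is the right idea, but you acknowledge rather than carry out the decoupling, and it is the crux: $\widehat\D$ depends on the global event $\{v\in\calC(0)\}$ and so does ``$u$ lies in a finite cluster,'' so one needs an explicit exploration/conditioning argument (e.g.\ reveal the environment on $B_R(\cdot)$ for the candidate high-$\lambda$ vertices and then apply the finite-cluster tail to the unexplored part, tracking the effect of the conditioning) before that polylog union bound is legitimate. As written, this last step is a genuine hole, not a bookkeeping remark.
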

We will fix the values of $\kappa,C_0$ in Theorem \ref{Polylog thm}. And the balls $B_{(\log n)^\kappa}(v)$ for $v \in \locV$ will be referred to as \emph{pocket islands}.

\noindent \textbf{Path Localization.} The following path localization has been proved in \cite{DX17}. Conditioned on survival, the random walk will  travel to one of the pocket islands (which we refer to as the target island) and then it will be confined in the target island afterwards. In addition, the random walk will avoid getting close to any region that is better than or almost as good as (i.e., has larger or nearly the same principal eigenvalue) the target island, and the random walk will reach the target island at a time at most linear in the distance between the target island and the origin. We next give a more formal statement (see Theorem~\ref{Polylog thm}) on the path localization.

\begin{defn}
\label{def-Ev}
For constant $\kappa,C_1>0$ to be determined and each $v \in\Z^d$, we define hitting time of the ball of radius $(\log n)^{\kappa/2}$ centered at $v$
$$ \ft_v = \tau_{B_{(\log n)^{\kappa/2}}(v)}\,,$$
and event
\begin{equation*}
\mathcal \locE_v = \left\{\tau_{\sd_{\lambda(v) }}>\ft_v, S_{[\ft_v,n]} \subseteq B_{(\log n)^{\kappa}}(v) , \tau > n 	\right\}\,,
\end{equation*}
where $\sd_\lambda = \{v\in \Z^d: \Pr^v(\tau > (\log n)^{C_1}) > [(1 - k_n^{-21d})\lambda]^{ (\log n)^{C_1}}\}$.	
\end{defn}

\begin{thm}[\cite{DX17}]
\label{Polylog thm}
For sufficiently large constants $\kappa,C_0,C_1$, with $\P$-probability tending to one,
\begin{equation}
\label{eq:poly-loc}
  \Pr(\cup_{v \in \locV}(\locE_v \cap \{\ft_v \leq C|S_{\ft_v}|\} )\mid  \tau > n) \geq 1 - e^{-(\log n)^c}\,.
\end{equation}
\end{thm}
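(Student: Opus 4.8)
\emph{Proof strategy.} The plan is to run a Feynman--Kac / spectral argument in tandem with a first-moment bound over environments. First I would restrict to a $\P$-typical environment, that is, to an event of probability tending to one on which simultaneously: (i) the structural conclusions of Lemma~\ref{locV-def} hold for the skeletal set $\locV$; (ii) the quantile estimate \eqref{eq:lambda-*-property} and the standard large-deviation bounds for survival up to $k_n$ steps hold at all relevant scales; and (iii) a crude a priori confinement holds, namely that conditioned on $\tau>n$ the walk stays inside $B_{C_0 n(\log n)^{-2/d}}(0)$ with overwhelming probability. Item (iii) will come out of the travel-cost estimate below together with the matching lower bound $\Pr(\tau>n)\ge \lambda_*^{(1+o(1))n}$, obtained by sending the walk to the best clearing in the box and holding it there.

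The core step is a \emph{per-block} localization. I would partition $[0,n]$ into $\lfloor n/k_n\rfloor$ consecutive blocks of length $k_n$. On $\{\tau>n\}$, restricting the walk to a single block and using $\Pr^x(\tau>k_n,\,S_{[0,k_n]}\subseteq B_R(x))\le \lambda(x)^{k_n}$ together with a soft statement that within each block the walk is essentially trapped in some $B_R$, one finds that the survival cost of a block is at most $\lambda(x_j)^{k_n}e^{o(k_n)}$ for a center $x_j$. Comparing with $\Pr(\tau>n)\ge p_{\alpha_1}^{(1+o(1))n/k_n}$ and recalling that $p_{\alpha_1}$ is a high quantile, so that $\lambda(x)^{k_n}<p_{\alpha_1}$ unless $x\in\D_*$, forces all but $o(n/k_n)$ of the blocks to have $x_j\in\D_*$. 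Hence, conditioned on $\tau>n$, the walk spends $(1-o(1))n$ steps inside $\cup_{v\in\D_*\cap\calC(0)}B_R(v)$, and then, by the covering part of \eqref{eq:locV-def}, inside $\cup_{v\in\locV}B_{3R}(v)$. Since the balls $B_{nk_n^{-100d}}(v)$ are disjoint, a gluing argument (visiting two distinct islands costs a travel price that already exceeds the budget) upgrades this to a \emph{single} $v\in\locV$ with $S_{[\ft_v,n]}\subseteq B_{(\log n)^\kappa}(v)$, provided $\kappa$ is taken large enough that $B_{(\log n)^\kappa}(v)$ engulfs $B_{3R}(v)$ with room to spare for the entropic excursions of size $\Theta(\log n)$.

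Next I would handle the initial passage and the avoidance of at-least-as-good regions. The key input is a Lyapunov / first-passage-percolation type bound: on a typical environment, for $x,y\in\calC(0)$ at Euclidean distance $\ell$ the chance to travel from $x$ to $y$ before hitting an obstacle is at most $e^{-c\ell}$. Applied to the origin and the entrance point $S_{\ft_v}$ this confines $v$ to $B_{C_0 n(\log n)^{-2/d}}(0)$; combined with the fact that the walk may squander only $o(n)$ steps outside the islands, it also gives $\ft_v\le C|S_{\ft_v}|$ (a passage lasting much longer than the distance would pin the walk in low-$\lambda$ territory for that long, contradicting the block count). For $\tau_{\sd_{\lambda(v)}}>\ft_v$: were the walk to enter $\sd_{\lambda(v)}$ before reaching $B_{(\log n)^{\kappa/2}}(v)$, the visited vertex (or its surrounding island) would, by the very definition of $\sd_\lambda$, be at least as good as $v$ and no farther along the traversed path, so re-running the block count with it as the target exhibits a strictly cheaper survival strategy, contradicting that $v$ is the realized target. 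Summing the probabilities of the (few) failure events then yields the claimed $1-e^{-(\log n)^c}$ bound.

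The hardest part, I expect, is the per-block localization: making precise, with $e^{o(k_n)}$ control, that ``the survival cost of a block equals $\lambda(\text{center})^{k_n}$''. This demands a uniform grip on the spectral gap and on the localization of the principal eigenfunction of $P|_{\calC_R(v)}$ across all $v\in\D_*$, so that within a block the walk genuinely concentrates near one center, together with a choice of $p_{\alpha_1}$ making the gap in $\lambda(x)^{k_n}<p_{\alpha_1}$ (for $x\notin\D_*$) wide enough to absorb the $o(n/k_n)$ slack; this is precisely where the scales $k_n=(\log n)^{4-2/d}(\log\log n)^{2\11_{d=2}}$ and $R=k_n(\log n)^2$ get pinned down. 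A secondary difficulty is the fluctuation/uniqueness argument guaranteeing that a single dominant island survives the union bound over the $\Theta((n(\log n)^{-2/d})^d)$ candidate locations, which rests on the moment estimates behind \eqref{eq:lambda-*-property}.
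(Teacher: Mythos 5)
The paper does not reprove this statement; the theorem is attributed to \cite{DX17} and the proof given is a citation exercise. It quotes from \cite[Proposition~4.3 and~(5.19)]{DX17} a target island $x_n\in\locV$ such that, conditionally on $\tau>n$, the walk reaches $B_{(\log n)^{\kappa/2}}(x_n)$ in time $O(|S_{\ft_{x_n}}|)$ and stays in $B_{(\log n)^{\kappa}}(x_n)$ thereafter, and then upgrades this to $\locE_{x_n}$ by chaining two more quoted facts: \cite[Lemma~5.6]{DX17}, that with conditional probability $1-e^{-n/(\log n)^C}$ the pre-arrival trajectory stays at distance $>(\log n)^q$ from every $x$ with $\lambda(x)>(1-k_n^{-20d})\lambda(x_n)$, and \cite[Lemma~4.5]{DX17}, that $\lambda(x)\le(1-k_n^{-20d})\lambda$ for all $x\in B_{(\log n)^q}(y)$ forces $y\notin\sd_\lambda$. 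Your proposal instead undertakes to rebuild the whole of \cite{DX17}. Your outline --- block decomposition of $[0,n]$, per-block spectral cost $\lambda(x)^{k_n}$ compared against the $p_{\alpha_1}^{(1+o(1))n/k_n}$ lower bound, FPP travel costs --- is the correct template at that level of generality, but it is a far larger enterprise than the stated proof calls for, and it leaves exactly the step that makes this statement non-trivial unaddressed.

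That step is $\tau_{\sd_{\lambda(v)}}>\ft_v$, and your argument for it contains a genuine logical error. You claim that entering $\sd_{\lambda(v)}$ before $\ft_v$ would contradict ``that $v$ is the realized target,'' but conditioning on $\{\tau>n\}$ is not optimization and no contradiction arises: the walk may pass through $\sd_{\lambda(v)}$, leave, end up near $v$, and survive. What must be proved is that this has small \emph{conditional probability}; a swap heuristic is not a substitute for the quantitative $e^{-n/(\log n)^C}$ bound that \cite[Lemma~5.6]{DX17} supplies. Compounding this, the assertion that a visited vertex of $\sd_{\lambda(v)}$ is ``by the very definition of $\sd_\lambda$ at least as good as $v$'' misreads the definition: $y\in\sd_\lambda$ constrains only the $(\log n)^{C_1}$-step survival probability at $y$, a purely local quantity, and says nothing directly about the radius-$R$ eigenvalue $\lambda(\cdot)$ on which $\D_*$ and $\locV$ are built; translating the one into the other is exactly the content of \cite[Lemma~4.5]{DX17}, which your sketch elides. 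Softer gaps are the ``gluing'' step (ruling out visiting two islands is not the same as proving $S_{[\ft_v,n]}\subseteq B_{(\log n)^{\kappa}}(v)$, which must also forbid mid-range excursions that reach no second island) and the uniform-over-centers per-block spectral estimate, which you flag but do not establish.
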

\begin{proof}
It can be found in the proof of \cite[Proposition 4.3 and (5.19)]{DX17} that for sufficiently large $\kappa,C_0$, there is a random site $x_n \in \locV$ (depending on  $\ob$ and potentially also depending on $(S_t)_{t=0}^n$) such that the following hold accordingly with $\P$-probability tending to one,
$$ \Pr(S_{[\ft_{x_n},n]} \subseteq B_{(\log n)^{\kappa}}(x_n) \mid  \tau > n) \leq e^{-(\log n)^c} \mbox{ and } \Pr(\ft_{x_n} \leq C|S_{\ft_{x_n}}|  \mid  \tau > n) \leq  e^{-(\log n)^c}\,.$$
For the same $x_n$, \cite[Lemma 5.6]{DX17} states that for any constant $q$,
\begin{equation*}
	\Pr(\lambda(x) \leq (1 - k_n^{-20d})\lambda(x_n),\forall x \in \cup_{0 \leq t \leq \ft_{x_n}} B_{(\log n)^q}(S_t)  \mid  \tau > n) \leq e^{-n/(\log n)^C}\,.
\end{equation*}
At the same time, \cite[Lemma 4.5]{DX17} yields if $\lambda(x) \leq (1 - k_n^{-20d})\lambda, \forall x\in B_{(\log n)^q}(y)$, then
\begin{equation*}
	\Pr^y(\tau > (\log n)^{C_1}) \leq (\log n)^C [ (1 - k_n^{-20d})\lambda]^{(\log n)^{C_1}} \leq [(1 - k_n^{-21d})\lambda]^{ (\log n)^{C_1}}\,,
\end{equation*}
where the last inequality holds if $C_1$ is sufficiently large. Combining above three results gives \eqref{eq:poly-loc}.
\end{proof}
We will choose $\kappa,C_0,C_1$ sufficiently large as in this theorem and will assume $\kappa \geq 10C_2 + C_1 + 10$ where $C_2$ is a constant to be selected in Lemma \ref{chemdist2}. 

\smallskip

\noindent {\bf A few useful lemmas.} We next record a few lemmas for later use.
\begin{lemma}
\label{FiniteClusterSize}
 \emph{(\cite[Theorem 3]{CCN87} and \cite{GM90b} (see also \cite[Corollary 3]{KZ90}))} For standard (Bernoulli) site percolation on $\Z^d$ with parameter $p > p_c(\Z^d)$,
 \begin{equation}
 \label{eq:FiniteClusterSize}
   \P(|\calC(0)| = m) \leq e^{-cm^{(d-1)/d}}.
 \end{equation}
\end{lemma}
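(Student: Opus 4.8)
The plan is to reduce to the case of large $m$ (for $m$ bounded by a constant depending only on $(d,p)$, the asserted bound $e^{-cm^{(d-1)/d}}$ is trivially true after shrinking $c$) and then to run a renormalization/block argument in the spirit of Grimmett--Marstrand and Kesten--Zhang. A naive Peierls bound will not suffice: writing $\{|\calC(0)|=m\}=\bigcup_{S\ni 0,\ |S|=m}\{\calC(0)=S\}$ and freezing $S$ open and its exterior vertex boundary closed gives $\P(\calC(0)=S)\le p^{m}(1-p)^{|\partial S|}$ with $|\partial S|\ge cm^{(d-1)/d}$ by the discrete vertex-isoperimetric inequality in $\Z^d$, but the number of lattice animals of size $m$ through the origin is $\ge C^{m}$, and $(Cp)^{m}$ overwhelms the sub-exponential gain $(1-p)^{cm^{(d-1)/d}}$ when $p$ is not close to $p_c(\Z^d)$. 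The renormalization is precisely what repairs this entropy/energy imbalance.

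\emph{Step 1 (block construction).} Partition $\Z^d$ into cubes of side $L$, with $L$ a large constant to be fixed. Following Grimmett--Marstrand, declare a cube \emph{good} if it, together with its $*$-neighbours, carries a ``macroscopic'' open crossing cluster, set up so that (i) the crossing clusters of two $*$-adjacent good cubes are forced to coincide, and (ii) if $\calC(0)$ meets a good cube then $\calC(0)$ contains that cube's crossing cluster. By the Grimmett--Marstrand slab theorem, for $p>p_c(\Z^d)$ the probability that a fixed cube is \emph{bad} tends to $0$ as $L\to\infty$; since badness depends only on the configuration in a bounded neighbourhood of the cube, the field of bad indicators is finite-range dependent with arbitrarily small marginals, hence by the Liggett--Schonmann--Stacey domination theorem it is stochastically dominated by i.i.d.\ Bernoulli site percolation on $\Z^d$ with parameter $\epsilon=\epsilon(L)\to 0$, which is subcritical once $L$ is large.

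\emph{Step 2 (blocking surface).} On $\{|\calC(0)|=m<\infty\}$, let $\mathcal B$ be the set of cubes met by $\calC(0)$; it is $*$-connected and $|\mathcal B|\ge mL^{-d}=:N$. Since $\calC(0)$ is finite, $\calC(0)$ cannot escape through good cubes (a bi-infinite chain of good cubes would attach its common crossing cluster to $\calC(0)$ by (i)--(ii), making $\calC(0)$ infinite), so one extracts a $*$-connected set $\Gamma$ of \emph{bad} cubes that blocks $\mathcal B$ from infinity; filling the finite holes of $\mathcal B$ first and then taking the $*$-exterior boundary of the resulting hole-free set ensures $\Gamma$ encloses a region of at least $N$ cubes, so the discrete isoperimetric inequality (at the renormalized scale) gives $|\Gamma|\ge cN^{(d-1)/d}\ge c_L\,m^{(d-1)/d}$, with $\Gamma$ lying within $*$-distance $\mathrm{poly}(|\Gamma|)$ of the origin.

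\emph{Step 3 (Peierls on the renormalized lattice) and conclusion.} The number of $*$-connected sets of cubes of size $k$ enclosing the origin is at most $k^{C}C^{k}$, and by Step 1 each is entirely bad with probability at most $\epsilon^{ck}$ (retain a positive fraction of its cubes that are pairwise far enough apart to be independent). Summing,
\[
\P(|\calC(0)|=m)\ \le\ \sum_{k\ge c_L m^{(d-1)/d}} k^{C}C^{k}\epsilon^{ck}\ \le\ e^{-c'm^{(d-1)/d}}
\]
for $L$ chosen large enough that $C\epsilon^{c}<1$; together with the trivial bounded-$m$ case this proves the lemma. The main obstacle is Step 2: one must set up the Grimmett--Marstrand good/bad dichotomy carefully enough that a \emph{finite} open cluster is genuinely trapped by a $*$-connected bad surface whose size is controlled from below by $m^{(d-1)/d}$ (not merely by the requirement of surrounding the origin), and this is exactly where the deep input — convergence of the slab critical points to $p_c(\Z^d)$ — is used; the bookkeeping is carried out in detail in \cite{GM90b, KZ90}.
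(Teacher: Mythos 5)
The paper does not prove Lemma~\ref{FiniteClusterSize}; it simply cites \cite{CCN87,GM90b,KZ90}. Your three-step outline (coarse-graining with Grimmett--Marstrand plus Liggett--Schonmann--Stacey domination, a blocking bad surface of size $\gtrsim m^{(d-1)/d}$, Peierls on the renormalized lattice) is precisely the Kesten--Zhang strategy of \cite{KZ90}, so it is the same route as the cited sources rather than an alternative one, and your opening remark about why a naive Peierls bound fails is the right motivation for renormalizing.

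There is, however, a genuine misstatement in Step~2 that goes beyond ``deferred bookkeeping.'' First, property~(ii) as you phrase it --- ``if $\calC(0)$ meets a good cube then $\calC(0)$ contains that cube's crossing cluster'' --- cannot be arranged: a finite cluster may graze a good cube at a single vertex without touching the crossing cluster. The achievable (and sufficient) version, in the spirit of \cite{PP96}, is that any open cluster whose trace in a good cube is large (say of size $\ge L^d/10$), or which crosses the cube, is wired to the crossing cluster; one must therefore work with the core $\mathcal B'=\{B:|\calC(0)\cap B|\ge L^d/10\}$, which still has $|\mathcal B'|\ge c\,mL^{-d}$. Second, and more seriously, the $*$-exterior boundary of the filled $\mathcal B$ is \emph{not} a set of bad cubes: those cubes are just cubes not met by $\calC(0)$, and generically most of them are good, so it cannot serve as your $\Gamma$. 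The correct argument is that $\mathcal B'$ is disjoint from the infinite good $*$-cluster (else $\calC(0)$ would be infinite by the gluing property), and one then has to extract a surface of bad blocks separating $\mathcal B'$ from that infinite good cluster and propagate the isoperimetric lower bound --- computed for the filled $*$-connected $\mathcal B$ --- to a lower bound on the number of bad blocks in the interface. That extraction is where the proof really lives; as written, your sketch presents it as though it followed from the naive exterior-boundary construction, which it does not.
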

\begin{lemma}
\label{lambda*}$
 	\lambda_{*} \geq 1-c_*(\log n)^{-2/d} -C(\log n)^{-3/d}$.
\end{lemma}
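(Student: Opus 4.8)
**Proof plan for Lemma~\ref{lambda*}.**

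The plan is to unwind the definition $\lambda_* = p_{\alpha_1}^{1/k_n}$ and reduce the claim to a quantitative lower bound on $p_{\alpha_1}$, the (large-quantile) survival probability up to $k_n$ steps. First I would recall from \cite{DX17} (where $p_{\alpha_1}$ is defined in \cite[(3.1)]{DX17}) that $p_{\alpha_1}$ is comparable to the $\P$-probability that a random walk run for $k_n$ steps survives among the obstacles, on a reasonably large-probability set of environments; in particular it suffices to exhibit \emph{some} environment (occurring with non-negligible probability, matching the quantile used to define $p_{\alpha_1}$) in which a walk survives $k_n$ steps with probability at least the desired bound. The natural such environment is one in which $B_{\rad}(0)$ — the ball of radius $\rad = \lfloor(\omega_d^{-1} d\log_{1/p} n)^{1/d}\rfloor$, cf.\ \eqref{eq:def-r} — is free of obstacles; this occurs with $\P$-probability roughly $p^{|B_\rad|} = n^{-d(1+o(1))}$, which is exactly the order dictated by \eqref{eq:lambda-*-property} and hence consistent with the quantile defining $p_{\alpha_1}$. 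Conditioned on this event, a walk confined to $B_\rad(0)$ survives $k_n$ steps with probability at least $\lambda_{B_\rad(0)}^{k_n}$, so $p_{\alpha_1} \geq \lambda_{B_\rad(0)}^{k_n}(1-o(1))$ and therefore $\lambda_* = p_{\alpha_1}^{1/k_n} \geq \lambda_{B_\rad(0)}(1-o(1))^{1/k_n} \geq \lambda_{B_\rad(0)} - o(k_n^{-1})$, which is negligible against the stated error terms.

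The remaining, and genuinely quantitative, step is to estimate the discrete principal eigenvalue $\lambda_{B_\rad(0)}$ of simple random walk killed on exiting a discrete Euclidean ball of radius $\rad$. I would use the standard fact that $1 - \lambda_{B_R(0)} = \tfrac{1}{2d}\,\mu_{B}\,\omega_d^{2/d}\,|B_R(0)|^{-2/d}\,(1+o(1))$ as $R\to\infty$ — i.e., the discrete Dirichlet eigenvalue converges, after the correct normalization, to the continuum first Dirichlet eigenvalue $\mu_B$ of the unit ball, with a polynomially small relative correction (this is the discrete-to-continuum eigenvalue comparison; a clean reference is the lattice approximation of Dirichlet eigenvalues, and in fact \cite{DX17} already contains such estimates in the regime of radii that are powers of $\log n$). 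Plugging in $|B_\rad(0)| = \omega_d \rad^{\,d}(1+O(\rad^{-1})) = d\log_{1/p} n\,(1+O((\log n)^{-1/d}))$ gives
\begin{equation*}
1 - \lambda_{B_\rad(0)} = \mu_B\Big(\frac{\omega_d \log(1/p)}{d}\Big)^{2/d}(\log n)^{-2/d}(1 + O((\log n)^{-1/d})) = c_*(\log n)^{-2/d} + O((\log n)^{-2/d-1/d}),
\end{equation*}
recalling $c_* = \mu_B\big(\tfrac{\omega_d\log(1/p)}{d}\big)^{2/d}$ from \eqref{eq:000-1}. Since $O((\log n)^{-3/d})$ absorbs both this correction and the $o(k_n^{-1})$ slack from the first paragraph, we conclude $\lambda_* \geq 1 - c_*(\log n)^{-2/d} - C(\log n)^{-3/d}$.

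The main obstacle is making the two approximation steps quantitative with the claimed error exponent $3/d$: (i) controlling the discrepancy between $|B_\rad(0)|$ and the continuum volume $\omega_d\rad^d$ to relative precision $O((\log n)^{-1/d})$ (lattice-point-counting in a ball, which is elementary) and, more delicately, (ii) the rate of convergence of the discrete Dirichlet eigenvalue of $B_\rad(0)$ to its continuum counterpart. For (ii) one can either invoke a known quantitative eigenvalue comparison for lattice balls, or — more in the spirit of this paper — interpose a continuum ball between an inner and outer lattice ball, use Dirichlet/Neumann bracketing together with domain monotonicity of $\mu$, and then scale; the relative error incurred is $O(\rad^{-1}) = O((\log n)^{-1/d})$, which is exactly what is needed. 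I would also double-check that the quantile index $\alpha_1$ in the definition of $p_{\alpha_1}$ is taken in a range ensuring $p_{\alpha_1}$ is at least the survival probability in the obstacle-free-ball environment described above; this is where one must cite the precise choice in \cite[(3.1)]{DX17} and the two-sided bound \eqref{eq:lambda-*-property}, which pins down $p_{\alpha_1}$ up to factors that are $k_n^{O(d)}$ and hence harmless after taking the $k_n$-th root.
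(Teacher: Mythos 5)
Your overall strategy coincides with the paper's: compare $\lambda_*$ with the principal eigenvalue of a discrete Euclidean ball via the two-sided quantile bound \eqref{eq:lambda-*-property}, then estimate that eigenvalue by a discrete-to-continuum comparison (the paper cites \cite[(24)]{Kuttler70}). One small slip in the eigenvalue formula: with the paper's normalization of $\mu_B$ in \eqref{eq-def-first-eigenvalue}, which already carries the factor $\tfrac{1}{2d}$, the correct relation is $1-\lambda_{B_R(0)} = \mu_B R^{-2}(1+O(R^{-1}))$, so the extra prefactor $\tfrac{1}{2d}$ in your displayed formula would give $\tfrac{1}{2d}c_*(\log n)^{-2/d}$ rather than $c_*(\log n)^{-2/d}$; since your plugged-in line drops it, this is just a typo.

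The genuine gap is the radius of the comparison ball. You take $B_\rad(0)$ and quote $\P(B_\rad(0)\ \text{open}) = n^{-d(1+o(1))}$, calling this ``consistent with'' the quantile. But the argument needs a strict inequality in the right direction: from the upper bound of \eqref{eq:lambda-*-property}, $\P(\lambda(v) \geq \lambda_*)$ is at most $Ck_n^{8d}n^{-d}$ (up to constants coming from the $v\in\calC(0)$ condition), and to conclude $\lambda_* \geq \lambda_{B_r(0)}$ by monotonicity of the tail $t\mapsto\P(\lambda(v)\geq t)$ one needs $\P(B_r(v)\ \text{open})$ to exceed $Ck_n^{8d}n^{-d}$, not merely match it in order. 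With $r=\rad$, the lattice-point discrepancy can push $|B_\rad(0)|$ above $d\log_{1/p}n$ by a term of order $\rad^{\,d-1}=(\log n)^{(d-1)/d}$, in which case $\P(B_\rad(0)\ \text{open}) = p^{|B_\rad(0)|}$ drops to $n^{-d}e^{-c(\log n)^{(d-1)/d}}$, which is far below $k_n^{8d}n^{-d}$; your $n^{-d(1+o(1))}$ hides exactly this dangerous exponent. The paper avoids it by taking $r=\rad-q$ for a fixed large constant $q$, which forces $|B_r(v)|\leq d\log_{1/p}n-\rad^{\,d-1}$ and hence $\P(B_r(v)\ \text{open})\geq n^{-d}e^{c\rad^{\,d-1}}\gg k_n^{8d}n^{-d}$; since $q$ is a constant, $\lambda_{B_r(0)}$ differs from $\lambda_{B_\rad(0)}$ by only $O(\rad^{-3})=O((\log n)^{-3/d})$, absorbed into the stated error. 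You flag the need to ``double-check the quantile'', but the correct fix is to shrink the ball by a constant, not to re-examine the choice of $\alpha_1$.
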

\begin{proof}
Recall that $\rad =  \lfloor (\omega_d^{-1}d\log_{1/p} n)^{1/d} \rfloor  $ as defined in \eqref{eq:def-r}. Let $r = \rad -q$. By \cite[(24)]{Kuttler70} and scaling we see that for any fixed $q>0$,
$$ 1 - \lambda_{B_r(0)} \leq c_*(\log n)^{-2/d}  + O((\log n)^{-3/d})\,.$$
In addition, $|B_r(v)| \leq d\log_p (n^{-1}) - \rad^{d-1}$ for sufficently large $q$, hence for all $v\in \Z^d$, $$\P(\lambda(v) \geq \lambda_{B_r(v)}) \geq \P(B_r(v) \text{ is open}) \geq  n^{-d} e^{c \rad^{d-1}}\,.$$
Fix a large $q$ and compare it with \eqref{eq:lambda-*-property}. We get $\lambda_* \geq 1 - c_*(\log n)^{-2/d} -C(\log n)^{-3/d}.$
\end{proof}

\begin{lemma}
	For $\lambda \geq \lambda_*$ and any $v \in \Z^d, t>0$
	\begin{equation}
		\label{eq:Prob-Dlambda}
		\P( v \in \sd_\lambda) \leq (\log n)^{C}n^{-d}\,,
	\end{equation}
	\begin{equation}
	\label{eq:SurvivalProb-SD}
		\Pr^v( \tau_{\sd_\lambda \cup \ob}>t) \leq (\log n)^{C}(1 - (\log n)^{-{100d}})^t \lambda^t\,.
	\end{equation}
\end{lemma}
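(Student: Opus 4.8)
The goal is to prove the two displayed inequalities \eqref{eq:Prob-Dlambda} and \eqref{eq:SurvivalProb-SD} for $\lambda \geq \lambda_*$, where $\sd_\lambda = \{v : \Pr^v(\tau > (\log n)^{C_1}) > [(1-k_n^{-21d})\lambda]^{(\log n)^{C_1}}\}$.

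\textbf{Proof of \eqref{eq:Prob-Dlambda}.} The plan is to relate membership in $\sd_\lambda$ to the event $\{v \in \D_*\}$ for which the probability bound \eqref{eq:lambda-*-property} already applies. First I would observe that $\Pr^v(\tau > (\log n)^{C_1})$ is governed, up to polylogarithmic corrections, by the principal eigenvalue $\lambda(v) = \lambda_{\calC_R(v)}$: by the standard spectral expansion for a substochastic matrix (here $P|_{\calC_R(v)}$), one has $\Pr^v(\tau_{B_R(v)^c \cup \ob} > t) \leq (\log n)^C \lambda(v)^t$ for $t = (\log n)^{C_1}$, where the polynomial prefactor absorbs the $\ell^2 \to \ell^1$ and $\ell^\infty \to \ell^2$ norm losses over a region of volume at most $|B_R(v)| = (\log n)^{O(1)}$; and moreover the contribution of walks that exit $B_R(v)$ before time $t$ without being killed is negligible since $R = k_n(\log n)^2$ is much larger than $(\log n)^{C_1/2}$ times any relevant diffusive scale — more carefully one uses Lemma~\ref{chemdist2}-type chemical-distance control or a crude union bound over exit times (this is exactly the kind of estimate already used in the proof of Theorem~\ref{Polylog thm} via \cite[Lemma 4.5]{DX17}). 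Consequently, if $v \in \sd_\lambda$ with $\lambda \geq \lambda_*$, then $(\log n)^C \lambda(v)^{(\log n)^{C_1}} \geq [(1-k_n^{-21d})\lambda_*]^{(\log n)^{C_1}}$, which forces $\lambda(v) \geq (1 - k_n^{-20d})\lambda_*$ once $C_1$ is large enough to beat the prefactor (same computation as in the proof of Theorem~\ref{Polylog thm}). The event $\{\lambda(v) \geq (1-k_n^{-20d})\lambda_*\}$ differs from $\{v \in \D_*\} = \{\lambda(v) \geq \lambda_*\}$ only by relaxing the threshold by a $(1-k_n^{-20d})$ factor; a minor rerun of the argument behind \cite[Corollary 3.7]{DX17} (equivalently \eqref{eq:lambda-*-property}) — which is itself an application of the Lifshitz-tail/percolation counting that gives $\P(\lambda(v) \geq \lambda) \leq$ polylog $\cdot\, p^{(\text{volume needed})}$ — yields $\P(\lambda(v) \geq (1-k_n^{-20d})\lambda_*) \leq (\log n)^C n^{-d}$. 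This gives \eqref{eq:Prob-Dlambda}.

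\textbf{Proof of \eqref{eq:SurvivalProb-SD}.} Here I want to bound $\Pr^v(\tau_{\sd_\lambda \cup \ob} > t)$, i.e.\ the probability the walk survives $t$ steps while also avoiding $\sd_\lambda$. The key point is that at every step the walk sits at a vertex \emph{not} in $\sd_\lambda$, and the definition of $\sd_\lambda$ gives a per-site decay estimate: for $w \notin \sd_\lambda$, $\Pr^w(\tau > (\log n)^{C_1}) \leq [(1-k_n^{-21d})\lambda]^{(\log n)^{C_1}}$. The plan is to apply this over disjoint time blocks of length $(\log n)^{C_1}$ via the Markov property: writing $t = m (\log n)^{C_1} + (\text{remainder})$ with $m = \lfloor t/(\log n)^{C_1}\rfloor$, condition successively at times $0, (\log n)^{C_1}, 2(\log n)^{C_1}, \ldots$; at the start of each block the walk is at some vertex $w_j \notin \sd_\lambda$ (on the event that it has avoided $\sd_\lambda$ so far), so the probability of surviving that block \emph{and continuing to avoid} $\sd_\lambda$ is at most $\Pr^{w_j}(\tau > (\log n)^{C_1}) \leq [(1-k_n^{-21d})\lambda]^{(\log n)^{C_1}}$. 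Multiplying over the $m$ blocks gives $\Pr^v(\tau_{\sd_\lambda \cup \ob} > t) \leq [(1-k_n^{-21d})\lambda]^{m(\log n)^{C_1}}$; absorbing the leftover partial block (of length $<(\log n)^{C_1}$) costs at most a factor $\lambda^{-(\log n)^{C_1}} \leq (1 + o(1))$, which is harmless, and bookkeeping $(1-k_n^{-21d})^t \leq (1-(\log n)^{-100d})^t$ holds for large $n$ since $k_n^{-21d} \leq (\log n)^{-100d}$ fails — wait, actually $k_n = (\log n)^{4-2/d}(\log\log n)^{\cdots}$ so $k_n^{-21d} = (\log n)^{-21d(4-2/d)+o(1)}$ which is much smaller than $(\log n)^{-100d}$, so we in fact have room to spare and the stated bound $(\log n)^C(1-(\log n)^{-100d})^t \lambda^t$ follows comfortably, with the polylog prefactor absorbing the partial-block correction.

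\textbf{Main obstacle.} The routine part is \eqref{eq:SurvivalProb-SD}: it is a clean block-decomposition/Markov argument and the numerology works with margin to spare. The delicate part is \eqref{eq:Prob-Dlambda}, specifically the first step — cleanly relating the killed-walk survival probability $\Pr^v(\tau > (\log n)^{C_1})$ to the local eigenvalue $\lambda(v)$ of $\calC_R(v)$. One must argue that survival is dominated by staying inside $B_R(v)$ (so that $\lambda(v)$, which is defined via $\calC_R(v)$, is the relevant quantity) and then convert the eigenvalue comparison into the probability bound \eqref{eq:lambda-*-property} for a slightly perturbed threshold. Both ingredients are present in \cite{DX17} (the eigenvalue/survival comparison is the content of \cite[Lemma 4.5]{DX17}, reused in the proof of Theorem~\ref{Polylog thm} above, and the threshold-robustness of the probability bound is implicit in \cite[Corollary 3.7]{DX17}), so the work is in assembling them carefully rather than in new ideas; nonetheless this is where one has to be careful about which region the walk is confined to and about the direction of the eigenvalue inequalities.
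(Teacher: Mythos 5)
Your block decomposition for \eqref{eq:SurvivalProb-SD} is the right skeleton, and for \eqref{eq:Prob-Dlambda} you take a genuinely different route than the paper (through the local eigenvalue $\lambda(v)$ rather than the $k_n$-step survival probability), but both parts contain real gaps.

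For \eqref{eq:Prob-Dlambda}: the central inequality you invoke, $\Pr^v(\tau > (\log n)^{C_1}) \leq (\log n)^C\lambda(v)^{(\log n)^{C_1}}$, is not available and is in general false. The constant $C_1$ must be taken quite large (well above $6$, as needed for the prefactor absorption in the proof of Theorem~\ref{Polylog thm}), while $R = k_n(\log n)^2 = (\log n)^{6-2/d+o(1)}$; thus $(\log n)^{C_1} \gg R$, and the walk can exit $B_R(v)$ long before time $(\log n)^{C_1}$ and survive in a region about which $\lambda(v) = \lambda_{\mathcal C_R(v)}$ says nothing. This is precisely why \cite[Lemma 4.5]{DX17}, as quoted in the proof of Theorem~\ref{Polylog thm}, carries a \emph{neighborhood} hypothesis ($\lambda(x)\le\cdots$ for all $x\in B_{(\log n)^q}(y)$) rather than a condition at the starting point alone. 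From $v\in\sd_\lambda$ you can therefore conclude at best that some $y$ near $v$ has $\lambda(y)\ge(1-k_n^{-20d})\lambda_*$, not that $\lambda(v)$ itself is large. The paper's argument bypasses eigenvalues entirely: from $v\in\sd_\lambda$ it extracts (by subdividing the $(\log n)^{C_1}$ steps into $k_n$-blocks) a site $y\in B_{(\log n)^{C_1}}(v)$ with $\Pr^y(\tau>k_n)\ge\lambda_*^{k_n}/2$ and then applies \cite[Lemma 3.3]{DX17}, a tail bound directly on the $k_n$-step survival probability. A corrected version of your idea (neighborhood eigenvalue bound plus a union bound over the neighborhood) could be made to work and would give the same order, but the single-site inequality you actually wrote does not close the argument.

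For \eqref{eq:SurvivalProb-SD}: the remainder-block correction is much larger than you claim. Since $\lambda\ge\lambda_* = 1-c_*(\log n)^{-2/d}(1+o(1))$ by Lemma~\ref{lambda*}, the factor $\lambda^{-r}$ for $r$ up to $(\log n)^{C_1}$ satisfies $\lambda^{-(\log n)^{C_1}}=\exp\bigl(c_*(\log n)^{C_1-2/d}(1+o(1))\bigr)$, which for $C_1>2/d$ is superpolynomial in $\log n$ and far from $1+o(1)$ or any polylog prefactor. For $t$ between $(\log n)^{C_1}$ and roughly $k_n^{21d}(\log n)^{C_1-2/d}$, with the remainder close to a full block, the crude bound $[(1-k_n^{-21d})\lambda]^{\lfloor t/(\log n)^{C_1}\rfloor(\log n)^{C_1}}$ then exceeds the target $(\log n)^C(1-(\log n)^{-100d})^t\lambda^t$, so the naive blocking does not suffice; the more careful decomposition adapted from \cite[Lemma 4.4]{DX17} is genuinely needed. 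You also have the arithmetic backwards: $k_n^{-21d}=(\log n)^{-(84d-42)+o(1)}$ is much \emph{larger} than $(\log n)^{-100d}$, not much smaller, and it is exactly the inequality $k_n^{-21d}>(\log n)^{-100d}$ (which the paper explicitly records at the end of its proof) that gives $(1-k_n^{-21d})^t\le(1-(\log n)^{-100d})^t$; your stated comparison, taken at face value, would reach the opposite conclusion.
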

\begin{proof}
If $x \in \sd_\lambda$, then there exists $y \in B_{(\log n)^{C_1}}(x)$ such that $\Pr^y(\tau > k_n) \geq [(1 - k_n^{-21d})\lambda_*]^{k_n+1} \geq \lambda_*^{k_n}/2$. Hence \cite[Lemma 3.3]{DX17} gives \eqref{eq:Prob-Dlambda}. The bound \eqref{eq:SurvivalProb-SD} is an analogue of \cite[Lemma 4.4]{DX17}. The adaption of the proof is straightforward: we can adapt the proof by just changing all occurrences of $k_n,R,\mathcal U_\alpha,p_\alpha$ to $(\log n)^{C_1},(\log n)^{C_1},\D_\lambda,\lambda^{(\log n)^{C_1}}$ respectively and noting that $k_n^{-21d}>(\log n)^{-100d}.$
\end{proof}

\section{One city theorem}\label{sec:one-city}
\subsection{Overview}
In this section, we will give the proof of Theorem~\ref{onecity}.  We first give a heuristic description. There are poly-logarithmic many pocket islands (see \eqref{eq:locV-def},\eqref{eq:poly-loc}). For each of them the probability for localizing in that island is \emph{roughly speaking} the product of the probability of reaching the island (which we refer to as searching probability) and the probability of staying in that island afterwards. We use the fluctuation of the searching probability to show that only a single one of them will be dominating. Below are the key ingredients for demonstrating that the fluctuation of searching probability is large:
\begin{itemize}
\item We expect that the searching probability to a far away vertex $v$ (this is close to the searching probability to a neighborhood around $v$) is exponentially small in $|v|$, where the rate of decay may depend on the direction $\frac{v}{|v|}$. 
\item The locations of these pocket islands are roughly independent and uniform in $B_n(0)$. 
\end{itemize}
In fact, we can have a quantitative version for the first ingredient which controls the rate of convergence for the logarithmic of the searching probability. To prove this, we can adapt methods discussed in Section~\ref{sec:ingredients} on the rate of convergence in first-passage percolation (in particular the method in \cite{Alexander97}). 

However, our situation is  more complicated as we have to keep track of the time spent on reaching an island. This is because, when we require the random walk to stay in the island after reaching it, the remaining amount of time is not fixed but depends on how much time the random walk has already spent on reaching the island. This motivates the following definition. 
\begin{defn}
\label{def-phistar}
	Recall that $ \ft_v = \tau_{B_{(\log n)^{\kappa/2}}(v)}$. For $\lambda > 0$ we define
\begin{equation}
	\varphi_\star(0,v;\lambda) =- \log \Ex \big[\lambda^{-\ft_v}  \11_{n \wedge \tau_{\sd_{\lambda }\cup \ob }\geq \ft_v }\big]\,.
\end{equation}
\end{defn}

We wish to make a couple of remarks on our definition of $\varphi_\star$. 
\begin{itemize}
\item We have a term of $ \lambda^{-\ft_{y}}$ in the definition. This is because after reaching the island, every step of survival costs roughly a probability of $\lambda$ (assuming the principal eigenvalue of the target island is $\lambda$), and thus for every step spent on reaching the island we give a reward of $\lambda^{-1}>1$ to account for the saving on future probability cost.
\item We do not allow the random walk to enter $\sd_\lambda$. Otherwise, the random walk may stay in a region of eigenvalue grater than $\lambda$ for excessively large  amount of time and lead to an excessively small value of  $\varphi_\star(x,y;\lambda)$ (since for every step the random walk gains a prize of $\lambda^{-1}$), and thus fails to serve its intended purpose. 
\end{itemize}

Next, we list three ingredients for the proof of Theorem~\ref{onecity}: Lemma~\ref{ev-phi} expresses $\Pr(\locE_v)$ as a combination of $\lambda(v)$ and $\varphi_\star(0,v;\lambda(v))$; in Proposition~\ref{phi-and-g} we approximate $\varphi_\star$ by a linear function $g$;  Lemma~\ref{topev-2} encapsulates our basic intuition that  the fluctuation of $g(v;\lambda(v))$ should be large since sites in $\locV$ are roughly uniform distributed in $B_n(0)$.

\begin{lemma}
\label{ev-phi}
With $\P$-probability tending to one,
	\begin{equation}
	\log \Pr(\locE_v) = n\log \lambda(v) - \varphi_\star(0,v;\lambda(v))  +O( (\log n)^{C})\,.
\end{equation}
\end{lemma}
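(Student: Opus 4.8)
The plan is to establish the identity by a two-sided estimate, using a last-visit (or first-visit) decomposition at the hitting time $\ft_v$ to separate the "searching" phase from the "survival-in-the-island" phase. Recall that $\locE_v = \{\tau_{\sd_{\lambda(v)}} > \ft_v,\ S_{[\ft_v,n]}\subseteq B_{(\log n)^\kappa}(v),\ \tau>n\}$. Conditioning on $\ft_v$ and on the location $S_{\ft_v}$ and applying the Markov property at time $\ft_v$, we get
\begin{equation*}
\Pr(\locE_v) = \Ex\Big[\11_{\ft_v \le n,\ \tau_{\sd_{\lambda(v)}\cup\ob}>\ft_v}\ \Pr^{S_{\ft_v}}\big(S_{[0,n-\ft_v]}\subseteq B_{(\log n)^\kappa}(v),\ \tau>n-\ft_v\big)\Big].
\end{equation*}
So the whole problem reduces to showing that the inner survival probability, starting from a point in $B_{(\log n)^{\kappa/2}}(v)$ and required to stay inside the pocket island $B_{(\log n)^\kappa}(v)$ for the remaining $m := n - \ft_v$ steps, equals $\lambda(v)^m \cdot e^{O((\log n)^C)}$ uniformly over starting points and over $m$ in the relevant range. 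Combined with the factor $\lambda(v)^{-\ft_v}$ that is hidden in matching this to $\varphi_\star(0,v;\lambda(v)) = -\log\Ex[\lambda(v)^{-\ft_v}\11_{n\wedge\tau_{\sd_{\lambda(v)}\cup\ob}\ge\ft_v}]$, writing $\lambda(v)^m = \lambda(v)^n \lambda(v)^{-\ft_v}$ gives exactly $n\log\lambda(v) - \varphi_\star(0,v;\lambda(v)) + O((\log n)^C)$.

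For the \textbf{lower bound} on the inner survival probability: since $B_{(\log n)^{\kappa/2}}(v)$ is deep inside $B_{(\log n)^\kappa}(v)$ and the relevant connected obstacle-free component $\mathcal C_R(v)$ (on which $\lambda(v)$ is the principal eigenvalue) sits inside $B_R(v)\subseteq B_{(\log n)^\kappa}(v)$ (here we use $\kappa$ large relative to the exponent in $R = k_n(\log n)^2$), we can restrict the walk to stay in $\mathcal C_R(v)$; then $\Pr^x(S_{[0,m]}\subseteq \mathcal C_R(v)) = ((P|_{\mathcal C_R(v)})^m \11)(x) \ge \langle x, \psi\rangle \lambda(v)^m \langle \psi,\11\rangle$ where $\psi$ is the (positive) principal eigenfunction; the overlap factors $\langle x,\psi\rangle$ and $\langle\psi,\11\rangle$ are bounded below by $e^{-O((\log n)^C)}$ because $\psi$ is not too small at any point of a set of size $\mathrm{poly}(\log n)$ — this uses standard spectral/Harnack-type estimates for SRW killed on exiting a connected set of polylogarithmic diameter, and is the kind of bound already used implicitly in \cite{DX17}. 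For the \textbf{upper bound}: staying in $B_{(\log n)^\kappa}(v)$ for $m$ steps is bounded by $\lambda_{B_{(\log n)^\kappa}(v)}^m$ times a polynomial-in-$(\log n)$ prefactor; the subtle point is that $\lambda_{B_{(\log n)^\kappa}(v)}$ could a priori be much larger than $\lambda(v)$ (the ball may contain other good obstacle-free regions). This is precisely where the constraint $\tau_{\sd_{\lambda(v)}}>\ft_v$ in the \emph{outer} expectation does not directly help; instead one invokes the defining property of the skeletal set $\locV$ — namely $\lambda(v) = \max\{\lambda(u): u\in B_{3R}(v)\}$ from \eqref{eq:locV-def} — together with a covering/partition of $B_{(\log n)^\kappa}(v)$ into $R$-scale boxes and the bound \eqref{eq:SurvivalProb-SD} (or the analogous Lemma 4.4 of \cite{DX17}) to control, on each box, the probability of long survival by $\lambda(v)^t$ up to polylog corrections, then patch these together via a union bound over the polylogarithmically many return segments.

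The \textbf{main obstacle} I expect is the upper bound: one must show that being confined to the large island $B_{(\log n)^\kappa}(v)$ costs (essentially) $\lambda(v)^m$ and not merely $\lambda_{B_{(\log n)^\kappa}(v)}^m$, i.e. that no better sub-region hides inside the pocket island. This requires carefully chaining local survival estimates — partitioning time into blocks of length $(\log n)^{C_1}$, applying \eqref{eq:SurvivalProb-SD}-type bounds block by block with the walk's position restricted to the island, and summing the resulting error terms so that the total overhead stays $e^{O((\log n)^C)}$ rather than growing with $m \sim n$. The bookkeeping of which good-region exponent controls each block — and ruling out that the walk parks itself in a region with eigenvalue strictly between $\lambda_*$ and $\lambda(v)$ — is the delicate part, and is exactly why the auxiliary set $\sd_\lambda$ and the maximality property of $\locV$ were set up in Section~\ref{sec:prelim}. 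Everything else (the Markov decomposition, the eigenfunction overlap lower bound, the algebra converting $\lambda(v)^m$ to $n\log\lambda(v) - \varphi_\star$) is routine.
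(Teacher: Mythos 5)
Your overall strategy---the Markov decomposition at $\ft_v$, reducing the lemma to a two-sided estimate $\lambda(v)^m e^{\pm O((\log n)^C)}$ for the inner survival probability, and using the maximality property of $\locV$ from \eqref{eq:locV-def} together with Lemma~4.5 of \cite{DX17} to handle the upper bound (ruling out better sub-islands inside $B_{(\log n)^\kappa}(v)$)---is exactly what the paper does. The upper-bound part, including the subtle point you correctly identify, is on target.

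The lower bound, however, has a concrete gap. You propose to bound the confinement probability from below by $\Pr^{S_{\ft_v}}(S_{[0,m]}\subseteq \mathcal C_R(v))$ and then apply a spectral decomposition $\langle \delta_x,\psi\rangle\lambda(v)^m\langle\psi,\11\rangle$. But the starting point is $x = S_{\ft_v}\in \partial_i B_{(\log n)^{\kappa/2}}(v)$, at distance roughly $(\log n)^{\kappa/2}$ from $v$, whereas $\mathcal C_R(v)\subseteq B_R(v)$ with $R = k_n(\log n)^2 \asymp (\log n)^{6-2/d}$. Since $\kappa \geq 10C_2 + C_1 + 10$ with $C_2 = 6d^2$, we have $(\log n)^{\kappa/2} \gg R$, so generically $S_{\ft_v}\notin\mathcal C_R(v)$. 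Then $\Pr^{S_{\ft_v}}(S_{[0,m]}\subseteq\mathcal C_R(v)) = 0$ and $\psi(S_{\ft_v}) = 0$, so the inner product vanishes and your bound gives nothing. The paper avoids this by first noting that $v,S_{\ft_v}\in\calC(0)$ with $|S_{\ft_v}-v|\leq (\log n)^{\kappa/2}$, so by the chemical-distance estimate of \cite{AP96} (or \cite[(3.8)]{DX17}) the point $S_{\ft_v}$ and any point of $\calC_R(v)$ are joined by an open path of length $\leq C(\log n)^{\kappa/2}$ that stays inside $B_{(\log n)^\kappa}(v)$; forcing the walk to follow that path costs only $e^{-(\log n)^C}$, after which the simpler bound $\max_x \Pr^x(\xi_A > t)\geq \lambda_A^t$ (\cite[Lemma 3.2]{DX17}) gives the $\lambda(v)^m$ factor, with no need for a Harnack-type eigenfunction estimate. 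You should add the connectivity step; once you do, your argument closes.
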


\begin{proposition}
\label{phi-and-g}
There exists a deterministic nonnegative function $g$ such that for some constants $\cul,\Cbar$ only depending on $(d,p)$ and all $\lambda \in [\lambda_*,1]$, 
\begin{equation}
\label{eq:phi-g-g-g}
	g( \, \cdot \,  ;\lambda)	\text{ is convex, homogeneous of order 1 and }\cul|x|\leq g(x;\lambda) \leq  \Cbar|x|\,.
\end{equation}
Also, conditioned on $0 \in \calC(\infty)$, with $\P$-probability tending to one, for all $v \in \locV$
\begin{equation}
	\varphi_\star(0,v;\lambda(v)) = g(v;\lambda(v)) + O(n^{5/6})\,.
\end{equation}
\end{proposition}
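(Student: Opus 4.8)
\emph{Strategy.} The plan is to exhibit $\varphi_\star(0,\cdot\,;\lambda)$ as an (approximately) subadditive, well concentrated functional of the i.i.d.\ obstacle field, to take $g(\cdot\,;\lambda)$ to be the deterministic ``shape'' attached to it, and then to control $\varphi_\star-g$ by combining a bounded-differences concentration estimate with the convergence-rate machinery of \cite{Alexander97}. It is convenient to first pass to the uncapped functional $\psi_v(\lambda):=-\log\Ex[\lambda^{-\ft_v}\11_{\tau_{\sd_\lambda\cup\ob}\ge\ft_v}]$: one has $\psi_v(\lambda)\le\varphi_\star(0,v;\lambda)$ always, and a truncation argument (iterating \eqref{eq:SurvivalProb-SD} over successive blocks of $(\log n)^{C_1}$ steps, so that exceeding $n$ steps before reaching $B_{(\log n)^{\kappa/2}}(v)$ costs a factor below $1$ per block, and handled as the time-cap issues in \cite{DX17}) gives $\varphi_\star(0,v;\lambda)=\psi_v(\lambda)+o(n^{5/6})$ uniformly for $|v|\le\cucc n(\log n)^{-2/d}$ and $\lambda\ge\lambda_*$, so it suffices to work with $\psi_v$. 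Using the strong Markov property at $\ft_u$, the factorization $\lambda^{-\ft_{u+v}}=\lambda^{-\ft_u}\lambda^{-(\ft_{u+v}-\ft_u)}$, and the fact that neither $\sd_\lambda$ nor $\ob$ depends on the endpoints, one gets
\[
\psi_{u+v}(\lambda)\ \le\ \psi_u(\lambda)+\max_{w\in B_{(\log n)^{\kappa/2}}(u)}\psi^w_{u+v}(\lambda)+O\big((\log n)^{C}\big),
\]
where $\psi^w_{u+v}$ is the analogue started at $w$ and the error absorbs the displacement of the start by $O((\log n)^{\kappa/2})$ and the chance that the walk hits $B_{(\log n)^{\kappa/2}}(u)$ before $B_{(\log n)^{\kappa/2}}(u+v)$. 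Since the obstacle field is i.i.d.\ and the law of $\sd_\lambda$ is translation covariant, $\E[\psi^w_{u+v}(\lambda)]=\E[\psi_v(\lambda)]+O((\log n)^{C})$, so $a(x;\lambda):=\E[\psi_x(\lambda)]$ is approximately subadditive and Fekete's lemma yields $g(x;\lambda):=\lim_{m\to\infty}m^{-1}a(mx;\lambda)$, which extends to a convex, degree-$1$ homogeneous function on $\R^d$ with $g(x;\lambda)\le a(x;\lambda)+O((\log n)^{C})$.

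\emph{Linear bounds.} The upper bound $g(x;\lambda)\le\Cbar|x|$ comes from a ballistic ``corridor'' strategy: conditioned on $0\in\calC(\infty)$ there is, with $\P$-probability $1-o(1)$, an open path of length $O(|x|)$ joining $0$ to a neighbour of $x$ (the chemical distance in the supercritical cluster is linear), and restricting the walk to follow it gives $\Ex[\lambda^{-\ft_x}\11_{\tau_\ob\ge\ft_x}]\ge(2d\lambda)^{-O(|x|)}\ge e^{-\Cbar|x|}$ (the reward $\lambda^{-\ft_x}\ge1$ only helps), while the path meets $\sd_\lambda$ with negligible probability. For the lower bound $g(x;\lambda)\ge\cul|x|$ one uses that the walk is forbidden from $\sd_\lambda$: reaching $B_{(\log n)^{\kappa/2}}(x)$ forces at least $|x|-O((\log n)^{\kappa/2})$ steps, each contributing (after the reward) a multiplicative factor bounded away from $1$; the polylogarithmic factor in \eqref{eq:SurvivalProb-SD} is sharpened to a genuine constant via a renormalization argument exploiting that regions whose local survival is comparable to the $\sd_\lambda$-threshold have density $n^{-\Theta(1)}$, so a $\sd_\lambda$-avoiding path must cross a positive fraction of ``generic'' unit-scale blocks.

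\emph{Concentration and the convergence rate.} Revealing the obstacle indicators block by block along an optimal corridor, resampling the field in a single $O(\log n)$-block changes $\psi_v(\lambda)$ by $O((\log n)^{C})$ (a blocked corridor is rerouted locally, using supercriticality), and only $O(|v|/\log n)$ blocks are relevant; an Azuma/Talagrand argument then gives, for $t\le|v|$,
\[
\P\big(|\psi_v(\lambda)-a(v;\lambda)|\ge t\big)\ \le\ \exp\big(-c\,t^{2}/(|v|(\log n)^{C})\big),
\]
which at $t=n^{5/6}$ and $|v|\le\cucc n(\log n)^{-2/d}$ is at most $e^{-n^{2/3}}$. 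It remains to prove the reverse of the Step-1 inequality, $a(v;\lambda)\le g(v;\lambda)+O(|v|^{5/6})$, i.e.\ that $a$ is not much larger than its convexification; this is exactly the setting of \cite{Alexander97}: the approximate subadditivity above, the shape bounds, and the concentration just stated let one verify the convex-hull-approximation hypotheses, whence $a(v;\lambda)-g(v;\lambda)=O(\sqrt{|v|}\log|v|)=o(n^{5/6})$. \textbf{This verification is the main obstacle.} Unlike the first-passage-percolation instances in \cite{Alexander97}, $\psi_v$ is the negative logarithm of an average over many (not self-avoiding) paths rather than a single passage time, it carries the time reward $\lambda^{-\ft_v}$, and it forbids the random set $\sd_\lambda$; each feature must be shown not to break the subadditivity or concentration inputs or the hull construction, and the bookkeeping of travel time together with the forbidden region is where the work lies.

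\emph{Uniformity in $\lambda$ and conclusion.} The previous two steps give, for each fixed $\lambda\in[\lambda_*,1]$, that $|\psi_v(\lambda)-g(v;\lambda)|=O(n^{5/6})$ simultaneously for all $v\in B_{\cucc n(\log n)^{-2/d}}(0)$ off an event of $\P$-probability $\le n^{d}e^{-n^{2/3}}$. To make this simultaneous in $\lambda$, note $a(\cdot;\lambda)$ and $g(\cdot;\lambda)$ are $O(|v|)$-Lipschitz in $\lambda$ (differentiate the reward factor; the $\lambda$-tilted law of $\ft_v$ has mean $O(|v|)$), while $\psi_v(\cdot)$ is $O(|v|)$-Lipschitz away from the at most $(\log n)^{C}$ downward jumps where $\sd_\lambda$ loses a point (by \eqref{eq:Prob-Dlambda} that is all there are in the relevant window), and each such jump has size $O((\log n)^{C})$ since avoiding one isolated $\sd_\lambda$-site or one pocket island only forces a detour of length $O((\log n)^{\kappa})$. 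Hence a deterministic net of spacing $n^{-1/6}$ in $\lambda$ suffices. Combining with the reduction $\varphi_\star(0,v;\lambda)=\psi_v(\lambda)+o(n^{5/6})$, the inclusion $\locV\subseteq B_{\cucc n(\log n)^{-2/d}}(0)$ from Lemma~\ref{locV-def}, and the fact that $\lambda(v)\in[\lambda_*,1]$ on that event, we obtain $\varphi_\star(0,v;\lambda(v))=g(v;\lambda(v))+O(n^{5/6})$ for all $v\in\locV$ with $\P$-probability tending to one.
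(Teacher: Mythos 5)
Your high-level outline matches the paper's (approximate subadditivity, a Fekete/Alexander shape function, a concentration estimate, then uniformity in $\lambda$), and you rightly flag the convex-hull-approximation verification as the crux. But there is a genuine gap in the concentration step. You reveal the environment ``block by block along an optimal corridor'' and assert that ``only $O(|v|/\log n)$ blocks are relevant,'' then invoke Azuma/Talagrand. This is the FPP geodesic-certification argument, and it does not transfer: $\psi_v$ is $-\log$ of a Green's-function-type sum over \emph{all} paths, not the passage time of a single shortest path, so it depends non-trivially on the obstacle field in a $d$-dimensional region of volume $\asymp |v|^d$ (and even a neighborhood of size $|v|(\log|v|)^{C}$ around $0$, which is why the paper introduces $\varphi^\circ$). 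Naively counting $O(|v|^d)$ influential sites each with influence $(\log n)^{C}$ in a bounded-differences bound gives variance $\gg |v|$, which is useless. The paper instead proves $\mathrm{Var}(\bar\varphi_*)\le (\log|x-y|)^{C}|x-y|$ by Efron--Stein, bounding the \emph{sum} of squared one-site influences by the reward-tilted expected hitting time $\Ex^u[\tau_v\lambda^{-\tau_v}]/\Ex^u[\lambda^{-\tau_v}]\lesssim |x-y|\,\mathrm{polylog}$ (Lemmas~\ref{lem-concentration-bound-w}, \ref{lem-concentration-3-2}, \ref{diamonddiff}); that reduction is a real idea and your sketch has no substitute for it.

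Two further points. First, the CHAP verification, which you explicitly call ``the main obstacle,'' is left unresolved; the paper's resolution (Lemmas~\ref{vpath}--\ref{concentration-path}, in particular the greedy-lattice-animal estimate) is a substantive technical input, and without it the argument is incomplete. Second, your linear lower bound reasoning is off target: the factor bounded away from $1$ per step comes from the obstacle density itself ($\Pr^u(\tau>\tau_v)\le (c')^{|u-v|}$), not from ``crossing generic $\sd_\lambda$-adjacent blocks,'' and the real role of the $\sd_\lambda$-avoidance is to make the reward $\lambda^{-\ft_v}$ controllable, which requires the ballistic-time estimate $\ft_v\lesssim |v|$ on the relevant event (the paper's Lemma~\ref{Linear}, an adaptation of the path-localization machinery of \cite{DX17}). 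Your uniformity-in-$\lambda$ device (a net of spacing $n^{-1/6}$ plus Lipschitz/jump control) is a legitimate alternative to the paper's independence argument (that $\sigma(G_0,\varphi_\star(0,v;\cdot))$ is independent of $\sigma(\lambda(v))$), but it needs a careful accounting of jump sizes, whereas the independence trick sidesteps this entirely.
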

\begin{remark}
For our purpose, we are interested in the case when $\lambda < 1$ since $\lambda$ will be the principal eigenvalue of a transition matrix of random walk with killing. Similar results has been proved when $\lambda \geq 1$ in \cite{Sznitman96b}, where one considers Brownian motion with Poissonian obstacles and the corresponding $g(\, \cdot \, ;\lambda)$ will be proportional to Euclidean distance. We note that our case when $\lambda<1$ is substantially more challenging to analyze (since for instance, we have to forbid the random walk to enter $\sd_\lambda$ which incurs a number of complications in the proof).
\end{remark}


\begin{lemma}
\label{topev-2}
With $\P$-probability tending to one, for any $u,v \in \locV$,
	$$ | (n\log \lambda(v) - g(v;\lambda(v))) - (n\log \lambda(u) - g(u;\lambda(u)))| \geq n (\log n )^{-C}\,.$$
\end{lemma}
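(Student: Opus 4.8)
The plan is to show that the random variable $F(v):=n\log\lambda(v)-g(v;\lambda(v))$ cannot take nearly-equal values at two distinct sites of $\locV$, with $\P$-probability tending to one. The key point is that $F$ has two independent sources of fluctuation — the principal eigenvalue $\lambda(v)$ and the geometric location $v$ through the term $g(v;\lambda(v))$ — and either one, by itself, spreads the values of $F$ over a range much larger than the target gap $n(\log n)^{-C}$, on a scale finer than $n(\log n)^{-C}$. First I would fix a large constant $C$ and a mesh of width $\delta_n=n(\log n)^{-C}$, and bound the probability that two specified sites $u,v\in B_{\cucc n(\log n)^{-2/d}}(0)$ both lie in $\locV$ and satisfy $|F(u)-F(v)|<\delta_n$; then a union bound over the $O((n(\log n)^{-2/d})^{2d})=n^{2d+o(1)}$ pairs finishes the argument, provided the per-pair probability is $o(n^{-2d-\epsilon})$. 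Recall from \eqref{eq:lambda-*-property} that $\P(v\in\D_*)\le k_n^{8d}n^{-d}$, so $\P(u,v\in\locV)\le n^{-2d+o(1)}$ already; what remains is to gain an extra polylogarithmic factor from the event $\{|F(u)-F(v)|<\delta_n\}$, which is where the fluctuation estimate enters.

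The main step is a conditional smoothness/anti-concentration estimate for $\lambda(v)$. The idea: conditioned on the environment outside $B_R(v)$ (in particular conditioned on the event $\{v\in\D_*\}$ being "witnessed" by some cluster structure), the eigenvalue $\lambda(v)=\lambda_{\calC_R(v)}$ still has genuine randomness coming from the obstacle configuration inside $B_R(v)$, and small local modifications move $\lambda(v)$ by amounts on the scale $\sim (\log n)^{-2/d}/R^{2}\cdot(\text{something})$ — in any case by amounts $\gg \delta_n/n=(\log n)^{-C}$ for $C$ large, since the relevant eigenvalue increments are only polylogarithmically small while $\delta_n/n$ is polylogarithmically small \emph{of any prescribed order}. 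Concretely I would argue that for each fixed $u$ and each fixed value of the environment near $u$, the conditional law of $\lambda(v)$ given $\{u,v\in\locV\}$ puts mass at most $(\log n)^{-C'}$ on any interval of length $\delta_n/n$ (here one uses that adding or removing an obstacle near the boundary of $\calC_R(v)$, or choosing among the $\gtrsim\exp(c\rad^{d-1})$ environments compatible with $v\in\D_*$, as in the proof of Lemma~\ref{lambda*}, produces eigenvalue perturbations that are spread out on a scale finer than the target), and hence $\P(|F(u)-F(v)|<\delta_n\mid u,v\in\locV)\le n\cdot\P(|\lambda(u)-\lambda(v)|\le \delta_n/n + \text{geometric correction}\mid\cdots)\le (\log n)^{-C'}$. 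Combined with $\P(u,v\in\locV)\le n^{-2d+o(1)}$ and the union bound over $n^{2d+o(1)}$ pairs, taking $C'$ (hence $C$) large enough gives the claim. One should also handle the trivial case $u,v$ in the same ball $B_{3R}(\cdot)$: by the maximality clause in \eqref{eq:locV-def}, distinct elements of $\locV$ have disjoint balls $B_{nk_n^{-100d}}(\cdot)$, so $|u-v|\ge nk_n^{-100d}$, which means the geometric term $g(u;\lambda)-g(v;\lambda)$ alone already has size $\gtrsim \cul\, nk_n^{-100d}\gg\delta_n$ unless the \emph{directions} $u/|u|$ and $v/|v|$ and radii nearly coincide — and in that regime one falls back on the eigenvalue fluctuation above.

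The hard part will be the conditional anti-concentration of $\lambda(v)$: one must show that conditioning on $v\in\locV$ (a rare, and somewhat rigid, event) does not accidentally pin $\lambda(v)$ to an interval of width $\delta_n/n$. I would address this by decomposing $\{v\in\D_*\}$ according to a "core" witness configuration in a small sub-box and then exhibiting many disjoint local modifications, each independent of the witness and each shifting $\lambda(v)$ by a distinct, well-separated amount; eigenvalue perturbation theory (first-order perturbation of $\lambda_{\calC_R(v)}$ under flipping a single site, using the principal eigenvector) gives quantitative control, and a simple pigeonhole over these $\gg(\log n)^{C'}$ alternatives yields the anti-concentration bound. A secondary technical nuisance is that $g(v;\lambda(v))$ depends on $\lambda(v)$ too, but since $g$ is Lipschitz in $\lambda$ on $[\lambda_*,1]$ (uniformly in $v/|v|$, from \eqref{eq:phi-g-g-g} and the construction of $g$) and $|v|\le \cucc n(\log n)^{-2/d}$, a change of $\lambda(v)$ by $\eta$ changes $g(v;\lambda(v))$ by at most $O(n(\log n)^{-2/d}\eta)$, which is of the same order as the change $n\eta$ in $n\log\lambda(v)$ up to a fixed constant factor; so the two contributions to $F$ do not cancel to higher than constant order, and the net derivative of $F$ in the perturbation parameter stays bounded below, preserving the anti-concentration. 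Assembling these pieces — rare-event bound from \eqref{eq:lambda-*-property}, conditional anti-concentration of $F$, union bound over pairs — completes the proof.
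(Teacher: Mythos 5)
Your union-bound framework (bound the per-pair probability and sum over $n^{2d+o(1)}$ pairs, using $\P(v\in\D_*)\le k_n^{8d}n^{-d}$ plus an extra polylogarithmic gain from anti-concentration of $F$) is structurally the same as the paper's. But the \emph{mechanism} you use to produce that gain is different from, and substantially weaker than, the paper's, and it has gaps that I don't think can be filled easily.

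The paper's argument does \emph{not} use any anti-concentration of $\lambda(v)$. It sums over the (finite) set $A$ of possible values $a_1,a_2$ of $\log\lambda(v),\log\lambda(u)$, and for each fixed pair $(a_1,a_2)$ it exploits the \emph{spatial} spread of the deterministic function $v\mapsto g(v;e^{a_1})$: since $g(\cdot;e^{a_1})$ is convex, homogeneous of degree one, and pinched between $\cul|\cdot|$ and $\Cbar|\cdot|$, the set $\{v\in B_n(0):|g(v;e^{a_1})-x|\le n(\log n)^{-C'}\}$ is a thin convex annulus of Lebesgue measure $\le Cn^d(\log n)^{-C'}$. Combining this with independence of $\lambda(u),\lambda(v)$ for $|u-v|\ge(\log n)^6$ closes the union bound. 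In other words, the anti-concentration comes entirely from varying $v$ with $\lambda$ frozen, and the only facts about $g$ needed are \eqref{eq:phi-g-g-g}.

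Your proposal instead rests on two claims that the paper never establishes and that look genuinely hard. (i) \emph{Conditional anti-concentration of $\lambda(v)$ given $v\in\locV$.} You sketch a perturbation-theoretic pigeonhole over local modifications, but $\{v\in\locV\}$ (or $\{v\in\D_*\}$) is a rare and fairly rigid event, and nothing in the paper controls the conditional law of $\lambda_{\calC_R(v)}$ at the fine scale $(\log n)^{-C}$; this would be a new and nontrivial lemma, not a routine step. (ii) \emph{Lipschitz dependence of $g(v;\lambda)$ in $\lambda$}, uniformly in $v/|v|$, which you invoke to argue that the two terms of $F$ do not cancel as $\lambda(v)$ is perturbed. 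Proposition~\ref{phi-and-g} gives convexity, homogeneity, and the two-sided bound $\cul|x|\le g(x;\lambda)\le\Cbar|x|$, but says nothing about regularity of $g$ as a function of $\lambda$; without it your ``net derivative bounded below'' step is unjustified. Finally, your fall-back step is incorrect as stated: from $|u-v|\ge nk_n^{-100d}$ and $\cul|x|\le g(x;\lambda)\le\Cbar|x|$ you cannot conclude $|g(u;\lambda)-g(v;\lambda)|\gtrsim\cul|u-v|$, because $g$ is a homogeneous gauge with large level sets and can take equal values at widely separated points — this is exactly why the paper bounds the measure of a \emph{level-set annulus} of $g$ rather than trying to get a pointwise lower bound on $|g(u)-g(v)|$. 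So the proposal, as written, does not prove the lemma, and the paper's route avoids precisely the obstacles your route runs into.
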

Let $v_*$ be the maximizer of the following variational problem
\begin{equation}
\label{eq:variational}
  \max_{v \in \locV} n\log \lambda(v) - g(v;\lambda(v))\,.
\end{equation}
Combining preceding three results, we will show in \emph{Proof of Theorem \ref{onecity}}  that conditioned on survival, with $\P$-probability tending to one the random walk travels  to the pocket island around  $v_*$ and stays there afterwards.
We will call the pocket island around $v_*$ the \emph{optimal pocket island}.

\begin{remark}
By Theorem~\ref{onecity}, for a typical environment, with $\Pr$-probability tending to 1 the target island as described in Section~\ref{sec:prelim} coincides with the optimal pocket island.
\end{remark}

\subsection{Proof of Lemmas \ref{ev-phi}, \ref{topev-2} and Theorem \ref{onecity}}

In this subsection, we prove Lemmas \ref{ev-phi}, \ref{topev-2} and then prove Theorem \ref{onecity} by combining  Lemmas \ref{ev-phi}, \ref{topev-2} and Proposition~\ref{phi-and-g}. The proof of Proposition~\ref{phi-and-g} is postponed to Section~\ref{sec:prop3.3} and occupies the entire section.

\begin{proof}[Proof of Lemma \ref{ev-phi}]
	By strong Markov Property, we get that 
\begin{equation}
\label{eq:evstep1-1}
	\Pr(\locE_v)= \Ex \left[ \11_{\tau_{\sd_{\lambda(v) }\cup \ob}>\ft_v}\Pr^{S_{\ft_v}}(\xi_{B_{(\log n)^{\kappa}}(v) \setminus \ob} > n - \ft_v)\right]\,.
\end{equation}
Note that for $m \in (0,n)$, by \eqref{eq:locV-def} and \cite[Lemma 4.5]{DX17},  
\begin{align*}
	\Pr^{S_{\ft_v}}(\xi_{B_{(\log n)^{\kappa}}(v) \setminus \ob} > m) \leq \Pr^{S_{\ft_v}}(\lambda(x) \leq \lambda(v) \text{ for } x \in S_{[0,m]},\tau > m) \leq(\log n)^{C} \lambda(v)^{m}\,.
\end{align*}
At the same time, since $v, S_{\ft_v} \in \calC(0)$ and $|S_{\ft_v} - v| \leq (\log n)^{\kappa/2}$, by \cite[Theorem 1.1]{AP96} (or \cite[(3.8)]{DX17}) we see that any point in $\calC_R(v)$ (defined in Section \ref{sec:prelim}) and $S_{\ft_v}$ can be connected by an open path of length at most $C(\log n)^{\kappa/2}$ (Thus the open path is also inside $B_{(\log n)^\kappa}(v)$). Combined with $\max_x\Pr^{x}(\xi_{A} > t) \geq \lambda_{A}^t$ (see \cite[Lemma 3.2]{DX17}), it gives that for $m \in (0,n)$
\begin{equation}
\label{eq:kappa-connect-v}
	\Pr^{S_{\ft_v}}(\xi_{B_{(\log n)^{\kappa}}(v) \setminus \ob}  > m) \geq e^{-(\log n)^{C}} \lambda(v)^{m}\,.
\end{equation}
Combining preceding three displays completes the proof of the lemma.
\end{proof}

\label{section:one city proof}

\begin{proof}[Proof of Lemma \ref{topev-2}]It suffices to prove the following:
with $\P$-probability tending to one, for any $u,v \in \D_{*}$ such that $|u - v| \geq (\log n)^6$, 
	$$ | (n\log \lambda(v) - g(v;\lambda(v))) - (n\log \lambda(u) - g(u;\lambda(u)))| \geq n (\log n )^{-C}\,.$$
Now we verify this statement. Let $A$ be the set of all the possible values for the random variable $\log \lambda(v)$ that are greater than or equal to $ \log \lambda_*$ and $C'>0$ be a large constant to be selected.
We have
\begin{align*}
	\sum_{\stackrel{u,v \in B_n(0)}{|u-v| \geq (\log n)^4}}&\P( | (n\log \lambda(v) + g(v;\lambda(v))) - (n\log \lambda(u) + g(u;\lambda(u)))| \leq n (\log n )^{-C'}, u,v \in \D_{*}) \\&= \sum_{u,v \in B_n(0),|u-v| \geq (\log n)^4}\sum_{a_1,a_2 \in A}\P(\log \lambda(v) = a_1, \log \lambda(u) = a_2) I_{a_1, a_2}\,,
\end{align*}
where $I_{a_1, a_2} = \11_{| (na_1 + g(v;e^{a_1})) - (na_2 + g(u;e^{a_2}))| \leq n (\log n )^{-C'}}$.
Noting that \eqref{eq:phi-g-g-g} holds for all $\lambda \in [\lambda_*,1]$, we let
\begin{equation}
	\mathcal H = \{ h: \R^d \to \R^+:\cul|v| \leq h(v) \leq \Cbar|v| , h \text{ convex, homogeneous of degree }1 \}.
\end{equation}
Then for all $h \in \mathcal H,x>0$, $h^{-1}(x)$ is a convex set in $B_{x/\cul}(0)$ and
\begin{align*}
	&\quad \sup_{a_1 \in A}\sup_{x \in \R}|\{v \in B_n(0): |g(v;e^{a_1}) -x | \leq n (\log n )^{-C'}\}|\\
	&\leq \sup_{h \in \mathcal H}\sup_{x \in \R}|\{v \in B_n(0): |h(v) -x | \leq n (\log n )^{-C'}\}|\\
  & =  \sup_{h \in \mathcal H}\sup_{x \in \R} | \{y \in \R^d : \exists \ v \in B_n(0) \ \mbox{ such that } \  |h(v) -x| \leq n (\log n )^{-C'}, |v-y|_\infty \leq 1/2 \} | \,,
   \end{align*}
  where in the last expression the outmost $|\cdot|$ stands for Lebesgue measure instead of cardinality. Note that $ |v-y|_\infty \leq 1/2$ implies $|h(v) - h(y)| \leq \Cbar d^{1/2}$ and $h(v) \leq \Cbar n$ for $v \in B_n(0).$ We thus obtain that
    \begin{align*}
  &\quad \sup_{a_1 \in A}\sup_{x \in \R}|\{v \in B_n(0): |g(v;e^{a_1}) -x | \leq n (\log n )^{-C'}\}|\\
	& \leq \sup_{h \in \mathcal H}\sup_{x \leq 2\Cbar n}\left|h^{-1}([x - n (\log n )^{-C'}-\Cbar d^{1/2},x + n (\log n )^{-C'} + \Cbar d^{1/2}]) \right|\\
	&\leq C \cul^{-1}(2\Cbar n)^{d-1} \cdot   (\cul^{-1}n (\log n )^{-C'} + \cul^{-1}2\Cbar d^{1/2})\leq Cn^{d}(\log n)^{-C'}\,,
\end{align*}
where the second inequality follows from the fact that the surface area of any convex set contained in a ball is less than the surface area of that ball (see, e.g., \cite[Page 48--50]{GW93}).
Since $\lambda(v)$ and $\lambda(u)$ are independent if $|u - v| \geq (\log n)^6$, we conclude that
\begin{align*}
\sum_{\stackrel{u,v \in B_n(0)}{|u-v| \geq (\log n)^4}}\sum_{a_1,a_2 \in A}&\P(\log \lambda(v) = a_1, \log \lambda(u) = a_2)\cdot I_{a_1, a_2}
	\leq  2^dn^{2d}(\log n)^{-C'}(\P(v \in \D_{*}))^2\,.
\end{align*}
Now, the results follows from \eqref{eq:lambda-*-property} and choosing $C'$ such that $(\log n)^{C'} \geq k_n^{20d}$. 
\end{proof}

\begin{proof}[Proof of Theorem \ref{onecity}]
Recall that $v_*$ is the maximizer of the function $ v \mapsto n\log \lambda(v) - g(v;\lambda(v))$ (c.f. \eqref{eq:variational}). Then combining Lemmas \ref{ev-phi}, \ref{topev-2} and Proposition \ref{phi-and-g}, we get that with $\P$-probability tending to one
$$\log \Pr(\locE_{v_*}) - \log\Pr(\locE_u) \geq 2^{-1}n (\log n )^{-C}\quad \forall u \in \locV \setminus \{{v_*}\}\,.$$
Hence, on the preceding event $ \Pr(\locE_{v_*})/\sum_{u \in \locV} \Pr(\locE_u) \geq 1 - e^{-n(\log n)^{-C}}$. Combined with \eqref{eq:poly-loc}, it follows that
$\Pr(\locE_{v_*} \mid \tau >n) \geq 1 - e^{-(\log n)^c}$.
Define
\begin{equation}
\label{eq:def-fregion}
	\fregion \text{ :=the connected component in $B_{(\log n)^\kappa}(v_*)$ that contains $v_*$}.
\end{equation}
And recall that $T = \ft_{v_*}$. On event $E_{v_*}$, the random walk stays in $B_{(\log n)^\kappa}(v_*)$ during $[T,n]$. The discussion before \eqref{eq:kappa-connect-v} yields $S_T \in \fregion$. Hence
   \begin{equation}
   \label{eq:INU}
\Pr\Big(T \leq C |v_*|, \{S_t: t \in [T,n]\} \subseteq \fregion \mid \tau > n\Big) \leq e^{-(\log n)^c}\,.\\
   \end{equation} 
This completes the proof of the theorem.
\end{proof}
\section{Approximation and concentration for $\varphi_\star$-function}
\label{sec:prop3.3}

This entire section is devoted to the proof of Proposition \ref{phi-and-g}. 
To this end, we first introduce a couple of definitions.
\begin{defn}
\label{def-green}
	For $\lambda>0,$ $A\subseteq \Z^d$ and $x,y \in \Z^d$,we define 
\begin{equation*}
  G_A(x,y;\lambda) = \sum_{n = 0}^\infty \lambda^{-n} \Pr^x(S_n = y,S_{[1,n-1]} \subseteq A)\,.
\end{equation*}
\end{defn}

Note that in the preceding definition, we have chosen $\lambda^{-n}$ as opposed to the more conventional $\lambda^n$ so that it will be consistent with the definition below.

\begin{defn}
\label{def-phi}
	We set
$r(x,y) = (\log|x-y|)^{2\kappa + 10d}$,$\cregion = (\sd_\lambda \cup \ob)^c$ and define log-weighted Green's functions (LWGF)
\begin{equation}
\label{eq:phi-def}
\begin{split}
    &\varphi(x,y;\lambda) = -\log \left(\Ex^{x} \left[ \lambda^{-\tau_{y}} \11_{\tau_{\sd_{\lambda} \cup \ob}>\tau_{y}} \right]  \right) \\
  &\varphi_*(x,y;\lambda) = \min_{x' \in B_{r(x,y)}(x)}\min_{y' \in B_{r(x,y)}(y)} \varphi(x',y';\lambda)\,.
\end{split}
\end{equation}
\end{defn}
Note that the name of log-weighted Green's functions came from the factor of $\lambda^{-\tau_y}$ in the preceding definition. Also,
\begin{equation}
\label{eq:phi=logG}
  \varphi(x,y;\lambda) = - \log G_{\cregion\setminus \{y\}}(x,y;\lambda) = - \log \big(G_{\cregion}(x,y;\lambda)/ G_{\cregion}(y,y;\lambda)\big)\,.
\end{equation}
The next result justifies the approximation of $\varphi_\star$ by the log-weighted Green's functions, whose proof can be found in Section~\ref{sec:Approximations of LWGF}.
\begin{lemma}
\label{ind-appr}
For all $v \in \Z^d$ with $|v| \in (n^{2/3},\cucc n(\log n)^{-2/d})$,
  \begin{equation*}
  \P\Big(\max_{\lambda \in [\lambda_*,1]}|\varphi_*(0,v;\lambda)- \varphi_\star(0,v;\lambda)  | \leq (\log n)^{C} \mid G_0\Big) \geq 1 - e^{-c(\log n)^2}\,.
\end{equation*}
where \begin{equation}
\label{eq:def-G-0}
	G_0 := \{\calC(0)\cap B^c_{n^{1/2}}(0) \not = \varnothing, \sd_{\lambda_*} \cap B_{n^{1/2}}(0) = \varnothing \}\,.
\end{equation}
\end{lemma}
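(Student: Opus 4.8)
\textbf{Proof proposal for Lemma~\ref{ind-appr}.}

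The plan is to work on the event $G_0$ and compare the two quantities $\varphi_\star(0,v;\lambda)$ and $\varphi_*(0,v;\lambda)$ by relating each of them to the log-weighted Green's function $\varphi(0,v;\lambda)$ up to polylogarithmic errors. Recall that $\varphi_\star(0,v;\lambda)=-\log\Ex[\lambda^{-\ft_v}\11_{n\wedge\tau_{\sd_\lambda\cup\ob}\ge\ft_v}]$, where $\ft_v=\tau_{B_{(\log n)^{\kappa/2}}(v)}$, while $\varphi(0,v;\lambda)=-\log\Ex[\lambda^{-\tau_v}\11_{\tau_{\sd_\lambda\cup\ob}>\tau_v}]$ hits the single point $v$ rather than a ball around it, and has no truncation at time $n$. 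First I would remove the time truncation at $n$ in the definition of $\varphi_\star$: since $\lambda\ge\lambda_*\ge 1-C(\log n)^{-2/d}$ (Lemma~\ref{lambda*}) and by \eqref{eq:SurvivalProb-SD} the walk killed on entering $\sd_\lambda\cup\ob$ survives $t$ steps with probability at most $(\log n)^C(1-(\log n)^{-100d})^t\lambda^t$, the contribution to $\Ex[\lambda^{-\ft_v}\11_{\tau_{\sd_\lambda\cup\ob}\ge\ft_v}]$ from the event $\{\ft_v>n\}$ is negligible when $|v|\le \cucc n(\log n)^{-2/d}$: the reward $\lambda^{-\ft_v}$ grows only exponentially at rate $(\log n)^{-2/d}$ per step whereas the survival penalty $(1-(\log n)^{-100d})^{\ft_v}$ together with the fact that reaching a ball around $v$ takes at least $|v|-(\log n)^{\kappa/2}$ steps shows the mass beyond $n$ is exponentially small. (Here one uses that $\varphi_\star$ is of order $|v|(\log n)^{O(1)}$, which already follows from the upper and lower Green's function bounds below, so that dropping an $e^{-n/(\log n)^C}$ term changes $-\log(\cdot)$ by at most an additive $o(1)$.)

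Having replaced $\ft_v$-with-truncation by $\ft_v$-without-truncation, the remaining task is to compare hitting a ball $B_{(\log n)^{\kappa/2}}(v)$ with hitting the center $v$. For the easy direction, $\tau_v\ge\ft_v$ always, and $\lambda^{-1}>1$, so $\Ex[\lambda^{-\tau_v}\11_{\tau_{\sd_\lambda\cup\ob}>\tau_v}]\le\Ex[\lambda^{-\ft_v}\11_{\tau_{\sd_\lambda\cup\ob}\ge\ft_v}]$ once we note that on $\{\tau_{\sd_\lambda\cup\ob}>\tau_v\}$ we also have $\tau_{\sd_\lambda\cup\ob}\ge\ft_v$; this gives $\varphi_\star(0,v;\lambda)\le\varphi(0,v;\lambda)$, hence $\varphi_\star(0,v;\lambda)\le\varphi_*(0,v;\lambda)$ after minimizing over the $B_{r(0,v)}$-balls around $0$ and $v$. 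For the reverse inequality I would use the strong Markov property at time $\ft_v$: starting from $S_{\ft_v}\in B_{(\log n)^{\kappa/2}}(v)$ with $S_{\ft_v}\in\cregion$, one needs a cheap way to walk from $S_{\ft_v}$ to $v$ without entering $\sd_\lambda\cup\ob$ and in a controlled number of steps. On $G_0$ the cluster $\calC(0)$ reaches outside $B_{n^{1/2}}(0)$, and by the chemical-distance estimates of \cite{AP96} (as used right before \eqref{eq:kappa-connect-v}) any two open points within $\calC_R(v)$ at Euclidean distance $\le (\log n)^{\kappa/2}$ are joined by an open path of length $O((\log n)^{\kappa/2})$ staying in $B_{(\log n)^\kappa}(v)$; since $\sd_{\lambda_*}\cap B_{n^{1/2}}(0)=\varnothing$ on $G_0$ — and one checks $\sd_\lambda\subseteq\sd_{\lambda_*}$ for $\lambda\ge\lambda_*$ — this path avoids $\sd_\lambda$ as well. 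Forcing the walk along this deterministic path costs a factor $e^{-(\log n)^C}$ in probability and contributes $\lambda^{-O((\log n)^{\kappa/2})}=e^{O((\log n)^{\kappa/2+1})}$ to the reward, both absorbed into the additive $(\log n)^C$ error. This yields $\varphi(0,v';\lambda)\le\varphi_\star(0,v;\lambda)+(\log n)^C$ for a suitable $v'\in B_{r(0,v)}(v)$ — in fact for $v'=v$ — and after the same reasoning starting at $0$ (using $0\in\calC(0)$ and a short detour to the actual starting point), we get $\varphi_*(0,v;\lambda)\le\varphi(0,v;\lambda)\le\varphi_\star(0,v;\lambda)+(\log n)^C$.

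The probability bound $1-e^{-c(\log n)^2}$ is exactly the probability that $G_0$-like local events hold uniformly — more precisely, that the chemical-distance property and the absence of $\sd$-sites hold not just in $B_{n^{1/2}}(0)$ but in the relevant neighborhoods of $v$ and along a corridor; I would obtain this from the exponential connectivity estimates of \cite{AP96} together with \eqref{eq:Prob-Dlambda} and a union bound over the $O((\log n)^{O(1)})$-many boxes one needs to control, with room to spare since each failure probability is $e^{-(\log n)^{c}}$ or smaller and we are conditioning on $G_0$. Uniformity over $\lambda\in[\lambda_*,1]$ is handled by a standard net argument: the map $\lambda\mapsto\varphi_\star(0,v;\lambda)$ (and likewise $\varphi_*$) is monotone and Lipschitz in $\lambda$ on $[\lambda_*,1]$ with a polynomial-in-$n$ Lipschitz constant (differentiating the exponential sums, using the survival bound \eqref{eq:SurvivalProb-SD} to control the number of relevant steps by $n(\log n)^C$), so it suffices to check the estimate on a polynomially-fine net of $\lambda$-values and pay only a polylogarithmic price. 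The main obstacle I anticipate is the reverse inequality: one must simultaneously (i) connect $S_{\ft_v}$ to $v$ by a short path avoiding $\sd_\lambda\cup\ob$, which requires knowing the good cluster structure near $v$ and not merely near $0$, and (ii) ensure that the time-truncation removal and the ``cheap corridor'' construction do not interact badly — the corridor adds $O((\log n)^{\kappa/2})$ steps, which is fine, but one must be careful that the endpoint of the walk after the corridor is genuinely $v$ and that no $\sd_\lambda$-vertex was crossed, which is where the containment $\sd_\lambda\subseteq\sd_{\lambda_*}$ and the hypothesis $\sd_{\lambda_*}\cap B_{n^{1/2}}(0)=\varnothing$ in $G_0$ are used crucially.
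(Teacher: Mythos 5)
Your ``easy direction'' contains a genuine error, in two places. First, the claimed pointwise inequality $\Ex[\lambda^{-\tau_v}\11_{\tau_{\sd_\lambda\cup\ob}>\tau_v}]\le\Ex[\lambda^{-\ft_v}\11_{\tau_{\sd_\lambda\cup\ob}\ge\ft_v}]$ is false: since $\lambda\in[\lambda_*,1]$, the map $t\mapsto\lambda^{-t}$ is \emph{increasing}, so $\tau_v\ge\ft_v$ gives $\lambda^{-\tau_v}\ge\lambda^{-\ft_v}$, which is the wrong direction, and combined with the (correct) inclusion of indicator events the product is not pointwise comparable. Second, even if one did have $\varphi_\star(0,v;\lambda)\le\varphi(0,v;\lambda)$, this gives nothing about $\varphi_\star$ vs.\ $\varphi_*$: by definition $\varphi_*(0,v;\lambda)$ is the \emph{minimum} of $\varphi(x',y';\lambda)$ over $x'\in B_{r(0,v)}(0)$, $y'\in B_{r(0,v)}(v)$, so $\varphi_*\le\varphi(0,v)$ as well, and two upper bounds by the same quantity do not compare $\varphi_\star$ and $\varphi_*$. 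The inequality $\varphi_\star\le\varphi_*+(\log n)^C$ is actually the \emph{hard} direction: it requires lower-bounding $\Ex[\lambda^{-\ft_v}\11_{n\wedge\tau_{\sd_\lambda\cup\ob}\ge\ft_v}]$ by exhibiting trajectories, and the paper does this by picking the actual minimizers $(x,y)$ with $\varphi_*(0,v)=\varphi(x,y)$ and routing the walk $0\to x\to y\to z_0$ for some $z_0\in B_{(\log n)^{\kappa/2}}(v)\cap\cregion_{\lambda_*}$, using the chemical-distance estimate in $\cregion$ (Lemma~\ref{chemdist2}) for the $0\to x$ leg and the tilde-white cutset $\bba(v,(\log n)^{C_2})$ from Section~\ref{section:Renormalization} for the $y\to z_0$ leg. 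Your proposal never introduces the minimizers $(x,y)$, and relies on $G_0$ for the geometry near $v$, but $G_0$ only constrains $B_{n^{1/2}}(0)$ whereas $|v|\ge n^{2/3}$ --- so a separate device (the cutset) is indispensable near $v$.

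The direction your ``reverse inequality'' aims for, $\varphi_*\le\varphi_\star+(\log n)^C$, is actually the short one in the paper, and is proved more robustly: decompose $\Ex[\lambda^{-\ft_v}\11_{\tau_{\sd_\lambda\cup\ob}\ge\ft_v}]$ on the entry point $z=S_{\ft_v}\in\partial_i B_{(\log n)^{\kappa/2}}(v)$, note $\ft_v=\tau_z$ on that event and $\varphi(0,z)\ge\varphi_*(0,v)$ because $z\in B_{r(0,v)}(v)$, and sum over the polylogarithmically many $z$. Your strong-Markov-at-$\ft_v$ version is in the same spirit, but it discards the trajectories with $\tau_{\sd_\lambda\cup\ob}=\ft_v$ (which contribute to $e^{-\varphi_\star}$ since the definition uses $\ge$ rather than $>$), and it needs the endpoint $v'$ to lie in $\cregion$, which is not automatic and forces one to argue via a nearby point provided by the cutset rather than $v$ itself. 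Finally, your net argument for uniformity in $\lambda$ is plausible in principle, but the paper avoids it entirely by only working on geometric events (existence of the cutsets, chemical-distance bounds, the Linear lemma's hitting-time control) that do not depend on $\lambda$ and then deducing the bound for all $\lambda\in[\lambda_*,1]$ at once; this is simpler and is what the $\sd_\lambda\subseteq\sd_{\lambda_*}$ containment is used for.
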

Here the event $G_0$ is used as an approximation of  $\{0 \in \calC(\infty)\}$ (see Lemma \ref{FiniteClusterSize}, \eqref{eq:Prob-Dlambda}), and in \emph{Proof of Proposition} \ref{phi-and-g} we will take advantage of the fact that $G_0$ only depends on the local environment of $0$.
We also wish to make a couple of remarks on Definition~\ref{def-phi}:
\begin{itemize}
\item We first approximate $\varphi_\star$ by $\varphi$ where we replace $\ft_y$ by $\tau_y$ --- this is useful since later we will apply sub-additive arguments and it would be convenient to get rid of reference to $n$ in the definition (recall that  $\ft_y = \tau_{B_{(\log n)^{\kappa/2}}(y)}$). In addition, we do not restrict $\tau_y <n$, which will be justified in Lemma \ref{Linear}.
\item We further approximate $\varphi$ by $\varphi_*$ by allowing to minimize over the starting and ending points in some local ball around $x$ and $y$, for the purpose of getting around the complication when $x$ and $y$ are disconnected by obstacles (which occurs with positive $\P$-probability).
\end{itemize}
In addition, we note that in later subsections we will introduce more approximations of LWGFs to facilitate our analysis.

The rest of this section is organized as follows: In Section~\ref{section:Renormalization} we apply renormalization techniques to control the chemical distances on the cluster $(\sd_\lambda \cup \ob)^c$ as well as some refined geometric properties (c.f.\ Lemma \ref{cutset}) for later use. In Section \ref{sec:Approximations of LWGF}, we justify the approximation of $\varphi_*$ by the LWGF and prove a few technical lemmas about LWGF.
In Section \ref{section: Concentration, Sub-additivity}, we prove a concentration inequality for $\varphi_*$ and sub-additivity of $\E \varphi_*$. Then in Section \ref{section:Convergence rate}, we follow the framework developed in \cite{Alexander97} for first-passage percolation to prove that $\E \varphi_*$  is approximated by $g$ with approximation error bounded by $O(|v|^\alpha)$ for some $\alpha<1$. In this step, we have to address a number of challenges that are not seen in the first-passage percolation setup, due to the complication in the definition of our log-weighted Green's function $\varphi_*$. Finally, in Section~\ref{sec:phi-and-g} we prove Proposition \ref{phi-and-g}, by combining the ingredients in previous subsections.

\subsection{Percolation process avoiding high survival probability regions}
\label{section:Renormalization}
In this subsection, we study connectivity properties for the percolation process on $\ob^c\cap \sd_{\lambda_*}^c$ where $\lambda_*$ is defined in \eqref{eq:lambda-*-def} --- this will be useful later when analyzing LWGFs. In order to analyze this percolation process with (short-range) correlations, we employ the standard renormalization technique in percolation theory, and reduce it to the analysis of a certain independent percolation process (see Lemmas~\ref{kiprobability} and \ref{StochasticalDomination}, and discussions following Lemma~\ref{StochasticalDomination}). 

\begin{defn}
\label{def-box}
	Recall that $K_r(v) = \{x \in \Z^d : |x-v|_\infty \leq r\}$. Let $L = \lfloor \log n \rfloor^{2d}$ and consider disjoint boxes
\begin{equation}
	\mabox_\mathbf{i} := K_{L}((2L+1)\mathbf{i}) \for \mathbf{i}\in\Z^d\,.
\end{equation}
We define the renormalized lattice $\{\mabox_\mathbf{i}: \mathbf{i}\in\Z^d \}$ which inherits the graph structure from the bijection $\mabox_\mathbf{i} \mapsto \mathbf{i}$.
\end{defn}

\begin{defn}
\label{de-kwhite}
Let $C_D$ be a positive constant to be selected in Lemma \ref{kiprobability}.
	We say $\mabox_\mathbf{i}$ (or $\mathbf{i}$) is white (otherwise black), if the following hold:

\begin{itemize}
	\item[1.] There exists a unique open connected component $\mac_\mathbf{i}$ in $K_{C_D L}((2L+1)\mathbf{i})$, such that 
	\begin{equation} \label{eq-def-C-i}
		|\mac_\mathbf{i} \cap (\cup_{\mathbf{j} :\|\mathbf{j-i}\|_\infty \leq 1}\mabox_\mathbf{j})| \geq L/10\,.
	\end{equation}
	\item[2.] For all $u,v \in \mac_\mathbf{i} \cap (\cup_{\mathbf j:\|\mathbf{j-i}\|_\infty \leq 1}\mabox_\mathbf{j})$ (recalling definition of $D_\cdot(\cdot, \cdot)$ in Section~\ref{sec:notation}),
	\begin{equation}
		D_{\mac_\mathbf{i}}(u,v) \leq C_D L\,.
	\end{equation}
	\item [3.] $K_{C_D L}((2L+1)\mathbf{i}) \cap \sd_{\lambda_*} = \varnothing$.  In addition, for all $\mathbf j$ satisfying $\|\mathbf j-\mathbf i\|_\infty \leq 1$, one has
	\begin{equation} \label{eq-size-C-i}
		|\mac_\mathbf{i} \cap \mabox_\mathbf{j}| \geq L/10\,.
	\end{equation}
\end{itemize}
\end{defn}
\begin{remark}
The requirements in \eqref{eq-def-C-i} and \eqref{eq-size-C-i} look somewhat odd and repetitive at first glance. We present the conditions in this way since we wish that the component satisfying \eqref{eq-def-C-i} is unique, and in addition this component satisfies \eqref{eq-size-C-i}. This is stronger than the claim that there is a unique connected component satisfying \eqref{eq-size-C-i}.
\end{remark}
The collection of white vertices gives a dependent site percolation process on the renormalized lattice. In the next two lemmas, we will show that the white vertices dominates a supercritical Bernoulli percolation for an appropriate choice of $C_D$ and $n \geq n_0$ where $n_0$ is a fixed large constant.
\begin{lemma} 
\label{kiprobability}
There exists a constant $C_D\geq 4$ such that
for all $\mbfi \in \Z^d$
\begin{itemize}
	\item [(1)]The event  $\{\mabox_\mathbf{i}$ is white$\}$ is independent of $\sigma(\text{ $\{\mabox_\mathbf{j}$ is white}\} \ for \ \mbfj \ s.t.\ |\mbfj-\mbfi| \geq 4C_D)$.
	\item [(2)]$
		\P(\text{$\mabox_\mathbf{i}$ is white}) \geq  1- n^{-1}\,.$
\end{itemize}

\end{lemma}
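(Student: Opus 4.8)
The plan is to prove Lemma~\ref{kiprobability} by a standard union-bound argument over the events constituting ``$\mabox_\mathbf{i}$ is white,'' showing that each fails with probability at most $n^{-1}/4$ (say) once $C_D$ is chosen large.

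\textbf{Step 1: Locality and independence.} First I would observe that whether $\mabox_\mathbf{i}$ is white depends only on the status (open/closed) of vertices in $K_{C_D L}((2L+1)\mathbf{i})$ together with whether $\sd_{\lambda_*}$ meets that box. The event $\{v\in\sd_{\lambda_*}\}$ is itself local: recalling Definition~\ref{def-Ev}, it depends on $\Pr^v(\tau>(\log n)^{C_1})$, which is a function of the environment in $B_{(\log n)^{C_1}}(v)$. Since $(\log n)^{C_1} \ll L = \lfloor \log n\rfloor^{2d}$ for large $n$, the full event is measurable with respect to the environment in a box of side-length $O(C_D L)$ around $(2L+1)\mathbf{i}$. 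Two such boxes, for $\mathbf{i},\mathbf{j}$ with $|\mathbf{i}-\mathbf{j}|\geq 4C_D$, are then disjoint (the separation in $\Z^d$ is at least $\approx 4C_D(2L+1)$, which exceeds twice the dependence radius $O(C_D L)$ for $C_D$ large), giving part (1). One should be slightly careful that ``$4C_D$'' is large enough to absorb the $(\log n)^{C_1}$ slack and the constant factors; this is where the ``$\geq 4$'' and the precise choice of $C_D$ interact.

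\textbf{Step 2: Bounding the failure probabilities.} For part (2), I would bound $\P(\mabox_\mathbf{i}\text{ is black})$ by the sum of three probabilities. (a) The event that $K_{C_D L}((2L+1)\mathbf{i})\cap\sd_{\lambda_*}\neq\varnothing$: by \eqref{eq:Prob-Dlambda}, $\P(v\in\sd_{\lambda_*})\leq(\log n)^C n^{-d}$, so a union bound over the $O((C_D L)^d) = (\log n)^{O(1)}$ vertices gives $(\log n)^{O(1)}n^{-d}\ll n^{-1}/4$ for $d\geq 2$. (b) The event that the largest open cluster in $K_{C_D L}((2L+1)\mathbf{i})$ fails to be unique or fails to occupy $\geq L/10$ vertices in each sub-box $\mabox_\mathbf{j}$, $\|\mathbf{j}-\mathbf{i}\|_\infty\leq 1$: since $p>p_c(\Z^d)$, standard supercritical site-percolation estimates (crossing probabilities / the finite-cluster tail in Lemma~\ref{FiniteClusterSize}, block arguments) show that in a box of side $\asymp L$ there is a unique ``giant'' open cluster that is ubiquitous on scale $L$, with the exceptional probability decaying like $\exp(-c L^{c'})$, which is far smaller than $n^{-1}$. (c) The event that two points of $\mac_\mathbf{i}$ in the central $3\times\cdots\times 3$ block are not joined by a path inside $\mac_\mathbf{i}$ of length $\leq C_D L$: this is a chemical-distance estimate — in the supercritical phase the intrinsic distance between two points at Euclidean distance $O(L)$ in the infinite cluster (or the giant cluster of a box) is $O(L)$ except with probability $\exp(-cL^{c'})$, by \cite[Theorem 1.1]{AP96} / the linearity of chemical distance; choosing $C_D$ to be the implicit constant in that estimate makes the length bound hold. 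Summing (a)--(c) gives $\P(\mabox_\mathbf{i}\text{ is black})\leq n^{-1}$ for $n\geq n_0$.

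\textbf{Main obstacle.} The routine part is (a); the substantive part is combining (b) and (c) into a single ``good box'' event whose complement has probability $\exp(-cL^{c'}) \ll n^{-1}$ and for which the constant $C_D$ can be fixed \emph{independently of $n$}. Concretely, the hard point is simultaneously controlling uniqueness of the giant cluster, its density $\geq L/10$ in every neighboring sub-box (the slightly redundant conditions \eqref{eq-def-C-i} and \eqref{eq-size-C-i}), and the internal diameter bound — all with one choice of $C_D$. I would handle this by invoking the renormalization/block-percolation machinery for supercritical percolation (e.g.\ Antal--Pisztora chemical-distance bounds and Pisztora-type uniqueness of the crossing cluster), applied at scale $L$: these give precisely a good-box event of probability $1-\exp(-cL^{c'})$, and since $L\to\infty$ with $n$ this beats $1-n^{-1}$ for all large $n$, while the geometric constants produced are absolute. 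The only genuine care needed is bookkeeping to ensure the $\sd_{\lambda_*}$-locality radius $(\log n)^{C_1}$ is negligible compared to $L$ and that the dependence range is comfortably below $4C_D(2L+1)$.
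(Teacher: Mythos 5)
Your proposal reproduces the paper's approach for part (2): a union bound over the four defining requirements, with the uniqueness/chemical-distance part controlled by Antal--Pisztora \cite{AP96}, the finite-cluster tails by Lemma~\ref{FiniteClusterSize}, the density condition by Penrose--Pisztora \cite{PP96}, and the $\sd_{\lambda_*}$-avoidance by \eqref{eq:Prob-Dlambda}. That part is fine and matches the paper.

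Your Step 1, however, rests on the assertion that $(\log n)^{C_1}\ll L=\lfloor\log n\rfloor^{2d}$, and this is false. The final inequality in the proof of Theorem~\ref{Polylog thm}, namely $(\log n)^{C}[(1-k_n^{-20d})\lambda]^{(\log n)^{C_1}}\leq[(1-k_n^{-21d})\lambda]^{(\log n)^{C_1}}$, requires roughly $(\log n)^{C_1}\gtrsim k_n^{20d}\log\log n$; since $k_n^{20d}\asymp (\log n)^{80d-40}$ (up to $\log\log n$ factors), this forces $C_1>80d-40\gg 2d$. The event $\{v\in\sd_{\lambda_*}\}$ therefore has locality radius $(\log n)^{C_1}$, so the whiteness event at $\mabox_{\mathbf i}$ depends on the environment out to radius of order $C_D L+(\log n)^{C_1}$, which exceeds $4C_D(2L+1)$ for \emph{every} fixed $C_D$ once $n$ is large. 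Hence ``absorbing the $(\log n)^{C_1}$ slack into $4C_D$'' cannot work, and the claimed $4C_D$-dependence in part (1) does not follow from the locality bookkeeping you describe. To be fair, the paper's own justification of part (1) (``a direct consequence of the definition'') does not address this either, so this looks like a subtlety in the lemma itself rather than an error specific to your write-up; but you should not assert $(\log n)^{C_1}\ll L$ without checking how large $C_1$ must actually be.
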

\begin{proof}
The first item is a direct consequence of the definition. Now, we verify the second item. We first claim that  $\mabox_\mathbf{i}$ is white if all of the following hold:
\begin{itemize}
	\item[(a)] For all $u,v \in \cup_{\mathbf j:\|\mathbf{j-i}\|_\infty \leq 1}\mabox_\mathbf{j}$ such that $u,v$ are in the same open connected component,
	\begin{equation}
		 D_{\ob^c}(u,v) \leq C_D L\,.
	\end{equation}
	\item[(b)] For any $ v \in \cup_{\mathbf j:\|\mathbf{j-i}\|_\infty \leq 1}\mabox_\mathbf{j}$, either $\mathcal C (v) = \mathcal C(\infty)$ or $|\mathcal C (v)| < L/10$.
	\item [(c)]For all $\|\mathbf{j-i}\|_\infty \leq 1$,
	\begin{equation}
		|\mathcal C(\infty) \cap \mabox_\mathbf{j}| \geq L/10\,.
	\end{equation}
	\item [(d)]$K_{C_D L}((2L+1)\mathbf{i}) \cap \sd_{\lambda_*} = \varnothing$.
\end{itemize}
To verify this, we observe that (a) implies that 
\begin{equation}\label{eq-justify-uniqueness}
\mbox{ all vertices in }\mathcal C(\infty) \cap (\cup_{j:\|j-i\|_\infty \leq 1}\mabox_\mathbf{j})\mbox{ are connected in }K_{C_D L}((2L+1)\mathbf{i})\cap \calC(\infty)\,.
\end{equation} 
Combining with (b) and (c), we get Property 1 in Definition ~\ref{de-kwhite} where $\mac_\mathbf{i}$ is the connected component of $\mathcal C(\infty) \cap K_{C_D L}((2L+1)\mathbf{i})$ which has non-empty intersection with $\mabox_\mathbf{i}$ --- such connected component is unique by \eqref{eq-justify-uniqueness}. Combining this with (d) gives Property 3. Property 2 follows from (a).

Now, \cite[Theorems 1.1]{AP96} yields
\begin{equation*}
	\P(\text{(a) holds}) \geq 1 - (6L+3)^{2d} e^{-c L^{1/d}}.
\end{equation*}
By Lemma \ref{FiniteClusterSize}
\begin{equation*}
	\P(\text{(b) holds}) \geq 1 - (6L+3)^{2d} e^{-c L^{1/2}}.
\end{equation*}
Combined with \cite[Theorem 5]{PP96}, this implies
\begin{equation*}
	\P(\text{(b) and (c) holds}) \geq 1 - (6L+4)^{2d} e^{-c L^{1/2}}.
\end{equation*}
Finally, by \eqref{eq:Prob-Dlambda}
\begin{equation*}
	\P(\text{(d) holds}) \geq 1 - (2C_D L +1 )^d(\log n)^{C}n^{-d}\,.
\end{equation*}
Altogether, this completes the proof of the lemma.
\end{proof}

\begin{lemma}
 \label{StochasticalDomination}
For any $\epsilon>0$ the white vertices stochastically dominates a supercritical independent site percolation with parameter $1-\epsilon$ as long as $n$ is greater than a large constant depending on $(d, \epsilon)$. 
\end{lemma}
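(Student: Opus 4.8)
The plan is to apply the standard Liggett--Schonmann--Stacey comparison theorem for $k$-dependent site percolation. Recall from Lemma~\ref{kiprobability} that the event $\{\mabox_\mathbf{i}\text{ is white}\}$ depends only on the environment in a bounded neighborhood of $(2L+1)\mathbf i$, and is independent of the status of all boxes $\mathbf j$ with $|\mathbf j - \mathbf i| \geq 4C_D$; moreover each box is white with probability at least $1 - n^{-1}$. Thus the white field $\{\11_{\mabox_\mathbf{i}\text{ is white}}: \mathbf i \in \Z^d\}$ is a $k$-dependent field with $k = 4C_D$ (a finite constant depending only on $(d,p)$) whose marginal is arbitrarily close to $1$ once $n$ is large.

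The key step is to invoke the Liggett--Schonmann--Stacey domination theorem \cite{LSS97}: for any $k$-dependent site percolation process on $\Z^d$ in which every site is open with probability at least $1-\delta$, the process stochastically dominates an i.i.d.\ Bernoulli site percolation with parameter $1 - f(\delta)$, where $f(\delta)\to 0$ as $\delta\to 0$ (with $f$ depending only on $k$ and $d$). Here one first passes from $k$-dependence to $1$-dependence by grouping the renormalized lattice $\Z^d$ into blocks of side $k$ and declaring a block ``good'' if all $k^d$ of its sites are white; goodness is then a $1$-dependent field with marginal at least $1 - k^d n^{-1}$, and the LSS theorem for $1$-dependent fields applies directly. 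Given a target $\epsilon>0$, choose $n$ large enough that $f(k^d n^{-1}) \cdot k^d \le \epsilon$ (absorbing the loss from unpacking blocks back to sites); then the white vertices dominate an i.i.d.\ site percolation with parameter $1-\epsilon$, which is supercritical since $1-\epsilon > p_c(\Z^d)$ for $\epsilon$ small — and if the prescribed $\epsilon$ in the statement is not itself small enough, one simply runs the argument with $\min(\epsilon, 1 - p_c(\Z^d))/2$ in place of $\epsilon$ and uses monotonicity of stochastic domination.

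There is essentially no serious obstacle here — the lemma is a black-box application of a well-known comparison theorem, and the only thing requiring care is the bookkeeping in converting the finite-range dependence constant $4C_D$ and the marginal bound $1 - n^{-1}$ from Lemma~\ref{kiprobability} into the hypothesis format of \cite{LSS97}. I would write the proof in two or three sentences: cite Lemma~\ref{kiprobability} for finite-range dependence and the marginal lower bound, invoke the LSS domination theorem, and note that for $n$ sufficiently large (depending on $d$ and $\epsilon$) the resulting i.i.d.\ parameter exceeds $1-\epsilon$, hence is supercritical. The one subtlety worth a remark is that ``$1-\epsilon$'' in the conclusion should be read with the understanding that $\epsilon$ can be taken as small as one likes, which is all that is needed in the subsequent renormalization arguments of Section~\ref{section:Renormalization}.
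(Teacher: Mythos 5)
Your proposal is correct and matches the paper's approach: the paper's proof is precisely the one-line citation of Lemma~\ref{kiprobability} together with \cite[Theorem 0.0]{LSS97}. One small simplification: Theorem 0.0 of \cite{LSS97} already applies directly to $k$-dependent fields, so the intermediate blocking step you describe to reduce to $1$-dependence is unnecessary.
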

\begin{proof}
The lemma follows from Lemma \ref{kiprobability} and \cite[Theorem 0.0]{LSS97}.
\end{proof}
 \begin{wrapfigure}{R}{1.7in}
  \begin{center}
  \includegraphics[width=1.4in]{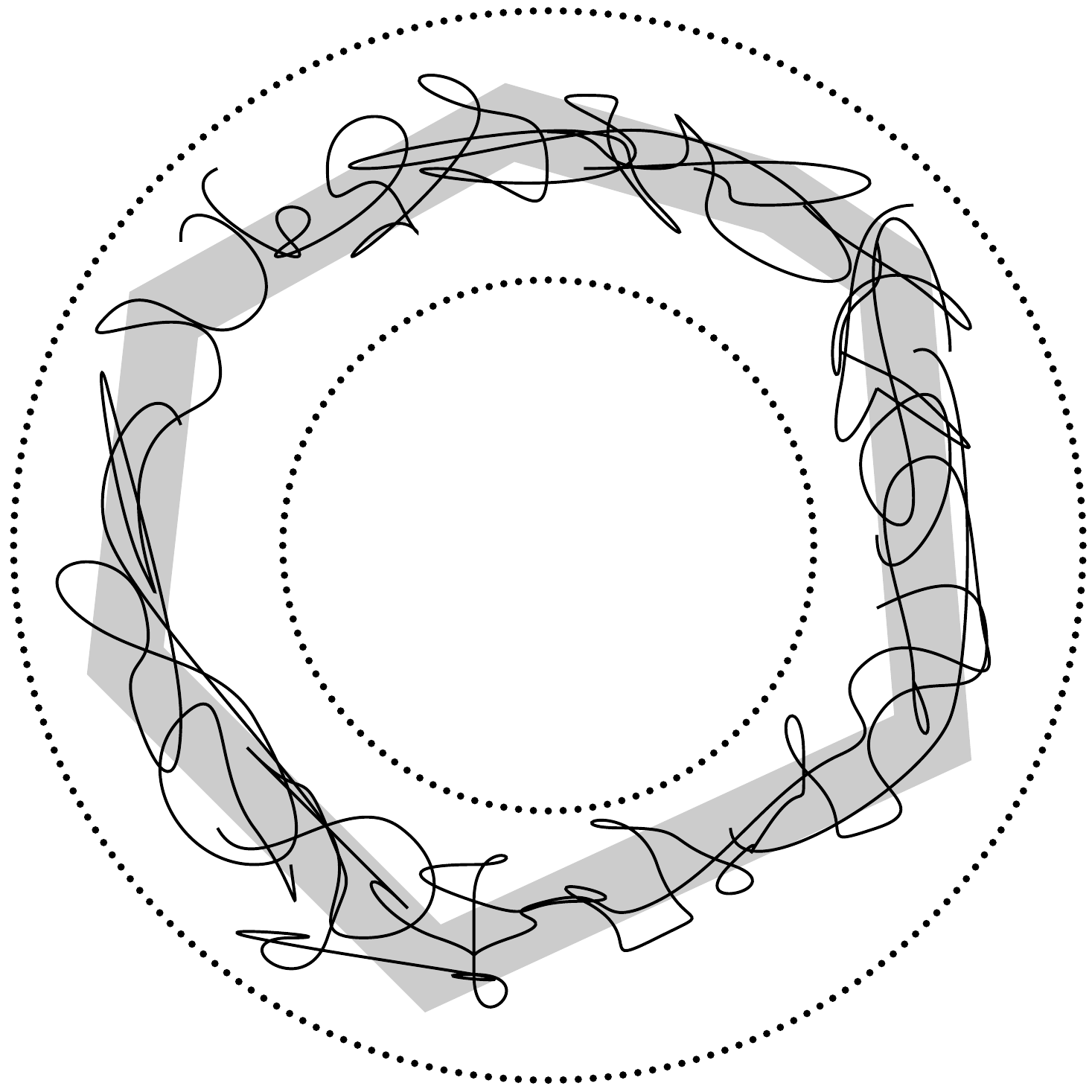}
	\begin{caption}
{Two dotted circles centered at $x$, with radius $r$ and $2r$ accordingly. Shaded area represents $\bba(x,r)$. Solid curves represent $\mac_{\bba(x,r)}$.}
\label{fig:cutset}
\end{caption}
  \end{center}
\end{wrapfigure}
In what follows, we will call vertices that are open in this Bernoulli($1-\epsilon$) percolation tilde-white. Thus, there is a coupling such that a tilde-white vertex is always white, and tilde-white vertices forms a Bernoulli percolation with parameter $1-\epsilon$. 
 In what follows, we will work with tilde-white vertices as opposed to white vertices. We will call the tilde-white percolation the  \textit{macroscopic} process and call the original site percolation (where open means free of obstacle) \textit{microscopic} process. For instance, we will refer to a microscopic path as a path that consists of vertices in the original lattice and a macroscopic path as a path that consists of vertices in the renormalized lattice. For any $x \in \Z^d$, we denote $\mbfi_x$ be such that $\mabox_{\mbfi_x}$ is the unique macroscopic box that contains $x$. For $\mathbf{A} \subseteq \Z^d$, define
\begin{equation}
\label{eq:def-K-A}
	\mabox_\mathbf{A}: = \cup_{\mbfi \in \mathbf{A}}\mabox_\mbfi \AND \mac_\mathbf{A}: = \cup_{\mbfi \in \mathbf{A}}\mac_\mbfi\,,
\end{equation}
where $\mac_{\mbfi}$ is defined as in Definition~\ref{de-kwhite} if $\mbfi$ is tilde-white and an empty set otherwise.
For $\mbfi \in \Z^d$, we denote by $\ck(\mbfi)$ the tilde-white cluster containing $\mabox_\mbfi$.
If $\mbfi$ is tilde-black (i.e., not tilde-white), then $\ck(\mbfi) = \varnothing$. 

For $A_1, A_2, A_3\subseteq\Z^d$, we say $A_1$ is a vertex cut that separates $A_2$ and $A_3$ if any path joining $A_2$ and $A_3$ has nonempty intersection with $A_1$.

\begin{lemma}
\label{cutset}
For any $x \in \Z^d$ and $r>(\log n)^{3d}$, let $\bba(x,r)$ be an arbitrary (macroscopic) tilde-white connected set such that $\mabox_{\bba(x,r)} $ is a vertex cut that separates $B_r(x)$ and $(B_{2r}(x))^c$ (See Figure \ref{fig:cutset}). If such cut does not exist, let $\bba(x,r) = \varnothing$. Then,
\begin{itemize}
	\item[(1)] With $\P$-probability at least $1 - e^{-rL^{-1}}$ we have $\bba(x,r)\neq \varnothing$.
	\item[(2)] For any $u \in \mabox_{\bba(x,r)}$ such that $|\mathcal C(u)|\geq L/10$, we have $u \in  \mac_{\bba(x,r)}$. Any $u,v \in \mac_{\bba(x,r)}$ can be join by a path in $\mac_{\bba(x,r)}$(thus in $(\ob \cup \sd_{\lambda_*})^c$) of length at most $r^{2d}$.
\end{itemize}
\end{lemma}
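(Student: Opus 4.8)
The plan is to treat Lemma~\ref{cutset} as a renormalization statement. Part~(1) is a Peierls-type duality argument: a macroscopic tilde-white ``barrier'' around $B_r(x)$ typically exists because its absence would force a long tilde-black crossing, which is exponentially unlikely since $\epsilon$ is as small as we please. Part~(2) is deterministic (note the statement carries no probability qualifier there) and should follow by carefully unpacking Definition~\ref{de-kwhite} together with the macroscopic connectivity of $\bba(x,r)$.

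For part~(1) I would argue by contraposition. The macroscopic boxes $\mabox_\mathbf{k}$ lying in the annulus $B_{2r}(x)\setminus B_r(x)$ form a shell of macroscopic ``radial width'' of order $r/L$ (consecutive box centers differ by $2L+1$). By the standard topological duality in $\Z^d$, if there is no macroscopically connected tilde-white set $\mathbf A$ whose union of boxes $\mabox_\mathbf{A}$ separates $B_r(x)$ from $(B_{2r}(x))^c$, then there is an $\ell_\infty$-connected path of tilde-black boxes crossing the shell, i.e.\ a tilde-black path of macroscopic length at least $cr/L$. By Lemma~\ref{StochasticalDomination} (and the coupling described right after it) the tilde-black boxes are i.i.d.\ Bernoulli($\epsilon$) with $\epsilon$ as small as we wish once $n$ is large; a union bound over the at most polynomially-in-$r$ many starting boxes and over $\ell_\infty$-self-avoiding paths of length $\ge cr/L$ then bounds $\P(\bba(x,r)=\varnothing)$ by a polynomial in $r$ times $(\nu\epsilon)^{cr/L}$, where $\nu=\nu(d)$ is the corresponding connective constant. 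Since $r>(\log n)^{3d}$ and $L=\lfloor\log n\rfloor^{2d}$, we have $r/L\ge(\log n)^{d}$, which dominates $\log r$; hence, choosing $\epsilon$ small, this bound is at most $e^{-r/L}$ for $n$ large, which is part~(1).

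For part~(2) fix a configuration on the event $\bba(x,r)\neq\varnothing$ and the chosen cut $\bba(x,r)$. The heart of the matter is a gluing claim: for adjacent tilde-white boxes $\mathbf i\sim\mathbf j$ one has $\mac_\mathbf{i}\cap\mac_\mathbf{j}\neq\varnothing$. This should come from Definition~\ref{de-kwhite}: by Property~3 both $\mac_\mathbf{i}$ and $\mac_\mathbf{j}$ contain at least $L/10$ vertices of $\mabox_\mathbf{i}$, and $\mabox_\mathbf{i}$ lies in the central $3^d$-block of both $\mathbf i$ and $\mathbf j$, so Property~2 (bounded internal chemical distance) plus the uniqueness in Property~1 forces the component of $K_{C_DL}((2L+1)\mathbf i)\setminus\ob$ carrying $\mac_\mathbf{j}\cap\mabox_\mathbf{i}$ to be $\mac_\mathbf{i}$ itself. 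Granting this, $\mac_{\bba(x,r)}$ is connected (chain the $\mac$'s along a macroscopic path inside the macro-connected set $\bba(x,r)$), which gives the second assertion of~(2), and the first assertion follows because a vertex $u\in\mabox_\mathbf{i}$ with $|\calC(u)|\ge L/10$ cannot lie in a component of $K_{C_DL}((2L+1)\mathbf i)\setminus\ob$ that is small inside the central block — by uniqueness it must be $\mac_\mathbf{i}\subseteq\mac_{\bba(x,r)}$, the alternative of a thin cluster escaping the enlarged box being excluded by the same size/uniqueness bookkeeping. The length bound is then immediate: $\bba(x,r)$ sits in an $O(1)$-neighborhood of $B_{2r}(x)$, hence has at most $C(r/L)^d$ boxes, so $|\mac_{\bba(x,r)}|\le C(r/L)^d(2C_DL+1)^d\le Cr^d\le r^{2d}$ for $r$ large, and two vertices of a connected set are joined inside it by a path no longer than its cardinality.

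The main obstacle is precisely this gluing/reconciliation in part~(2): verifying that the global cluster $\calC(u)$ and the designated components $\mac_\mathbf{i},\mac_\mathbf{j}$ of overlapping enlarged boxes all agree locally. This requires keeping exact track of the box radii $L$, $3L+1$ and $C_DL$ in Definition~\ref{de-kwhite} — in particular, ensuring that a path realizing the Property~2 chemical-distance bound from a vertex of $\mabox_\mathbf{i}$ remains in the region where the uniqueness clause applies — and it is the only place where the precise value of $C_D$ (the choice $C_D\ge 4$, enlarged if necessary) is exploited. By comparison, part~(1) is routine once the duality picture is in place, and the length estimate is a one-line counting bound.
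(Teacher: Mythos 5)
Your proposal takes essentially the same route as the paper: part (1) is the same Peierls/duality argument (no tilde-white cut forces a long tilde-black crossing, then a union bound with $\epsilon$ small and $r/L\gg\log r$), and part (2) is the same gluing of the $\mac_{\mathbf i}$'s via Properties 1--3 of Definition~\ref{de-kwhite} followed by the crude volume bound $|\mac_{\bba(x,r)}|\le r^{2d}$. The only step the paper spells out that you compress into ``standard topological duality'' is the concrete extraction of a nearest-neighbor-connected, entirely tilde-white cut from the external outer boundary of the tilde-black hull (via \cite[Lemma 2.1]{DP96}, using $\sharp_{3d}$-connectivity to leave a buffer), but the underlying argument is the same.
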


\begin{proof} 
(1) It suffices to prove that in the macroscopic lattice there exists a tilde-white connected set $\bba$ that separates $B_{R}(\mbfi_x)$ and $(B_{3R/2}(\mbfi_x))^c$ for $R = r(2L)^{-1}+2$, since each microscopic path not intersecting with $\mabox_\bba$ corresponds to a macroscopic path not intersecting with $\bba$. (Here, $B_{R}(\mbfi_x)$ is the discrete ball in the macroscopic lattice.)

	For $q>0$, we say a subset of $\Z^d$ is $\sharp_{q}$-connected, if it is connected with respect to the adjacency relation $$x \overset{\sharp_q}{\sim} y \iff  |x-y|_1 \leq q,.$$
	In addition, we say a subset of $\Z^d$ is $*$-connected, if it is connected with respect to the adjacency relation $$x \overset{*}{\sim} y \iff  |x-y|_\infty \leq 1.$$

	Now, let $A$ be the union of $B_R(\mbfi_x)$ and all tilde-black $\sharp_{3d}$-connected components that have nonempty intersection with $\cup_{\mathbf{j} \in B_R(\mbfi_x)}\{\mathbf{k}:|\mathbf{k} - \mathbf{j}|_1 \leq 3d\}$. 
	If $A \not \subseteq B_{3R/2-10d}(\mbfi_x)$, then there exists a $\sharp_{3d}$-connected tilde-black path connecting $B_{R+3d}(\mbfi_x)$ and $(B_{3R/2-10d}(\mbfi_x))^c$. By definition of tilde-white process, we get from a simple union bound over all $\sharp_{3d}$-connected tilde-black path connecting $B_{R+3d}(\mbfi_x)$ and $B_{3R/2-10d}(\mbfi_x)^c$ that
	\begin{align*}
		\P(A \not \subseteq B_{3R/2-10d}(\mbfi_x)) \leq (2R + 6d)^d (6d+1)^{d(3R/2-10d)}\epsilon^{3R/2-10d} \leq e^{-2R}\,,
	\end{align*}
	for sufficiently large $n$ (and thus $\epsilon$ is sufficiently small). 
	On the event $\{A\subseteq B_{3R/2-10d}(\mbfi_x)\}$, we let $A' = \cup_{\mathbf{j} \in A}\{\mathbf{k}:|\mathbf{k} - \mathbf{j}|_1 \leq d\}$ and $A_0$ be the connected component in $A'$ containing $B_R(\mbfi_x)$.
	Then by \cite[Lemma 2.1]{DP96} (which states that the external outer boundary of a $*$-connected set is $*$-connected), there is a subset of $\partial A_0$ which is $*$-connected and contains $A_0$ in its interior. We denote this subset by $\bba_0$ and let $\bba = \cup_{\mbfj \in \bba_0 }\{\mathbf{k}:|\mathbf{k} - \mathbf{j}|_\infty \leq 1\}
$. Then, the set $\bba$ is connected and is contained in $B_{3R/2}(\mbfi_x)$. It remains to prove that $\bba$ is tilde-white and contains $A = B_{R}(\mbfi_x)$ in its interior.

Note that for any $\mbfj \in \bba_0$, the $\ell_1$-distance between $\mbfj$ and $A$ is $d+1$. Hence, the set $\bba$ contains $A = B_{R}(\mbfi_x)$ in its interior.  By the $\sharp_{3d}$-connectivity, the $\ell_1$-distance between $\mbfj$ and any tilde-black point in  $A^c$ is at least $3d - (d+1) \geq d+1$. Hence, the set $\bba$ is also free of tilde-black points. 

\noindent (2) By Property 1 in Definition~\ref{de-kwhite},  $|\mathcal C(u)|\geq L/10$ implies that $u \in \mac_{\bba(x,r)}$. Also, since $\bba(x,r)$ is connected, by Properties 1 and 3 in Definition \ref{de-kwhite}, $ \mac_{\bba(w,r)} $ is connected. Then the last statement follows from $|\mac_{\bba(x,r)}| \leq r^{2d}.$
\end{proof}

\begin{lemma}
\label{chemdist2}
Recall $ \cregion = (\sd_\lambda \cup \ob)^c$ and let $C_2=6d^2$. Suppose that $\lambda \geq \lambda_*$ where $\lambda_*$ is defined in \eqref{eq:lambda-*-def}. For any $x,y \in \Z^d$ such that $|x-y|\geq n^\fuu$ with  $\P$-probability at least $1 - e^{- (\log |x-y|)^{3}}$ we have
\begin{itemize}
	\item [(1)] There is a unique connected component in $\cregion$ of diameter no less than $(\log |x-y|)^{C_2}$ that intersect with $B_{|x-y|^2}(x)$\,.
	\item[(2)] For any $u,v$ in this component, there exists a path in $\cregion$ that connects $u,v$ and which has length at most $\max (C|u-v|, (\log |x-y|)^{C}).$
\end{itemize}
\end{lemma}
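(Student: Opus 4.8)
The goal is to prove Lemma~\ref{chemdist2}, which asserts uniqueness of a large-diameter connected component of $\cregion = (\sd_\lambda \cup \ob)^c$ intersecting $B_{|x-y|^2}(x)$, together with quantitative chemical-distance control within that component. The plan is to deduce both items from the renormalization apparatus of Section~\ref{section:Renormalization}, specifically Lemma~\ref{cutset} applied at a range of scales, combined with a union bound over dyadic annuli and a Borel--Cantelli-style summation.

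\textbf{Setup and the key deterministic observation.} First I would fix $x, y$ with $|x-y| \geq n^\fuu$ and write $\ell = |x-y|$. Since $\lambda \geq \lambda_*$, we have $\sd_\lambda \subseteq \sd_{\lambda_*}$ by monotonicity of $\sd_\cdot$ in $\lambda$ (larger $\lambda$ makes the survival threshold $[(1-k_n^{-21d})\lambda]^{(\log n)^{C_1}}$ larger, hence fewer vertices qualify), so $\cregion = (\sd_\lambda \cup \ob)^c \supseteq (\sd_{\lambda_*} \cup \ob)^c$; thus any tilde-white macroscopic structure from Section~\ref{section:Renormalization} has its microscopic realization $\mac_{\bba}$ sitting inside $\cregion$. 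The deterministic heart of the argument: if for every scale $r$ in the dyadic range $[(\log \ell)^{C_2}/2,\, \ell^2]$ and every relevant center $z$ (on a net in $B_{\ell^2}(x)$) the separating tilde-white cut $\bba(z, r)$ of Lemma~\ref{cutset} exists, then any two connected subsets of $\cregion$ each of diameter $\geq (\log\ell)^{C_2}$ that both reach $B_{\ell^2}(x)$ must actually be joined through a common $\mac_{\bba}$. Indeed a set of diameter $\geq (\log \ell)^{C_2} > (\log n)^{3d}$-scale cannot be contained between the radius-$r$ and radius-$2r$ spheres around an appropriate center, so it must cross the annular cut $\mabox_{\bba(z,r)}$; any vertex $u$ it meets there with $|\calC(u)| \geq L/10$ lies in $\mac_{\bba(z,r)}$ by Lemma~\ref{cutset}(2) — and a component of diameter $\geq (\log\ell)^{C_2}$ certainly contains vertices with huge $\ob^c$-clusters (here one uses Lemma~\ref{FiniteClusterSize} to rule out that a diameter-$(\log\ell)^{C_2}$ connected subset of $\ob^c$ has all its clusters small; more carefully, a connected set in $\ob^c$ of large diameter \emph{is} a single large $\ob^c$-cluster). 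So both large components feed into the single connected set $\mac_{\bba(z,r)} \subseteq \cregion$, forcing them to coincide. This gives item (1).

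\textbf{The probabilistic union bound.} For item (1) I would take a net of centers $z$ at spacing $(\log\ell)^{C_2}$ inside $B_{\ell^2}(x)$ (about $\ell^{2d}(\log\ell)^{-dC_2}$ of them) and for each, a geometric sequence of radii $r_j = 2^j$ with $r_j$ ranging over $[(\log\ell)^{C_2}, \ell^2]$ (about $\log_2(\ell^2)$ values). For each pair $(z, r_j)$, Lemma~\ref{cutset}(1) gives failure probability $\leq e^{-r_j L^{-1}} \leq e^{-(\log\ell)^{C_2} L^{-1}}$. Recalling $L = \lfloor \log n\rfloor^{2d}$ and $\ell \geq n^\fuu = n^{1/10}$ so $\log n \leq 10\log\ell$, and choosing $C_2 = 6d^2$ (which exceeds $2d+1$ comfortably), the exponent $(\log\ell)^{C_2} L^{-1} \geq (\log\ell)^{6d^2}(10\log\ell)^{-2d} \gtrsim (\log\ell)^{6d^2 - 2d} \gg (\log\ell)^{3}$ dominates the polynomial-in-$\ell$ number of $(z,r_j)$ pairs (each at most $e^{O(\log\ell)}$). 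Summing, the total failure probability is $\leq e^{-(\log\ell)^{3}}$ for $n$ (hence $\ell$) large, matching the claimed bound; for small $\ell$ bounded by a constant the statement is vacuous or absorbed into the constant. On the complementary event all cuts exist, so item (1) holds as argued above.

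\textbf{Chemical distance, item (2).} Given $u, v$ in the unique large component, I would distinguish two regimes. If $|u-v| \leq (\log\ell)^{C}$ (for the constant $C$ we are free to choose), then since $u, v$ lie in the same $\cregion$-component and that component has diameter $\geq (\log\ell)^{C_2}$, a local application of Lemma~\ref{cutset}: pick a cut $\bba(u, r)$ at scale $r \asymp (\log\ell)^{C}$ — both $u$ and $v$ have large $\ob^c$-clusters (again Lemma~\ref{FiniteClusterSize}, else they'd be in a tiny component, contradicting membership in the big one via the cut argument), hence $u, v \in \mac_{\bba(u,r)}$, which by Lemma~\ref{cutset}(2) is internally connected by a path of length $\leq r^{2d} \leq (\log\ell)^{2dC}$; relabeling constants this is $\leq (\log\ell)^{C'}$, absorbed into the $\max$. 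If instead $|u-v| > (\log\ell)^C$, the standard renormalization chaining argument applies: by Lemma~\ref{StochasticalDomination} the tilde-white process is supercritical Bernoulli $(1-\epsilon)$ for $\epsilon$ small, so standard supercritical percolation chemical-distance estimates (the macroscopic analogue of the Antal--Pisztora bound \cite{AP96}, applicable since $\epsilon$ can be taken as small as we like) give that $u, v$ — both in large clusters hence both in big tilde-white components, which must be the \emph{same} one by item (1) — are joined by a macroscopic tilde-white path of length $\leq C|u-v|/L$ except with probability $\leq e^{-c|u-v|/L^{O(1)}} \leq e^{-(\log\ell)^{3}}$ after a further union bound over the $\leq \ell^{O(1)}$ choices of $u$ (note $|u-v| \geq (\log\ell)^C \gg (\log\ell)^3 L$ for $C$ large). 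Concatenating the microscopic segments inside each traversed $\mac_\mbfi$ — each of length $\leq C_D L$ by Property 2 of Definition~\ref{de-kwhite} — converts this to a microscopic path in $\cregion$ of total length $\leq C_D L \cdot C|u-v|/L = C'|u-v|$. This yields item (2), and intersecting the three exceptional events (cut existence, local connectivity, macroscopic chaining) keeps the total failure probability $\leq e^{-(\log\ell)^3}$.

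\textbf{Main obstacle.} The step I expect to require the most care is the clean deterministic deduction that a $\cregion$-connected set of diameter $\geq (\log\ell)^{C_2}$ must enter $\mac_{\bba(z,r)}$ (not merely $\mabox_{\bba(z,r)}$) — this needs the observation that such a set is contained in a single $\ob^c$-cluster of size $\geq (\log\ell)^{C_2} \geq L/10$, invoking Lemma~\ref{cutset}(2)'s promotion clause, and correctly handling the geometry so that the diameter forces the crossing at \emph{some} scale $r \leq \ell^2$ (one must center the annuli appropriately, e.g. at a net point near one end of the set). The bookkeeping of which $(z, r_j)$ pairs suffice, and verifying $C_2 = 6d^2$ is large enough for all the exponent comparisons simultaneously (against both the number of pairs and the chemical-distance error terms), is the other place where one has to be attentive, though it is routine once the scheme is fixed.
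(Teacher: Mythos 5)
Your overall route — use the renormalization machinery of Section~\ref{section:Renormalization}, the separating tilde-white cuts of Lemma~\ref{cutset}, and a macroscopic Antal--Pisztora chemical-distance estimate, transferring back to microscopic paths via the $\mac_{\mbfi}$'s — is the same approach the paper takes. However, your deduction of item~(1) from ``all cuts $\bba(z,r_j)$ exist on a net of centers and dyadic scales'' has a genuine gap. From the existence of cuts you can conclude that each large $\cregion$-component $B_i$ crosses \emph{some} cut $\bba(z_i, r)$ (centered near a point $v_i \in B_i \cap B_{\ell^2}(x)$) and hence meets $\mac_{\bba(z_i,r)}$. But you then assert ``both large components feed into the single connected set $\mac_{\bba(z,r)}$'', which requires $B_1$ and $B_2$ to cross a \emph{common} cut. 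Nothing forces this: $v_1$ and $v_2$ can be far apart in $B_{\ell^2}(x)$, and each $B_i$ could be a thin set at roughly constant distance from $x$ that never crosses any fixed annulus $\bba(x, r_j)$. So the tilde-white sets $\mac_{\bba(z_1,r)}$ and $\mac_{\bba(z_2,r)}$ may a priori be disjoint, and the existence of cuts alone does not join $B_1$ to $B_2$.

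The missing ingredient is exactly what the paper supplies: one must show that the macroscopic tilde-white sets $\bba(z_i,r)$ are large enough (macroscopic cardinality $\gtrsim r/L$) to rule out being small finite clusters — via the dichotomy ``every macroscopic tilde-white cluster in $B_{|x-y|^2}(\mbfi_x)$ is either the unique infinite cluster $\ck(\infty)$ or has size $\leq (\log|x-y|)^9$'' — hence all of them lie in $\ck(\infty)$, and then the macroscopic chemical-distance bound in $\ck(\infty)$ plus Definition~\ref{de-kwhite} Property~2 produce the microscopic $\cregion$-path between $B_1$ and $B_2$. This is precisely the chaining you deploy in your item~(2), which is why the paper proves the single quantitative claim (any $v_1 \in B_1$, $v_2 \in B_2$ are joined in $\cregion$ by a path of length $\leq \max(C|v_1-v_2|, (\log|x-y|)^C)$) once, and reads off both (1) and (2) from it. Your sequential presentation — (1) first, then (2) citing ``must be the same one by item~(1)'' — is circular in this respect: establishing (1) already requires the macroscopic Antal--Pisztora chaining of (2), and conversely the parenthetical in (2) asking that the two big tilde-white components coincide does not follow from (1) (which is about $\cregion$-components, not macroscopic clusters) but from the large-or-infinite dichotomy above. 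Reorganize so that the chaining claim is proved directly (under the event that cuts exist, the macro process has the chemical-distance property, and the size dichotomy holds), and deduce both items from it.
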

\begin{proof}
Suppose $B_1,B_2 \subseteq \cregion$ are two connected set and both $B_1$ and $B_2$ have diameter at least $(\log |x-y|)^{C_2}$ and have nonempty intersection with $B_{|x-y|^2}(x)$. We will prove that with  $\P$-probability at least $1 - e^{- (\log |x-y|)^{3}}$ for any $v_1 \in B_1, v_2\in B_2$, there exists a path in $\cregion$ that connects $v_1,v_2$ and is of length less than $\max(C |v_1-v_2|,(\log |x-y|)^{C})$. Provided with this claim, the lemma follows immediately. Next we verify the claim.

Without loss of generality, we suppose $|B_1|, |B_2| \leq (2\log |x-y|)^{C_2d}$. We suppose $\bba(v,r)$ exists for $r = (\log |x-y|)^{5d}$ and all $v \in B_{2|x-y|^2}(x)$, which has probability at least $ 1 -e^{-(\log |x-y|)^4}$ according to Lemma \ref{cutset}. Since $C_2 = 6d^2$, it follows that $B_i \cap \mabox_{\bba(v_i,r)} \neq \varnothing$. By Property 1 in Definition \ref{de-kwhite} and that $B_i$ is open, it follows that $B_i \cap \mac_{\bba(v_i,r)} \neq \varnothing$.

Since tilde-white percolation is a supercritical independent site percolation, with probability 1 there exists a unique infinite (macroscopic) tilde-white cluster (c.f. \cite{AKN87}), which we denote as $\ck(\infty)$. Then the following holds with probability at least $1 - e^{-(\log |x-y|)^4}$ (by \cite[Theorems 1.1]{AP96} (or \cite[(3.8)]{DX17}) and Lemma \ref{FiniteClusterSize})
\begin{itemize}
	\item[(a)] For all $\mathbf{i,j} \in \ck(\infty)\cap B_{|x-y|^2}(\mbfi_x)$, $
		 D_{\ck(\infty)}(\mathbf{i,j}) \leq \max(C |\mathbf{i-j}|,( \log |x-y|)^5)$.
	\item [(b)] For all $\mbfi \in \Z^d \cap B_{|x-y|^2}(\mbfi_x)$, either $\ck(\mbfi) = \ck(\infty)$ or $|\ck(\mbfi)| \leq (\log |x-y|)^9$.
\end{itemize}
Then it follows from (b) that $\bba(v_i,r) \subseteq \ck(\infty)$ for $i=1,2$. Combined with (a) and the Property 2 in Definition \ref{de-kwhite}, it yields the claim we described at the beginning of the proof and thus complete the proof of the lemma.
\end{proof}

\subsection{Approximations of LWGF}
\label{sec:Approximations of LWGF}

In order to prove concentration inequality for $\varphi_*$ and sub-additivity for $\E \varphi_*$ in Section~\ref{section: Concentration, Sub-additivity}, we shall introduce $\bar \varphi_*$, and $\varphi^{\circ}$ as follows. For notation convenience, we will omit $\lambda$ in $\varphi_*$ (recalling that $\lambda \geq \lambda_*$). 

\begin{defn}
\label{def-phic}
	Recall \eqref{eq:phi-def}. We define $\bar \varphi$, the truncation of $\varphi$, as
\begin{align}
\label{eq:def-barphi}
	&\bar \varphi_*(x,y)  = \cul|x-y|\vee(\varphi_*(x,y) \wedge \Cbar|x-y|)\,,
\end{align}
and define  $\varphi^\circ$'s as ($\varphi^\circ$ is $\varphi$ with an additional restriction that RW does not exit a big ball centered at $x$ ---  the reader may remember the notation by interpret the superscript $\circ$ as ``restriction to a ball'') 
\begin{align}
\label{eq:def-phi[1]}
	&\varphi^{\circ}(x,y) = -\log \big(\Ex^{x}\big[ \lambda^{-\tau_{y}} ;\tau_{\sd_{\lambda} \cup \ob}>\tau_{y} , \xi_{B_{R_{\circ}}(x)} > \tau_{y}\big]  \big)\,, \nonumber \\
	&\varphi^{\circ}_*(x,y) = \min_{x' \in B_{r(x,y)}(x)}\min_{y' \in B_{r(x,y)}(y)}  \varphi^{\circ}(x',y') \,,\\
	&\bar \varphi^{\circ}_*(x,y)  = \cul|x-y|\vee(\varphi^{\circ}_*(x,y) \wedge \Cbar|x-y|)\,,\nonumber
\end{align}
where $\cul, \Cbar>0$ are two constants to be selected (the underline and bar decorations in the notation correspond to lower and upper truncations), $R_{\circ} = |x-y|(\log |x-y|)^{C_{\circ}}$, 
$C_{\circ}= C_1 + 3$.
(Recall $r(x,y) = (\log |x-y|)^{2\kappa + 10d}$).
\end{defn} 
It follows directly from definition that 
\begin{equation}
\label{eq:phiorder}
	\varphi^{\circ}(x,y) \geq \varphi(x,y) \geq \varphi(x,y)\,.
\end{equation}
Some remarks are in order for Definition~\ref{def-phic}.
\begin{itemize}
\item As we will show in Lemma~\ref{phic-app} $\varphi_*(x,y)$ is linear in $|x-y|$. Then we can (and we will) choose $\cul,\Cbar$ such that these $\varphi_*$ (respectively $\varphi_*^\circ$) and $\bar\varphi_*$ (respectively $\bar \varphi_*^\circ$) are close to each other. Such truncations are useful as they allow us to work with functions that are deterministically bounded from below and above.
\item We will show in Lemma~\ref{diamonddiff}  that $\varphi$ and $\varphi^\circ$ are close to each other and show in Lemma~\ref{con-con} that concentration of $\bar \varphi_*^{\circ}(x,y)$ around its expectation is sufficient to guarantee the concentration of $\bar \varphi_*(x,y)$. This is useful since it is more convenient to prove concentration for $\bar \varphi_*^{\circ}(x,y)$ for the reason that it only depends on a finite number of random variables.
\end{itemize}

\begin{lemma}
\label{phic-app}
	There exist constants $\cul,\Cbar>0$ only depending on $(d,p)$ such that for all $x,y \in \Z^d$ with $|x-y|>n^{\fuu}$, the following holds with probability at least $1-e^{-(\log |x-y|)^{2}}$: for all $\lambda \in [\lambda_*,1]$
	\begin{align}
	&\label{eq:phi-app-upper}\varphi^{\circ}_*(x,y) \leq \tfrac{1}{4}\Cbar|x-y|\,, \\
	&\label{eq:phi-app-lower}\varphi_*(x,y) \geq 4\cul|x-y|\,.
	\end{align}
As a result, we have for all $\lambda \in [\lambda_*,1]$
\begin{equation}
\label{eq:phidev}
\varphi_*(x,y) = \bar \varphi_*(x,y), \varphi^{\circ}_*(x,y)  =  \bar \varphi^{\circ}_*(x,y)
\end{equation}
\end{lemma}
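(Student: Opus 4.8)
The plan is to prove the two bounds \eqref{eq:phi-app-upper} and \eqref{eq:phi-app-lower} separately, each by exhibiting (respectively ruling out) random-walk strategies, and then to deduce \eqref{eq:phidev} by combining them with the definition of the truncations in \eqref{eq:def-barphi}, \eqref{eq:def-phi[1]} and the monotonicity \eqref{eq:phiorder}. For the upper bound \eqref{eq:phi-app-upper}, I would use the renormalization machinery of Section~\ref{section:Renormalization}: on the high-probability event of Lemma~\ref{chemdist2} (applied with the pair $x,y$, valid since $|x-y|>n^{\fuu}$), there is a path in $\cregion = (\sd_\lambda\cup\ob)^c$ joining a point $x'\in B_{r(x,y)}(x)$ to a point $y'\in B_{r(x,y)}(y)$ of length at most $C|x-y|$, and — after enlarging the event slightly using Lemma~\ref{cutset} — this path can be taken to stay inside $B_{R_\circ}(x)$ (here $R_\circ = |x-y|(\log|x-y|)^{C_\circ}$ is comfortably larger than $C|x-y|$). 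Forcing the walk to follow this path step by step gives $\Ex^{x'}\big[\lambda^{-\tau_{y'}};\tau_{\sd_\lambda\cup\ob}>\tau_{y'},\xi_{B_{R_\circ}(x')}>\tau_{y'}\big]\geq (\lambda^{-1}/(2d))^{C|x-y|}\geq (1/(2d))^{C|x-y|}$ since $\lambda\le 1$, hence $\varphi^\circ(x',y')\leq C'|x-y|$; choosing $\Cbar\geq 4C'$ yields \eqref{eq:phi-app-upper}. One subtlety is that the path produced by Lemma~\ref{chemdist2}(2) has length bounded by $\max(C|u-v|,(\log|x-y|)^C)$, so when $|x-y|$ is not much larger than a power of $\log$ one uses the $(\log|x-y|)^C$ branch; but since $|x-y|>n^{\fuu}$ this is never the binding case.

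For the lower bound \eqref{eq:phi-app-lower}, I would argue that the walk needs at least order $|x-y|$ steps to get from near $x$ to near $y$, and each step costs a factor bounded away from $\lambda^{-1}$ in a suitable averaged sense. Concretely, $\varphi(x',y') = -\log\Ex^{x'}[\lambda^{-\tau_{y'}}\11_{\tau_{\sd_\lambda\cup\ob}>\tau_{y'}}]\geq -\log\Ex^{x'}[\lambda^{-\tau_{y'}}\11_{\tau_{y'}<\infty}]$, and since $\tau_{y'}\geq |x'-y'|_1\geq |x-y|-2r(x,y)$ deterministically (lattice distance), we get $\lambda^{-\tau_{y'}}\leq \lambda^{-\tau_{y'}}$, which does not help directly because $\lambda\le 1$ makes $\lambda^{-\tau_{y'}}$ large. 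Instead one must use that the number of walk trajectories of length $m$ is $(2d)^m$ while the probability of each is $(2d)^{-m}$, and that reaching $y'$ within $m$ steps forces $m\geq |x-y|-2r(x,y)$: summing, $\Ex^{x'}[\lambda^{-\tau_{y'}}\11_{\tau_{y'}\le M}]\leq \sum_{m\geq |x-y|-2r}\lambda^{-m}\Pr^{x'}(\tau_{y'}=m)$ and this is controlled once one also discards the contribution of very long excursions using the strict exponential killing bound \eqref{eq:SurvivalProb-SD} (which gives, for $\lambda\ge\lambda_*$, survival decaying like $(1-(\log n)^{-100d})^t\lambda^t$, so that $\lambda^{-t}\Pr(\tau_{\sd_\lambda\cup\ob}>t)\to 0$ exponentially). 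Minimizing $-\log$ of the total over $x'\in B_{r(x,y)}(x)$, $y'\in B_{r(x,y)}(y)$ and over $\lambda\in[\lambda_*,1]$ — which only shifts the bound by $O(r(x,y))=O((\log|x-y|)^{2\kappa+10d})=o(|x-y|)$ — produces $\varphi_*(x,y)\geq c|x-y|$ for a constant $c$, and choosing $\cul\leq c/4$ gives \eqref{eq:phi-app-lower}.

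Finally, \eqref{eq:phidev} is immediate: on the intersection of the two high-probability events, $\varphi_*(x,y)\geq 4\cul|x-y|\geq \cul|x-y|$ and, since $\varphi^\circ_*(x,y)\leq \tfrac14\Cbar|x-y|\leq\Cbar|x-y|$ and $\varphi_*(x,y)\leq\varphi^\circ_*(x,y)$ by \eqref{eq:phiorder}, also $\varphi_*(x,y)\leq \Cbar|x-y|$; thus $\bar\varphi_*(x,y)=\cul|x-y|\vee(\varphi_*(x,y)\wedge\Cbar|x-y|)=\varphi_*(x,y)$, and likewise $\varphi^\circ_*(x,y)\geq\varphi_*(x,y)\geq4\cul|x-y|\geq\cul|x-y|$ together with $\varphi^\circ_*(x,y)\leq\tfrac14\Cbar|x-y|\leq\Cbar|x-y|$ gives $\bar\varphi^\circ_*(x,y)=\varphi^\circ_*(x,y)$. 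The probability bound $1-e^{-(\log|x-y|)^2}$ comes from taking the weaker of the two exponential estimates (the $1-e^{-(\log|x-y|)^3}$ from Lemma~\ref{chemdist2} and the $1-e^{-rL^{-1}}$ from Lemma~\ref{cutset}, both dominated by $e^{-(\log|x-y|)^2}$), and a union bound over $\lambda$ is not needed since the strategies/path bounds are uniform in $\lambda\in[\lambda_*,1]$ (using only $\lambda\le 1$ for the upper bound and \eqref{eq:SurvivalProb-SD} for the lower). The main obstacle I anticipate is the lower bound: making the ``each step costs at least a constant factor'' heuristic rigorous in the presence of the $\lambda^{-\tau_{y'}}$ reward requires carefully balancing the entropy $(2d)^m$ of paths against $\lambda^{-m}$ and simultaneously controlling long excursions via \eqref{eq:SurvivalProb-SD}, and getting a clean constant $c$ independent of $\lambda\in[\lambda_*,1]$ and of the minimization over the $r(x,y)$-balls.
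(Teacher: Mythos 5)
Your upper-bound argument for \eqref{eq:phi-app-upper} matches the paper's: work on the event that the cutsets of Lemma~\ref{cutset} exist and the chemical-distance control of Lemma~\ref{chemdist2} holds, produce a path in $\cregion$ of length at most $C|x-y|$ joining $B_{r(x,y)}(x)$ to $B_{r(x,y)}(y)$, force the walk along it, and observe that the path automatically lies in $B_{R_\circ}(x)$. The derivation of \eqref{eq:phidev} from the two displayed bounds and \eqref{eq:phiorder} is also exactly right.

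The lower bound \eqref{eq:phi-app-lower}, however, has a genuine gap. You propose to control $\Ex^{x'}[\lambda^{-\tau_{y'}};\tau_{\sd_\lambda\cup\ob}>\tau_{y'}]$ by pairing $\lambda^{-m}$ against the survival bound \eqref{eq:SurvivalProb-SD}. But \eqref{eq:SurvivalProb-SD} gives $\Pr^v(\tau_{\sd_\lambda\cup\ob}>t)\leq (\log n)^C(1-(\log n)^{-100d})^t\lambda^t$, so after cancelling the $\lambda^{\pm t}$ the per-step penalty beyond $\lambda$ is only $(1-(\log n)^{-100d})$. Summing $m\geq |x-y|-2r(x,y)$ then yields $\varphi_*(x,y)\gtrsim |x-y|(\log n)^{-100d}$ up to lower-order terms, which is $o(|x-y|)$ whenever $|x-y|$ is polynomial in $n$ (and $|x-y|$ can be as large as $\cucc n(\log n)^{-2/d}$); this falls short of $4\cul|x-y|$ by a factor of $(\log n)^{100d}$. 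The paper's lower bound instead has two essential ingredients you do not use: first, Lemma~\ref{Linear}, which truncates the hitting time to $\tau_{y'}<C|u-v|$ at negligible cost (this is a nontrivial adaptation of the loop-erasure decomposition from \cite{DX17} and is what makes $\lambda^{-\tau_{y'}}$ bounded by $\lambda^{-C|u-v|}$, a harmless factor since $|\log\lambda|=O((\log n)^{-2/d})$); second, the constant-rate killing estimate from \cite[Lemma 5.4]{DX17}, which gives $\P(\Pr^u(\tau>\tau_v)\geq (c')^{|u-v|})\leq e^{-c|u-v|}$ for a constant $c'\in(0,1)$ --- i.e.\ each unit of loop-erased Euclidean distance costs a constant, not merely $(\log n)^{-100d}$. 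Your ``entropy $(2d)^m$ vs.\ $\lambda^{-m}$'' heuristic cannot be made to work without some such constant-rate input, because the entropy $(2d)^m$ exactly cancels the $(2d)^{-m}$ probability of each trajectory and \eqref{eq:SurvivalProb-SD} alone supplies too weak a penalty.
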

\begin{lemma}
\label{diamonddiff}
	For all $x,y \in \Z^d$ such that $|x-y| > n^\fuu$. If $\varphi(x,y) \leq 2\Cbar|x-y|$, then
	\begin{align}
	&\varphi(x,y) \geq \varphi^{\circ} (x,y) - e^{-|x-y|}\,, \label{eq:diamonddiff}\\
	\Ex^x\big[\tau_{y}\lambda^{-\tau_{y}};&\tau_{\sd_{\lambda} \cup \ob}>\tau_{y} \big]/\Ex^x\big[\lambda^{-\tau_{y}};\tau_{\sd_{\lambda} \cup \ob}>\tau_{y} \big] \leq |x-y|(\log |x-y|)^{C_{\circ}} \,. \label{eq:ex-taulambda-tau}
\end{align}
\end{lemma}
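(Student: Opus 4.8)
The plan is to rewrite both inequalities in terms of $A := e^{-\varphi(x,y)} = \Ex^x\big[\lambda^{-\tau_y};\,\tau_{\sd_\lambda\cup\ob}>\tau_y\big]$ and its truncation $A^{\circ} := e^{-\varphi^{\circ}(x,y)} = \Ex^x\big[\lambda^{-\tau_y};\,\tau_{\sd_\lambda\cup\ob}>\tau_y,\,\xi_{B_{R_\circ}(x)}>\tau_y\big]$, and to estimate the defect $A-A^{\circ}$ (and later the tail of $\Ex^x[\tau_y\lambda^{-\tau_y};\cdots]$) via the survival bound \eqref{eq:SurvivalProb-SD}. First I would dispose of the degenerate cases: if $x\notin\cregion$, or if $y$ is not reachable from $x$ inside $\cregion=(\sd_\lambda\cup\ob)^c$, then $A=0$ and the hypothesis $\varphi(x,y)\le 2\Cbar|x-y|$ fails, so we may assume $A\ge e^{-2\Cbar|x-y|}>0$. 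Expanding in the value $m$ of $\tau_y$,
\begin{equation*}
A-A^{\circ}=\sum_{m\ge 0}\lambda^{-m}\,\Pr^x\big(\tau_y=m,\ \tau_{\sd_\lambda\cup\ob}>m,\ \xi_{B_{R_\circ}(x)}\le m\big)\,.
\end{equation*}
The geometric observation I would use is that, since $R_\circ=|x-y|(\log|x-y|)^{C_{\circ}}$ and $|x-y|>n^{\fuu}$ (hence $R_\circ>2|x-y|$ for $n$ large), any path from $x$ that first leaves $B_{R_\circ}(x)$ and afterwards reaches $y$ must have length $>R_\circ$; so only terms with $m>R_\circ$ survive in the sum.

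Next I would feed in the survival estimate: for every $m$, using $\lambda\ge\lambda_*$ and \eqref{eq:SurvivalProb-SD},
\begin{equation*}
\lambda^{-m}\,\Pr^x\big(\tau_y=m,\ \tau_{\sd_\lambda\cup\ob}>m\big)\ \le\ \lambda^{-m}\,\Pr^x\big(\tau_{\sd_\lambda\cup\ob}>m\big)\ \le\ (\log n)^{C}\big(1-(\log n)^{-100d}\big)^{m}\,.
\end{equation*}
Summing the geometric series over $m>R_\circ$ gives
\begin{equation*}
A-A^{\circ}\ \le\ (\log n)^{C+100d}\big(1-(\log n)^{-100d}\big)^{R_\circ}\ \le\ (\log n)^{C+100d}\,e^{-(\log n)^{-100d}R_\circ}\,.
\end{equation*}
Since $|x-y|>n^{\fuu}$ forces $\log|x-y|>\tfrac1{10}\log n$, and since $C_{\circ}=C_1+3$ may be taken larger than $100d$, the exponent obeys $(\log n)^{-100d}R_\circ=(\log n)^{-100d}|x-y|(\log|x-y|)^{C_\circ}\ge c\,|x-y|(\log n)^{3}$, which for $n$ large dominates $(2\Cbar+2)|x-y|+C\log\log n$. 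Hence $A-A^{\circ}\le\tfrac12 e^{-|x-y|}A$, and using $-\log(1-t)\le 2t$ for $t\le\tfrac12$ we get $\varphi^{\circ}(x,y)=-\log A^{\circ}\le-\log A+e^{-|x-y|}=\varphi(x,y)+e^{-|x-y|}$, which is \eqref{eq:diamonddiff}.

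For \eqref{eq:ex-taulambda-tau} I would run the same decomposition on the numerator $\Ex^x[\tau_y\lambda^{-\tau_y};\,\tau_{\sd_\lambda\cup\ob}>\tau_y]=\sum_m m\,\lambda^{-m}\Pr^x(\tau_y=m,\ \tau_{\sd_\lambda\cup\ob}>m)$, splitting at $m=\tfrac12 R_\circ$. The part $m\le\tfrac12 R_\circ$ is at most $\tfrac12 R_\circ\sum_m\lambda^{-m}\Pr^x(\tau_y=m,\cdots)=\tfrac12 R_\circ A$. For the part $m>\tfrac12 R_\circ$ the same estimate as above applies (the extra factor $m$ being harmless, since $\sum_{m>M}m x^m\le C M(1-x)^{-2}x^{M}$ and $R_\circ\gg(\log n)^{100d}$), producing a quantity at most $e^{-(2\Cbar+1)|x-y|}\le A\le\tfrac12 R_\circ A$ for $n$ large, using $R_\circ\ge 2$. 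Adding the two pieces yields $\Ex^x[\tau_y\lambda^{-\tau_y};\cdots]\le R_\circ A=|x-y|(\log|x-y|)^{C_\circ}A$, which is exactly \eqref{eq:ex-taulambda-tau}.

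The argument is mostly bookkeeping; the one point that needs genuine care is checking that the ``gain'' $(\log n)^{-100d}(\log|x-y|)^{C_\circ}$ per unit Euclidean distance really does beat the fixed constant $\Cbar$ and absorbs all $(\log n)^{C}$-type and $m$-type prefactors --- this is precisely where the hypotheses $|x-y|>n^{\fuu}$ (so that $\log|x-y|\asymp\log n$) and $C_\circ=C_1+3$ with $C_1$ large enter. A minor secondary point is the reduction that the cases $A=0$ are vacuous under the standing assumption $\varphi(x,y)\le 2\Cbar|x-y|$, and that \eqref{eq:SurvivalProb-SD} is being applied directly to $\sd_\lambda$ (not $\sd_{\lambda_*}$), which is legitimate for $\lambda\ge\lambda_*$.
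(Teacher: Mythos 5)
Your proof is correct and takes essentially the same route as the paper: decompose by the value of $\tau_y$, apply the survival bound \eqref{eq:SurvivalProb-SD}, sum the geometric tail, and compare it to the lower bound $e^{-\varphi(x,y)}\ge e^{-2\Cbar|x-y|}$ coming from the hypothesis. The paper introduces an intermediate threshold $j_*=|x-y|(\log|x-y|)^{C_\circ-1}$ and bounds both claims via a single tail estimate, whereas you truncate directly at $R_\circ$ (resp.\ $\tfrac12 R_\circ$); this is a cosmetic difference, and you are right to flag explicitly that the argument needs $C_\circ=C_1+3$ large compared to $100d$ (a point the paper leaves implicit in its choice of ``sufficiently large'' $C_1$).
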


\begin{lemma}
\label{con-con}
For all $x,y \in \Z^d$ such that $|x-y| > n^\fuu$,\begin{align*}
    \{&\E\left[\bar \varphi_*(x,y)\right] - \bar \varphi_*(x,y)\leq 2^{-1}|x-y|^{\tuu}, 4 \cul |x-y| \leq \bar \varphi_*(x,y)  \leq 4^{-1}\Cbar|x-y|\}\\
  &\subseteq \{ \E\left[\bar \varphi_*^{\circ}(x,y)\right] - \bar \varphi^{\circ}_*(x,y) \leq |x-y|^{\tuu} ,2\cul |x-y| \leq \bar \varphi^{\circ}_*(x,y) \leq 2^{-1}\Cbar|x-y| \}\\
  &\quad \subseteq \{\E\left[\bar \varphi_*(x,y)\right] - \bar \varphi_*(x,y) \leq 2|x-y|^{\tuu},  \cul |x-y|<\bar \varphi_*(x,y) < \Cbar|x-y|\}\,.
\end{align*}
\
\end{lemma}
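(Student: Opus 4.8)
\textbf{Proof proposal for Lemma~\ref{con-con}.}

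The plan is to prove the two inclusions separately, and in each case the argument hinges on comparing $\bar\varphi_*$ with $\bar\varphi_*^\circ$ both pointwise (as random variables) and in expectation, using Lemma~\ref{phic-app} and Lemma~\ref{diamonddiff} to control the discrepancy $\varphi_*^\circ - \varphi_*$ on a high-probability event. The key observation is that $0 \le \varphi_*^\circ(x,y) - \varphi_*(x,y)$ always (from \eqref{eq:phiorder}, applied after the minimization over the local balls), so the only issue is an upper bound on this difference. First I would invoke Lemma~\ref{diamonddiff}: on the event where $\varphi(x',y') \le 2\Cbar|x-y|$ for the relevant $x',y'$ (which is implied, up to the exponentially small failure probability, by $\bar\varphi_*(x,y) \le \tfrac14\Cbar|x-y|$ together with Lemma~\ref{phic-app}), we get $\varphi_*^\circ(x,y) \le \varphi_*(x,y) + e^{-|x-y|}$, hence $\bar\varphi_*^\circ(x,y) \le \bar\varphi_*(x,y) + e^{-|x-y|}$ and likewise a matching lower bound; since $|x-y| > n^{1/10}$ this error is utterly negligible against $|x-y|^{2/3}$. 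So pointwise $\bar\varphi_*^\circ$ and $\bar\varphi_*$ differ by at most $e^{-|x-y|}$ on a good event $G$ with $\P(G^c) \le e^{-(\log|x-y|)^2}$.

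Next I would control the expectations. We have the deterministic bounds $\cul|x-y| \le \bar\varphi_* \le \Cbar|x-y|$ and the same for $\bar\varphi_*^\circ$, so on $G^c$ the two truncated functions differ by at most $(\Cbar - \cul)|x-y|$, which contributes at most $(\Cbar-\cul)|x-y| \cdot e^{-(\log|x-y|)^2}$ to $|\E\bar\varphi_*^\circ - \E\bar\varphi_*|$ — again negligible against $|x-y|^{2/3}$. Combining with the pointwise comparison on $G$, we conclude $|\E[\bar\varphi_*^\circ(x,y)] - \E[\bar\varphi_*(x,y)]| \le 2^{-1}|x-y|^{2/3}$, say, for $|x-y|$ large (and the statement is only asserted for $|x-y| > n^{1/10}$, which is large). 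Now the first inclusion follows by a chain of estimates on the good event: if $\E[\bar\varphi_*] - \bar\varphi_* \le 2^{-1}|x-y|^{2/3}$, then $\E[\bar\varphi_*^\circ] - \bar\varphi_*^\circ \le (\E[\bar\varphi_*^\circ] - \E[\bar\varphi_*]) + (\E[\bar\varphi_*] - \bar\varphi_*) + (\bar\varphi_* - \bar\varphi_*^\circ) \le 2^{-1}|x-y|^{2/3} + 2^{-1}|x-y|^{2/3} + e^{-|x-y|} \le |x-y|^{2/3}$; and the two-sided bound $2\cul|x-y| \le \bar\varphi_*^\circ \le 2^{-1}\Cbar|x-y|$ follows from $4\cul|x-y| \le \bar\varphi_* \le 4^{-1}\Cbar|x-y|$ together with the $e^{-|x-y|}$ pointwise closeness, since $4\cul - e^{-|x-y|} > 2\cul$ etc. The second inclusion is entirely symmetric: from $\E[\bar\varphi_*^\circ] - \bar\varphi_*^\circ \le |x-y|^{2/3}$ one recovers $\E[\bar\varphi_*] - \bar\varphi_* \le |x-y|^{2/3} + |E\text{-gap}| + e^{-|x-y|} \le 2|x-y|^{2/3}$, and the two-sided bound $\cul|x-y| < \bar\varphi_* < \Cbar|x-y|$ is strictly weaker than $2\cul|x-y| \le \bar\varphi_*^\circ \le 2^{-1}\Cbar|x-y|$ perturbed by $e^{-|x-y|}$, again using that $e^{-|x-y|}$ is tiny.

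I expect the main (though still modest) obstacle to be bookkeeping the conditional-vs-unconditional nature of the high-probability event: Lemma~\ref{diamonddiff} and Lemma~\ref{phic-app} give statements that hold with probability $1 - e^{-(\log|x-y|)^2}$, and one must be careful that the inclusions in Lemma~\ref{con-con} are stated as \emph{deterministic} set inclusions between events — so strictly the correct reading is that each inclusion holds after intersecting with the good event $G$ of Lemmas~\ref{phic-app} and~\ref{diamonddiff}, or equivalently that the symmetric difference of the two sides has $\P$-measure at most $e^{-(\log|x-y|)^2}$, which is exactly how these inclusions get used downstream (to transfer a concentration bound for $\bar\varphi_*^\circ$ to one for $\bar\varphi_*$ up to a negligible probability correction). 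The only genuinely quantitative input is that $e^{-|x-y|}$ and $|x-y| e^{-(\log|x-y|)^2}$ are both $o(|x-y|^{2/3})$ for $|x-y| > n^{1/10}$, which is immediate; all the constants $\cul,\Cbar$ appearing in the truncation have enough slack (factors of $4$ versus $2$) to absorb these errors.
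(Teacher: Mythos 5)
Your proposal identifies the right two ingredients — the pointwise comparison between $\varphi_*$ and $\varphi_*^\circ$ via Lemma~\ref{diamonddiff}, and the control of $\E[\bar\varphi_*^\circ]-\E[\bar\varphi_*]$ by splitting on a good event — and the chain of inequalities you would run for each inclusion is essentially the paper's. However, you have misread Lemma~\ref{diamonddiff} as a high-probability statement, when in fact it is a \emph{deterministic} implication valid for every environment: if $\varphi(x,y)\le 2\Cbar|x-y|$ then $\varphi(x,y)\ge\varphi^\circ(x,y)-e^{-|x-y|}$, no exceptional set involved. This matters because Lemma~\ref{con-con} asserts a genuine deterministic set inclusion, and the paper proves it as such. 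Your final paragraph's conclusion that ``strictly the correct reading is that each inclusion holds after intersecting with the good event $G$'' is therefore a genuine gap: as written, your argument proves a strictly weaker statement than the lemma.

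The point you are missing is that the hypotheses on the left side of each inclusion already supply the hypothesis of Lemma~\ref{diamonddiff} deterministically. If $4\cul|x-y|\le\bar\varphi_*(x,y)\le 4^{-1}\Cbar|x-y|$, then the truncation in $\bar\varphi_*$ is inactive, so $\bar\varphi_*(x,y)=\varphi_*(x,y)\le 4^{-1}\Cbar|x-y|$; writing $\varphi_*(x,y)=\varphi(u,v)$ for the minimizing pair and using $|u-v|\ge|x-y|-2r(x,y)\ge |x-y|/2$, this gives $\varphi(u,v)\le 2\Cbar|u-v|$, so Lemma~\ref{diamonddiff} applies directly and yields $\varphi^\circ_*(x,y)\le\varphi_*(x,y)+e^{-|x-y|/2}$ — with no probabilistic input and no appeal to Lemma~\ref{phic-app}. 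The same mechanism works for the second inclusion starting from $2\cul|x-y|\le\bar\varphi_*^\circ\le 2^{-1}\Cbar|x-y|$, which forces $\bar\varphi_*^\circ=\varphi_*^\circ$ and hence $\varphi_*\le\varphi_*^\circ\le 2^{-1}\Cbar|x-y|\le 2\Cbar|x-y|$. The only place where a good event is genuinely needed is to bound the deterministic number $\E[\bar\varphi_*^\circ]-\E[\bar\varphi_*]$, since that expectation integrates over all environments; there the paper uses $E=\{\bar\varphi_*=\varphi_*,\,\bar\varphi^\circ_*=\varphi^\circ_*\}$ with $\P(E^c)$ small by \eqref{eq:phidev}, and your treatment of that part (bounding the $G^c$ contribution by $\Cbar|x-y|\cdot\P(G^c)$) is correct and matches the paper.
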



Now, we prove the aforementioned results as well as Lemma~\ref{ind-appr}. We first record the following corollary, which is an immediate consequence of \eqref{eq:SurvivalProb-SD}.
\begin{cor}
\label{G-polylogn}
There exists $C_3 = C_3(d,p)$ such that for all $x,y \in \Z^d$ and $0<\lambda<1$
\begin{equation}
    G_{\cregion}(x,y;\lambda) \leq (\log n)^{C_3}\,.
\end{equation}
\end{cor}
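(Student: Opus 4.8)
The plan is to bound the defining series $G_{\cregion}(x,y;\lambda)=\sum_{m\ge 0}\lambda^{-m}\Pr^x(S_m=y,\,S_{[1,m-1]}\subseteq\cregion)$ term by term using the tail estimate \eqref{eq:SurvivalProb-SD}, and then sum a geometric series. I will carry this out for $\lambda\in[\lambda_*,1)$, which is the range used throughout the paper and the one in which \eqref{eq:SurvivalProb-SD} is available; recall from Lemma~\ref{lambda*} that then $\lambda\ge\lambda_*\ge 1/2$ once $n$ is large, so $\lambda^{-1}$ and $\lambda^{-2}$ are bounded by absolute constants.

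First I would discard the constraint $\{S_m=y\}$, since it only decreases the probability, leaving the $m$-th summand at most $\lambda^{-m}\Pr^x(S_{[1,m-1]}\subseteq\cregion)$. The terms $m=0,1,2$ are handled by the trivial bound and contribute $O(1)$. For $m\ge 3$, conditioning on the first step and using the Markov property gives $\Pr^x(S_{[1,m-1]}\subseteq\cregion)\le\max_{z\in\Z^d}\Pr^z(\tau_{\sd_\lambda\cup\ob}>m-2)$, since $\{S_{[0,t]}\subseteq\cregion\}=\{\tau_{\sd_\lambda\cup\ob}>t\}$. Applying \eqref{eq:SurvivalProb-SD} with $t=m-2$ bounds this by $(\log n)^C(1-(\log n)^{-100d})^{m-2}\lambda^{m-2}$, so the $m$-th summand is at most $\lambda^{-m}\cdot(\log n)^C(1-(\log n)^{-100d})^{m-2}\lambda^{m-2}=\lambda^{-2}(\log n)^C(1-(\log n)^{-100d})^{m-2}$. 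Summing over $m\ge 3$ yields a geometric series summing to $(\log n)^{100d}$ up to a constant, hence $G_{\cregion}(x,y;\lambda)\le O(1)+C(\log n)^{C+100d}$, which is at most $(\log n)^{C_3}$ for, say, $C_3=C+100d+1$ and $n$ large.

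There is essentially no obstacle here: as the statement indicates, it is an immediate consequence of \eqref{eq:SurvivalProb-SD}, and the only thing to watch is the bookkeeping with the $\lambda^{\pm m}$ factors. The one genuinely load-bearing feature is that the tail bound in \eqref{eq:SurvivalProb-SD} carries the extra decay $(1-(\log n)^{-100d})^t$ beyond the bare $\lambda^t$: without it, the $\lambda^{-m}$ prefactor in $G_{\cregion}$ would not be beaten and the series would diverge. That extra factor is exactly the slack $(1-k_n^{-21d})$ built into the definition of $\sd_\lambda$, propagated through the block decomposition underlying \eqref{eq:SurvivalProb-SD}, so the corollary is in effect calibrated precisely to what that slack provides.
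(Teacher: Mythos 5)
Your proof is correct and follows exactly the route the paper has in mind (the paper labels the corollary an ``immediate consequence'' of \eqref{eq:SurvivalProb-SD} without writing out the summation, which is what you have supplied). Your restriction of $\lambda$ to $[\lambda_*,1)$ is not just cautious bookkeeping: the range $0<\lambda<1$ as written in the corollary's statement is in fact too broad. For $x\sim y$ the $m=1$ term of the defining series is already $(2d)^{-1}\lambda^{-1}$, which exceeds any fixed power of $\log n$ once $\lambda$ is small enough, so no uniform polylogarithmic bound can hold down to $\lambda\to 0$; and \eqref{eq:SurvivalProb-SD} is only stated for $\lambda\geq\lambda_*$ in the first place. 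Every application of the corollary in the paper takes $\lambda\in[\lambda_*,1]$, so the intended hypothesis is clearly $\lambda\geq\lambda_*$, which is precisely what your argument uses (together with Lemma~\ref{lambda*} to get $\lambda^{-1},\lambda^{-2}=O(1)$). The rest — dropping $\{S_m=y\}$, peeling off the first step so the event matches $\{\tau_{\sd_\lambda\cup\ob}>m-2\}$, plugging in \eqref{eq:SurvivalProb-SD}, and summing the geometric series with ratio $1-(\log n)^{-100d}$ to pick up the extra $(\log n)^{100d}$ — is exactly right, and you have correctly isolated the load-bearing fact, namely that the $(1-(\log n)^{-100d})^t$ decay is what beats the $\lambda^{-m}$ prefactor.
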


\begin{proof}[Proof of  Lemma~\ref{phic-app}: \eqref{eq:phi-app-upper}]
We will work on the event that both $\bba(x,r(x,y)/2)$ and $\bba(y,r(x,y)/2)$ are non-empty and the properties described in Lemma \ref{chemdist2} holds for ($x,y,\lambda_*$). By Lemmas \ref{cutset} and \ref{chemdist2}, this event has probability at least $1 - 2e^{(\log |x-y|)^{3}}$. And we note that such a event does not depend on $\lambda$. 

On such a event, since $\mac_{\bba(x,r(x,y)/2)}$ and $\mac_{\bba(y,r(x,y)/2)}$ are in $(\sd_{\lambda_*} \cup \ob)^c$ and of diameter at least $(\log |x-y|)^{C_2}$,Lemma~\ref{chemdist2} (2) implies there is a path in $\mathcal V_{\lambda_*}$ of lenth at most $C|x-y|$ that connects $B_{r(x,y)}(x)$ and $B_{r(x,y)}(y)$. Therefore, by forcing the random walk to go along such a path we get
\begin{equation*}
	\max_{\lambda \in [\lambda_*,1]}\varphi^\circ_*(x,y;\lambda) \leq C \log (2d) |x-y|\,.\qedhere
\end{equation*}
\end{proof}

The next result will be useful in proving \eqref{eq:phi-app-lower}.
\begin{lemma}
\label{Linear}
For all $u \in \mathcal C(0)$ with $|u| \geq n^{1/2}$, with $\P$-probability at least $1 - 2e^{-(\log |u|)^{2d}}$, for all $\lambda \in [\lambda_*,1]$
  \begin{equation*}
  \Ex \left[ \lambda^{-\tau_{u}} \11_{\tau_{\sd_{\lambda} \cup \ob}>\tau_{u}} \11_{\tau_u < C|u|}\right] \geq \Ex \left[ \lambda^{-\tau_{u}} \11_{\tau_{\sd_{\lambda} \cup \ob}>\tau_{u}} \right](1 - e^{-|u|^{1/101}})\,.
\end{equation*}
\end{lemma}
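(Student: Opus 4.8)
The plan is to show that the contribution to the expectation $\Ex[\lambda^{-\tau_u}\11_{\tau_{\sd_\lambda\cup\ob}>\tau_u}]$ coming from the event $\{\tau_u \geq C|u|\}$ is super-exponentially small in $|u|$, so that it can be absorbed into the multiplicative error $(1-e^{-|u|^{1/101}})$. Write $m = \lceil C|u|\rceil$ and decompose according to dyadic (or unit) blocks of time: $\Ex[\lambda^{-\tau_u}\11_{\tau_{\sd_\lambda\cup\ob}>\tau_u}\11_{\tau_u \geq m}] = \sum_{t \geq m}\lambda^{-t}\Pr(\tau_u = t, \tau_{\sd_\lambda\cup\ob}>t)$. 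For each term, note that $\{\tau_u = t\} \subseteq \{\tau_{\sd_\lambda\cup\ob} \wedge \tau_u > t-1\}$, and $\{\tau_{\sd_\lambda\cup\ob} \wedge \tau_u > t-1\} \subseteq \{\tau_{\sd_\lambda\cup\ob} > t-1\}$ since we may treat $\{u\}$ as being removed (or simply bound $\Pr(\tau_u=t,\tau_{\sd_\lambda\cup\ob}>t) \leq \Pr(\tau_{\sd_\lambda\cup\ob\cup\{u\}} > t-1)$, noting $u\in\calC(0)$ so removing $u$ changes $\sd_\lambda$-type bounds only locally). Then invoke \eqref{eq:SurvivalProb-SD}: $\Pr^0(\tau_{\sd_\lambda\cup\ob}>t) \leq (\log n)^C (1-(\log n)^{-100d})^t\lambda^t$, which holds on the relevant high-probability environment event. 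This gives $\lambda^{-t}\Pr(\cdots) \leq (\log n)^C(1-(\log n)^{-100d})^{t-1}$, and summing the geometric series over $t \geq m$ yields a bound of order $(\log n)^{2C}\exp(-c m (\log n)^{-100d})$.

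The next step is to compare this to a lower bound for the full expectation $\Ex[\lambda^{-\tau_u}\11_{\tau_{\sd_\lambda\cup\ob}>\tau_u}]$. Here I would use the geometric structure established in Lemma~\ref{cutset}/Lemma~\ref{chemdist2}: on an environment event of probability at least $1-e^{-(\log|u|)^{2d}}$ (matching the claimed exceptional probability, which forces $|u|\geq n^{1/2}$ to make $(\log|u|)^{2d}$ comparable to $(\log n)^{2d}$ — note $|u|\geq n^{1/2}$ gives $(\log|u|)^{2d}\geq 2^{-2d}(\log n)^{2d}$, adequate after adjusting constants), there is a path in $\cregion = (\sd_\lambda\cup\ob)^c$ from $0$ to $u$ of length at most $C'|u|$ for some constant $C'$. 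Forcing the walk along this path gives $\Ex[\lambda^{-\tau_u}\11_{\tau_{\sd_\lambda\cup\ob}>\tau_u}] \geq \lambda^{-\ell}(2d)^{-\ell}$ where $\ell \leq C'|u|$ is the path length, hence $\geq (2d)^{-C'|u|}$ since $\lambda\leq 1$. Choosing the constant $C$ in the statement large enough relative to $C'$ and $\log(2d)$ — specifically so that $c\,C|u|(\log n)^{-100d}$ dominates $C'|u|\log(2d) + |u|^{1/101}$, which works because $|u|\geq n^{1/2}$ makes $(\log n)^{-100d}$ only polylogarithmically small while we have a full factor of $|u|$ to spare — we conclude $\Ex[\lambda^{-\tau_u}\11_{\tau_{\sd_\lambda\cup\ob}>\tau_u}\11_{\tau_u\geq m}] \leq e^{-|u|^{1/101}}\Ex[\lambda^{-\tau_u}\11_{\tau_{\sd_\lambda\cup\ob}>\tau_u}]$, which rearranges to the claim. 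The union over $\lambda\in[\lambda_*,1]$ is handled by noting that both the path-existence event and the event underlying \eqref{eq:SurvivalProb-SD} for $\lambda=\lambda_*$ can be taken uniform in $\lambda$: the path lies in $(\sd_{\lambda_*}\cup\ob)^c \subseteq (\sd_\lambda\cup\ob)^c$ for $\lambda\geq\lambda_*$ (monotonicity of $\sd_\lambda$ in $\lambda$), and the survival bound \eqref{eq:SurvivalProb-SD} already carries the factor $\lambda^t$ explicitly, so the $\lambda^{-t}$ cancels for every $\lambda$ simultaneously.

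The main obstacle I anticipate is matching the exceptional probabilities and the scales correctly: \eqref{eq:SurvivalProb-SD} is stated with $\P$-probability one (it is a deterministic bound given that certain polylog-in-$n$ environment events hold), but Lemma~\ref{cutset} and Lemma~\ref{chemdist2} have failure probabilities like $e^{-rL^{-1}}$ or $e^{-(\log|x-y|)^3}$; one must verify that for $|u|\geq n^{1/2}$ these combine to give something bounded by $2e^{-(\log|u|)^{2d}}$ as claimed (or rather, that the stated exponent is the binding one — I suspect the "$2$" and the exponent $(\log|u|)^{2d}$ are chosen precisely to dominate the $e^{-rL^{-1}}$-type term from the cutset lemma applied at radius $r \approx |u|$, which is exponentially small in $|u|/\text{polylog}$ and hence easily beats $e^{-(\log|u|)^{2d}}$; the real constraint is whatever lemma has only quasi-polynomial-in-$|u|$ decay). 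A secondary bookkeeping point is ensuring the path used for the lower bound genuinely avoids $\sd_\lambda$ and not merely $\sd_{\lambda_*}$ — but this follows from $\sd_{\lambda}\subseteq\sd_{\lambda_*}$ for $\lambda\geq\lambda_*$, directly from the definition of $\sd_\lambda$ (larger $\lambda$ makes the defining inequality harder to satisfy). I would also double-check whether the "$\tau_u < C|u|$" versus "$\tau_u \leq C|u|$" distinction and the precise value of $C$ need to be threaded through consistently; these are routine.
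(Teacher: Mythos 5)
Your estimate on the tail of $\tau_u$ is not strong enough to close the argument. From \eqref{eq:SurvivalProb-SD} you get, after the $\lambda$-powers cancel, a tail bound of order $(\log n)^{C}(1-(\log n)^{-100d})^{C|u|}\approx\exp(-cC|u|(\log n)^{-100d})$, and you propose to beat this against a lower bound on the full expectation of order $(2d)^{-C'|u|}=\exp(-C'|u|\log(2d))$ obtained by forcing the walk along a path of length $\leq C'|u|$ in $\cregion$. But dividing through by $|u|$, the required inequality becomes
\begin{equation*}
cC(\log n)^{-100d}\;\geq\;C'\log(2d)+|u|^{-100/101}\,,
\end{equation*}
whose left-hand side tends to $0$ as $n\to\infty$ while $C'\log(2d)$ is a fixed positive constant, so no fixed constant $C$ works. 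Your closing step claims the "full factor of $|u|$" rescues the comparison, but $|u|$ multiplies both sides and cancels: what remains is a competition between per-step rates, and the effective rate $(\log n)^{-100d}$ that \eqref{eq:SurvivalProb-SD} charges for avoiding $\sd_\lambda$ is hopelessly small next to the entropy rate $\log(2d)$ charged by the path lower bound.

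The paper's proof avoids this by never comparing the tail of $\tau_u$ directly to a crude path lower bound. It adapts the loop-erasure machinery of \cite{DX17}: decompose the walk into its loop-erased skeleton $\eta$ and the attached loops, work on the environment event \eqref{eq:lin-dis-event} (controlled by \cite[Lemma 5.3]{DX17}, which is also where the claimed failure probability $e^{-(\log|u|)^{2d}}$ comes from), and use the sharper bound that a loop of length $t$ rooted at a site outside $\mathcal M(t)$ costs at most $\lambda^{t}e^{-t/2(\log t)^{2}}$ --- essentially a linear-in-$t$ exponent, not $t(\log n)^{-100d}$. This controls, as a \emph{fraction} of the total weight, the paths with $\tau_u\gg|\eta|$; a separate absolute bound from \cite[Lemma 5.4]{DX17} then handles the paths with $|\eta|\geq C''|u|$. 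This structural step is exactly what your proposal skips, and it cannot be substituted by \eqref{eq:SurvivalProb-SD} alone. As a secondary bookkeeping issue, Lemma~\ref{chemdist2} only gives failure probability $e^{-(\log|u|)^{3}}$, which for $d\geq 2$ exceeds the allowed $2e^{-(\log|u|)^{2d}}$, so that lemma also cannot supply the exceptional event.
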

\begin{proof}
By setting $\lambda = 1$ and $u \in B_n(0)$, this lemma reduce to the path localization result $\ft_u \leq C|S_{\ft_u}|$ (see \eqref{eq:poly-loc}, \cite[(5.18)]{DX17}). The proof of the lemma is a straightforward adaption of arguments in \cite{DX17}, and here we only describe how to implement such an adaption. 

For $u, v\in \mathbb Z^d$, $A\subseteq \Z^d$ and $r\geq 1$, we define $\K_{A, r}(u, v)$ as in \cite[Equation (2.16)]{DX17}.  For a random walk path $\omega$, we define its unique loop erasure decomposition as in \cite[Equation (5.1)]{DX17} where $\eta = \eta(\omega)$ is the loop erasure and we define $\phi(\omega)$ as in \cite[Equation (5.10)]{DX17}. For $t \geq 1$, as in \cite{DX17}, we let $\mathcal{M}(t)$ be the collection of sites $v$ such that $\Pr^v(\tau > t) \geq e^{-t/(\log t)^2}$. We also define
$$A_t(\omega) = \{0\leq i \leq |\eta|: |l_i| = t,\eta_i \not \in \mathcal M(t)\}\,.$$
We will work on the following event (which do not depends on $\lambda$) 
\begin{equation}
\label{eq:lin-dis-event}
	\big \{ |\gamma \cap \mathcal M(t)| \leq e^{-(\log t)^{3/2}} |\gamma|, \quad \forall \gamma \in \cup_{m \geq |u|} \mathcal W_m(0), t \geq t_1 \big \}
\end{equation}
where $\calW_{m}(0)$ is the set of self-avoiding paths in $\Z^d$ of length $m$ with initial point $0$ and $t_1$ is a constant only depending on $(d,p)$. By \cite[Lemma 5.3]{DX17}, the event described in \eqref{eq:lin-dis-event} has $\P$-probability at least $1 - e^{-(\log |u|)^{2d}}$. (Note that we should replace $``n(\log n)^{-100d^2}$'' in the proof of \cite[Lemma 5.3]{DX17} by ``$|u|$''.) 
By $\lambda \geq \lambda_*$ and \eqref{eq:SurvivalProb-SD} we could see that $$\max_{u \not \in \mathcal M(t)}\Pr^u(\tau_{\sd_{\lambda} \cup \ob} > t) \leq \begin{cases}
  \lambda^{t}e^{-2^{-1}t/(\log t)^2}, &t \leq k_n^{50d}\,,\\
  \lambda^{t}e^{-2^{-1}tk_n^{-21d}}, &t \geq k_n^{50d}\,.
\end{cases}$$
As in the proof of \cite[Lemma 5.5]{DX17}, there exists a constant $t_2 = t_2(d,p)$ such that for $m\geq 1$, $t \geq t_2$, $\gamma \in \phi( \K_{(\sd_\lambda \cup\ob)^c  \setminus \{u\},m}(0,u))$ with $ m -|\gamma| \geq |\eta|t^{-9}$ we have
  \begin{align*}
    \Pr(\{\omega \in \K_{(\sd_\lambda \cup\ob)^c  \setminus \{u\},m}(0,u) : \phi(\omega) = \gamma\})&\leq \Pr(\gamma) \binom{|\eta|}{\frac{m-|\gamma|}{t}} \big(\max_{u \not \in \mathcal M(t)}\Pr^u(\tau_{\sd_{\lambda} \cup \ob} > t)\big)^{\frac{m-|\gamma|}{t}}\,.
  \end{align*}
Substituting the bound on $\max_{u \not \in \mathcal M(t)}\Pr^u(\tau_{\sd_{\lambda} \cup \ob} > t)$ and using $\binom{M}{N} \leq (eM/N)^N$ gives 
$$ \Pr(\{\omega \in \K_{(\sd_\lambda \cup\ob)^c  \setminus \{u\},m}(0,u) : \phi(\omega) = \gamma\})\leq \left( \Pr(\gamma) \lambda^{m - |\gamma|}\right) \cdot e^{-3^{-1}(m-|\gamma|)k_n^{-21d}}\,.$$
For $t \leq |u|^{1/20}$, since $|\eta| \geq |u| \geq n^{1/2}$, we have $(m-|\gamma|)k_n^{-21d} \geq |\eta|t^{-9}k_n^{-21d} \geq |u|^{1/2}$. For $t \geq |u|^{1/20}$, since $m-|\gamma|$ is a multiple of $t$, $(m-|\gamma|)k_n^{-21d} \geq t \cdot k_n^{-21d} \geq |u|^{1/50}t^{1/2}$.
Then summing over all $m,\gamma$ such that $ m -|\gamma| \geq |\eta|t^{-9}$ and $t \geq t_2$ we get
  \begin{align}
  \label{eq:linear - 1}
   \sum_{t \geq t_2}&\sum_{m \geq 0} \lambda^{-m}\Pr(\{\omega \in \K_{(\ob \cup \sd_{\lambda})^c \setminus \{u\},m}(0,u) : |A_t(\omega)| \geq |\eta| t^{-10}\}) \nonumber \\
   &\leq \Ex \left[ \lambda^{-\tau_{u}} \11_{\tau_{\sd_{\lambda} \cup \ob}>\tau_{u}} \right] e^{-|u|^{1/100}}\,.
  \end{align}
Next, as it was treated in \cite[Corollary 5.9]{DX17}, we combine \eqref{eq:linear - 1} and \eqref{eq:lin-dis-event} to deduce that 
and on event \eqref{eq:lin-dis-event}, we have
  \begin{equation*}
  \Ex \left[ \lambda^{-\tau_{u}} \11_{\tau_{\sd_{\lambda} \cup \ob}>\tau_{u}} \11_{\tau_u \geq C'\cdot \eta(S_{[0,\tau_u]})}\right]
      \leq \Ex \left[ \lambda^{-\tau_{u}} \11_{\tau_{\sd_{\lambda} \cup \ob}>\tau_{u}} \right] e^{-|u|^{1/100}} 
  \end{equation*}
  where $C'$ is an appropriately chosen constant. 
Finally, by \cite[Lemma 5.4]{DX17}, we see that
\begin{equation*}
\Ex \left[ \lambda^{-\tau_{u}} \11_{\tau_{\sd_{\lambda} \cup \ob}>\tau_{u}} \11_{\tau_u < C'\cdot \eta(S_{[0,\tau_u]})}
\11_{\eta(S_{[0,\tau_u]}) \geq C'' |u|}\right]
 \leq \sum_{j \geq C''|u|} (c' \lambda_*^{-C'} )^j\,,
\end{equation*}
for some constant $c'= c(d,p) \in (0,1)$ and sufficiently large $C''$.
Combining the preceding two  inequalities and \eqref{eq:phi-app-upper} completes the proof of the lemma.
\end{proof}

\begin{proof}[Proof of  Lemma~\ref{phic-app}: \eqref{eq:phi-app-lower} and \eqref{eq:phidev}]
By Lemma \ref{Linear}, for any $u \in B_{r(x,y)}(x) $, 
$v \in B_{r(x,y)}(y)$ with $\P$-probability at least $1 - 2e^{-(\log |u-v|)^{2d}}$ we have that for all $\lambda \in [\lambda_*,1]$
\begin{align*}
  \varphi(u,v) &= -\log\Ex^{u} \left[ \lambda^{-\tau_{v}} \11_{\tau_{\sd_{\lambda} \cup \ob}>\tau_{v}} \right]\geq - \log \Ex^{u} \left[ \lambda^{-\tau_{v}} \11_{\tau_{\sd_{\lambda} \cup \ob}>\tau_{v}} \11_{\tau_{v} < C|u-v|}\right]-1\\
  & \geq -\log \Pr^{u}(\tau>\tau_{v}) - (1+C|u-v| \log \lambda )\,.
\end{align*}
At the same time, since any path connecting $u$ and $v$ has a loop erasure of length at least $|u-v|$, \cite[Lemma 5.4]{DX17} implies that there exists $c' = c'(d,p) \in (0,1)$ such that
$$\P(\Pr^u(\tau > \tau_v) \geq (c')^{|v-u|}) \leq e^{-c|v-u|}\,.$$
We complete the proof of \eqref{eq:phi-app-lower} by combining preceding two inequalities and using a union bound over all $u \in B_{r(x,y)}(x) $, $v \in B_{r(x,y)}(y)$.

Since \eqref{eq:phidev} is an immediate consequence of \eqref{eq:phi-app-lower} and \eqref{eq:phi-app-upper}, we have completed the proof of the lemma.
\end{proof}

\begin{proof}[Proof of Lemma \ref{diamonddiff}]
We denote $j_* =|x-y|(\log |x-y|)^{C_{\circ}-1}$. 
By \eqref{eq:SurvivalProb-SD}, we have for any $x,y \in \Z^d$,
\begin{equation}
\label{eq:ex-taulambda-tau -1}
\begin{split}
    &  \Ex^x\left[\tau_{y}\lambda^{-\tau_{y}};\tau_{\sd_{\lambda} \cup \ob}>\tau_{y},\tau_y \geq j_*\right]\leq \sum_{j\geq j_*}j\lambda^{-j}\Pr^x(\tau_{\sd_\lambda \cup \ob}>j)\\
  & \leq \sum_{j \geq j_*}j(\log n)^{C}(1 - (\log n)^{-100d})^j\leq e^{-|x-y|\log|x-y|}\,.
\end{split}
\end{equation}
Since
$
\lambda^{-\tau_{y}} \leq \tau_y\lambda^{-\tau_{y}}
$, we have that $e^{-\varphi(x,y)} - e^{-\varphi^{\circ}(x,y)}$ equals to
\begin{equation*}
\Ex^{x} [ \lambda^{-\tau_{y}} ;\tau_{\sd_{\lambda} \cup \ob}>\tau_{y}\geq  \xi_{B_{j_*}(x)}]\leq \Ex^{x} [\lambda^{-\tau_{y}} ;\tau_{\sd_{\lambda} \cup \ob}>\tau_{y}\geq j_*] \leq e^{-|x-y|\log|x-y|}\,.\label{eq-phi-compare}
\end{equation*}
Combined with $\varphi(x,y) \leq 2\Cbar|x-y|$, it yields
\begin{equation*}
	\varphi^{\circ} (x,y) - \varphi(x,y) \leq -\log( 1 - e^{-|x-y|\log |x-y| + 2\Cbar|x-y|}) \leq e^{-|x-y|}\,,
\end{equation*}
verifying \eqref{eq:diamonddiff}. Next, we prove \eqref{eq:ex-taulambda-tau}. To this end, we observe
\begin{align*}
	\Ex^x\left[\tau_{y}\lambda^{-\tau_{y}};\tau_{\sd_{\lambda} \cup \ob}>\tau_{y},  \tau_{y} \leq j_*\right]\leq j_*\Ex^x\left[\lambda^{-\tau_{y}};\tau_{\sd_{\lambda} \cup \ob}>\tau_{y}\right]
	 =  j_*e^{-\varphi(x,y)}\,.
\end{align*}
Combined with \eqref{eq:ex-taulambda-tau -1} and $\varphi(x,y) \leq 2\Cbar|x-y|$, it yields that
\begin{align*}
	\Ex^x\left[\tau_{y}\lambda^{-\tau_{y}};\tau_{\sd_{\lambda} \cup \ob}>\tau_{y} \right] 
	&\leq j_*e^{-\varphi(x,y)} + e^{-|x-y|\log|x-y|} \leq |x-y|(\log |x-y|)^{C_{\circ}}e^{-\varphi(x,y)}\,,
\end{align*}
which gives \eqref{eq:ex-taulambda-tau} as desired and thus completes the proof of the lemma.
\end{proof}

\begin{proof}[Proof of Lemma \ref{con-con}]
	Combining \eqref{eq:diamonddiff} and Definition \ref{def-phi}, we get that 
$$\varphi_*(x,y) \leq 2\Cbar|x-y| \implies \varphi_*(x,y) \geq \varphi_*^{\circ} (x,y) - e^{-|x-y|/2}\,. $$
Since $\varphi_*(x,y) \leq \varphi_*^{\circ} (x,y)$, it suffices to prove $\E\left[\bar \varphi_*^{\circ}(x,y)\right] \leq \E\left[\bar \varphi_*(x,y)\right] + e^{-(\log |x-y|)^2/2}$.
Now, denote event $E = \{\bar \varphi_*(x,y) = \varphi_*(x,y)$ and $\bar \varphi^{\circ}_*(x,y) =  \varphi^{\circ}_*(x,y)\}$. Then by \eqref{eq:phidev}, we have
  $$  \E\left[\bar \varphi_*(x,y)\right] -  \E\left[\varphi_*(x,y) \11_{E}\right] \leq \Cbar|x-y|e^{-(\log|x-y|)^2}\,,$$
    $$  \E\left[\bar \varphi^{\circ}_*(x,y)\right] -  \E\left[\varphi^{\circ}_*(x,y) \11_{E}\right] \leq \Cbar|x-y|e^{-(\log|x-y|)^2}\,.$$
Suppose $\varphi_*(x,y) = \varphi(u,v)$, for $u \in B_{r(x,y)}(x)$ and 
$v \in B_{r(x,y)}(y) $. By \eqref{eq:diamonddiff}, we have $$\varphi_*(x,y)\11_{E} = \varphi(u,v)\11_{E} \geq \varphi^{\circ} (u,v)\11_{E} - e^{-|u-v|} \geq \varphi_*^{\circ}(x,y)\11_{E} - e^{-|x-y|/2}\,.$$
We complete the proof by combining preceding three inequalities.
\end{proof}

\begin{proof}[Proof of Lemma \ref{ind-appr}]
We first note that combining Lemma \ref{FiniteClusterSize} and \eqref{eq:Prob-Dlambda} gives \begin{equation}
\label{eq:G-0-0-percolate}
	\P(G_0\bigtriangleup \{ 0\in \calC(\infty)\}) \to 0\,.
\end{equation}
So $\P(G_0)$ is bounded away from $0$ for large $n$.
Let $\tempset =B_{(\log n)^{\kappa/2}}(v)$ where $\kappa$ is chosen in Theorem \ref{Polylog thm}. Since $\varphi(0, z)\geq \varphi_*(0, v)$ for all $z\in \partial_i A$, we have the upper bound
\begin{equation}\label{eq-4.3-upper-bound}
  \Ex \left[\lambda^{-\ft_v}  \11_{\tau_{\sd_{\lambda }\cup \ob}>\ft_v}\right]  \leq \sum_{z \in \partial_i \tempset }e^{- \varphi(0,z)} \leq 2^d(\log n)^{d\kappa/2} e^{-\varphi_*(0,v) }\,.
\end{equation}
For the lower bound, we will work on the event such that the following holds (which does not depend on $\lambda$).
\begin{itemize}
	\item $\bba(v,(\log n)^{C_2}) \not = \varnothing$, 
	\item Properties discribed in Lemma \ref{chemdist2} hold for $(0,n\mathbf e_1,\lambda_*)$,
	\item For all $x \in B_{r(0,v)}(0) $, $y \in  B_{r(0,v)}(v)$ and $\lambda \in [\lambda_*,1]$,
  \begin{equation}
  \label{eq:phistar-cut}
  \Ex^x \left[ \lambda^{-\tau_{y}} \11_{\tau_{\sd_{\lambda} \cup \ob}>\tau_{y}} \11_{\tau_y < C|y|}\right] \geq \Ex^x \left[ \lambda^{-\tau_{y}} \11_{\tau_{\sd_{\lambda} \cup \ob}>\tau_{y}} \right](1 - e^{-|y|^{1/101}})\,.
\end{equation}
\end{itemize}
Combining Lemmas \ref{cutset}, \ref{chemdist2} and Lemma \ref{Linear} (since $|v| \geq n^{2/3}$ ensures $|y| \geq n^{1/2}$), this event has probability at least $1 - e^{-c(\log n)^2}$.

Now, we choose $x \in B_{r(0,v)}(0) $ and $y \in  B_{r(0,v)}(v)$ such that $\varphi_*(0,v) = \varphi(x,y)$ and we claim that 
\begin{equation}
\label{eq:two path}
  D_{\mathcal V_{\lambda_*}}(0,x) \leq (\log n)^C \text{   and   there exists }  z_0 \in A \ \mbox{ such that } \ D_{\mathcal V_{\lambda_*}}(y,z_0) \leq (\log n)^C\,.
\end{equation}
Provided with this, by forcing random walk to travel from $0$ to $x$ at the beginning and travel from $y$ to $z_0$ at the end (both along the geodesics), we have that for $i \geq 0$,
$$\Pr(S_{i + D_{\mathcal V_{\lambda_*}}(0,x) + D_{\mathcal V_{\lambda_*}}(y,z_0)} = z_0,S_{[1,{i + D_{\mathcal V_{\lambda_*}}(0,x) + D_{\mathcal V_{\lambda_*}}(y,z_0)}-1]} \subseteq \cregion)$$
is bounded below by $(2d)^{-2(\log n)^C}\Pr^x(S_i = y,S_{[1,i-1]} \subseteq \cregion)$.
Hence,
\begin{align}
		\Ex^x \left[ \lambda^{-\tau_{y}} \11_{\tau_{\sd_{\lambda} \cup \ob}>\tau_{y}} \11_{\tau_y < C|y|}\right] &\leq \sum_{i = 0}^{C|y|} \lambda^{-i} \Pr^x(S_i = y,S_{[1,i-1]} \subseteq \cregion) \nonumber\\
		& \leq(2d)^{2(\log n)^C}\sum_{i = 0}^{C|y|} \lambda^{-i} \Pr(S_i = z_0,S_{[1,i-1]} \subseteq \cregion)\,,\label{eq:lem4.3-1}
\end{align}
where in the last step, we have changed the index $i  + D_{\mathcal V_{\lambda_*}}(0,x) + D_{\mathcal V_{\lambda_*}}(y,z_0) \mapsto i$.  Decomposing the random walk path depending on the entrance point in $A$ and using strong Markov property, we get that
\begin{align*}
 \eqref{eq:lem4.3-1} &\leq (2d)^{2(\log n)^C} \sum_{z \in \partial_i A}\sum_{i = 0}^{C|y|} \lambda^{-i} \Pr(S_i = z,S_{[1,i-1]} \subseteq \cregion \setminus A) \cdot G_{\cregion}(z,z_0;\lambda) \\
 &\leq (2d)^{2(\log n)^C}(\log n)^{C_3}\Ex \big[\lambda^{-\ft_v}  \11_{n \wedge\tau_{\sd_{\lambda }\cup \ob} \geq\ft_v}\big]\,,
\end{align*}
where in the last step, we used Corollary \ref{G-polylogn} and $|y|\leq 2\cucc n(\log n)^{-2/d}$ (since $|v| \leq \cucc n(\log n)^{-2/d})$).
Combining it with \eqref{eq:phistar-cut} gives the lower bound
$$\Ex \big[\lambda^{-\ft_v}  \11_{n \wedge\tau_{\sd_{\lambda }\cup \ob} \geq\ft_v}\big] \geq e^{-(\log n)^C}\Ex^x \left[ \lambda^{-\tau_{y}} \11_{\tau_{\sd_{\lambda} \cup \ob}>\tau_{y}} \right]\,.$$
Combined with \eqref{eq-4.3-upper-bound} and \eqref{eq:G-0-0-percolate}, this yields the result of the lemma.

It remains to prove \eqref{eq:two path}. Since $x$ is connected to $y$ in $\cregion$ and (by \eqref{eq:def-G-0}) $0$ is connected to $B^c_{n^{1/2}}(0)$ in $\cregion$, Lemma~\ref{chemdist2} (2) yields the first part of \eqref{eq:two path}. The second part also follows directly if $y \in A$. If $y \not \in A$, since we assumed $\bba(v,(\log n)^{C_2}) \not = \varnothing$, applying Lemma \ref{chemdist2} to $\mac_{\bba(v,(\log n)^{C_2})}$ yields that the vertex $y$ is connected to $\mac_{\bba(v,(\log n)^{C_2})}$ in $\mathcal V_{\lambda_*}$. Since $A \supseteq B_{3(\log n)^{C_2}}(v)$, we get from Lemma~\ref{chemdist2} (2) that  $y$ is connected to some $z_0 \in \partial_i \tempset$ in $\mathcal V_{\lambda_*}$ by a path of length at most $(\log n)^{C}$. 
\end{proof}

\subsection{Concentration and Sub-additivity of LWGF}
\label{section: Concentration, Sub-additivity}
In this subsection, we will prove two key properties of LWGF: concentration as incorporated in Lemma~\ref{concentration} and sub-additivity as incorporated in Lemma~\ref{sub-additivity}.
\begin{lemma}
\label{concentration}
For any $q\geq 2$, $|x-y| > n^{\fuu}$ and sufficiently large $n$
	\begin{equation}
	\mathrm{Var}\left[ \bar \varphi_*(x,y) \right] \leq (\log |x-y|)^C \cdot |x-y|\,,
\end{equation}
\begin{equation}
	\E \left(\left(  \E \bar \varphi_*(x,y) - \bar \varphi_*(x,y) \right)_+\right)^q \leq (\log |x-y|)^{qC} \cdot |x-y|^{q/2}\,,
\end{equation}
where we used the notation $a_+ = a \11_{a\geq 0}$.
\end{lemma}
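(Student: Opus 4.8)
The plan is to use the Efron--Stein / martingale difference approach, exploiting the fact (guaranteed by Lemma~\ref{con-con}) that it suffices to control the fluctuations of $\bar\varphi_*^\circ(x,y)$, which depends only on the environment inside the ball $B_{R_\circ}(x)$ with $R_\circ = |x-y|(\log|x-y|)^{C_\circ}$, i.e.\ on finitely many i.i.d.\ coordinates $\{\omega_v : v \in B_{R_\circ}(x)\}$ (each $\omega_v$ recording whether $v$ carries an obstacle, which also determines $\sd_{\lambda_*}$ locally up to the $(\log n)^{C_1}$-range dependence — this short-range dependence can be absorbed by grouping coordinates into blocks of side $(\log n)^{C_1}$, at the cost of polylog factors). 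First I would order the relevant coordinates (or coordinate-blocks) $1,\dots,N$ with $N \le C R_\circ^d$, let $\mathcal F_i$ be the $\sigma$-field generated by the first $i$ of them, and write $\bar\varphi_*^\circ - \E\bar\varphi_*^\circ = \sum_{i=1}^N D_i$ with $D_i = \E[\bar\varphi_*^\circ \mid \mathcal F_i] - \E[\bar\varphi_*^\circ \mid \mathcal F_{i-1}]$. Then $\mathrm{Var}[\bar\varphi_*^\circ] = \sum_i \E[D_i^2]$ and, for the $q$-th moment bound, I would invoke the Burkholder--Davis--Gundy inequality (or the simpler Azuma-type bound once an $L^\infty$ bound on $D_i$ is available) to reduce matters to bounding $\sum_i \E[D_i^2]$ and $\max_i \|D_i\|_\infty$.

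The heart of the argument is a \emph{bounded-influence} estimate: resampling a single block of coordinates changes $\bar\varphi_*^\circ(x,y)$ by at most a polylogarithmic amount, \emph{and} only a polylogarithmic number of blocks have non-negligible influence. For the first point, note that $\varphi^\circ(x',y')$ is (minus the log of) a sum of weighted path-probabilities; changing the environment in one block of side $(\log n)^{C_1}$ can only affect paths that visit that block. By the chemical-distance control of Lemma~\ref{chemdist2} (and the cutset Lemma~\ref{cutset}), with overwhelming probability one can reroute any such path around the modified block at an extra cost of at most $(\log n)^C$ steps, hence an extra multiplicative factor of at most $\lambda_*^{-(\log n)^C} = e^{O((\log n)^{C-2/d})}$ in the relevant Green's-function-type quantity; combined with Corollary~\ref{G-polylogn} this gives $|D_i| \le (\log|x-y|)^C$ deterministically on a good event (and on the bad event, which has probability $\le e^{-c(\log|x-y|)^2}$, we use the a priori truncation bound $\bar\varphi_*^\circ \le \Cbar|x-y|$, whose contribution to all moments is negligible). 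For the second point — that only $O((\log|x-y|)^C)$ blocks matter — I would use the linearity/geodesic structure: by Lemma~\ref{chemdist2}(2) the near-optimal path realizing $\varphi_*^\circ(x,y)$ has length $O(|x-y|)$, but more importantly, via an interpolation/Alexander-type argument the set of blocks whose modification can move $\bar\varphi_*^\circ$ by more than, say, $|x-y|^{-10}$ is confined near an optimal path and has cardinality $O(|x-y|\,\mathrm{polylog})$; summing $\E[D_i^2] \le (\log|x-y|)^C \cdot \Pr(\text{block } i \text{ influential}) + (\text{negligible})$ over these blocks yields $\sum_i \E[D_i^2] \le (\log|x-y|)^C |x-y|$. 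This gives the variance bound directly, and feeding $\sum_i\E[D_i^2] \le (\log|x-y|)^C|x-y|$ together with $\|D_i\|_\infty \le (\log|x-y|)^C$ into BDG gives $\E[(\E\bar\varphi_*(x,y) - \bar\varphi_*(x,y))_+^q] \le (\log|x-y|)^{qC}|x-y|^{q/2}$ after transferring from $\bar\varphi_*^\circ$ back to $\bar\varphi_*$ using Lemma~\ref{con-con} (the $+$ part being exactly the direction Lemma~\ref{con-con} controls).

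The main obstacle I anticipate is making the ``only polylogarithmically many influential blocks'' claim rigorous and quantitative — naively every block within distance $O(|x-y|)$ of $x$ could carry a tiny influence, and bounding $\sum_i \E[D_i^2]$ by $N \cdot (\text{polylog})^2 \sim |x-y|^d (\text{polylog})$ is far too weak. One has to show that blocks \emph{far from every near-geodesic} have influence so small (super-polynomially small in $|x-y|$) that their total contribution is $O(1)$, which requires a quantitative stability statement for the optimizer of $\varphi_*^\circ$: perturbing the environment away from the optimal path region can only help by re-routing, and the gain is bounded by the excess length forced, which is large for distant blocks. This is where the Alexander-framework ideas (to be developed in Section~\ref{section:Convergence rate}) and the renormalization estimates of Section~\ref{section:Renormalization} do the real work; everything else (the martingale decomposition, BDG, the transfer via Lemma~\ref{con-con}) is routine.
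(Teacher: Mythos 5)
You have correctly identified the central obstacle: a naive bound of $|D_i|\le(\log|x-y|)^C$ summed over all $O(R_\circ^d)$ coordinates yields $|x-y|^d(\log|x-y|)^C$, which is far too weak, and one must somehow show the effective number of influential sites is $O(|x-y|\cdot\mathrm{polylog})$. However, the mechanism you propose to close this gap — a quantitative stability statement saying that sites far from every near-geodesic have super-polynomially small influence, to be extracted from the Alexander framework of Section~\ref{section:Convergence rate} — is not what the paper does, and it cannot be used here: Section~\ref{section:Convergence rate} (in particular Lemmas~\ref{good--v-prob} and \ref{concentration-path}) \emph{consumes} Lemma~\ref{concentration} as an input, so routing the proof through that section would be circular. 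Moreover the ``only polylog-many sites are influential'' phrasing in your first pass is stronger than what is true or needed; linearly many sites can carry non-negligible influence.

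The paper's actual mechanism is simpler and avoids any geodesic-stability argument. It proceeds via the Efron--Stein inequality (and its $q$-th moment version from \cite[Theorem~2]{BBLM05}) rather than a Doob martingale with BDG, but the real point is how $\sum_w\E[D_w^2]$ is controlled. Lemma~\ref{lem-concentration-bound-w} gives the deterministic polylog bound on the increment from resampling a single site $w$ (via rerouting around $\mac_{\bba(w,r)}$, essentially as you describe), and Lemma~\ref{lem-concentration-3-2} gives a second bound on the \emph{same} increment in terms of the LWGF-weighted probability that the walk comes near $w$. Multiplying these two bounds yields, on the good event,
\[
\bigl((\bar\varphi_*(x,y;\ob_w)-\bar\varphi_*(x,y;\ob))_+\bigr)^2\ \le\ C r^{6d}\sum_{z\in B_{2r}(w)}\frac{\Ex^u[\lambda^{-\tau_v};\xi_{\cregion}>\tau_v,\,\tau_z\le\tau_v]}{\Ex^u[\lambda^{-\tau_v};\xi_{\cregion}>\tau_v]}\,,
\]
and the crucial step is that when one sums this over \emph{all} $w\in B_{2R_\circ}(x)$, the indicators $\11_{\tau_z\le\tau_v}$ telescope into (at most) $\tau_v$, so the total is bounded by $Cr^{7d}$ times the weighted expected hitting time $\Ex^u[\tau_v\lambda^{-\tau_v};\cdots]/\Ex^u[\lambda^{-\tau_v};\cdots]$, which Lemma~\ref{diamonddiff}, equation~\eqref{eq:ex-taulambda-tau}, bounds by $|x-y|(\log|x-y|)^{C_\circ}$. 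No stability of the optimizer is needed; the linear-in-$|x-y|$ count comes for free from the fact that the weighted walk simply does not have time to influence more than $O(|x-y|\,\mathrm{polylog})$ sites in expectation. You should replace your geodesic-stability step with this expected-travel-time argument.
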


We will use the Efron-Stein inequality to bound the second and higher moments (see \cite[Theorem 2]{BBLM05}). To this end, we will control the increment of $ \bar \varphi_*(x,y)$ when resampling obstacles in $(B_{2R_{\circ}}(x))^c$ and resampling the obstacle at each $w \in B_{2R_{\circ}}(x)$. We see that Lemma \ref{diamonddiff}  implies resampling obstacles in $(B_{2R_{\circ}}(x))^c$ only has a very small effect on LWGFs (see \eqref{eq:concentration-bound-circ} below). It is a more complicated issue to control the influence from resampling $w \in B_{2R_{\circ}}(x)$. To this end, we will employ the following two types of bounds.
\begin{itemize}
\item We show in Lemma~\ref{lem-concentration-bound-w} that on a typical environment, the increment of  $\bar \varphi_*(x,y)$ due to the resampling at $w$ is at most poly-logarithmic in $|x-y|$. The proof is divided into two cases as follows.
\begin{itemize}
 \item  For $w$ near $x$ (or $y$), (recalling that in defining $\bar \varphi_*(x,y)$ we have optimized over starting and ending points that are near $x$ and $y$ respectively) we will  choose a different starting point (or end point) and consider the set of paths that do not get close to $w$.  See Lemma~\ref{lem-concentration-bound-w} \underline{\textsc{Case 1}} below.
 \item For $w$ away from $x$ and $y$, we note that the random walk can take a detour using $\mac_{\bba(w,(\log |x-y|)^C)}$ for some constant $C>0$ (recall $\mac$ as in Definition \ref{de-kwhite} and $\bba$ as in Lemma \ref{cutset}) to avoid getting close to $w$.  See Lemma~\ref{lem-concentration-bound-w} \underline{\textsc{Case 2}} below.
 \end{itemize}
 \item We prove in Lemma~\ref{lem-concentration-3-2} a bound on the increment by a direct computation which takes into account how likely the random walk will get close to $w$. 
\end{itemize}
\begin{remark}\label{rem-accessibility}
Due to the requirement of avoiding $\sd_{\lambda}$ in the definition of LWGFs, by resampling the obstacle at $w$, it is possible to change the accessibility for more than just the vertex $w$. However, by \eqref{eq-w-accessibility} below the change on accessibility is confined in a local ball around $w$.
\end{remark}
In order to implement the preceding outline in detail, we first introduce a few definitions.  For $w \in B_{2R_{\circ}}(x)$, we denote  $$\ob_w = (\ob \setminus \{w\}) \cup (\ob' \cap  \{w\}),\quad\ob_{\circ} = (\ob \setminus (B_{2R_{\circ}}(x))^c) \cup (\ob' \cap  (B_{2R_{\circ}}(x))^c)$$ where $\ob'$ is an independent copy of $\ob$ (that is, $\ob_w$ is obtained from $\ob$ by re-sampling the environment at $w$). Write $\varphi(x,y;\ob)$ to emphasize its dependence on the environment.
Let $E_{x, y}$ be the event such that the following hold:
\begin{itemize}
	\item For any $ v \in B_{|x-y|^2}(x)$, either $\mathcal C (v) = \mathcal C(\infty)$ or $|\mathcal C (v)| \leq (\log |x-y|)^{5}$.
	\item For all $w \in B_{|x-y|^2}(x)$, $\bba(w,r) \not = \varnothing$ for $r := (\log |x-y|)^{C_1+C_2 + 6d}$.
	\item $\bar\varphi_*(x,y;\ob) =\varphi_*(x,y;\ob)$, and for all $w \in B_{|x-y|^2}(x)$, 
  $\bar\varphi_*(x,y;\ob_w) =\varphi_*(x,y;\ob_w).$
\end{itemize}
Then by Lemmas \ref{FiniteClusterSize}, \ref{cutset}, \eqref{eq:phidev} and \eqref{eq:phiorder},
\begin{align}
\label{eq:con-e-prob}
	\P(E_{x,y}^c) \leq& (2|x-y|^2)^de^{-c(\log|x-y|)^{5/2}} + (2|x-y|^2)^de^{-r (\log n)^{-2d}} + 3(2|x-y|^2)^d e^{-(\log|x-y|)^{2}} \nonumber\\
	\leq& e^{-(\log|x-y|)^{3/2}}\,.
\end{align}
On the event $E_{x, y}$, by \eqref{eq:phi-def} we can choose $u \in B_{r(x,y)}(x) $ and 
$v \in B_{r(x,y)}(y)$ such that 
\begin{equation}\label{eq-phi-phi-*} 
\bar\varphi_*(x,y;\ob) =\varphi_*(x,y;\ob) = \varphi(u,v;\ob)\,.
\end{equation}
 Let $$ \cregion_{,w} =(\sd_{\lambda}(\ob_w) \cup \ob_w)^c\,.$$ Then $$\varphi(x,y,\ob_w) = -\log \big(\Ex^{x}\big[ \lambda^{-\tau_{y}} ;\xi_{\cregion_{,w}} > \tau_{y}\big]  \big)\,.$$

Now, we claim that on the event $E_{x, y}$, 
\begin{align}
	\varphi(u,v;\ob)\geq \varphi_*(x,y; \ob_\circ) - e^{-|x-y|/2} \,.\label{eq:concentration-bound-circ}
\end{align}
We first see that \eqref{eq:def-phi[1]} implies $\varphi(u,v;\ob_\circ) \leq \varphi^{\circ}(u,v;\ob)$ and Lemma \ref{diamonddiff}  yields $\varphi^{\circ}(u,v;\ob) \leq \varphi(u,v;\ob) + e^{-|u-v|}$. Then  \eqref{eq:concentration-bound-circ} follows from \eqref{eq:phi-def}.

Further, for all $w \in B_{|x-y|^2}(x)$ let $\tempset_w$ denote the union of $\mabox_{\bba(w,r)}$ and its interior region; i.e. $\tempset_w$ is the complement of the infinite connected component in $(\mabox_{\bba(w,r)})^c$. Since $ \cregion \bigtriangleup \cregion_{,w} \subseteq B_{(\log n)^{C_1}}(w)$ and $A_w \supseteq B_r(w) \supseteq B_{(\log n)^{C_1}}(w)$,
\begin{equation}\label{eq-w-accessibility}
 \cregion \setminus \tempset_w=  \cregion_{,w} \setminus \tempset_w\,.
\end{equation}
That is, removing or adding obstacle at $w$ can only change the accessibility of the sites in $A_w$ (actually, only in $B_r(w)$) for the random walk. 
\begin{lemma}\label{lem-concentration-bound-w}
For $x, y$ with $|x-y| > n^{\fuu}$, let $u,v$ be chosen as in \eqref{eq-phi-phi-*}. On the event $E_{x, y}$ we have
\begin{align}
	   \varphi(u,v;\ob)\geq \varphi_*(x,y; \ob_w) - \log(2d)r^{3d} \quad  \forall w \in B_{2R_{\circ}}(x)\,.\label{eq:concentration-bound-w}
\end{align}
\end{lemma}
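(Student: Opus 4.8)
The plan is to show that, on $E_{x,y}$, the re-sampling of the obstacle at a single site $w\in B_{2R_\circ}(x)$ cannot increase $\varphi_*(x,y;\cdot)$ by more than a poly-logarithmic amount; equivalently, that $\Ex^{x'}[\lambda^{-\tau_{y'}};\xi_{\cregion_{,w}}>\tau_{y'}]$ for a well-chosen pair $(x',y')\in B_{r(x,y)}(x)\times B_{r(x,y)}(y)$ is at least $e^{-\log(2d)r^{3d}}$ times $\Ex^{u}[\lambda^{-\tau_v};\xi_{\cregion}>\tau_v]=e^{-\varphi(u,v;\ob)}$. The key structural fact is \eqref{eq-w-accessibility}: the two environments $\cregion$ and $\cregion_{,w}$ agree outside the local region $A_w$ (the filled-in version of $\mabox_{\bba(w,r)}$), which has diameter at most $r^{2d}$. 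So any random-walk trajectory in $\cregion$ that simply \emph{avoids $A_w$} is automatically a legal trajectory in $\cregion_{,w}$ with the same weight. The strategy is therefore: take the near-optimal trajectory for $\varphi(u,v;\ob)$, and surgically reroute it around the single bad region $A_w$ using the tilde-white detour cluster $\mac_{\bba(w,r)}$, paying only a $(2d)^{-(\text{detour length})}$ factor, where the detour length is $O(r^{2d})$.

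I would split the argument into the two cases indicated in the outline. \underline{\textsc{Case 1}}: $w$ is close to one of the endpoints, say $|w-x|\le r^{3d}$ (symmetrically for $y$). Here, rather than re-route, I would exploit that $\bar\varphi_*$ minimizes over all starting points in $B_{r(x,y)}(x)$, and since $r(x,y)=(\log|x-y|)^{2\kappa+10d}\gg r=(\log|x-y|)^{C_1+C_2+6d}$, there is ``room'' to pick a fresh starting point $x'\in B_{r(x,y)}(x)$ whose chemical distance (in $\cregion_{,w}$, via the detour cluster $\mac_{\bba(w,r)}$, using Lemma~\ref{cutset}(2) and Property 2 of Definition~\ref{de-kwhite}) to $u$ is at most $r^{2d}$ and whose connecting geodesic stays inside $\cregion\setminus A_w=\cregion_{,w}\setminus A_w$ except possibly near its two endpoints — more carefully, I would route from $x'$ into $\mac_{\bba(w,r)}$, around, and back to $u$, entirely within $\cregion_{,w}$. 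Then
\[
\Ex^{x'}[\lambda^{-\tau_{y}};\xi_{\cregion_{,w}}>\tau_{y}]\ \ge\ (2d)^{-r^{2d}}\,\lambda^{-r^{2d}}\,\Ex^{u}[\lambda^{-\tau_{v}};\xi_{\cregion}>\tau_{v}]\ \ge\ (2d)^{-r^{3d}}e^{-\varphi(u,v;\ob)},
\]
using $\lambda\ge\lambda_*\ge 1/2$ (say) to absorb $\lambda^{-r^{2d}}$ into the $(2d)^{-r^{3d}}$ slack. \underline{\textsc{Case 2}}: $w$ is far from both $x$ and $y$ (distance $>r^{3d}$). Now I keep $x'=u$, $y'=v$, and for each trajectory $\omega$ from $u$ to $v$ that avoids $A_w$ I keep it verbatim (same weight in $\cregion_{,w}$). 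For a trajectory that enters $A_w$, I record its first entrance into $\partial A_w$ and its last exit, and replace the excursion inside $A_w$ by a path through $\mac_{\bba(w,r)}$ connecting those two boundary points; by Lemma~\ref{cutset}(2) this rerouted piece has length $\le r^{2d}$ and lies in $\cregion_{,w}$. Summing over $\omega$ and using strong Markov property at the entrance/exit times, one gets $e^{-\varphi(v;u;\cregion_{,w})}\ge (2d)^{-2r^{2d}}e^{-\varphi(u,v;\ob)}$, hence $\varphi_*(x,y;\ob_w)\le\varphi(u,v;\ob)+\log(2d)r^{3d}$ as required; the extra factor $\lambda^{-O(r^{2d})}$ is again absorbed since $\lambda\le1$ makes it $\le1$ — wait, here the detour may be \emph{longer} than the erased piece, so $\lambda^{-(\text{detour}-\text{erased})}\le\lambda^{-r^{2d}}\le 2^{r^{2d}}$, still absorbed into $(2d)^{-r^{3d}}$ by choosing the exponent $3d$ large enough relative to $2d$.

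The main obstacle I anticipate is \underline{bookkeeping the reroute as a genuine inequality between Green's-function-type sums}, not just between ``best paths'' as in first-passage percolation: because $\varphi$ is $-\log$ of a sum over \emph{all} paths (weighted by $\lambda^{-\text{length}}$), I cannot simply compare geodesics. The clean way is to fix a deterministic injective-up-to-bounded-multiplicity surgery map on trajectories — ``cut out the first $A_w$-excursion, splice in a fixed geodesic of $\mac_{\bba(w,r)}$'' — and check (i) it lands in trajectories legal for $\cregion_{,w}$, (ii) it changes the length by a bounded additive $O(r^{2d})$, and (iii) each image trajectory has at most, say, $(2d)^{r^{2d}}$ preimages (bounding how many original excursions could splice to the same thing). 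Then the sum inequality, and hence the $\varphi$ inequality with the stated $\log(2d)r^{3d}$ loss, follows. A secondary technical point is ensuring the detour cluster $\mac_{\bba(w,r)}$ is itself inside $\cregion_{,w}$ and not just inside $\cregion$: this is exactly why $r$ was chosen $\ge (\log|x-y|)^{C_1}$ in the definition of $E_{x,y}$, so that $\bba(w,r)$ avoids $\sd_{\lambda_*}$ (Property 3 of Definition~\ref{de-kwhite}) and the local change $\cregion\bigtriangleup\cregion_{,w}\subseteq B_{(\log n)^{C_1}}(w)\subseteq B_r(w)\subseteq A_w$ does not touch it.
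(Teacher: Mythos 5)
Your broad strategy matches the paper's: split on whether $w$ is ``near'' an endpoint or not, and reroute through the tilde-white detour cluster $\mac_{\bba(w,r)}$, using that $\cregion$ and $\cregion_{,w}$ agree outside $A_w$ and that $\mac_{\bba(w,r)}\subseteq\cregion\cap\cregion_{,w}$ has chemical diameter $\le r^{2d}$. But there is a genuine gap exactly at the point you flag as ``the main obstacle''.

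Your Case~2 bookkeeping does not close. The surgery map that erases the excursion between first entrance and last exit of $A_w$ is \emph{not} injective-up-to-bounded-multiplicity: once the excursion is replaced by a fixed geodesic of $\mac_{\bba(w,r)}$, the image retains only the entrance/exit points $z,z'\in\partial_i A_w$ and has \emph{infinitely many} preimages, one for every $\cregion$-excursion from $z$ to $z'$ through $A_w$, of every length. Claim (iii) -- ``at most $(2d)^{r^{2d}}$ preimages'' -- is therefore false, and (ii) is also false (the erased excursion can be arbitrarily long, so the length change is not additively bounded). What saves the argument, and what the paper actually does, is not to count preimages but to bound their total \emph{weight}: after the first-entrance/last-exit decomposition one compares $G_{\cregion\setminus\{v\}}(z,z';\lambda)$ (the total weight of all erased excursions) against $G_{\cregion_{,w}\setminus\{v\}}(z,z';\lambda)\ge(2d)^{-r^{2d}}$ (the spliced detour). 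The first quantity is controlled by Corollary~\ref{G-polylogn}, which gives $G_{\cregion}(z,z';\lambda)\le(\log n)^{C_3}$ uniformly; this polylogarithmic bound on the full Green's function is the indispensable ingredient that your sketch never invokes, and without it the ratio in your displayed inequality cannot be bounded. (The paper's inequality \eqref{eq-ratio-z-z'} is precisely this Green's-function comparison.)

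A secondary issue is your case split. Thresholding at $|w-x|\le r^{3d}$ is not the right dichotomy: since $u\in B_{r(x,y)}(x)$ and $r(x,y)=(\log|x-y|)^{2\kappa+10d}$ can far exceed $r^{3d}=(\log|x-y|)^{3d(C_1+C_2+6d)}$ (the paper only imposes a lower bound on $\kappa$, which can be large), your ``Case~2'' regime $|w-x|>r^{3d}$ does not preclude $u\in A_w$, and the decomposition at $\partial_iA_w$ then degenerates. The natural and correct split, used by the paper, is whether $u$ or $v$ lies in $A_w\cup\mac_{\bba(w,r)}$; in that Case~1 you must in fact swap the relevant endpoint ($v$ to some $v_0\in\mac_{\bba(w,r)}\cap B_{r(x,y)}(y)$) and again use the polylogarithmic Green's-function bound, rather than only relocate the starting point and hope the rest of the $u\to v$ trajectory transfers verbatim (it does not, since later excursions into $A_w$ still see the resampled environment).
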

\begin{proof}
We divide the proof into  two cases as follows. 
\begin{figure}
	\includegraphics[width=1.8in]{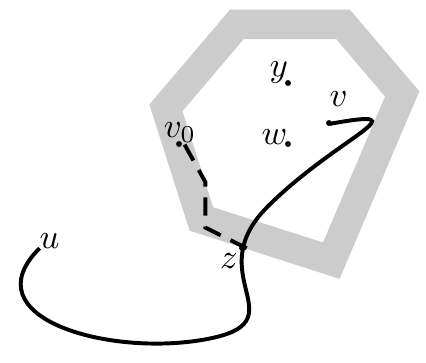}
	\includegraphics[width=1.8in]{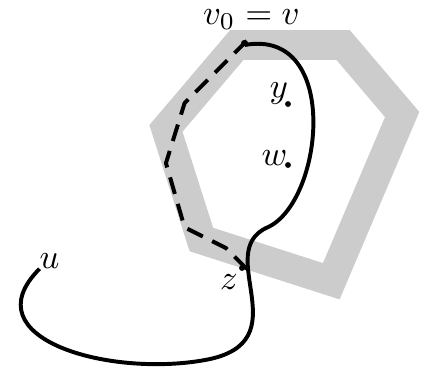}
	\includegraphics[width=2.2in]{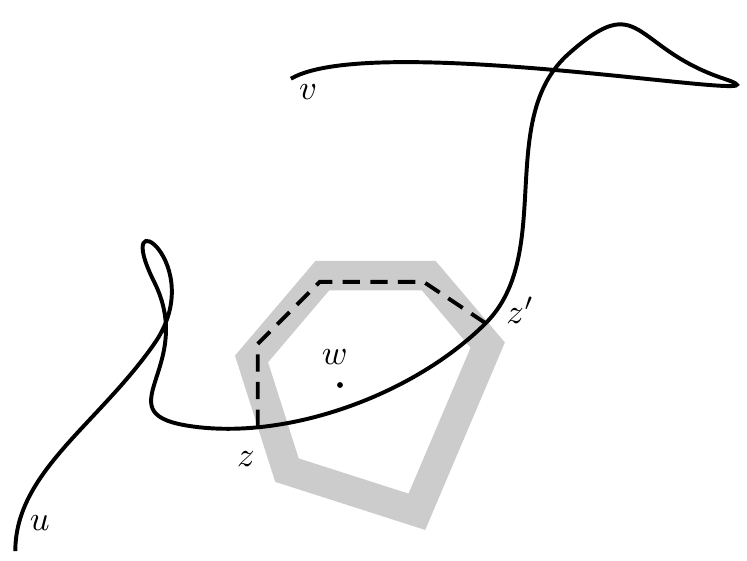}
	\begin{caption}
{From left to right: \underline{\textsc{Case 1}}(i), \underline{\textsc{Case 1}}(ii), \underline{\textsc{Case 2}}. The gray areas represent $\mabox_{\bba(w,r)}$'s. The solid curve are the original paths. We replace a segment of it by the dotted curve.}
\label{fig:Wx}
\end{caption}
\end{figure}

\noindent\underline{\textsc{Case 1:}} $u\in \tempset_w \cup \mac_{\bba(w,r)}$ or $v \in \tempset_w \cup \mac_{\bba(w,r)}$.

We assume that $v \in \tempset_w \cup \mac_{\bba(w,r)}$ (the case $u\in \tempset_w \cup \mac_{\bba(w,r)}$ can be treated similarly).  We first claim that there exists $v_0 \in \mac_{\bba(w,r)} \cap B_{r(x,y)}(y) $. To see this, we consider the following two scenarios.
\begin{enumerate}[(i)]
\item  $v \in \tempset_w \setminus \mabox_{\bba(w,r)}$: In this scenario, since 
$r(x,y) \geq 100r$, we know that $B_{r(x,y)}(y)  \cap \mabox_{\mathbf{i}}$ is non-empty for some $\mathbf{i} \in \bba(w,r)$. Then we choose an arbitrary $v_0$ in $\mabox_{\mathbf{i}} \cap \mac_{\bba(w,r)}$. Then Property 1 in Definition \ref{de-kwhite} gives $\calC(v_0) \geq L/10$ and the claim follows from Lemma \ref{cutset} (2).
\item $v \in \mabox_{\bba(w,r)}\cup \mac_{\bba(w,r)}$: In this scenario, choose $v_0=v$ and we only need to prove $v \in \mac_{\bba(w,r)}$ when $v \in \mabox_{\bba(w,r)}$. Since $v$ is connected to $u$ in $\ob^c$, by the first requirement of the event $E_{x, y}$ we see that $\calC(v) = \calC(\infty)$ and the desired claim follows from Lemma \ref{cutset} (2).
\end{enumerate}
Also, for the same reason as in (ii) above, we know that for any $z \in \partial_i (\tempset_w\cup \mac_{\bba(w,r)})$ with $G_{\cregion\setminus ((\tempset_w\cup \mac_{\bba(w,r)})\cup\{v\})}(u,z;\lambda) > 0$, we have $\calC(z) = \calC(\infty)$ and thus  by Lemma \ref{cutset} (2), we get $z \in \mac_{\bba(w,r)}$ (since $z \in \partial_i (\tempset_w\cup \mac_{\bba(w,r)})$). 

Next, we  will verify \eqref{eq:concentration-bound-w} in this case by combining the preceding discussions with the following decomposition of $\varphi$:
\begin{align}\label{eq-decomposition-phi}
	\varphi(u,v;\ob) = -\log \Big(\sum_{z \in \partial_i (\tempset_w\cup \mac_{\bba(w,r)})} G_{\cregion\setminus ((\tempset_w\cup \mac_{\bba(w,r)})\cup\{v\})}(u,z, \lambda)G_{\cregion\setminus  \{v\}}(z,v;\lambda)  \Big).
\end{align}
By Corollary \ref{G-polylogn}, $G_{\cregion \setminus \{v\}}(z,v;\lambda) \leq (\log n)^{C_3}$. Let $v_0$ be chosen as above and consider  any $z \in \partial_i (\tempset_w\cup \mac_{\bba(w,r)})$ with $G_{\cregion\setminus ((\tempset_w\cup \mac_{\bba(w,r)})\cup\{v\})}(u,z;\lambda) > 0$.  In light of scenario (i),(ii) and the discussion after (ii), we see that  $v_0,z \in \mac_{\bba(w,r)}$ and thus by Lemma \ref{cutset} (2) we get that $G_{\cregion_{,w} \setminus \{v\}}(v_0,z;\lambda) \geq (2d)^{-r^{2d}}$. Therefore, combined with \eqref{eq-decomposition-phi} it yields that $\varphi(u,v;\ob)$ is bounded below by
\begin{align*}
	 -\log \Big(&(2d)^{r^{2d}}(\log n)^{C_3}\cdot \sum_{z \in \partial_i (\tempset_w\cup \mac_{\bba(w,r)})} G_{\cregion\setminus ((\tempset_w\cup \mac_{\bba(w,r)})\cup\{v_0\})}(u,z;\lambda)G_{\cregion\setminus  \{v_0\}}(z,v_0;\lambda)  \Big)\\
	& = \varphi(u,v_0; \ob_w) - \log(2d)r^{2d} - C_3 \log\log n \geq \varphi_*(x,y; \ob_w) - \log(2d)r^{3d}\,,
\end{align*}
where we have used $\tempset_w\cup \mac_{\bba(w,r)}\cup\{v_0\} = \tempset_w\cup \mac_{\bba(w,r)}$. 

\smallskip

\noindent\underline{\textsc{Case 2:}} Neither $u$ nor $v$ is in $\tempset_w \cup \mac_{\bba(w,r)}$.

Let $I_{z,z'} = G_{\cregion\setminus (A_w \cup \{v\})}(u,z;\lambda) G_{\cregion\setminus (A_w \cup \{v\})}(z',v;\lambda)$, we have
\begin{align*}
G_{\cregion\setminus \{v\}}(u,v;\lambda) &= \sum_{z,z' \in \partial_i A_w}  I_{z,z'} \cdot G_{\cregion\setminus \{v\}}(z,z';\lambda)+G_{\cregion\setminus (A_w \cup \{v\})}(u,v;\lambda)\,, \\
G_{\cregion_{,w}\setminus \{v\}}(u,v;\lambda) &= \sum_{z,z' \in \partial_i A_w}  I_{z,z'} \cdot G_{\cregion_{,w}\setminus \{v\}}(z,z';\lambda)+G_{\cregion\setminus (A_w \cup \{v\})}(u,v;\lambda) \,.
\end{align*} 
Wen claim that that for $z,z' \in \partial_i A_w$ with $I_{z,z'}>0$,
\begin{equation}\label{eq-ratio-z-z'}
G_{\cregion\setminus \{v\}}(z,z';\lambda) /G_{\cregion_{, w}\setminus \{v\}}(z,z';\lambda)  \leq  (2d)^{r^{3d}}\,.
\end{equation}
Provided with \eqref{eq-ratio-z-z'}, we see that $G_{\cregion\setminus \{v\}}(u,v;\lambda)/G_{\cregion_{,w}\setminus \{v\}}(u,v;\lambda)  \leq (2d)^{r^{3d}}$, which then yields \eqref{eq:concentration-bound-w}. It remains to prove \eqref{eq-ratio-z-z'}.
We first note that $G_{\cregion}(z,z';\lambda) \leq (\log n)^{C_3}$ due to Corollary \ref{G-polylogn}. 
At the same time, for the same reason as in {\underline{Case 1}}, we know $I_{z,z'}>0$ implies $\calC(z)=\calC(z')=\calC(\infty)$. Combining this with $z,z' \in \partial_i A_w$ and Lemma \ref{cutset} (2) gives $z,z' \in \mac_{\bba(w,r)}$. Then Lemma \ref{cutset} (2) implies $G_{\mac_{\bba(w,r)}}(z,z';\lambda) \geq (2d)^{-r^{2d}}$. Thus \eqref{eq-ratio-z-z'} would follow once we prove $\mac_{\bba(w,r)} \subseteq \cregion_{,w} \setminus \{v\}$. To this end, note that $\mac_{\bba(w,r)} \subseteq \cregion\setminus \{v\}$ (by Definition \ref{de-kwhite} and the assumption of this case) and that $\mac_{\bba(w,r)} \cap B_{r - 100L}(w) = \varnothing$ (by the definition of $\mac_{\bba(w,r)}$ in Lemma \ref{cutset}). Since $\cregion \bigtriangleup \cregion_{,w} \subseteq B_{(\log n)^4}(w) \subseteq B_{r - 100L}(w)$, it yields that $ \mac_{\bba(w,r)} \subseteq \cregion_{,w} \setminus \{v\}$. At this point, we complete the verification of  \eqref{eq-ratio-z-z'}.

Combining the two cases above completes the proof of the lemma.
\end{proof}
 

We remark that  \eqref{eq:concentration-bound-w} holds for general $w$ provided that the event $E_{x, y}$ occurs, but it is suboptimal for typical $w$. For a typical $w$, the influence of resampling the environment at $w$ is much smaller than the bound given in \eqref{eq:concentration-bound-w}, as incorporated in the next lemma.
\begin{lemma}\label{lem-concentration-3-2}
For $x, y$ with $|x-y| > n^{\fuu}$, let $u,v$ be chosen as in \eqref{eq-phi-phi-*}.  For $w \in B_{2R_{\circ}}(x)$, we have
\begin{equation}
\label{eq:concentration-3-2}
   \frac{\Ex^{u} \left[ \lambda^{-\tau_{v}} ;{\xi_{\cregion}>\tau_{v}} ,{\tau_{\tempset_w} \leq \tau_{v}} \right] }{\Ex^{u} \left[ \lambda^{-\tau_{v}} ;{\xi_{\cregion}>\tau_{v}}\right] }\geq \frac{1}{2} \min(1, \varphi(u,v;\ob_w) -\varphi(u,v;\ob))\,.
 \end{equation} 
 \end{lemma}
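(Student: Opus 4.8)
The plan is to split the denominator of \eqref{eq:concentration-3-2} according to whether or not the walk visits $\tempset_w$ before reaching $v$, and to recognize that the part of the mass which avoids $\tempset_w$ is controlled by the re-sampled quantity $e^{-\varphi(u,v;\ob_w)}$.

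Concretely, I will write $N := \Ex^u[\lambda^{-\tau_v}; \xi_\cregion > \tau_v] = e^{-\varphi(u,v;\ob)}$ and $N_w := \Ex^u[\lambda^{-\tau_v}; \xi_{\cregion_{,w}} > \tau_v] = e^{-\varphi(u,v;\ob_w)}$, using $\{\tau_{\sd_\lambda\cup\ob} > \tau_v\} = \{\xi_\cregion > \tau_v\}$ and the analogous identity in the environment $\ob_w$. Denote by $A$ the numerator on the left of \eqref{eq:concentration-3-2} and put $B := \Ex^u[\lambda^{-\tau_v}; \xi_\cregion > \tau_v,\ \tau_{\tempset_w} > \tau_v]$, so that $N = A + B$.

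The only substantive step is the inequality $B \le N_w$. Since we are on the event $E_{x,y}$ and $B_{2R_\circ}(x) \subseteq B_{|x-y|^2}(x)$, \eqref{eq-w-accessibility} gives $\cregion \setminus \tempset_w = \cregion_{,w} \setminus \tempset_w$. Hence every trajectory contributing to $B$ --- which by definition stays in $\cregion$ up to $\tau_v$ and never meets $\tempset_w$ before $\tau_v$ --- in fact stays in $\cregion \setminus \tempset_w = \cregion_{,w} \setminus \tempset_w \subseteq \cregion_{,w}$, so it satisfies $\xi_{\cregion_{,w}} > \tau_v$ and carries the same nonnegative weight $\lambda^{-\tau_v}$; summing over all such trajectories yields $B \le N_w$. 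The degenerate cases cause no trouble: if $v \in \tempset_w$ or $u \in \tempset_w$ then $B = 0$, and if $u \notin \cregion_{,w}$ then $N_w = 0$ and again $B = 0$.

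Finally I will combine: $A = N - B \ge N - N_w$, whence $A/N \ge 1 - e^{-\delta}$ with $\delta := \varphi(u,v;\ob_w) - \varphi(u,v;\ob)$. If $\delta \le 0$ the right-hand side of \eqref{eq:concentration-3-2} is nonpositive and there is nothing to prove; if $\delta > 0$ I invoke the elementary bound $1 - e^{-\delta} \ge \tfrac12\min(1,\delta)$ (from $1 - e^{-1} > \tfrac12$ for $\delta \ge 1$, and $1 - e^{-\delta} \ge \delta - \delta^2/2 \ge \delta/2$ for $0 < \delta < 1$). I do not expect a real obstacle here --- the constant $\tfrac12$ and the truncation $\min(1,\cdot)$ in the statement are exactly what is needed to absorb the discrepancy between $1 - e^{-\delta}$ and $\delta$ --- and the only point demanding care is to check that the set identity $\cregion \setminus \tempset_w = \cregion_{,w} \setminus \tempset_w$ from \eqref{eq-w-accessibility} is available for the particular $w \in B_{2R_\circ}(x)$ under consideration.
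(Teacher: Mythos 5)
Your proof is correct and takes essentially the same route as the paper: the paper decomposes $e^{-\varphi(u,v;\ob_w)}$ according to whether the walk visits $\tempset_w$ before $\tau_v$ and drops the first term, which is exactly the inequality $B \leq N_w$ you isolate (both rely on \eqref{eq-w-accessibility} for the paths avoiding $\tempset_w$), after which both arguments finish with the same elementary comparison of $1-e^{-\delta}$ (equivalently $1-t$ with $t=B/N$) against $\tfrac12\min(1,\delta)$.
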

\begin{proof}
We note that 
\begin{align*}
\varphi(u,v;\ob_w) &= -\log \left(\Ex^{u} \left[ \lambda^{-\tau_{v}} ;{\xi_{\cregion_w}>\tau_{v}} ,{\tau_{\tempset_w} \leq \tau_{v}} \right] +\Ex^{u} \left[ \lambda^{-\tau_{v}} ;{\xi_{\cregion}>\tau_{v}}  ,{\tau_{\tempset_w} > \tau_{v}}\right]  \right)\\
& \leq -\log \left(\Ex^{u} \left[ \lambda^{-\tau_{v}} ;{\xi_{\cregion}>\tau_{v}}  ,{\tau_{\tempset_w} > \tau_{v}}\right]  \right)\,.
\end{align*}
Thus, we have
\begin{equation*}
  \varphi(u,v;\ob_w) -\varphi(u,v;\ob) 
   \leq - \log \left(\frac{\Ex^{u} \left[ \lambda^{-\tau_{v}} ;{\xi_{\cregion}>\tau_{v}} ,{\tau_{
  \tempset_w} > \tau_{v}} \right] }{\Ex^{u} \left[ \lambda^{-\tau_{v}} ;{\xi_{\cregion}>\tau_{v}}\right] } \right)\,.
\end{equation*}
Therefore, using $ - \log t \leq (1 - t)$ for $1/2 \leq t \leq 1$ gives \eqref{eq:concentration-3-2}.
\end{proof}

\begin{proof}[Proof of Lemma~\ref{concentration}]
It follows from \eqref{eq-phi-phi-*}, \eqref{eq:phi-def} and the third requirement of $E_{x,y}$ that
\begin{align}
\label{eq:con-bound-w-0}
     \big(\big(\bar \varphi_*(x,y;\ob_w) - \bar\varphi_*(x,y;\ob) \big)_+\big)^2& \leq \left(\left(\varphi(u,v;\ob_w) - \varphi(u,v;\ob)\right)_+\right)^2  \nonumber \\
    & \leq(\log(2d))^2r^{6d}
 \times \frac{2\Ex^{u} \left[ \lambda^{-\tau_{v}} ;{\xi_{\cregion}>\tau_{v}} ,{\tau_{\tempset_w} \leq \tau_{v}} \right] }{\Ex^{u} \left[ \lambda^{-\tau_{v}} ;{\xi_{\cregion}>\tau_{v}}\right] } \nonumber\\
  & \leq Cr^{6d} \sum_{\mathclap{z \in B_{2r}(w)}} \ \frac{\Ex^{u} \left[ \lambda^{-\tau_{v}} ;{\xi_{\cregion}>\tau_{v}} ,{\tau_{z} \leq \tau_{v}} \right] }{\Ex^{u} \left[ \lambda^{-\tau_{v}} ;{\xi_{\cregion}>\tau_{v}}\right] }\,,
\end{align}
where in the second inequality, we used Lemmas~\ref{lem-concentration-bound-w} and \ref{lem-concentration-3-2}. Now, we define
\begin{equation}
	V_+ = \Big(\big(\bar \varphi_*(x,y;\ob_\circ) - \bar\varphi_*(x,y;\ob)   \big)_+\Big)^2 + \sum_{w \in B_{2R_{\circ}}(x)} \Big(\big(\bar \varphi_*(x,y;\ob_w) - \bar\varphi_*(x,y;\ob)   \big)_+\Big)^2\,.
\end{equation}
Combining \eqref{eq:concentration-bound-circ} and \eqref{eq:con-bound-w-0}, we have that on event $E_{x, y}$ 
\begin{align*}
	V_+ &\leq \sum_{w \in B_{2R_{\circ}}(x)}Cr^{6d} \sum_{z \in B_{2r}(w)}\frac{\Ex^{u} \left[ \lambda^{-\tau_{v}} ;{\xi_{\cregion}>\tau_{v}} ,{\tau_{z} \leq \tau_{v}} \right] }{\Ex^{u} \left[ \lambda^{-\tau_{v}} ;{\xi_{\cregion}>\tau_{v}}\right] }  + 1\\
	&\leq Cr^{7d}\sum_{z \in \Z^d} \frac{\Ex^{u} \left[ \lambda^{-\tau_{v}} ;{\xi_{\cregion}>\tau_{v}} ,{\tau_{z} \leq \tau_{v}} \right] }{\Ex^{u} \left[ \lambda^{-\tau_{v}} ;{\xi_{\cregion}>\tau_{v}}\right] } + 1\leq Cr^{7d}\frac{\Ex^{u} \left[ \tau_v\lambda^{-\tau_{v}} ;{\xi_{\cregion}>\tau_{v}} \right] }{\Ex^{u} \left[ \lambda^{-\tau_{v}} ;{\xi_{\cregion}>\tau_{v}}\right] } \,.
\end{align*}
Combining with \eqref{eq:ex-taulambda-tau}, we get that on the event $E_{x, y}$
$$ V_+   \leq  (\log |x-y|)^{C}|x-y|\,.  $$
Combined with \eqref{eq:con-e-prob} and $|\bar\varphi_*(x,y)|  \leq \Cbar|x-y|$, this yields for any $q\geq2$, and $|x-y| \geq n^\fuu$
\begin{equation*}
	\E\left[V_+^{q/2}\right] \leq \Big( (\log |x-y|)^{C} |x-y| \Big)^{q/2}\,.
\end{equation*}
In light of the preceding estimate, we complete the proof of the lemma by applying Efron-Stein inequality and \cite[Theorem 2]{BBLM05}.
\end{proof}

Next, we prove the sub-additivity of our LWGF. Note that the sub-additivity for \emph{unweighted} Green's function (i.e., when $\lambda = 1$) was proved in \cite[Chapter 5, Lemma 2.1]{Sznitman98}. The proof for our LWGF shares the same spirit but is a bit more complicated.
\begin{lemma}
	\label{sub-additivity} 
	For all $x,y,z \in \Z^d$ such that $|x-y| \geq n^{\fuu}$ and all $\lambda \in [\lambda_*,1]$ 
	\begin{equation}
	\label{eq:sub-additivity}
		\E \bar \varphi_*(x,y) \leq \E\bar\varphi_*(x,z) +\E \bar\varphi_*(z,y)  + e^{(\log|x-y|)^{2/3}}\,.
	\end{equation}
\end{lemma}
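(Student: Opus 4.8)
The heart of the matter is that the (unnormalised, weighted) Green's function is submultiplicative along an intermediate vertex, hence $\varphi(\,\cdot\,,\,\cdot\,;\lambda)$ is \emph{exactly} subadditive. Indeed, by \eqref{eq:phi=logG} we have $e^{-\varphi(x,y;\lambda)}=G_{\cregion}(x,y;\lambda)/G_{\cregion}(y,y;\lambda)$; decomposing the weighted walk count $G_{\cregion}(x,y;\lambda)$ according to its first visit to $z$ — a convergent rearrangement for $\lambda\in[\lambda_*,1)$ by Corollary~\ref{G-polylogn}, the endpoint $\lambda=1$ following by monotonicity — gives the renewal inequality $G_{\cregion}(x,y;\lambda)\,G_{\cregion}(z,z;\lambda)\ge G_{\cregion}(x,z;\lambda)\,G_{\cregion}(z,y;\lambda)$; taking $-\log$ and cancelling the $G_{\cregion}(z,z;\lambda)$ and $G_{\cregion}(y,y;\lambda)$ terms yields
\[
\varphi(x,y;\lambda)\le \varphi(x,z;\lambda)+\varphi(z,y;\lambda)\qquad\text{for all }x,y,z\in\Z^d,\ \lambda\in[\lambda_*,1]
\]
(if $z\notin\cregion$ the right side is $+\infty$ and there is nothing to prove). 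This is the only place the random walk itself enters; all remaining work is to transfer this to $\bar\varphi_*$ and to take expectations at the price $e^{(\log|x-y|)^{2/3}}$.

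First dispose of the case $\max(|x-z|,|z-y|)>K|x-y|$ with $K:=\Cbar/\cul$: there $\bar\varphi_*(x,z)+\bar\varphi_*(z,y)\ge\cul\max(|x-z|,|z-y|)>\Cbar|x-y|\ge\bar\varphi_*(x,y)$, deterministically. So assume $|x-z|,|z-y|\le K|x-y|$, whence $r(x,z)$ and $r(z,y)$ are both poly-logarithmic in $|x-y|$. Work on a good event $\mathcal G$, of $\P$-probability at least $1-e^{-(\log|x-y|)^{3/2}}$, on which: Lemma~\ref{phic-app} holds for the pair $(x,y)$ (and for $(x,z)$, resp.\ $(z,y)$, whenever $|x-z|>n^{\fuu}$, resp.\ $|z-y|>n^{\fuu}$), so $\bar\varphi_*=\varphi_*$ there; Lemma~\ref{chemdist2} holds for $(x,y)$, providing a unique macroscopic component $\mathcal K$ of $\cregion$ meeting $B_{|x-y|^2}(x)$ in which any two points at distance $s$ are joined by a $\cregion$-path of length $\le\max(Cs,(\log|x-y|)^C)$; and $\bba(w,\rho)\ne\varnothing$ for $w\in\{x,y,z\}$ and all the poly-logarithmic radii $\rho$ needed below (Lemma~\ref{cutset}). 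On $\mathcal G^c$ the integrand is at most $\bar\varphi_*(x,y)\le\Cbar|x-y|$, so it contributes $\le\Cbar|x-y|\,e^{-(\log|x-y|)^{3/2}}=o(1)$ to the expectation, which is absorbed.

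On $\mathcal G$, pick $(a,b)$ attaining $\varphi_*(x,z)$ and $(c,d)$ attaining $\varphi_*(z,y)$ (finite, via $\mathcal K$), and pick $a'\in B_{r(x,y)}(x)\cap\mathcal K$ and $d'\in B_{r(x,y)}(y)\cap\mathcal K$ inside $\mac_{\bba(x,\,r(x,y)/10)}$, $\mac_{\bba(y,\,r(x,y)/10)}$. Treat each of the two pieces according to its length and a fixed poly-logarithmic threshold $\rho_1=(\log|x-y|)^{3\kappa}$. If $|x-z|>\rho_1$, then $a$ is joined to $b$ in $\cregion$ with $|a-b|\ge|x-z|-2r(x,z)\ge(\log|x-y|)^{C_2}$, so $a,b\in\mathcal K$; Lemma~\ref{chemdist2}(2) then bounds $\varphi(a',a)$ and $\varphi(b,c)$ by $\log(2d)\cdot\mathrm{polylog}(|x-y|)$, and also gives $\varphi_*(x,z)=\varphi(a,b)\le 2C\log(2d)\,|x-z|\le\tfrac14\Cbar|x-z|$, so that $\varphi_*(x,z)\le\bar\varphi_*(x,z)$. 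If instead $|x-z|\le\rho_1$, then $|x-z|$ is itself poly-logarithmic, and a direct $\cregion$-path from $a'$ to any $\mathcal K$-point near $z$ (furnished by $\mac_{\bba(z,\,\cdot)}$) has poly-logarithmic length, costing $\varphi\le(\log|x-y|)^{C}$ which is absorbed, and $\bar\varphi_*(x,z)\ge0$ suffices. Handling $(z,y)$ symmetrically and chaining the subadditivity of $\varphi$,
\[
\varphi_*(x,y)\le\varphi(a',d')\le\varphi(a',a)+\varphi(a,b)+\varphi(b,c)+\varphi(c,d)+\varphi(d,d')\le\varphi_*(x,z)+\varphi_*(z,y)+(\log|x-y|)^{C},
\]
and since $\bar\varphi_*(x,y)=\varphi_*(x,y)$ on $\mathcal G$ while $\varphi_*(x,z)\le\bar\varphi_*(x,z)$ and $\varphi_*(z,y)\le\bar\varphi_*(z,y)$, we obtain $\bar\varphi_*(x,y)\le\bar\varphi_*(x,z)+\bar\varphi_*(z,y)+e^{(\log|x-y|)^{2/3}}$ for $|x-y|$ large. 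Taking expectations and adding the $o(1)$ from $\mathcal G^c$ concludes.

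The main obstacle is exactly the bookkeeping in the third paragraph: one must choose $\rho_1$ and the radii $r(\,\cdot\,,\,\cdot\,)$ so that every ``gluing'' path is genuinely poly-logarithmic (so its $\varphi$-cost is swallowed by $e^{(\log|x-y|)^{2/3}}$) while the only terms of size a distance are the two $\varphi_*$'s, each appearing with coefficient exactly one; and one must verify, in each regime for $|x-z|,|z-y|$ relative to $\rho_1$, $n^{\fuu}$ and $K|x-y|$, that the relevant $\varphi_*$-minimisers land in the single macroscopic cluster $\mathcal K$ so that Lemma~\ref{chemdist2}(2) applies and so that $\varphi_*$ does not exceed its upper truncation. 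The exact subadditivity of $\varphi$ itself, by contrast, is essentially free.
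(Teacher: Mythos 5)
Your global architecture is close to the paper's: identify the subadditivity of $\varphi$ at the Green's-function level, pass to $\varphi_*$ via a good event carrying cut-sets and chemical-distance control near $x$, $z$ and $y$, split into cases according to whether $z$ is far from both or close to one endpoint, and dominate the bad event using the deterministic bound $\bar\varphi_*(x,y)\le\Cbar|x-y|$. Your observation that $\varphi$ is \emph{exactly} subadditive (via $G_{\cregion}(x,y;\lambda)\ge G_{\cregion\setminus\{z\}}(x,z;\lambda)\,G_\cregion(z,y;\lambda)$ and $G_\cregion(x,z;\lambda)=G_{\cregion\setminus\{z\}}(x,z;\lambda)\,G_\cregion(z,z;\lambda)$) is correct and slightly cleaner than the paper's version with the extra $C_3\log\log n$. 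Likewise the reduction to $|x-z|,|z-y|\le(\Cbar/\cul)\,|x-y|$ is fine.

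However, there is a genuine gap in the choice of the cut-off $\rho_1=(\log|x-y|)^{3\kappa}$. In Case A you write ``pick $(a,b)$ attaining $\varphi_*(x,z)$ (finite, via $\mathcal K$)'' and then ``$a$ is joined to $b$ in $\cregion$ \ldots so $a,b\in\mathcal K$,'' but this presupposes $\varphi_*(x,z)<\infty$, i.e.\ that $B_{r(x,z)}(x)$ and $B_{r(x,z)}(z)$ both meet a common open $\cregion$-cluster. The natural way to secure this is via a cut-set $\bba(x,\rho)$ with $\rho\le r(x,z)/2$, and Lemma~\ref{cutset} requires $\rho>(\log n)^{3d}$, while $r(x,z)=(\log|x-z|)^{2\kappa+10d}$. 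When $|x-z|$ is only slightly above $\rho_1$ (or, after a WLOG, when $|z-y|$ is slightly above $\rho_1$), we have $\log|x-z|\asymp\log\log n$ and hence $r(x,z)\asymp(\log\log n)^{2\kappa+10d}\ll(\log n)^{3d}$, so no cut-set at a usable radius is available inside $B_{r(x,z)}(\cdot)$, and you cannot guarantee that the minimizing pair $(a,b)$ exists, let alone lies in $\mathcal K$. The fallback that the bad event is absorbed also fails here: its $\P$-probability can be as large as $e^{-c\,(\log\log n)^{2\kappa+10d}}$, and multiplying by $\Cbar|x-y|\ge\Cbar\,n^{1/10}$ does not produce an $o(1)$ contribution, let alone something $\le e^{(\log|x-y|)^{2/3}}$. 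The remedy is exactly what the paper does: take the WLOG $|x-z|\ge|z-y|$ (so that $|x-z|\ge|x-y|/2$ and Lemma~\ref{phic-app} always handles the $(x,z)$ pair) and use the threshold $e^{(\log|x-y|)^{1/2}}$ rather than a polylogarithm. With $|z-y|\ge e^{(\log|x-y|)^{1/2}}$ one gets $r(z,y)\ge(\log|x-y|)^{\kappa+5d}\gg(\log n)^{3d}$, making the cut-set argument valid, while in the complementary case the direct gluing path has cost of order $e^{(\log|x-y|)^{1/2}}$, which is precisely why the lemma's error allowance is $e^{(\log|x-y|)^{2/3}}$ and not a polylogarithm. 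Once you adopt this larger threshold, the rest of your chaining argument ($\varphi(a',a)$, $\varphi(b,c)$, $\varphi(d,d')$ all polylog via Lemma~\ref{chemdist2}(2), and $\varphi_*\le\bar\varphi_*$ from the upper bound on $\varphi_*$) goes through as you describe; the only remaining difference from the paper is that you glue at the level of $\varphi$-chaining while the paper glues at the level of the Green's-function decomposition across the cut-set $\bba(z,2r(x,z))$, which are two equivalent ways to express the same idea.
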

\begin{proof}
For any $u,v,w \in \Z^d$, by strong Markov property at $\tau_v$, we have
$$ G_{\cregion \setminus \{w\}}(u,w;\lambda) \geq G_{\cregion \setminus \{v,w\}}(u,v;\lambda)  G_{\cregion \setminus \{w\}}(v,w;\lambda)\,.  $$
Then \eqref{eq:phi=logG} and Lemma \ref{G-polylogn} implies
	\begin{equation}
	\label{eq:add-uvw}
		\varphi(u,w) \leq \varphi(u,v) + \varphi(v,w) + C_3 \log\log n\,.
	\end{equation}

To prove \eqref{eq:sub-additivity}, we have the complication that $\varphi_*(x,y)$ is the minimum over a neighborhood of $x$ to a neighborhood of $y$ (see \eqref{eq:phi-def}) and the size of the neighborhood depends on $|x-y|$. This requires a more careful treatment in the proof.

Without lose of generality, we assume $|x-z| \geq |y-z|$. By the definition of $\bar \varphi_*$, it suffices to consider the case when $|x-y| \geq \cul\Cbar^{-1}|x-z|$. The proof divides into the following two cases.
\begin{figure}
	\includegraphics[width=3in]{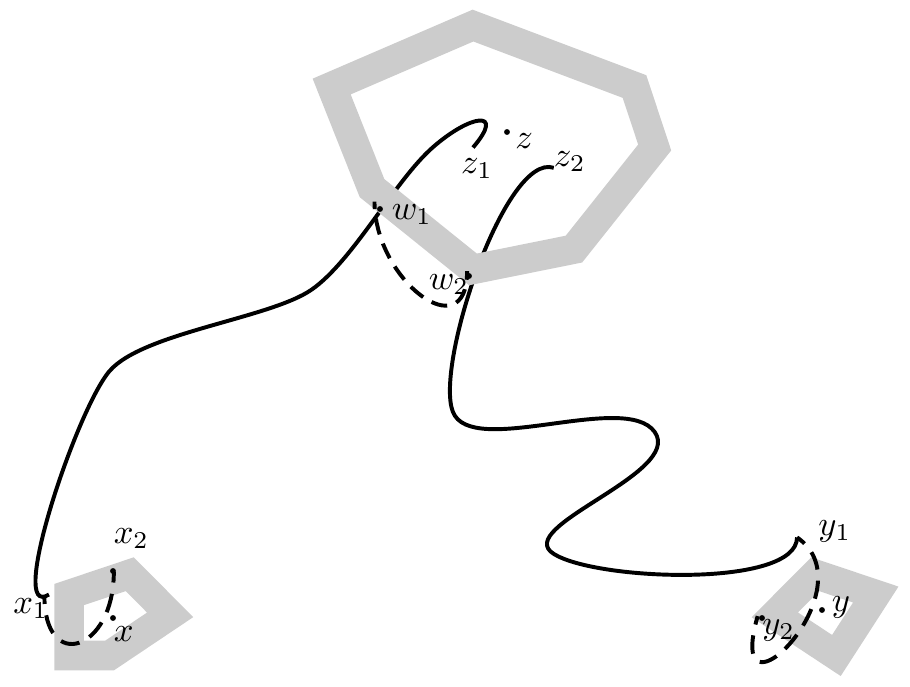}
	\includegraphics[width=2.2in]{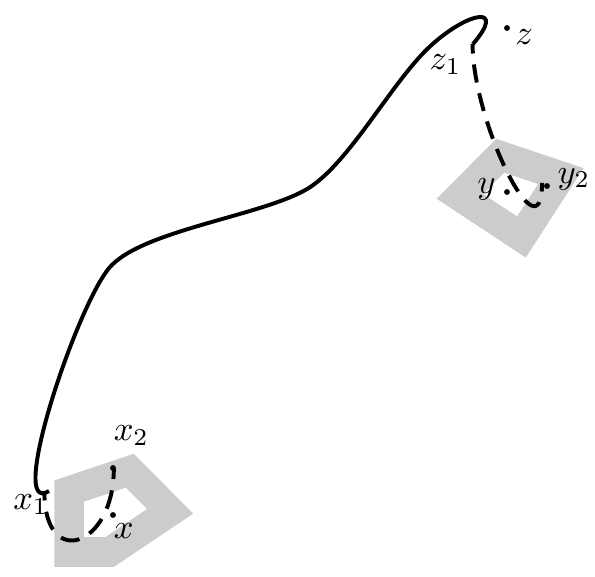}
	\begin{caption}
{Left: \textsc{Case 1}. Right: \textsc{Case 2}. The gray areas represent $\mabox_{\bba}$'s. The solid curve are the original paths. We use dotted curves to connect two paths or change the starting/ending points.}
\label{fig:add}
\end{caption}
\end{figure}
\medskip

\noindent\underline{\textsc{Case 1:}}  $|y-z| \geq e^{(\log |x-y|)^{1/2}}$.

In this case, we let $E_{x, y, z}$ be the event such that the following holds:
\begin{itemize}
	\item For any $ v \in B_{|x-y|^2}(x)$, either $\mathcal C (v) = \mathcal C(\infty)$ or $|\mathcal C (v)| \leq (\log |x-y|)^{5}$.
	\item $\bar \varphi_*(x,y) = \varphi_*(x,y),\bar \varphi_*(x,z) = \varphi_*(x,z)$ and $\bar \varphi_*(y,z) = \varphi_*(y,z)$,
	\item $\bba(x,r(x,y)/2),\bba(y,r(x,y)/2),\bba(z,2r(x,z))$ are not empty.
	\item Properties described in Lemma \ref{chemdist2}.
\end{itemize}
By Lemmas \ref{FiniteClusterSize}, \ref{cutset}, \eqref{eq:phidev} and \ref{chemdist2},
\begin{align}
\label{eq:tri-pro-1}
	\P(E_{x, y, z}^c) &\leq (2|x-y|)^de^{-c(\log |x-y|^{5/2})}+3e^{-(\log|x-y|)^2}+e^{-(\log|x-z|)^3}+e^{-(\log|z-y|)^3} \nonumber\\ &+ e^{-2^{-1}r(x,y) (\log n)^{-\cgpt}} + e^{-2r(x,z) (\log n)^{-\cgpt}} \leq 2 e^{-(\log|x-y|)^{3/2}}\,.
\end{align}
On event $E_{x, y,z}$, by \eqref{eq:phi-def} we choose $x_1 \in B_{r(x,z)}(x) $ and 
$z_1 \in B_{r(x,z)}(z) $, $z_2 \in B_{r(z,y)}(z)$ and 
$y_1 \in B_{r(z,y)}(y) $ such that 
$$\bar\varphi_*(x,z) = \varphi_*(x,z) = \varphi(x_1,z_1) \quad \text{and} \quad\bar \varphi_*(z,y)= \varphi_*(z,y) = \varphi(z_2,y_1).$$
Noting that $\bba(z,2r(x,z))$ is not empty, we denote by $\tempset$ the union of $ \mabox_{\bba(z,2r(x,z))}$ and its interior region; i.e., $\tempset$ is the complement of the infinite connected component in $ \mabox_{\bba(z,2r(x,z))}^c$. By decomposition of random walk paths, we have
$$G_{\cregion}(x_1,y_1;\lambda) \geq  \sum_{w_1,w_2 \in \partial_i \tempset}G_{\cregion \setminus \tempset}(x_1,w_1;\lambda)G_{\cregion }(w_1,w_2;\lambda)G_{\cregion \setminus \tempset}(w_2,y_1;\lambda)\,.$$
Also, for $w_1,w_2$ such that $G_{\cregion \setminus\tempset}(x_1,w_1;\lambda)G_{\cregion \setminus\tempset}(w_2,z_1;\lambda)>0$, by the first requirement of $E_{x, y, z}$, we have $\calC(w_1) = \calC(w_2)=\calC(\infty)$. Applying Lemma \ref{cutset} (2), we get
$$G_{\cregion }(w_1,w_2;\lambda) 
\geq (2d)^{-(2r(x,z))^{2d}}\,.$$
Therefore, combining the preceding two displayed inequalities gives
\begin{align*}
	G_{\cregion}(x_1,y_1;\lambda) \geq (2d)^{-(2r(x,z))^{2d}} \cdot \sum_{w_1,w_2 \in \partial_i \tempset}G_{\cregion \setminus\tempset}(x_1,w_1;\lambda)G_{\cregion \setminus\tempset}(w_2,y_1;\lambda)\,.
 \end{align*}
Also, by Corollary \ref{G-polylogn}, 
 \begin{align*}
G_{\cregion}(x_1,z_1;\lambda) & =  \sum_{w\in \partial_i \tempset}G_{\cregion \setminus\tempset}(x_1,w;\lambda)G_\cregion(w,z_1;\lambda) \leq (\log n)^{C_3} \sum_{w\in \partial_i \tempset}G_{\cregion \setminus\tempset}(x_1,w;\lambda)\,,
\end{align*}
and similarly $G_{\cregion}(z_2,y_1;\lambda) \leq (\log n)^{C_3} \sum_{w\in \partial_i \tempset}G_{\cregion \setminus  \tempset}(w,z_1;\lambda)$.
Combining preceding three inequalities and that $r(x,z) \geq \log (|x-y|/2) \geq c\log n$ gives
$$ G_{\cregion}(x_1,y_1;\lambda) \geq e^{-C(r(x,z))^{2d}}G_{\cregion}(x_1,z_1;\lambda) G_{\cregion}(z_2,y_1;\lambda)\,.$$
Then by \eqref{eq:phi=logG} and Corollary \ref{G-polylogn}, we deduce that
$$ \varphi(x_1,y_1) \leq \bar \varphi_*(x,z) + \bar \varphi_*(z,y)  + C(r(x,z))^{2d}\,.$$
Since $\bba(x,r(x,y)/2),\bba(y,r(x,y)/2) \not = \varnothing$  and $r(x,y)/2>(\log |x-y|)^{C_2}$, we choose $x_2 \in \mac_{\bba(x,r(x,y)/2)}$ and $y_2 \in \mac_{\bba(y,r(x,y)/2)} $ arbitrarily. Then by Lemma \ref{chemdist2}, $x_1$ and $x_2$ is connected by path in $\cregion$ of length at most $(\log |x-y|)^{C}$. Then $\varphi(x_1,x_2) \leq \log (2d)(\log |x-y|)^C$ (and the same holds for $y_1$ and $y_2$).
Therefore, on the event $E_{x, y, z}$, \eqref{eq:add-uvw} gives
$$ \varphi_*(x,y) \leq \varphi (x_2, y_2) \leq \varphi(x_1, y_1) +2 \log (2d) (\log |x-y|)^{C}  \leq 
\bar \varphi_*(x,z) + \bar \varphi_*(z,y)  + (\log |x-y|)^{C}\,.$$
Combined with \eqref{eq:tri-pro-1} and $\bar \varphi_*(x,y)  \leq \Cbar|x-y|$, this implies
$$ \E\bar\varphi_*(x,y)   \leq \E\bar\varphi_*(x,z) +\E \bar\varphi_*(z,y)  + (\log |x-y|)^{C}\,.$$

\noindent\underline{\textsc{Case 2}:} $|y-z| \leq e^{(\log |x-y|)^{1/2}}$. 

In this case, $|x-z| \geq |x-y|/2$. We let $E_{x,y, z}$ be the event such that the following hold:
\begin{itemize}
	\item For any $ v \in B_{|x-y|^2}(x)$, either $\mathcal C (v) = \mathcal C(\infty)$ or $|\mathcal C (v)| \leq (\log |x-y|)^{5}$.
	\item $\bar \varphi_*(x,y) = \varphi_*(x,y)$ and $\bar \varphi_*(x,z) = \varphi_*(x,z)$,
	\item $\bba(x,r(x,y)/2),\bba(y,r(x,y)/2)$ are not empty.
	\item Properties described in Lemma \ref{chemdist2}.
\end{itemize}
By Lemmas \ref{FiniteClusterSize}, \ref{cutset}, \eqref{eq:phidev} and \ref{chemdist2},
\begin{align}
\label{eq:tri-prob-2}
	\P(E_{x, y, z}^c) &\leq(2|x-y|)^de^{-c(\log |x-y|^{5/2})} +  3e^{-(\log|x-y|)^2}\nonumber\\
&+2e^{-(\log|x-z|)^2} + 2e^{-2^{-1}r(x,y) (\log n)^{-\cgpt}} \leq e^{-(\log|x-y|)^{3/2}}\,.
\end{align}
On the event $E_{x, y, z}$, by \eqref{eq:phi-def} we let $x_1 \in B_{r(x,z)}(x) $ and $z_1 \in B_{r(x,z)}(z) $ such that
$$\bar \varphi_*(x,z)= \varphi_*(x,z) = \varphi(x_1,z_1)\,.$$
Since $\bba(x,r(x,y)/2),\bba(y,r(x,y)/2)$ are not empty, by Lemma \ref{chemdist2}, there exists $x_2 \in B_{r(x,y)}(x) $ and $y_2 \in B_{r(x,y)}(y)  $ such that  $x_1$ is connected  to $x_2$ by path in $\cregion$ of length at most $(\log |x-y|)^{C}$ and $z_1$ is connected to $y_2$  by path in $\cregion$ of length at most $C e^{-(\log|x-y|)^{1/2}}$. Then $\varphi(x_2,x_1) \leq \log (2d)(\log |x-y|)^C$, $\varphi(z_1,y_2) \leq C\log (2d) e^{-(\log|x-y|)^{1/2}}$.
Therefore, on the event $E_{x, y, z}$, \eqref{eq:add-uvw} gives
\begin{align*}
 \varphi_*(x,y)& \leq \varphi(x_2,y_2)\leq \varphi(x_1,z_1)+ C e^{(\log|x-y|)^{1/2}}\log(2d) \\
 &=  \varphi_*(x,z)  + C e^{(\log|x-y|)^{1/2}}\log(2d)\,.
 \end{align*}
Combined with \eqref{eq:tri-prob-2} and $\bar \varphi_*(x,y)  \leq \Cbar|x-y|$, this implies
\begin{equation*}
 \E\bar\varphi_*(x,y) \leq \E\bar\varphi_*(x,z) +\E \bar\varphi_*(z,y)  + e^{2(\log|x-y|)^{1/2}}\,. \qedhere
 \end{equation*}
\end{proof}

\subsection{Rate of convergence for LWGF to linear functions}
\label{section:Convergence rate}
Let \begin{equation}
\label{eq:def-h}
	h(x) = \E \left[\bar \varphi_*(0,x)\right]\text{ for all }x \in \Z^d\,.
\end{equation}It has been proved in Lemma \ref{sub-additivity} that $\{h(mx)\}_{m \geq 1}$ is sub-additive (with some error term). Hence, one can prove that (see \cite[Theorem 23]{BE52}) for all $x \in \Z^d$ the limit
\begin{equation}
\label{eq:g-def-lim}
	g(x): = \lim_{m \to \infty} h(mx) /m 
\end{equation}
exists. Furthermore, $g$ could first extends to $\Q^d$ by restricting $m$ to such that $mx \in \Z^d$, and then extends to $\R^d$ by continuity (see \cite[Lemma 1.5]{Alexander97}). Moreover, it follows directly from definition and Lemma \ref{sub-additivity} that $g$ is homogeneous of order 1 and sub-additive, i.e.,
\begin{align}\label{eq:gsuba}
	g(\alpha x) = \alpha g(x) \mbox{ for }x\in \R^d, \alpha>0\,, \mbox{ and } g(x+y) \leq g(x) + g(y) \mbox{ for }x,y\in \R^d\,.
\end{align}
Thus, $g$ is convex (such properties of $g$ are useful, e.g., in the proof of Lemma \ref{topev-2}). Now, we can state the main conclusion of this subsection.
\begin{lemma}
\label{converrate}
For $|x| \geq n^{1/2}$, we have that
\begin{equation}
\label{eq:happ}
	h(x) \leq g(x) + |x|^{4/5}\,.
\end{equation}
\end{lemma}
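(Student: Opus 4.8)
The plan is to follow the convex-hull approximation framework of Alexander~\cite{Alexander97}, which converts sub-additivity together with a concentration estimate into a quantitative convergence rate. The three inputs needed are: sub-additivity of $h$ with a small error term (Lemma~\ref{sub-additivity}), a concentration estimate for $\bar\varphi_*$ around its mean with fluctuations of order $|x|^{1/2+o(1)}$ (Lemma~\ref{concentration}), and the linear two-sided bound $\cul|x|\leq h(x)\leq\Cbar|x|$ (from Lemma~\ref{phic-app}). The conclusion $g$ is homogeneous, convex and sub-additive has already been recorded in \eqref{eq:gsuba} and the discussion around \eqref{eq:g-def-lim}, so the only remaining task is the rate \eqref{eq:happ}.

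First I would set up the Alexander machinery. The key notion is that of an \emph{approximate} sub-additive function: $h$ satisfies $h(x+y)\leq h(x)+h(y)+\mathcal{E}(|x|\vee|y|)$ with error $\mathcal{E}(t)=e^{(\log t)^{2/3}}$, which is $t^{o(1)}$ and in particular is eventually dominated by any positive power of $t$; and $h$ is bounded below and above by multiples of $|x|$, so the normalized limit $g$ exists and is a norm-like convex function. Alexander's theorem (his Theorem~1.8, or the version in \cite[Sec.~3]{Alexander97}) states roughly that if $h$ is approximately sub-additive with error $\mathcal{E}$, is linearly bounded, and satisfies a concentration bound of the form $\mathbb{P}(h(x)-\E h(x)\leq -s)\leq$ (something summable in a suitable range of $s\gtrsim|x|^{1/2}(\log|x|)^{C}$), then $h(x)\leq g(x)+C_1|x|^{1/2}(\log|x|)^{C_2}$ for $|x|$ large; a weaker version with exponent replaced by any $\beta\in(1/2,1)$ suffices for us and is more robust. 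The point is to verify that the hypotheses of whichever version of Alexander's theorem we invoke are met by our $h$, for $|x|\geq n^{1/2}$, and then the output is exactly \eqref{eq:happ} with room to spare (we only claim exponent $4/5$, comfortably above $1/2$).

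Concretely, the steps in order are: (i) record that $h$ is approximately sub-additive, with the error in \eqref{eq:sub-additivity} being $e^{(\log|x-y|)^{2/3}}=|x-y|^{o(1)}$ --- note the asymmetry $|x-z|\geq|z-y|$ in the proof of Lemma~\ref{sub-additivity} causes no difficulty since the error only depends on $|x-y|$; (ii) record the linear bounds $\cul|x|\leq h(x)\leq\Cbar|x|$; (iii) from Lemma~\ref{concentration}, extract the one-sided lower-tail bound: for $q\geq 2$, $\E((\E\bar\varphi_*(0,x)-\bar\varphi_*(0,x))_+)^q\leq(\log|x|)^{qC}|x|^{q/2}$, hence by Markov $\mathbb{P}(\E\bar\varphi_*(0,x)-\bar\varphi_*(0,x)\geq s)\leq s^{-q}(\log|x|)^{qC}|x|^{q/2}$, which is the concentration input; (iv) verify Alexander's convex-hull / geometric condition --- in our setting this is where one uses that $\bar\varphi_*(0,x)$ can be realized (up to the small error) by concatenating geodesic-like paths through the intermediate cut sets $\mac_{\bba(\cdot,\cdot)}$ supplied by Lemmas~\ref{cutset} and \ref{chemdist2}, which is exactly what the proof of Lemma~\ref{sub-additivity} already establishes; (v) apply Alexander's theorem to conclude $h(x)\leq g(x)+C|x|^{\beta}$ for some $\beta<4/5$, and absorb constants and logarithmic factors into $|x|^{4/5}$ for $|x|\geq n^{1/2}$ (the lower bound $|x|\geq n^{1/2}$ is used to ensure the error terms $e^{(\log|x|)^{2/3}}$ and the concentration fluctuations are genuinely smaller than $|x|^{4/5}$ and that Lemmas~\ref{concentration} and \ref{sub-additivity}, which require $|x|>n^{\fuu}$, apply).

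The main obstacle is step (iv): Alexander's framework is not a black box one can cite verbatim here, because his hypotheses are tailored to first-passage percolation where the relevant functional is the length of a single shortest path, whereas our $\varphi_*$ is (morally) a log-partition-function averaging over many non-self-avoiding paths, carries the extra weight $\lambda^{-\tau_y}$, and must avoid the forbidden region $\sd_\lambda$. So the real work is to re-derive, in our setting, the ``convex hull approximation property'' --- that is, that for $x$ in a suitable lattice direction one can glue near-optimal paths for $h(x')$ and $h(x'')$ with $x'+x''$ close to $x$ to build a near-optimal path for $h(x)$, with only a $|x|^{o(1)}$ loss. This is precisely the content already proven in the two cases of Lemma~\ref{sub-additivity} (using the cut sets $\mabox_{\bba}$ and the chemical-distance control of Lemma~\ref{chemdist2} to bridge between the local neighborhoods $B_{r(\cdot,\cdot)}$ of the relevant points), together with the upper bound $G_\cregion\leq(\log n)^{C_3}$ from Corollary~\ref{G-polylogn} to control the gluing cost. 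Once this geometric input is in hand, the passage from approximate sub-additivity plus concentration to the rate \eqref{eq:happ} is the standard Alexander argument, and I would present it by citing \cite{Alexander97} for the generic implication and verifying its hypotheses rather than reproving it.
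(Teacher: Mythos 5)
Your high-level plan is the same as the paper's: cast the problem into Alexander's approximate sub-additivity / convex-hull framework, feed in the sub-additivity of Lemma~\ref{sub-additivity}, the linear two-sided bounds, and the concentration of Lemma~\ref{concentration}, then read off the rate. Steps (i)--(iii) and (v) are fine, and you correctly identify that the convex-hull approximation property (CHAP, Lemma~\ref{Alexlemma}) is the bottleneck.

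The gap is in step (iv). You assert that CHAP ``is precisely the content already proven in the two cases of Lemma~\ref{sub-additivity}.'' It is not. Sub-additivity is the statement $h(x+y)\le h(x)+h(y)+o(|x|)$; CHAP is the much more specific requirement that for each direction there exists $\alpha\in[1,a]$ with $x/\alpha\in\mathrm{Co}(Q_x)$, where $Q_x$ (Definition~\ref{def-gx}) is the set of increments $y$ with $g_x(y)\le g(x)$ and $h(y)\le g_x(y)+10C_Q|x|^{2/3}$. Verifying this requires constructing, for a given $N$ and with positive probability, a chain $0=v_0,\dots,v_{\zeta_3}=Nx$ whose increments all lie in $Q_x$ and whose total number is $\le 4N$. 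The paper builds this chain adaptively from the environment (\eqref{eq:defin-v-i-1}--\eqref{eq:defin-v-i-3}), proves in Lemma~\ref{vpath} that $\sum_i\varphi_*(v_i,v_{i+1})$ is controlled by $\varphi_*(0,Nx)$ and that $\zeta_2-\zeta_1$ is comparable to $N$, and then needs the new concentration estimate Lemma~\ref{concentration-path} to transfer this to $\sum_i\E\bar\varphi_*(v_i,v_{i+1})$. Crucially, because $\varphi_*$ is a weighted Green's function averaging over many non-self-avoiding paths rather than the length of a single geodesic, one cannot invoke BK-type independence of the segments as in the FPP setting of \cite{Alexander97}; the paper substitutes the greedy lattice animal bounds of \cite{Lee97,Martin02} (see \eqref{eq:greedyLA}). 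Finally a counting argument via Lemma~\ref{QDD}(2),(3) and \eqref{eq:m3-1}--\eqref{eq:m3-3} shows $\zeta_3\le 4N$. None of this is delivered by the gluing that establishes sub-additivity, and it is precisely this package (Lemmas~\ref{vpath}, \ref{good--v-prob}, \ref{concentration-path}, \ref{mleqan}) that your outline leaves out.
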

The proof of Lemma~\ref{converrate} is a combination of the concentration results proved in Section~\ref{section: Concentration, Sub-additivity} and a nontrivial adaption of arguments in \cite{Alexander97}. We first record some straightforward properties of $h$. By Lemma \ref{sub-additivity} and \cite[Theorem 23]{BE52}, for all $|x| \geq n^{\fuu}$
\begin{equation}
\label{eq:gandh}
	g(x) \leq  h(x) + |x|^{1/10}\,,
\end{equation}
 \begin{equation}
\label{eq:function-g-bound}
	\cul|x|\leq g(x) \leq  \Cbar|x|\,,
\end{equation}
where $\cul, \Cbar$ are two constants as in \eqref{eq:def-barphi}  In addition, $g$ is continuous (see \cite[Lemma 1.5]{Alexander97}).

One of the main results in \cite{Alexander97} is that CHAP (as described in Lemma \ref{Alexlemma}) implies approximation property of the form \eqref{eq:happ}. Hence, it suffice to verify the CHAP condition. We will adapt the arguments in \cite{Alexander97} in order to verify CHAP (Lemma \ref{mleqan}). However, in our case $h$ is not the length of the shortest path (as in the case for the first-passage percolation), but in a vague sense $h$ is the ``average" length of all open paths (which are not necessarily self-avoiding). This incurs some nontrivial challenges, whose treatment requires some new technical ingredients including Lemma \ref{vpath} and some concentration results in our case (Lemmas \ref{good--v-prob}, \ref{concentration-path}).

We point out that Lemmas \ref{Alexlemma}, \ref{QDD} and the arguments in \textit{the proof of Lemma \ref{converrate}} are from \cite{Alexander97}. We omit the proof the Lemma \ref{Alexlemma} but present the proof the other two, as there is an error term in the sub-additivity of $h$ and hence a few (straightforward) modifications are required there. Also, Lemma \ref{mleqan} is similar to \cite[Proposition 3.4]{Alexander97}.

Now, we turn to the proof of the main result in this subsection. 
\begin{defn}\label{def-gx}
	For $x\in\R^d$, let $H_x$ denote a hyperplane tangent to $\partial \{y:g(y) \leq g(x)\}$ at $x$. Let $H_x^0$ denote the hyperplane through $0$ parallel to $H_x$. We define $ g_x(y)$ by the unique linear function such that
\begin{equation*}
	g_x(y) =0 \for y \in H_x^0, \AND g_x(x)=g(x)\,,
\end{equation*}
and define
\begin{equation}
\label{eq:def-Qx}
	Q_x: = \{y\in\Z^d:g_x(y) \leq g(x),h(y) \leq g_x(y) + 10C_Q|x|^{\tuu}\}\,,
\end{equation}
where $C_Q = 2\Cbar\cul^{-1} + 10.$
\end{defn}
Then one can prove that (see \cite[(1.9)]{Alexander97})
\begin{equation}
\label{eq:gx<g}
	 |g_x(y)| \leq g(y)\text{ for all }y \in \R^d\,.
\end{equation}
\begin{lemma}\emph{(\cite[Lemma 1.6]{Alexander97})} 
\label{Alexlemma}
		Let $M,a>1$. Suppose that for each $x \in \mathbb Q^d$ with $|x| \geq M$, there exists $N \geq 1$, a path $\gamma$ in $\Z^d$ from $0$ to $Nx$ and a sequence of sites $0 = v_0,v_1,...,v_m = Nx$ in $\gamma$ such that $m \leq aN$ and $v_i - v_{i-1} \in Q_x$ for all $1 \leq i \leq m$. Then $h$ satisfies the convex-hull approximation property (CHAP), meaning for all $x \in \mathbb Q^d$ with $|x| \geq M$, we have
		\begin{equation}
			x / \alpha \in \mathrm{Co}(Q_x)\quad \text{for some }\alpha \in [1,a]\,,
		\end{equation}
		where $\mathrm{Co}(\cdot)$ denotes the convex hull.
\end{lemma}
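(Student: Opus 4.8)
The plan is to prove Lemma~\ref{Alexlemma} by a direct averaging argument, using only that $g_x$ is a linear functional with $g_x(x)=g(x)$ and that $Q_x$ lies in the half-space $\{y:g_x(y)\le g(x)\}$ (see Definition~\ref{def-gx} and \eqref{eq:def-Qx}). Fix $x\in\mathbb Q^d$ with $|x|\ge M$ and apply the hypothesis to obtain $N\ge 1$, a lattice path $\gamma$ from $0$ to $Nx$, and sites $0=v_0,v_1,\dots,v_m=Nx$ along $\gamma$ with $m\le aN$ and $v_i-v_{i-1}\in Q_x$ for every $1\le i\le m$. Since $Nx=\sum_{i=1}^m (v_i-v_{i-1})$ is a sum of $m$ elements of $Q_x$, the arithmetic mean $\tfrac1m\sum_{i=1}^m(v_i-v_{i-1})=\tfrac{N}{m}x$ is a convex combination of points of $Q_x$ and hence lies in $\mathrm{Co}(Q_x)$. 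Setting $\alpha:=m/N$, this is exactly the statement $x/\alpha\in\mathrm{Co}(Q_x)$, so the only thing left is to check $\alpha\in[1,a]$.

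The upper bound $\alpha=m/N\le a$ is immediate from $m\le aN$. For the lower bound $\alpha\ge 1$, i.e.\ $m\ge N$, I would exploit the linearity of $g_x$: each increment satisfies $g_x(v_i-v_{i-1})\le g(x)$ because $v_i-v_{i-1}\in Q_x$, while by linearity and $g_x(x)=g(x)$,
\begin{equation*}
\sum_{i=1}^m g_x(v_i-v_{i-1})=g_x\Big(\sum_{i=1}^m(v_i-v_{i-1})\Big)=g_x(Nx)=N\,g_x(x)=N\,g(x)\,.
\end{equation*}
Hence $N\,g(x)\le m\,g(x)$, and since $g(x)\ge\cul|x|>0$ by \eqref{eq:function-g-bound} (as $|x|\ge M>1$), this forces $m\ge N$, i.e.\ $\alpha\ge 1$. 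Combining the two bounds gives $x/\alpha\in\mathrm{Co}(Q_x)$ for some $\alpha\in[1,a]$, which is precisely CHAP, and the lemma follows.

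I do not expect any serious obstacle in this step: the argument is essentially forced by the definitions. The one point worth highlighting is the telescoping identity $\sum_i g_x(v_i-v_{i-1})=N g(x)$, which, combined with the defining inequality $g_x(\cdot)\le g(x)$ of $Q_x$, is exactly what rules out $m<N$; without it one would only get the trivial bound $\alpha\le a$. All of the genuine difficulty in establishing CHAP is deferred to verifying the hypothesis of this lemma (namely Lemma~\ref{mleqan}, which produces the path $\gamma$ and the skeleton $v_0,\dots,v_m$ using the concentration estimates of Section~\ref{section: Concentration, Sub-additivity}), rather than to Lemma~\ref{Alexlemma} itself; this is why, following \cite{Alexander97}, only a sketch is needed here.
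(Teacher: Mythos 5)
Your proof is correct, and it is in fact the standard argument behind Alexander's Lemma 1.6: the telescoping sum $Nx = \sum_{i=1}^m (v_i - v_{i-1})$ exhibits $x/(m/N)$ as an average of $m$ points of $Q_x$, the bound $m \le aN$ gives $\alpha = m/N \le a$, and the linearity of $g_x$ together with $g_x(y)\le g(x)$ on $Q_x$ and $g(x)\ge \cul|x|>0$ forces $m\ge N$, hence $\alpha\ge 1$. The paper explicitly omits the proof of this lemma and refers to \cite[Lemma 1.6]{Alexander97}, so there is no in-paper argument to compare against, but your reconstruction is complete and matches the cited source; the only point I would flag as worth stating explicitly (you do gesture at it) is that the strict positivity $g(x)>0$ follows from the homogeneity of $g$ together with \eqref{eq:function-g-bound}, so that one may cancel $g(x)$ in $N g(x)\le m g(x)$ without worrying about the degenerate case.
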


To verify the condition in Lemma \ref{Alexlemma}, we fix a large integer $N$ such that $Nx \in \Z^d$ and choose $v_i$'s depending on the environment and show that they satisfies the condition in Lemma \ref{Alexlemma} with positive $\P$-probability (which suffices as the existence of desired $v_i$'s as stated in Lemma~\ref{Alexlemma} is a deterministic event). 

We choose $u_1 \in B_{r(0,Nx)}(0) $ and $u_2 \in B_{r(0,Nx)}(Nx)$ such that $\varphi_*(0,Nx) = \varphi(u_1,u_2)$. Then we choose $v_i$ for $i = 1,2,...,\zeta_1,...,\zeta_2,...,\zeta_3$ as well as $\zeta_1, \zeta_2, \zeta_3$ inductively as follows. Set $v_0 = 0$ and for any $i \geq 1$

	\begin{equation}
	\label{eq:defin-v-i-1}
		v_i = \arg\min_{u :~u -  v_{i-1}\in\partial_iQ_x} |u - u_1|
	\end{equation}
until $u_1 \in v_{i-1}+ Q_x$, in which case we set $v_i = u_1$ and $\zeta_1 = i$. Next for $i \geq \zeta_1 + 1$ set
	\begin{equation}
		\label{eq:defin-v-i-2}
		v_i = \arg\max_{\substack{u :~u -  v_{i-1}\in\partial_iQ_x\\ u \not\in \{v_{\zeta_1},v_{\zeta_1+1},...,v_{i-1}\}} }G_{\cregion \setminus \{v_{\zeta_1},v_{\zeta_1+1},...,v_{i-1}\}}(v_{i-1},u;\lambda) \cdot G_{\cregion \setminus \{v_{\zeta_1},v_{\zeta_1+1},...,u\}}(u,u_2;\lambda) \,,
	\end{equation}
until $u_2 \in v_{i-1}+ Q_x$, in which case we set $v_i = u_2$ and $\zeta_2 = i$. Finally for $i \geq \zeta_2 + 1$ set
		\begin{equation}
			\label{eq:defin-v-i-3}
		v_i = \arg\min_{u :~u -  v_{i-1}\in\partial_iQ_x} |u - u_3|
	\end{equation}
until $Nx \in v_{i-1}+ Q_x$, in which case we set $v_i = Nx$ and $\zeta_3 = i$. We remark that we do not require either $0$ or $Nx$ is in $\calC(\infty)$ here.

We verify the condition in Lemma \ref{Alexlemma} for $M = n^{1/3}$ and $a = 4$ as a consequence of the following lemma, and then prove Lemma \ref{converrate}. 
\begin{lemma}
\label{mleqan}
For all $x \in \Q^d$ with $|x| \geq n^{1/3}$, there exists an integer $N \geq 1$ such that $$\P( \zeta_3 \leq 4N)>0\,.$$
\end{lemma}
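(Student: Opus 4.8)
The plan is to follow the scheme of \cite[proof of Proposition~3.4]{Alexander97}, with the first-passage-percolation geodesic replaced by a last-exit decomposition of the log-weighted Green's function. Fix $x\in\Q^d$ with $|x|\geq n^{1/3}$, let $N$ be a large integer (chosen at the end, but always $N\leq\mathrm{poly}(n)$) with $Nx\in\Z^d$, pick $u_1\in B_{r(0,Nx)}(0)$ and $u_2\in B_{r(0,Nx)}(Nx)$ realizing $\varphi_*(0,Nx)=\varphi(u_1,u_2)$, and let $v_0,\dots,v_{\zeta_3}$ be as in \eqref{eq:defin-v-i-1}--\eqref{eq:defin-v-i-3}. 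I would work on the intersection $\mathcal G$ of the following environment events, each of $\P$-probability tending to $1$ (so $\P(\mathcal G)>0$ for $n$ large): the conclusions of Lemmas~\ref{cutset} and \ref{chemdist2} for all relevant pairs inside $B_{CN|x|}(0)$; the identities \eqref{eq:phidev} together with the bounds of Lemma~\ref{phic-app} for every pair of sites in $B_{CN|x|}(0)$ at distance $>n^{\fuu}$; and the lower-tail bound $\bar\varphi_*(z,z')\geq h(z'-z)-|z'-z|^{\tuu}$ for every such pair, which follows from Lemma~\ref{concentration} and a union bound over the $\mathrm{poly}(n)$ pairs. On $\mathcal G$ every increment $v_i-v_{i-1}$ has positive Green's function, so the realizing paths concatenate into a single path $\gamma$ from $0$ to $Nx$ visiting $v_0,\dots,v_{\zeta_3}$ in order, as required in Lemma~\ref{Alexlemma}; hence it suffices to show $\zeta_3\leq 4N$ on $\mathcal G$.

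\textbf{Phases~1 and 3.} Since $h(y)\leq\Cbar|y|$ for all $y$ and $g_x(y)\geq-g(y)$ by \eqref{eq:gx<g}, the definition \eqref{eq:def-Qx} gives $B_{\rho}(0)\subseteq Q_x$ for some $\rho\geq c\,C_Q|x|^{\tuu}$; as $|u_1|,\,|Nx-u_2|\leq r(0,Nx)=(\log(N|x|))^{2\kappa+10d}\ll\rho$ once $N\leq\mathrm{poly}(n)$ and $n$ is large, we have $u_1\in v_0+Q_x$ and $Nx\in v_{\zeta_2}+Q_x$ outright, so $\zeta_1\leq 1$ and $\zeta_3\leq\zeta_2+1$. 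Thus only $m':=\zeta_2-\zeta_1$ needs to be bounded.

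\textbf{Phase~2.} Any walk path from $v_{i-1}$ to $u_2$ makes a last visit to $v_{i-1}+\partial_iQ_x$ before exiting $v_{i-1}+Q_x$ (recall $0\in Q_x$ and $u_2\notin v_{i-1}+Q_x$ until the phase ends), and $|Q_x|\leq C|x|^d$; so the maximizing choice \eqref{eq:defin-v-i-2}, with Corollary~\ref{G-polylogn} absorbing the polylogarithmic factors, yields
\[
\varphi_*(0,Nx)=\varphi(u_1,u_2)\ \geq\ \sum_{i=\zeta_1+1}^{\zeta_2}\varphi(v_{i-1},v_i)\ -\ m'\,(d\log|x|+C).
\]
On $\mathcal G$, each summand obeys $\varphi(v_{i-1},v_i)\geq\varphi_*(v_{i-1},v_i)=\bar\varphi_*(v_{i-1},v_i)\geq h(v_i-v_{i-1})-|v_i-v_{i-1}|^{\tuu}$; membership $v_i-v_{i-1}\in Q_x$ gives $h(v_i-v_{i-1})\leq g_x(v_i-v_{i-1})+10C_Q|x|^{\tuu}$ and $g_x(v_i-v_{i-1})\leq g(x)$; and \eqref{eq:gandh} gives $h(v_i-v_{i-1})\geq g_x(v_i-v_{i-1})-|v_i-v_{i-1}|^{\fuu}$. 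Feeding these, the exact telescoping identity $\sum_ig_x(v_i-v_{i-1})=g_x(u_2-u_1)=Ng(x)+O(r(0,Nx))$, the a priori lower bound $\varphi_*(v_{i-1},v_i)\geq 4\cul|v_i-v_{i-1}|\geq 4\cul\rho$, and the upper bound $\varphi_*(0,Nx)=\bar\varphi_*(0,Nx)\leq\Cbar N|x|$ (in fact $=Ng(x)(1+o(1))$ by \eqref{eq:g-def-lim} and the variance bound of Lemma~\ref{concentration}) into the counting argument of \cite[proof of Proposition~3.4]{Alexander97} should give $m'\leq 4N-O(1)$: the increments landing near the face $\{g_x=g(x)\}$ of $\partial Q_x$ make $g_x$-progress $\geq g(x)(1-o(1))$, so the telescoping identity caps their number by $N(1+o(1))$, while every other increment lies near the face $\{h=g_x+10C_Q|x|^{\tuu}\}$, which forces $\varphi(v_{i-1},v_i)$ to exceed the $g_x$-value of that increment by almost $10C_Q|x|^{\tuu}$, so that the upper bound on $\varphi_*(0,Nx)$ and the choice $C_Q=2\Cbar\cul^{-1}+10$ bound their number too. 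Hence $\zeta_3=\zeta_1+m'+1\leq 4N$ on $\mathcal G$, and $\P(\zeta_3\leq 4N)\geq\P(\mathcal G)>0$ for $n$ large.

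\textbf{The main obstacle} is precisely this counting step. In \cite{Alexander97} one works along a single geodesic whose realized passage time is exactly additive; here $\varphi$ is a logarithm of an average over many (non-self-avoiding) paths, so additivity is replaced by the last-exit decomposition above --- costing the extra term $m'(d\log|x|+C)$, which is harmless only because each increment has length $\geq\rho\gg\log|x|$ --- and the shape-theorem input ``$T(v_{i-1},v_i)\approx g(v_i-v_{i-1})$ along a geodesic'' is replaced by two-sided control of $\varphi_*(v_{i-1},v_i)$ around $h(v_i-v_{i-1})$ via Lemmas~\ref{phic-app} and \ref{concentration}, which must hold simultaneously over all the environment-dependent increments; this forces the restrictions $N\leq\mathrm{poly}(n)$ (so the union bound over the $\mathrm{poly}(n)$ relevant pairs survives) and $|x|\geq n^{1/3}$ (which gives $|v_i-v_{i-1}|\geq\rho\geq cn^{2/9}\gg n^{\fuu}$, so every high-probability input of Section~\ref{section: Concentration, Sub-additivity} and of Lemma~\ref{phic-app} applies with room to spare). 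Some care is also needed to ensure the greedy rule \eqref{eq:defin-v-i-2} never gets stuck, i.e.\ always admits a site $u$ of positive weight; this again uses the connectivity of $\cregion$ guaranteed on $\mathcal G$ by Lemmas~\ref{cutset}--\ref{chemdist2}.
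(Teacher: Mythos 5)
Your overall architecture (verify CHAP via Lemma~\ref{Alexlemma}, handle the short phases $[0,\zeta_1]$ and $[\zeta_2,\zeta_3]$ trivially since $r(0,Nx)\ll|x|^{1/3}$, then use a last-exit decomposition inside $Q_x$ as in Lemma~\ref{vpath}(2) plus the $\Delta_x/D_x$ counting from Lemma~\ref{QDD}) matches the paper. But there is a genuine gap in how you supply the per-increment concentration in Phase~2.

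You propose to take a union bound over the $\mathrm{poly}(n)$ pairs $(z,z')$ in $B_{CN|x|}(0)$ to get $\bar\varphi_*(z,z')\geq h(z'-z)-|z'-z|^{\tuu}$ simultaneously, and you therefore stipulate $N\leq\mathrm{poly}(n)$. However, $N$ is dictated by the definitional convergence $h(Nx)/N\to g(x)$ in \eqref{eq:g-def-lim}: to run the Markov-type argument you need $h(Nx)\leq Ng(x)+N/2$ (your parenthetical ``$=Ng(x)(1+o(1))$'' conflates concentration of $\bar\varphi_*(0,Nx)$ around its mean $h(Nx)$, which Lemma~\ref{concentration} gives, with closeness of $h(Nx)/N$ to $g(x)$, which at this point is only the sub-additive limit and carries no rate). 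The rate-free sub-additive ergodic theorem gives no a priori bound on how large $N$ must be, and proving such a bound is exactly the content of Lemma~\ref{converrate} --- so requiring $N\leq\mathrm{poly}(n)$ here is circular. The paper sidesteps this entirely: Lemma~\ref{concentration-path} does not union-bound over pairs; it uses a greedy-lattice-animals estimate \eqref{eq:greedyLA} along the (environment-dependent) sequence $v_{\zeta_1},\dots,v_{\zeta_2}$ to control the fraction of indices whose increments fail the concentration event \eqref{eq:goodv}, uniformly over all $m\geq m_0$ with no upper bound on $m$ (hence none on $N$). This, intersected with the low-probability Markov event, is what makes the argument close for arbitrarily large $N$. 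Your write-up would need to replace the naive union bound by some analogue of that uniform-over-paths control to be correct.

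A secondary but related point: after getting concentration per increment you still have to run the $\Delta_x$-versus-$D_x$ count. You gesture at ``the counting argument of \cite{Alexander97},'' but the version you'd actually run must use the specific error budget coming from Lemma~\ref{vpath}(2) ($+3d(\zeta_2-\zeta_1)\log|x|$, which is tiny because each increment has length $\geq|x|^{1/3}$) and the precise constants in Lemma~\ref{QDD}(2)--(3); as written this part is a plan rather than a proof, but it is not where the proposal fails.
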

\begin{proof}[Proof of Lemma \ref{converrate}]
 We recall that the conditions in Lemma \ref{Alexlemma} are on $h$, which is deterministic. By choosing $v_i$'s depending on environment as in \eqref{eq:defin-v-i-1}-\eqref{eq:defin-v-i-3}, in light of Lemma \ref{mleqan}, we know $v_i$ satisfies conditions in Lemma \ref{Alexlemma} for $M = n^{1/3}$ and $a = 4$ with positive $\P$-probability. Hence the conditions in Lemma \ref{Alexlemma} hold, because it only requires the existence of such $v_i$'s. Now, choose $q \in [|x|^{\frac{1}{4}}/2,|x|^{\frac{1}{4}}] \cap \Q$. Since $|x/q| \geq |x|^{\frac{3}{4}} \geq n^{1/3}$, we use Lemma \ref{Alexlemma} (with $x/q$ in replace of $x$) and Carath\'{e}odory's theorem (which states that every points in a convex hull is a convex combination of at most $d+1$ fixed points in this convex hull. See \cite[Theorem 3.10]{Cunha13}.) to conclude that
	\begin{equation}
		x/q =\alpha_{1}y_{1} + \alpha_{2}y_{2} + \ldots + \alpha_{d+1}y_{d+1}
	\end{equation}
with $\alpha_{i} \geq 0, \sum_{i = 1}^{d+1}\alpha_{i} \in [1,4]$ and $y_{i} \in Q_{x/q}.$
Let $ x^* = \sum_{i = 1}^{d+1}\lfloor q\alpha_{i} \rfloor y_{i}$.
Note that $y_{i} \in Q_{x/q}, g_{x/q} = g_x$ (since $g_x$ only depends on $x/|x|$). By Lemma \ref{sub-additivity},
\begin{align*}
	h(x^*) &\leq \sum_{i = 1}^{d+1}\lfloor q\alpha_{i} \rfloor (h(y_{i}) +e^{(\log 4|x|)^{2/3}})\\
	&\leq \sum_{i = 1}^{d+1}\lfloor q\alpha_{i} \rfloor \big( g_x(y_{i}) + 10 C_Q|x/q|^\tuu\big) + 4(d+1)qe^{(\log 4|x|)^{2/3}}\\
	&\leq g_x(x^*) +40C_Qq|x/q|^\tuu+ 4(d+1)qe^{(\log 4|x|)^{2/3}}\,,
\end{align*}
where in the last step we used that $g_x(\cdot)$ is a linear function.
In addition, since $|x - x^*| \leq \sum_{i=1}^{d+1}|y_i| \leq (d+1)C_Q |x/q|$, by \eqref{eq:gx<g} and \eqref{eq:function-g-bound}, $$ h(x-x^*) - g_x(x - x^*) \leq 2\Cbar C_Q(d+1)|x/q|\,.$$
Combining the preceding two inequalities, we conclude that
\begin{align*}
	h(x) &\leq g(x) + 40C_Qq|x/q|^\tuu+ 4(d+1)qe^{(\log 4|x|)^{2/3}} +2\Cbar C_Q(d+1)|x/q|\\
		& \leq g(x) + C|x|^{3/4} \,. \qedhere
\end{align*}
\end{proof}

The rest of this subsection is devoted to the proof of Lemma \ref{mleqan}.
We need a few more definitions: For $y \in \Z^d$, we define 
\begin{equation}
\label{eq:def-alex}
\begin{split}
	s_x(y)&:=h(y) - g_x(y),\\
	G_x&:=\{ y \in \Z^d : g_x(y)>g(x) \},\\
	\Delta_x&:=\{ y \in Q_x : y\text{ adjacent to } \Z^d \setminus Q_x, y\text{ not adjacent to } G_x \},\\
	D_x&:=\{ y \in Q_x : y\text{ adjacent to } G_x \}.
\end{split}
\end{equation}

\begin{lemma}\emph{(\cite[Lemma 3.3]{Alexander97})}
\label{QDD}
Recall that $C_Q = 2\Cbar\cul^{-1} + 10$. For all $x \in \Q^d$ with $|x| \geq n^{1/3}$
	\begin{itemize}
		\item [(1)]If $y \in Q_x$, then $g(y) \leq 2g(x)$, $|y| \leq C_Q|x|$ and $s_x(y) \geq -|y|^{1/10}$.
		\item [(2)]If $y \in \Delta_x$, then $s_x(y) \geq 8C_Q|x|^{\tuu}$.
		\item [(3)]If $y \in D_x$, then $g_x(y) \geq 5g(x)/6$.
	\end{itemize}
\end{lemma}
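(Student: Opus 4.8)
The statement is entirely deterministic --- both $h$ and $g$ are fixed functions --- and the plan is to follow the scheme of \cite[Lemma 3.3]{Alexander97}, the only genuinely new point being that the sub-additivity of $h$ (Lemma~\ref{sub-additivity}) carries an additive error $e^{(\log|\cdot|)^{2/3}}$. Since we always have $|x|\ge n^{1/3}$, this error, together with the terms $|y|^{1/10}$ and $10C_Q|x|^{\tuu}$ entering the definition \eqref{eq:def-Qx} of $Q_x$, will be negligible against $g(x)\ge\cul|x|\ge\cul n^{1/3}$. The ingredients I would assemble first are: $\cul|z|\le g(z)\le\Cbar|z|$ and $\cul|z|\le h(z)\le\Cbar|z|$ for all $z\in\Z^d$ (the former by homogeneity from \eqref{eq:function-g-bound}, the latter directly from the truncation in \eqref{eq:def-barphi}); the pointwise bound $|g_x(z)|\le g(z)$ from \eqref{eq:gx<g}; the approximation $g(z)\le h(z)+|z|^{1/10}$ for $|z|\ge n^{\fuu}$ from \eqref{eq:gandh}; and linearity of $g_x$. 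I would also record at the outset a ``$Q_x$ is full near the origin'' remark: for $|z|\le n^{1/5}$ one has $g_x(z)\le g(z)\le\Cbar n^{1/5}\le g(x)$ and $h(z)-g_x(z)\le 2\Cbar n^{1/5}\le 10C_Q|x|^{\tuu}$ once $n$ is large, so $B_{n^{1/5}}(0)\subseteq Q_x$, and likewise $G_x\cap B_{n^{1/5}}(0)=\varnothing$. Hence every $y$ appearing in parts (2) and (3) has $|y|\ge n^{1/5}-1\ge n^{\fuu}$, so \eqref{eq:gandh} and Lemma~\ref{sub-additivity} may be applied to it.

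For part (1), $g_x(y)\le g(x)$ is the first defining condition of $Q_x$. To get $g(y)\le 2g(x)$, I combine the second defining condition $h(y)\le g_x(y)+10C_Q|x|^{\tuu}\le g(x)+10C_Q|x|^{\tuu}$ with $g(y)\le h(y)+|y|^{1/10}$ and $|y|\le\cul^{-1}g(y)$, obtaining $g(y)\le g(x)+10C_Q|x|^{\tuu}+\cul^{-1/10}g(y)^{1/10}$; were $g(y)\ge 2g(x)$, this would force $g(y)\le 40C_Q|x|^{\tuu}$, contradicting $g(y)\ge 2g(x)\ge 2\cul n^{1/3}$ for $n$ large (the case $|y|<n^{\fuu}$ being trivial since then $g(y)\le\Cbar n^{\fuu}\ll g(x)$). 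Then $|y|\le\cul^{-1}g(y)\le 2\cul^{-1}g(x)\le 2\Cbar\cul^{-1}|x|\le C_Q|x|$. Finally $s_x(y)=h(y)-g_x(y)\ge\bigl(g(y)-|y|^{1/10}\bigr)-g(y)=-|y|^{1/10}$, using \eqref{eq:gandh} and $g_x(y)\le g(y)$ (which suffices in the only regime $|y|\ge n^{\fuu}$ that is ever used).

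For part (2), let $y\in\Delta_x$ and fix a neighbour $y'\notin Q_x$. Since $y$ is not adjacent to $G_x$ we have $y'\notin G_x$, i.e.\ $g_x(y')\le g(x)$, so the failure of $y'\in Q_x$ lies in the second condition: $h(y')>g_x(y')+10C_Q|x|^{\tuu}$. By sub-additivity of $h$ (Lemma~\ref{sub-additivity}) and translation invariance, $h(y')\le h(y)+h(y'-y)+e^{(\log|y'|)^{2/3}}\le h(y)+\Cbar+e^{(\log|y'|)^{2/3}}$, while linearity of $g_x$ with $|g_x(y'-y)|\le g(y'-y)\le\Cbar$ gives $g_x(y')\ge g_x(y)-\Cbar$. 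Subtracting, $s_x(y)=h(y)-g_x(y)>10C_Q|x|^{\tuu}-2\Cbar-e^{(\log|y'|)^{2/3}}\ge 8C_Q|x|^{\tuu}$, since $|y'|\le C_Q|x|+1$ makes $e^{(\log|y'|)^{2/3}}=|x|^{o(1)}\ll|x|^{\tuu}$ for $n$ large. For part (3), let $y\in D_x$ and fix a neighbour $y'\in G_x$, so $g_x(y')>g(x)$; by linearity and $|g_x(y'-y)|\le g(y'-y)\le\Cbar$, $g_x(y)=g_x(y')-g_x(y'-y)>g(x)-\Cbar\ge\tfrac56 g(x)$ once $g(x)\ge 6\Cbar$, which holds because $g(x)\ge\cul n^{1/3}$.

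There is no serious obstacle here: the work is purely in the bookkeeping of error terms, checking in each of (1)--(3) that the additive losses --- $|y|^{1/10}$, $10C_Q|x|^{\tuu}$, the constant $\Cbar$ coming from a unit step, and the sub-additivity error $e^{(\log|\cdot|)^{2/3}}$ --- are dominated either by $g(x)\ge\cul n^{1/3}$ (parts (1) and (3)) or by $|x|^{\tuu}\ge n^{2/9}$ (part (2)), and that \eqref{eq:gandh} and Lemma~\ref{sub-additivity} are invoked only for arguments of norm at least $n^{\fuu}$, which is guaranteed by the ``fullness near the origin'' remark.
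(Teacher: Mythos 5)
Your proof is correct and follows essentially the same route as the paper's: part (1) combines $g_x(y)\le g(x)$, $h(y)\le g_x(y)+10C_Q|x|^{2/3}$ and \eqref{eq:gandh} to force $g(y)\le 2g(x)$ and then reads off $|y|\le C_Q|x|$ and $s_x(y)\ge -|y|^{1/10}$; part (2) passes to a neighbouring site outside $Q_x\cup G_x$ and uses the approximate sub-additivity of $h$ (Lemma~\ref{sub-additivity}) together with linearity of $g_x$, with the $e^{(\log|\cdot|)^{2/3}}$ error absorbed by $|x|^{2/3}$; and part (3) uses a neighbour in $G_x$ and linearity. The only stylistic difference is that you spell out the ``$Q_x$ contains a ball of radius $n^{1/5}$'' observation to justify that \eqref{eq:gandh} and Lemma~\ref{sub-additivity} are invoked only at arguments of norm at least $n^{\fuu}$, which the paper leaves implicit.
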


\begin{proof}
	\noindent (1) For $y\in Q_x$, we have $s_x(y) \leq 10C_Q|x|^\tuu$ and $g_x(y) \leq g(x)$. Then by \eqref{eq:gandh}
	$$g(y) \leq h(y) + |y|^{1/10} = g_x(y) + s_x(y) + |y|^{1/10} \leq g(x) + 10C_Q|x|^\tuu + |y|^{1/10}\,.$$
	Hence, by \eqref{eq:function-g-bound}, we have $g(y) \leq 2g(x)$ for large $x$ and $y$. Then by \eqref{eq:function-g-bound}, $|y| \leq 2\Cbar\cul^{-1}|x|<C_Q|x|$.
Also, \eqref{eq:gandh} implies $s_x(y) \geq -|y|^{1/10}$.

	\noindent(2) If $y \in \Delta_x$, then $y = z + e$ for some $z \in \Z^d  \setminus (Q_x \cup G_x)$ and $e \in \{\pm e_1,...,\pm e_d\}$ where $\{e_i\}_{i=1}^d$ is the standard basis in $\R^d$. Since $s_x(z) \geq 10C_Q|x|^{\tuu}$ and $|s_x(e)| \leq 2\Cbar$, using sub-additivity of $h$ (Lemma \ref{sub-additivity}), linearity of $g_x$  and $|z| \leq |y| +1 \leq2C_Q|x|$ (which follows from $y\in \Delta_x\subseteq Q_x$ and the first item of the current lemma) we get
	\begin{align*}
		s_x(y) &= h(y) - g_x(y) \geq h(z) - h(e) -e^{(\log |z|)^{2/3}}- (g_x(z) + g_x(e))\\
		 &= s_x(z)-s_x(e) -e^{(\log |z|)^{2/3}} \geq 8C_Q|x|^{\tuu}\,.
	\end{align*}
	\noindent (3) If $y \in D_x$, then $y = z + e$ for some $z \in \Z^d \cap G_x$ and $e \in \{\pm e_1,...,\pm e_d\}$. Hence
	\begin{equation*}
		 g_x(y) = g_x(z) + g_x(e) \geq g(x) - \Cbar \geq 5g(x)/6.\qedhere
	\end{equation*}
\end{proof}

The main goal of Lemmas \ref{vpath}-\ref{concentration-path} is to show that $\varphi_*(0,Nx)$ can be well approximated by $\sum_{i = 0}^{\zeta_3-1} \E \left[\bar \varphi_*(v_i,v_{i+1})  \right]$ with high probability, which is needed to prove Lemma \ref{mleqan}. The analog of such approximation is used in FPP setup to serve a similar purpose in \cite{Alexander97}. In FPP, the total length of the shortest path from $0$ to $Nx$ is simply equals to the sum of that of non-intersecting path segments that assembles the geodesic. And the concentration can be obtained by using BK's inequality and an exponential concentration bound for each segment. However, our case is a bit more complicated and our proof is different from the arguments in \cite{Alexander97}. In particular, we have applied results on lattice greedy animals (\cite{Lee97,Martin02}). 

\begin{lemma}
\label{vpath}
The following holds for all environments. For all $x \in \Q^d$ with $|x| \geq n^{1/3}$,
	\begin{itemize}
		\item [(1)]$ \zeta_2 - \zeta_1 \geq  N/(2C_Q) \quad \text{and}\quad \zeta_1 + \zeta_3 - \zeta_2\leq 4|x|^{-\tuuovertwo}r(0,Nx) \,.$
		\item [(2)]$ \sum_{i = \zeta_1}^{\zeta_2-1}  \varphi_*(v_i,v_{i+1}) \leq \varphi_*(0,Nx) +3d(\zeta_2-\zeta_1)\log|x|\,.$
	\end{itemize}
	
\end{lemma}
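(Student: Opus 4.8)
The plan is to prove both parts pathwise, since for each environment the vertices $v_i$ are produced by a deterministic greedy procedure. Everything hinges on the two-sided estimate $B_{c|x|^{\tuu}}(0)\cap\Z^d\subseteq Q_x\subseteq B_{C_Q|x|}(0)$, valid for a small constant $c=c(d,p)$ and all $|x|\ge n^{1/3}$: the outer inclusion is Lemma~\ref{QDD}(1), and for the inner one I would note that for $|y|\le c|x|^{\tuu}$ one has $h(y)\le\Cbar|y|$ by \eqref{eq:def-barphi} and $|g_x(y)|\le g(y)\le\Cbar|y|$ by \eqref{eq:gx<g} together with $\cul|z|\le g(z)\le\Cbar|z|$ (which holds for all $z$ by homogeneity of $g$, even though \eqref{eq:function-g-bound} is only stated for large $|z|$), so that $g_x(y)\le\Cbar c|x|^{\tuu}\le\cul|x|\le g(x)$ and $h(y)-g_x(y)\le 2\Cbar c|x|^{\tuu}\le 10C_Q|x|^{\tuu}$ once $c$ is small and $|x|$ large; thus $y\in Q_x$.

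For \textbf{Part (1)} I would argue: each stage-2 step has $v_i-v_{i-1}\in Q_x$, hence $|v_i-v_{i-1}|\le C_Q|x|$ by Lemma~\ref{QDD}(1); summing over $\zeta_1<i\le\zeta_2$ and using $|u_2-u_1|\ge N|x|-2r(0,Nx)\ge N|x|/2$ gives $\zeta_2-\zeta_1\ge N/(2C_Q)$. For stages 1 and 3, at every step before the target is reached the current point is at distance $>c|x|^{\tuu}$ from the target (otherwise the target would already lie in the translated $Q_x$, by the inner ball inclusion), and following the lattice segment from the current point toward the target one finds a competitor in the shell $v_{i-1}+\partial_i Q_x$ that is at least $\tfrac c2|x|^{\tuu}$ closer to the target, so the greedy rules \eqref{eq:defin-v-i-1} and \eqref{eq:defin-v-i-3} shrink the distance to the target by at least $\tfrac c2|x|^{\tuu}$ per step; since the targets $u_1$ and $Nx$ are at distance $\le r(0,Nx)$ from the respective start points, this gives $\zeta_1\le 1+(2/c)|x|^{-\tuu}r(0,Nx)$ and similarly for $\zeta_3-\zeta_2$, whence $\zeta_1+\zeta_3-\zeta_2\le 4|x|^{-\tuuovertwo}r(0,Nx)$ once $N$ is large enough that $r(0,Nx)\ge|x|^{1/3}$ (the regime in which the lemma is applied in Lemma~\ref{mleqan}).

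For \textbf{Part (2)} I would set $W_i:=\{v_{\zeta_1},\dots,v_{i-1}\}$, $a_i:=G_{\cregion\setminus W_i}(v_{i-1},u_2;\lambda)$, and recall $\varphi_*(0,Nx)=\varphi(u_1,u_2)=-\log G_{\cregion\setminus\{u_2\}}(u_1,u_2;\lambda)$ by \eqref{eq:phi=logG}. Since $v_{i-1}$ is interior to $v_{i-1}+Q_x$ while $u_2\notin v_{i-1}+Q_x$ for $i\le\zeta_2-1$, any nearest-neighbour path from $v_{i-1}$ to $u_2$ first hits the shell $v_{i-1}+\partial_i Q_x$ at a site outside $W_i$; decomposing there, then using a last-visit decomposition and Corollary~\ref{G-polylogn} to replace $G_{\cregion\setminus W_i}(u,u_2;\lambda)$ by $(\log n)^{C_3}G_{\cregion\setminus(W_i\cup\{u\})}(u,u_2;\lambda)$, gives $a_i\le(\log n)^{C_3}\sum_{u-v_{i-1}\in\partial_i Q_x}G_{\cregion\setminus W_i}(v_{i-1},u;\lambda)\,G_{\cregion\setminus(W_i\cup\{u\})}(u,u_2;\lambda)$, whose summand is exactly what \eqref{eq:defin-v-i-2} maximizes at $u=v_i$. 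With $|\partial_i Q_x|\le|B_{C_Q|x|}(0)|$, $W_{i+1}=W_i\cup\{v_i\}$, and a first-visit decomposition plus Corollary~\ref{G-polylogn} giving $G_{\cregion\setminus W_i}(v_{i-1},v_i;\lambda)\le(\log n)^{C_3}e^{-\varphi(v_{i-1},v_i)}\le(\log n)^{C_3}e^{-\varphi_*(v_{i-1},v_i)}$, I obtain the recursion $a_i\le(\log n)^{2C_3}|B_{C_Q|x|}(0)|\,e^{-\varphi_*(v_{i-1},v_i)}\,a_{i+1}$ for $\zeta_1<i\le\zeta_2-1$. Telescoping, together with the bounds $a_{\zeta_1+1}=G_{\cregion\setminus\{u_1\}}(u_1,u_2;\lambda)\ge(\log n)^{-C_3}e^{-\varphi_*(0,Nx)}$ and $a_{\zeta_2}\le(\log n)^{C_3}e^{-\varphi_*(v_{\zeta_2-1},v_{\zeta_2})}$ (first/last-visit decompositions and $\varphi_*\le\varphi$), and taking $-\log$ yields $\sum_{i=\zeta_1}^{\zeta_2-1}\varphi_*(v_i,v_{i+1})\le\varphi_*(0,Nx)+(\zeta_2-\zeta_1)\big(4C_3\log\log n+\log|B_{C_Q|x|}(0)|\big)$; since $|x|\ge n^{1/3}$ forces $\log n\le3\log|x|$ (so $\log\log n=o(\log|x|)$) and $\log|B_{C_Q|x|}(0)|=d\log|x|+O(1)$, the parenthesis is $\le 3d\log|x|$ for large $|x|$, which is the claim.

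The hard part is the bookkeeping in Part~(2): one must carry out the first-exit, first-visit, and last-visit Green's-function decompositions carefully with the weights $\lambda^{-m}$ (finiteness and positivity of all the Green's functions involved coming from Corollary~\ref{G-polylogn} and from loop-erasing a path from $u_1$ to $u_2$), check that the first visit of the shell really lands \emph{outside} $W_i$ so that it competes in \eqref{eq:defin-v-i-2}, and verify that the per-step loss --- the cardinality $|\partial_i Q_x|$ times the $(\log n)^{C_3}$ factors --- stays below $3d\log|x|$, which is precisely where the hypothesis $|x|\ge n^{1/3}$ is used. A secondary but genuine point is establishing the inner ball inclusion $B_{c|x|^{\tuu}}(0)\subseteq Q_x$ for all $y$ in that ball, since it is needed both for the step-size control in Part~(1) and for the ``interior'' property used in the first-exit decomposition.
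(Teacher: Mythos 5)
Your proposal is correct and follows essentially the same route as the paper: Part~(1) uses the two-sided inclusion for $Q_x$ (inner ball to control the per-step progress of stages 1 and 3, outer ball via Lemma~\ref{QDD}(1) for stage 2) and a distance-telescoping count; Part~(2) telescopes a Green's function recursion driven by the greedy rule \eqref{eq:defin-v-i-2}, with the per-step loss $|\partial_i Q_x|\cdot(\log n)^{O(1)}\le e^{3d\log|x|}$ absorbed using $|x|\ge n^{1/3}$. The only genuine deviation is in how you set up the one-step recursion in Part~(2): the paper performs a single last-visit decomposition at $v_{i-1}+Q_x$ (whose last-visit vertex lies on $\partial_i Q_x$ and whose tail automatically avoids $v_{i-1}+Q_x$, so the factor $G_{\cregion\setminus(W_i\cup(v_{i-1}+Q_x))}$ is relaxed to $G_{\cregion\setminus(W_i\cup\{u\})}$ with no extra Green's function bound), whereas you first decompose at the first hit of the shell $v_{i-1}+\partial_iQ_x$ and then use a separate last-visit decomposition at $u$ together with Corollary~\ref{G-polylogn}, picking up an extra $(\log n)^{C_3}$ per step that is then swallowed in the same $3d\log|x|$ slack — both are valid and lead to the identical recursion $a_i\lesssim|\partial_iQ_x|\,e^{-\varphi_*(v_{i-1},v_i)}\,a_{i+1}$. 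Two small bookkeeping remarks: (i) your inner-ball radius $c|x|^{2/3}$ is finer than the paper's $|x|^{1/3}$ (both arguments work, since the truncation in \eqref{eq:def-barphi} and homogeneity of $g$ give the two-sided linear bounds on $h$ and $g$ for \emph{all} $y$); (ii) your explicit caveat that one needs $r(0,Nx)\gtrsim|x|^{1/3}$, i.e.\ $N$ large, to absorb the additive $+1$'s into the stated $4|x|^{-1/3}r(0,Nx)$ is apt — the paper's own bound $\zeta_1\le 2|x|^{-1/3}r(0,Nx)$ silently glosses over the same off-by-one, and the regime of large $N$ is indeed the one used downstream in Lemma~\ref{mleqan}.
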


\begin{proof}
(1) By Lemma \ref{QDD}~(1), we have $|v_i - v_{i-1}| \leq C_Q|x|$ for all $\zeta_1 + 1\leq i\leq \zeta_2$. In addition, we see that $|v_{\zeta_1} - v_{\zeta_2}| \geq N |x| - 2 r(0, Nx) \geq  N|x|/2$.
This implies that  $\zeta_2 - \zeta_1 \geq |Nx|/(2C_Q|x|) = N/(2C_Q)$.

For $y \in \Z^d$ with $|y| \leq |x|^{1/3}$. By \eqref{eq:gx<g}, $g_x(y) \leq g(y) \leq \Cbar|x|^{1/3} \leq g(x)$ and $h(y) - g_x(y) \leq 2\Cbar|x|^{1/3} \leq 10C_Q|x|^{\tuu}$. Then by \eqref{eq:def-Qx}, \begin{equation}
	B_{|x|^{\tuuovertwo}}(0) \subseteq Q_x \,.
\end{equation}
Hence, by \eqref{eq:defin-v-i-1} we have that for $i \leq \zeta_1-1$
$$|u_1 - v_i| \leq |u_1 - v_{i-1}| - |x|^{\tuuovertwo}/2\,.$$
Since $|u_1| \leq r(0,Nx)$, we have $\zeta_1 \leq 2|x|^{-\tuuovertwo}r(0,Nx)$. Similarly $\zeta_3 - \zeta_2 \leq 2|x|^{-\tuuovertwo}r(0,Nx)$. Thus, we complete the proof of (1).

\noindent (2)  By considering the last visit to $v_{i-1} + Q_x= \{v_{i-1} + y: y\in Q_x\}$, we get for $\zeta_{1} +1 \leq i \leq \zeta_{2}-1$, $G_{\cregion \setminus \{v_{\zeta_1},v_{\zeta_1+1},...,v_{i-1}\}}(v_{i-1},u_2;\lambda)$ equals to 
	\begin{align*}
	\qquad &\sum_{\mathclap{\substack{u :~u -  v_{i-1}\in\partial_iQ_x\\ u \not\in \{v_{\zeta_1},v_{\zeta_1+1},...,v_{i-1}\}} }}\quad G_{\cregion \setminus \{v_{\zeta_1},v_{\zeta_1+1},...,v_{i-1}\}}(v_{i-1},u;\lambda) \cdot G_{\cregion \setminus (\{v_{\zeta_1},v_{\zeta_1+1},...,v_{i-1}\} \cup (v_{i-1}+Q_x))}(u,u_2;\lambda)\\
		\leq&\qquad \sum_{\mathclap{\substack{u :~u -  v_{i-1}\in\partial_iQ_x\\ u \not\in \{v_{\zeta_1},v_{\zeta_1+1},...,v_{i-1}\}}} }\quad  G_{\cregion \setminus \{v_{\zeta_1},v_{\zeta_1+1},...,v_{i-1}\}}(v_{i-1},u;\lambda) \cdot G_{\cregion \setminus \{v_{\zeta_1},v_{\zeta_1+1},...,v_{i-1},u\}}(u,u_2;\lambda)\,.
	\end{align*}
	 By Lemma \ref{QDD} (1), $|\partial_i Q_x| \leq (2C_Q|x|)^{d}<|x|^{2d}$. Then by \eqref{eq:defin-v-i-2} we get that (using the simple fact that the sum of non-negative numbers is bounded above by the product of the maximal term and the number of terms in the summation)
	\begin{align*}
	 &G_{\cregion \setminus \{v_{\zeta_1},v_{\zeta_1+1},...,v_{i-1}\}}(v_{i-1},u_2;\lambda)  \\
	&\leq G_{\cregion \setminus \{v_{\zeta_1},v_{\zeta_1+1},...,v_{i-1}\}}(v_{i-1},v_i;\lambda) G_{\cregion \setminus \{v_{\zeta_1},v_{\zeta_1+1},...,v_i\}}(v_i,u_2;\lambda) |x|^{2d} .
	\end{align*}
	Taking logarithm for the above inequality and summing over $\zeta_{1} +1 \leq i \leq \zeta_{2}-1$, we have
	\begin{equation*}
		\sum_{i = \zeta_{1}}^{\zeta_2-1} \varphi(v_i,v_{i+1}) \leq \varphi(u_{1},u_2) +3d(\zeta_2 - \zeta_{1}-1)\log|x|\,, 
	\end{equation*}
	where we used \eqref{eq:phi=logG} and Corollary \ref{G-polylogn}.
Since $\varphi_*(v_i,v_{i+1})\leq \varphi(v_i,v_{i+1}) $, combining with $\varphi_*(0,Nx) = \varphi(u_1,u_2)$, we completes the proof of (2).
\end{proof}
Next, we prove some concentration results for LWGFs. 
\begin{defn}
\label{def-gpt}
	For fixed $\lambda \in [\lambda_*,1]$ and $x \in \Q^d$ with $|x| \geq n^{1/3}$, let $\gpt$ be the collection of sites $u\in \Z^d$ such that for all $v$ satisfying $|x|^{\tuuovertwo} \leq |u-v| \leq C_Q|x|$,
\begin{equation}
\label{eq:goodv}
	 \E\left[\bar \varphi_*^{\circ}(u,v)\right] - \bar \varphi^{\circ}_*(u,v) \leq |u-v|^{\tuu}  \AND 2 \cul |x-y| \leq \bar \varphi^{\circ}_*(u,v) \leq 2^{-1}\Cbar|u-v| \,.
\end{equation}
\end{defn}
In the preceding definition we used $\bar \varphi^{\circ}_*(u,v)$ to take advantage of the fact that it only depends on local environment, and the fact that the concentration of $\bar \varphi_*^{\circ}(u,v)$ around its expectation is sufficient to guarantee the concentration of $\bar \varphi_*(u,v)$ (Lemma \ref{con-con}) (which plays an important role in proving the concentration of $\sum \bar\varphi_*(v_i,v_{i+1})$ in Lemma~\ref{concentration-path}).
\begin{lemma} 
\label{good--v-prob}
For all $u \in \Z^d$
	$$\P(u \not \in \gpt) \leq |x|^{-10d^2}\,.$$
\end{lemma}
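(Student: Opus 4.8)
The plan is to union-bound over the (finitely many, up to a polynomial count) choices of $v$ with $|x|^{1/3} \leq |u-v| \leq C_Q|x|$, and for each such pair $(u,v)$ to show that
\[
\P\Big(\E\big[\bar \varphi_*^{\circ}(u,v)\big] - \bar \varphi^{\circ}_*(u,v) > |u-v|^{2/3}\Big) + \P\Big(\bar \varphi^{\circ}_*(u,v) \notin [2\cul|u-v|, 2^{-1}\Cbar|u-v|]\Big)
\]
is super-polynomially small in $|u-v|$, hence (since $|u-v|\geq |x|^{1/3}$ and $|x|\geq n^{1/3}$) small enough that multiplying by the number of relevant $v$, which is at most $(2C_Q|x|+1)^d \leq |x|^{2d}$, still yields $\leq |x|^{-10d^2}$. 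The deviation $|u-v|^{2/3}$ together with the additive error terms $|u-v|^{1/10}$ appearing in \eqref{eq:gandh}/\eqref{eq:gsuba} is the quantity controlled by our concentration machinery.

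First I would handle the upper/lower truncation bounds. The statement $2\cul|u-v| \leq \bar \varphi^{\circ}_*(u,v) \leq 2^{-1}\Cbar|u-v|$ is exactly (after renaming $x\mapsto u$, $y\mapsto v$) contained in the conclusion \eqref{eq:phidev} of Lemma~\ref{phic-app} — more precisely, \eqref{eq:phi-app-upper} gives $\varphi^{\circ}_*(u,v) \leq \tfrac14\Cbar|u-v|$ and \eqref{eq:phi-app-lower} gives $\varphi_*(u,v)\geq 4\cul|u-v|$, hence $\varphi_*^\circ(u,v)\geq\varphi_*(u,v)\geq 4\cul|u-v|$ by \eqref{eq:phiorder}; combined with the definition of the truncation this forces $\bar\varphi_*^\circ(u,v)=\varphi_*^\circ(u,v)$ and pins it inside $[4\cul|u-v|,\tfrac14\Cbar|u-v|]\subseteq[2\cul|u-v|,\tfrac12\Cbar|u-v|]$. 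All of this holds off an event of probability at most $e^{-(\log|u-v|)^2}$ (using $|u-v|\geq n^{1/10}$, which is guaranteed since $|u-v|\geq|x|^{1/3}\geq n^{1/9}$). So the truncation part contributes at most $|x|^{2d}e^{-(\log|x|^{1/3})^2}$, which is negligible.

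The main step is the one-sided concentration $\E[\bar\varphi_*^\circ(u,v)] - \bar\varphi_*^\circ(u,v) \leq |u-v|^{2/3}$. Here I would invoke Lemma~\ref{con-con}: the event $\{\E[\bar\varphi_*(u,v)]-\bar\varphi_*(u,v)\leq \tfrac12|u-v|^{2/3},\ 4\cul|u-v|\leq\bar\varphi_*(u,v)\leq\tfrac14\Cbar|u-v|\}$ is contained in the event controlling $\bar\varphi_*^\circ$ with the cleaner bound $|u-v|^{2/3}$. It therefore suffices to bound $\P(\E[\bar\varphi_*(u,v)]-\bar\varphi_*(u,v) > \tfrac12|u-v|^{2/3})$, and this follows from the moment bound in Lemma~\ref{concentration}: by Markov's inequality applied to the $q$-th moment,
\[
\P\Big(\big(\E\bar\varphi_*(u,v)-\bar\varphi_*(u,v)\big)_+ > \tfrac12|u-v|^{2/3}\Big) \leq \frac{\E\big((\E\bar\varphi_*(u,v)-\bar\varphi_*(u,v))_+\big)^q}{(\tfrac12|u-v|^{2/3})^q} \leq \frac{(\log|u-v|)^{qC}|u-v|^{q/2}}{2^{-q}|u-v|^{2q/3}},
\]
which equals $\big(2(\log|u-v|)^C |u-v|^{-1/6}\big)^q$. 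Choosing $q$ a large enough constant (depending only on $d$) makes this at most $|u-v|^{-12d^2}\leq |x|^{-4d^2}$ for $n$ large, and then the union bound over $\leq |x|^{2d}$ choices of $v$ leaves $\leq |x|^{-2d^2}\leq |x|^{-10d^2}$... wait — to get the exponent right one simply takes $q$ with $q/6 - C\cdot o(1) \geq 12d^2$, say $q = 100d^2$, absorbing the logarithmic factor, so that each term is $\leq |x|^{-15d^2}$ and the sum over $v$ is $\leq |x|^{2d-15d^2}\leq |x|^{-10d^2}$. Combining this with the negligible truncation contribution gives $\P(u\notin\gpt)\leq|x|^{-10d^2}$ as claimed. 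The only real subtlety is bookkeeping: one must check the hypotheses $|u-v|>n^{1/10}$ needed to apply Lemmas~\ref{phic-app}, \ref{con-con} and \ref{concentration} — but this is immediate from $|u-v|\geq|x|^{1/3}\geq n^{1/9}>n^{1/10}$ — and one must make sure $n$ is large enough that the $(\log|u-v|)^C$ prefactors are dominated by the polynomial gain, which again follows since $|u-v|$ is at least a fixed positive power of $n$.
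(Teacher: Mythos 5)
Your proposal is correct and follows essentially the same route as the paper: a union bound over the at-most-$(2C_Q|x|)^d$ choices of $v$, Markov's inequality applied to the $q$-th moment bound of Lemma~\ref{concentration} for the one-sided deviation, Lemma~\ref{phic-app} (via \eqref{eq:phidev}) for the truncation bounds, and Lemma~\ref{con-con} to transfer to $\bar\varphi_*^{\circ}$; the only difference is that you spell out the Markov/moment step and exponent bookkeeping more explicitly than the paper does.
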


\begin{proof}
For any $|u-v| \geq |x|^{\tuuovertwo}$, since $|x|^{1/3} \geq n^{\fuu}$, by Lemma \ref{concentration}
$$ \P(\E\left[\bar \varphi_*(u,v)\right] - \varphi_*(u,v) \geq 2^{-1}|u-v|^{2/3} ) \leq |u-v|^{- 60d^2}\leq |x|^{-20d^2}\,.$$
By Lemma \ref{phic-app},
$\P(\bar \varphi_*(u,v) \geq 4^{-1}\Cbar|u-v|  \ or \ \bar \varphi_*(u,v) \leq 4\cul|u-v|) \leq e^{-3^{-2}(\log |x|)^{2}}\,.$
Combined with Lemma \ref{con-con}, it yields that
\begin{equation*}
	\P(u \not \in \gpt)  \leq (2C_Q|x|)^d (|x|^{-20d^2} +  e^{-3^{-2}(\log |x|)^{2}}) \leq |x|^{-10d^2}\,. \qedhere
\end{equation*}

\end{proof}

\begin{lemma}
\label{concentration-path}
For any $m_0 \geq |x|^{10d}$ and $|x| \geq n^{1/3}$, the following holds with $\P$-probability at least $1 - e^{-m_0|x|^{-5d}}$: For all $m \geq m_0$ and $v_0,v_1,...,v_m  \in \Z^d$ such that $|v_0| \leq e^{(\log m)^{1/d}}$ and $|x|^{\tuuovertwo}\leq|v_i -v_{i-1}|\leq C_Q|x|\quad i = 1,2,...,m-1$, we have that
		\begin{align*}
		\sum_{i = 0}^{m-1} \E \left[\bar \varphi_*(v_i,v_{i+1})  \right] \leq \sum_{i = 0}^{m-1}  \varphi_*(v_i,v_{i+1}) +  C_Qm|x|^\tuu\,.
	\end{align*}
\end{lemma}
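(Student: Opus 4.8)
\textbf{Proof proposal for Lemma~\ref{concentration-path}.}

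The plan is to bound, for \emph{each fixed tuple} $(v_0,\dots,v_m)$ satisfying the stated constraints, the probability that the inequality fails, and then to take a union bound over all such tuples. So first I would fix $m \geq m_0$ and a sequence $v_0,\dots,v_m$ with $|v_0|\leq e^{(\log m)^{1/d}}$ and $|x|^{1/3}\leq |v_i-v_{i-1}|\leq C_Q|x|$. The quantity of interest is
\[
Z \;:=\; \sum_{i=0}^{m-1}\Big(\E\big[\bar\varphi_*(v_i,v_{i+1})\big] - \varphi_*(v_i,v_{i+1})\Big),
\]
and I want $\Pr(Z > C_Q m|x|^{2/3})$ to be smaller than the reciprocal of the number of admissible tuples, which is at most $(\text{vol of starting ball})\cdot (2C_Q|x|+1)^{dm}\leq e^{Cm\log|x|}$ for $m\geq m_0\geq |x|^{10d}$. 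Hence it suffices to show $\Pr(Z > C_Q m|x|^{2/3}) \leq e^{-2Cm\log|x|}$ or so, using that $m|x|^{2/3}$ dominates $m(\log|x|)^{C}$ terms.

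The second and main step is the tail bound for $Z$. Here I would \emph{not} try to use independence of the summands directly (they are not independent, since $\bar\varphi_*(v_i,v_{i+1})$ and $\bar\varphi_*(v_j,v_{j+1})$ read overlapping regions of the environment when $v_i$ and $v_j$ are close). Instead, replace each $\bar\varphi_*(v_i,v_{i+1})$ by its ``circled'' counterpart $\bar\varphi_*^\circ(v_i,v_{i+1})$, which by Definition~\ref{def-phic} depends only on the environment inside $B_{R_\circ}(v_i)$ with $R_\circ = |v_i-v_{i+1}|(\log\cdot)^{C_\circ}\leq C_Q|x|(\log|x|)^{C_\circ}$. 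By Lemma~\ref{con-con}, on the event where the centered deviation is controlled, the event
$\{\E[\bar\varphi_*(v_i,v_{i+1})]-\bar\varphi_*(v_i,v_{i+1})\leq 2^{-1}|v_i-v_{i+1}|^{2/3},\ 4\cul|v_i-v_{i+1}|\leq\bar\varphi_*\leq 4^{-1}\Cbar|v_i-v_{i+1}|\}$
is implied by the corresponding statement for $\bar\varphi^\circ_*$, which is exactly the ``good vertex'' event $\{v_i\in\gpt\}$ of Definition~\ref{def-gpt} (relative to the pair $(v_i,v_{i+1})$). So on $\cap_i\{v_i\in\gpt\}$ each summand of $Z$ is at most $2^{-1}|v_i-v_{i+1}|^{2/3}\leq 2^{-1}(C_Q|x|)^{2/3}$, whence $Z\leq \tfrac12 C_Q m|x|^{2/3}$ deterministically. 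Therefore the failure event is contained in $\cup_i\{v_i\notin\gpt\}$, and I only need to bound $\Pr(\cup_i\{v_i\notin\gpt\})$ — but a crude union bound over $i$ gives only $m|x|^{-10d^2}$ from Lemma~\ref{good--v-prob}, which is \emph{not} summable against the $e^{Cm\log|x|}$ tuples.

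To fix this I would use the \emph{greedy lattice animals} input flagged in the text (\cite{Lee97,Martin02}): the set of bad vertices $\{u:u\notin\gpt\}$ has, by Lemma~\ref{good--v-prob}, individual probability $|x|^{-10d^2}$, and — after passing to the renormalized macroscopic lattice at scale $R_\circ$ so that the events $\{v_i\in\gpt\}$ become finite-range dependent with exponentially small marginals — the number of bad macroscopic sites that any connected animal of size $N$ can contain is, with probability $1-e^{-cN}$ (for $N$ large), at most $\epsilon N$ for arbitrarily small $\epsilon$. Since the polygonal path $v_0\to v_1\to\cdots\to v_m$ visits at most $Cm$ macroscopic boxes forming a connected animal (consecutive $v_i$ differ by at most $C_Q|x|$, a bounded number of boxes), the number of bad $v_i$ along it is at most $\epsilon m$ off an event of probability $e^{-cm|x|^{-5d}}$ — here the $|x|^{-5d}$ loss comes from converting the microscopic count $m$ and the box scale into a macroscopic count. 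On the complement the bad summands number $\leq \epsilon m$ and contribute at most $\epsilon m\cdot\Cbar C_Q|x|\leq \tfrac12 C_Q m|x|^{2/3}$ once $\epsilon$ is taken small relative to $|x|^{-1/3}$ (using $|v_i-v_{i+1}|\leq C_Q|x|$ and the deterministic bound $|\bar\varphi_*|\leq\Cbar|x-y|$ from \eqref{eq:phidev}), while the good summands contribute $\leq\tfrac12 C_Q m|x|^{2/3}$ as above; summing gives $Z\leq C_Q m|x|^{2/3}$. The probability we discarded, $e^{-cm|x|^{-5d}}$, beats $e^{-m|x|^{-5d}}$ after adjusting constants, and crucially it beats the $e^{Cm\log|x|}$ tuple count because $m\geq m_0\geq|x|^{10d}$ makes $m|x|^{-5d}\geq|x|^{5d}\gg C|x|^{1/3}\log|x|\geq$ (number-of-tuples exponent per unit $m$)…

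\emph{(Caveat: the bookkeeping in the last sentence needs the union bound to be run after the greedy-animal estimate, i.e. one should bound $\Pr(\exists\text{ admissible tuple with many bad }v_i)$ directly via the animal estimate rather than first fixing the tuple; I would organize the write-up that way.)} The main obstacle, then, is precisely this entropy-versus-probability balance: a naive union bound over the $e^{Cm\log|x|}$ tuples against the per-vertex bad-probability $|x|^{-10d^2}$ fails, and the greedy lattice animal machinery is what makes the argument close. The rest — the reduction to $\gpt$ via Lemmas~\ref{con-con} and~\ref{good--v-prob}, and the deterministic bound on good/bad summand contributions — is routine given the lemmas already established.
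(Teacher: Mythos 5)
Your proposal is essentially the paper's argument: reduce the centered sum to a count of indices $i$ with $v_i\notin\gpt$ via Lemma~\ref{con-con} and the deterministic truncation bounds, and then control that count simultaneously over all admissible sequences via a greedy-lattice-animal estimate on the polygonal path through the $v_i$'s (avoiding the fatal tuple-by-tuple union bound, exactly as your caveat flags). The only divergences are bookkeeping: the paper renormalizes to independence on the coarser scale $|x|^2$ by intersecting with cosets $\calV_j = j + |x|^2\Z^d$ (rather than a scale tied to $R_\circ$), takes $M = dC_Q m|x|$ as the ambient self-avoiding path length through $v_0,\dots,v_m$, and applies the Lee--Martin estimate to get at most $m|x|^{-2}$ bad indices off probability $e^{-M|x|^{-4d}/2}$, which then sums over cosets, over $m\geq m_0$, and over $v_0$ to give the stated $e^{-m_0|x|^{-5d}}$ bound; the paper also keeps the $+\,C_3\log\log n$ lower truncation for $\varphi_*$ from Corollary~\ref{G-polylogn} when bounding the bad-index contribution, a term your sketch omits but which is harmless. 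None of these affect the validity of your approach.
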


\begin{proof}
By Lemma \ref{QDD}~(1) and \eqref{eq:def-barphi}, $\bar \varphi_*(v_i,v_{i+1}) \leq \Cbar C_Q|x|$. By Corollary \ref{G-polylogn}, we have that $\varphi_*(v_i,v_{i+1}) \geq - C_3\log \log n$. Then by \eqref{eq:goodv} and Lemma \ref{con-con}, we get the following two inequalities:
	\begin{equation*}
		\sum_{i = 0}^{m-1} \bar \varphi_*(v_i,v_{i+1}) \leq \sum_{i = 0}^{m-1}  \varphi_*(v_i,v_{i+1})+|\{i\in\{0,1,...,m-1\}:v_i \not \in \gpt\}|\cdot (\Cbar C_Q|x| + C_3 \log \log n)\,,
	\end{equation*}
		\begin{equation*}
		\sum_{i = 0}^{m-1} \E \left[\bar \varphi_*(v_i,v_{i+1})  \right] - \bar \varphi_*(v_i,v_{i+1}) \leq 2m(C_Q|x|)^\tuu + |\{i\in \{0,1,...,m-1\}:v_i \not \in \gpt\}| \cdot \Cbar C_Q|x|\,.
	\end{equation*}
Since $C_Q>10, |x| \geq n^{1/3}$, it suffices to prove that  with $\P$-probability at least $1 - e^{-m_0|x|^{-5d}}$
\begin{equation} \label{eq-458}
	|\{i \in \{0,1,...,m-1\} :v_i \not \in \gpt\}| \leq m|x|^{-2}\,.
\end{equation}
To this end, we denote $\calV_i =i + |x|^2\Z^d$ for $i \in \{1,2,...,|x|^2\}^d$, where $\calV_i$ inherits the graph structure from the natural bijection which maps $v\in \Z^d$ to $i + |x|^2 v \in \calV_i$. 
For any integer $M>0$, 
  \begin{align*}
   & \P(\max_{\eta \in \calW_{M}(v_0)}|\eta \cap \calV_i \cap \gpt^c| \geq   M|x|^{-4d})\\
     \leq &\P(\max_{\eta \in \calW_{M}(v_0)}| \{ y \in \calV_i : \eta \cap K_{|x|^2}(y) \not = \varnothing  \}\cap \gpt^c| \geq  M|x|^{-4d})\,.
  \end{align*}
where $\calW_{M}(v_0)$ is the set of connected self-avoiding paths in the original lattice of length $M$ with initial point $v_0$. Since the event $v \in \gpt$ is independent of $\sigma(u \in \gpt, u\in\Z^d: |u-v|\geq |x|^2)$, we have that events $\{v \in \gpt^c\}$ for $v \in \calV_i$ are independent. Also, for any $\eta \in \calW_{M}(v_0)$, we know that $\{ y \in \calV_i: \eta \cap K_{|x|^2}(y)\not = \varnothing \}$ is a lattice animal in $\calV_i$ of size at most $ 3^d M/|x|^2$. By Lemma~\ref{good--v-prob}, $$M|x|^{-4d} \geq |x| \cdot (  (3^dM|x|^{-2})\cdot \P(v \in \gpt^c)^{1/d})\,.$$Then a result on greedy lattice animals proved in \cite[Page 281]{Lee97} (see also \cite{Martin02}) yields
\begin{equation}
\label{eq:greedyLA}
	\P(\max_{\eta \in \calW_{M}(v_0)}|\eta \cap \calV_i \cap \gpt^c| \geq  M|x|^{-4d}) \leq e^{-M|x|^{-4d}/2}\,.
\end{equation}
Note that for all $v_0,v_1,...,v_m$ such that $|v_i - v_{i-1}| \leq C_Q|x|$, there exists a self avoiding path $\eta$ that goes through $v_0,v_1,...,v_m$ and has length $dC_Qm|x|.$ Thus, \eqref{eq-458} follows by summing \eqref{eq:greedyLA} over $i \in \{1,2,...,|x|^2\}^d$, $M = dC_Qm|x|$, $m \geq m_0$ and $v_0 \in B_{ e^{(\log m)^{1/d}}}(0)$. 
\end{proof}

\begin{proof}[Proof of Lemma \ref{mleqan}]
By \eqref{eq:g-def-lim} and \eqref{eq:function-g-bound} and Markov's inequality, we get for sufficiently large $N$,
\begin{equation}
	\P(\bar \varphi_*(0,Nx) \leq Ng(x)+N) \geq 1 - \frac{\E \bar \varphi_*(0,Nx)}{Ng(x)+N} \geq \frac{1}{2\Cbar|x|+2}\,.
\end{equation}
At the same time, by Lemma \ref{vpath}~(1), we can apply Lemma \ref{concentration-path} to $v_{\zeta_1},...,v_{\zeta_2-1}$ and $m_0 = \lfloor N/(2C_Q) \rfloor-1$. Combining with \eqref{eq:phidev}, we get that the following two inequalities hold with positive $\P$-probability for sufficiently large $N$,
		\begin{align*}
				&\sum_{i = \zeta_1}^{\zeta_2-1} \E \left[\bar \varphi_*(v_i,v_{i+1})  \right] \leq \sum_{i = \zeta_1}^{\zeta_2-1}  \varphi_*(v_i,v_{i+1}) +  C_Q(\zeta_2 - \zeta_1)|x|^\tuu\\
	& \varphi_*(0,Nx) = \bar \varphi_*(0,Nx) \leq Ng(x)+N\,. 
	\end{align*}
On this event, combining preceding two inequality and Lemma \ref{vpath} (2), we get that
\begin{align}
\label{eq:mlegan-1}
	\sum_{i = \zeta_1}^{\zeta_2-1} \E \left[\bar \varphi_*(v_i,v_{i+1})  \right]
	\leq Ng(x) +N+ 2C_Q(\zeta_2 - \zeta_1)|x|^{\tuu}\,.
\end{align}
At the same time, by Lemma \ref{QDD}~(1), we have $|v_{i+1}-v_i| \leq C_Q|x|$. Combined with \eqref{eq:def-barphi} and Lemma \ref{vpath} (1), this implies
\begin{align}
\label{eq:mlegan-2}
	\sum_{i = 0}^{\zeta_1-1} \E \left[\bar \varphi_*(v_i,v_{i+1})  \right] + \sum_{i = \zeta_2}^{\zeta_3-1} \E \left[\bar \varphi_*(v_i,v_{i+1})  \right]
	&\leq(\zeta_1 + \zeta_3-\zeta_2)\Cbar C_Q|x|\nonumber\\
	& \leq 4\Cbar C_Q|x|^{2/3}r(0,Nx)\,.
\end{align}
Note that Lemma \ref{vpath}~(1) implies 
$$C_Q(\zeta_2 - \zeta_1)|x|^{\tuu} \geq N|x|^{\tuu} /2\mbox{ and }r(0,Nx) = (\log (Nx))^{2\kappa + 10d}\,.$$ 
It follows that \eqref{eq:mlegan-2} is upper bounded by $C_Q(\zeta_2 - \zeta_1)|x|^{\tuu}$. Then by \eqref{eq:mlegan-1}, we have
\begin{equation}
\label{eq:mlegan-666}
	\sum_{i = 0}^{\zeta_3-1} \E \left[\bar \varphi_*(v_i,v_{i+1})  \right]\leq Ng(x) + 3C_Q\zeta_3|x|^{\tuu}\,.
\end{equation}
Now, by Lemma \ref{QDD}~(1)(2) (recall that $s_x(y):=h(y) - g_x(y)$)
\begin{align*}
	\sum_{i = 0}^{\zeta_3-1} \E \left[\bar \varphi_*(v_i,v_{i+1})  \right] &= \sum_{i = 0}^{\zeta_3-1}g_x(v_i - v_{i+1}) +s_x(v_i - v_{i+1}) \\
	& \geq g_x(Nx) + |\{1\leq i \leq \zeta_3-1:v_i \in \Delta_x \}|\cdot 8C_Q|x|^{\tuu} - \zeta_3|x|^{1/10}\,.
\end{align*}
Combining this with \eqref{eq:mlegan-666} gives
\begin{equation}
\label{eq:m3-1}
	|\{1\leq i \leq \zeta_3-1:v_i \in \Delta_x \}| \leq \zeta_3/2\,.
\end{equation}
At the same time, by Lemma \ref{QDD}(3)
\begin{align*}
	\sum_{i = 0}^{\zeta_3-1} \E \left[\bar \varphi_*(v_i,v_{i+1})  \right] &= \sum_{i = 0}^{\zeta_3-1}g_x(v_i - v_{i+1}) +s_x(v_i - v_{i+1}) \\
	& \geq |\{1\leq i \leq \zeta_3-1:v_i \in D_x \}|\cdot 5g(x)/6 - \zeta_3|x|^{1/10}\,.
\end{align*}
Combining with the lower bound in \eqref{eq:function-g-bound} and \eqref{eq:mlegan-666}, we get
\begin{equation}
\label{eq:m3-2}
	|\{1\leq i \leq \zeta_3-1:v_i \in D_x \}| \leq 6N/5 + \zeta_3/8\,.
\end{equation}
Note that $v_i \in \Delta_x \cup D_x$ for $i \not= \zeta_1,\zeta_2,\zeta_3$; that is, 
\begin{equation}
\label{eq:m3-3}	
|\{1\leq i \leq \zeta_3-1:v_i \in \Delta_x \}| + |\{1\leq i \leq \zeta_3-1:v_i \in D_x \}| \geq \zeta_3-3\,.
\end{equation}
Combining \eqref{eq:m3-1}, \eqref{eq:m3-2} and \eqref{eq:m3-3}, we complete the proof of the lemma.
\end{proof}

\subsection{Proof of Proposition \ref{phi-and-g}} \label{sec:phi-and-g}
In this subsection we combine the ingredients in previous subsections and provide the proof for Proposition \ref{phi-and-g}. We will consider different $\lambda$'s. Thus, we will use the notation $g(v, \lambda)$ and $h(v, \lambda)$ to denote the functions $g(v)$ and $h(v)$ respectively (as defined at the beginning of Subsection~\ref{section:Convergence rate}), but with an explicit emphasis on the dependence of $\lambda$.

\begin{proof}[Proof of Proposition \ref{phi-and-g}]
Lemma \ref{converrate} and \eqref{eq:gandh} implies for all $\lambda \in [\lambda_*,1]$ and $v \in \locV \subseteq B_{\cucc n(\log n)^{-2/d}}(0) \setminus B_{n^{2/3}}(0)$ (c.f. \eqref{eq:locV-def})
$$g(v;\lambda) = h(v;\lambda) + O(n^{4/5})\,. $$
At the same time, we let $$E_v= \big\{\max_{\lambda \in [\lambda_*,1]}|\varphi_\star(0,v;\lambda) - \bar \varphi_*(0,v;\lambda)| \leq (\log n)^C \big\}\,.$$
Then Lemmas \ref{ind-appr} and \ref{phic-app} imply that
$$ \P(\cup_{v \in B_{\cucc n(\log n)^{-2/d}}(0) \setminus B_{n^{2/3}}(0)}~E_v \mid G_0) \geq 1- e^{-c(\log n)^2}\,.$$
Now by \eqref{eq:G-0-0-percolate}, it suffices to prove that conditioned on $G_0$, with $\P$-probability tending to $1$, for all $v \in \locV$
\begin{equation}
 	\label{eq:evstep3}|\bar\varphi_*(0,v;\lambda(v)) - h(v;\lambda(v))| \leq n^{2/3}\,.
 \end{equation}
Note that this does not follow directly from Lemma \ref{concentration} because Lemma \ref{concentration} can provide concentration neither uniformly on all $v \in B_n(0)$ nor on all $ \lambda \in [\lambda_*,1]$. To prove \eqref{eq:evstep3}, we first notice that for any $v \in B_{\cucc n(\log n)^{-2/d}}(0) \setminus B_{n^{2/3}}(0)$, $\{\lambda(v) \geq \lambda_*\}$ and $G_0$ are independent. Thus \eqref{eq:lambda-*-property} yields
$$\P(E_v \mid \lambda(v) \geq \lambda_*,G_0) \geq 1- e^{-c(\log n)^2}\,.$$
Hence,
\begin{align*}
	&\P(|\bar\varphi_*(0,v;\lambda(v)) - h(v;\lambda(v))| \leq n^{2/3} \mid \lambda(v) \geq \lambda_*,G_0)\\
 \geq  &\P(|\varphi_\star(0,v;\lambda(v)) - h(v;\lambda(v))| \leq n^{2/3}/2,E_v \mid \lambda(v) \geq \lambda_*,G_0)\\
 \geq &\P(|\varphi_\star(0,v;\lambda(v)) - h(v;\lambda(v))| \leq n^{2/3}/2 \mid\lambda(v) \geq \lambda_*, G_0) - e^{-c(\log n)^2}\,.
\end{align*}
Since $\sigma(G_0,\varphi_\star(0,v;\lambda), \lambda \in (0,1))$ is independent of $\sigma(\lambda(v))$, this is 
\begin{align*}
	\geq & \min_{\lambda \in [\lambda_*,1]}\P(|\varphi_\star(0,v;\lambda) - h(v;\lambda)| \leq n^{2/3}/2 \mid G_0) - e^{-c(\log n)^2}\\
	\geq & \min_{\lambda \in [\lambda_*,1]}\P(|\bar \varphi_*(0,v;\lambda) - h(v;\lambda)| \leq n^{2/3}/3 \mid G_0) - e^{-c(\log n)^2} - \P(E_v^c \mid G_0)\\
	\geq & 1- n^{1/4}\,.
\end{align*}
where in the last step, we used Lemma \ref{concentration}.
Combined with \eqref{eq:lambda-*-property}, it gives that
\begin{equation*}
  \P\big(|\bar\varphi_*(0,v;\lambda(v)) - h(v;\lambda(v))| > n^{2/3},\lambda(v) \geq \lambda_* \mid G_0\big)\end{equation*}
  is bounded from above by  $n^{-d-1/5}$.
Then using a union bound over $v$ yields that \eqref{eq:evstep3} holds for all $v \in \locV$ conditioned on $G_0$ and thus complete the proof of the lemma.
\end{proof}

\section{Asymptotic ball}\label{section:Asymptotic Ball}

From Theorem~\ref{onecity}, we know that conditioned on survival the random walk is localized in the optimal pocket island of volume poly-logarithmic in $n$. In this section, we will show that in fact, there is a region (which we refer to as \emph{intermittent island}) contained in the optimal pocket island such that the following holds:
\begin{itemize}
\item the intermittent island is asymptotically a discrete Euclidean ball of volume $d\log_{1/p} n$,
\item the principal eigenvalues of the intermittent island and the optimal pocket island are close to each other. 
\end{itemize}
The proof  consists of the following two steps.
\begin{itemize}
\item In Section~\ref{section:upper bound on the size}, we first notice that the region with low density of obstacles (which presumably forms the intermittent island) has low entropy, thus a sharp upper bound on the volume of the intermittent island can be derived.
Then we will show that the principal eigenvalues of the intermittent island and the optimal pocket island are close to each other. A key ingredient in this step is to show that the principal eigenfunction of the optimal pocket island is supported on the intermittent island.
\item  In Section~\ref{section:Asymptotic ball - Faber--Krahn}, we observe that the intermittent island achieves nearly largest eigenvalue (Lemma \ref{eigen-Omega}) over all set of the same volume (Lemma \ref{empty}). Thus, by Faber--Krahn inequality the intermittent island has to be a discrete ball asymptotically. The proof is carried out in Section \ref{section:Asymptotic ball - Faber--Krahn}, where we use a quantitative version of Faber--Krahn inequality as in \eqref{eq:FB}. Note that \eqref{eq:FB} is in the continuous setup but our problem is discrete. To address this, we use the relation between the continuous eigenvalue and its discrete approximation (\cite[(38)]{Kuttler70} (see also \cite[\S4]{Weinberger56}, \cite[\S6]{Weinberger58} and \cite{Polya52}).
\end{itemize}

\subsection{Intermittent island}
\label{section:upper bound on the size}
Recall that $\rad =  \lfloor (\omega_d^{-1}d\log_{1/p} n)^{1/d} \rfloor  $ as defined in \eqref{eq:def-r}.
For $\epl \in (0,1)$, we consider the following disjoint boxes that cover $\Z^d$: 
\begin{equation}
\label{def-smallbox}
	K_{\lfloor \epl\rad\rfloor}(x) = \{y\in \mathbb Z^d: |x-y|_\infty \leq \lfloor \epl\rad\rfloor \} \for x \in  (2\lfloor \epl\rad\rfloor+1)\Z^d\,.
\end{equation}
\begin{defn}
\label{empty-set}
	For $\rho \in (0,1)$, we say a box $K_{\lfloor \epl\rad\rfloor}(x)$ is $(\epl\rad,\rho)$-empty if $$|\ob \cap K_{\lfloor \epl\rad\rfloor}(x)| \leq \rho |K_{\lfloor \epl\rad\rfloor}(x)|\,.$$ Let $\emp(\epl,\rho)$ be the union of $(\epl\rad,\rho)$-empty boxes in \eqref{def-smallbox} that intersect with $B_{(\log n)^\kappa}(v_*)$.
\end{defn}


\begin{lemma}\label{empty}
With $\P$-probability tending to one, for any $ \epl,\rho \in(\rad^{-1/2}, p^{2}d^{-100})$

\begin{equation*}
\P(|\emp(\epl,\rho)| \leq d \log_{1/p}n + \rho^{1/2}\rad^{d}) \geq 1 - e^{-\rad}\,.
\end{equation*}
\end{lemma}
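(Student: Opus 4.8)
\textbf{Proof proposal for Lemma~\ref{empty}.}

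The plan is to bound $|\emp(\epl,\rho)|$ by controlling the number of $(\epl\rad,\rho)$-empty boxes that can intersect the fixed ball $B_{(\log n)^\kappa}(v_*)$. First I would observe that each box in the tiling \eqref{def-smallbox} that intersects $B_{(\log n)^\kappa}(v_*)$ is contained in (a slightly enlarged version of) that ball, so there are at most $((2\log n)^\kappa/(\epl\rad))^d + O(1)$ such boxes, which is poly-logarithmic in $n$ and in particular $e^{o(\rad)}$ in number. Hence it suffices to prove a bound on the probability that a \emph{single} fixed box $K=K_{\lfloor\epl\rad\rfloor}(x)$ is $(\epl\rad,\rho)$-empty: the crucial point is to show
\[
  \P\big(K \text{ is }(\epl\rad,\rho)\text{-empty}\big) \le \exp\big(-c_0\,(1-o(1))\,|K|\,\log(1/p)\,(1 - H(\rho)/\log(1/p))\big),
\]
or more precisely to show that the number of boxes we expect to be empty times $|K|$ stays below $d\log_{1/p}n + \rho^{1/2}\rad^d$ with the stated failure probability $e^{-\rad}$.

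The key estimate is a large-deviation bound for $|\ob\cap K|$, which is $\mathrm{Binomial}(|K|, 1-p)$. Requiring $|\ob\cap K|\le \rho|K|$ with $\rho$ small forces almost all of $K$ to be open, an event of probability roughly $p^{|K|}$ up to an entropy factor $\binom{|K|}{\le \rho|K|}\le e^{H(\rho)|K|}$; since $\rho < p^2 d^{-100}$ we have $H(\rho) \ll \log(1/p)$, so
\[
  \P\big(K\text{ is }(\epl\rad,\rho)\text{-empty}\big) \le \binom{|K|}{\lfloor\rho|K|\rfloor} p^{(1-\rho)|K|} \le p^{(1-C\rho^{1/2})|K|},
\]
where I absorb the entropy term into $C\rho^{1/2}$ using $\rho < p^2d^{-100}$ (this is where the exponent $\rho^{1/2}$ in the conclusion comes from: $-\log\binom{|K|}{\rho|K|}/(|K|\log(1/p)) = O(\rho\log(1/\rho))=O(\rho^{1/2})$). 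Now $|K| = (2\lfloor\epl\rad\rfloor+1)^d$ and $\rad^d = \omega_d^{-1}d\log_{1/p}n(1+o(1))$, so $p^{(1-C\rho^{1/2})|K|} = n^{-(1-o(1))(2\epl)^d\omega_d/d \cdot(1 - C\rho^{1/2})}$.

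To finish, I would set up the counting argument. Let $N$ be the number of empty boxes intersecting $B_{(\log n)^\kappa}(v_*)$; then $|\emp(\epl,\rho)| \le N\cdot|K|$, and $\E N$ is (number of candidate boxes)$\times\P(\text{a box is empty})$. The number of candidate boxes is $\Theta\big(((\log n)^\kappa/(\epl\rad))^d\big) = (\log n)^{\kappa d + o(1)}$, so $\E[N\cdot|K|] \le (\log n)^{O(1)} \cdot \rad^d \cdot n^{-c'}$ provided $(2\epl)^d\omega_d/d > 0$ — but this is too crude when $\epl$ is as small as $\rad^{-1/2}$, since then $|K| = (2\epl\rad)^d$ could be only polylogarithmic and $\P(\text{empty})$ is not small. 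This is the main obstacle: for very small $\epl$ I cannot rely on each box individually being rare. The resolution is a second-moment / union-bound split: when $\epl\rad$ is large (say $\ge \rad^{1/2}$ regime in the ``$\epl$ close to $p^2d^{-100}$'' end) the first-moment argument above already gives $\E[N|K|]$ exponentially small; when $\epl\rad$ is small, instead of bounding each box I directly bound the total number of \emph{open} vertices inside $B_{(\log n)^\kappa}(v_*)$ that lie in empty boxes: every empty box contributes at least $(1-\rho)|K|$ open vertices, and these vertices, being at density $\ge 1-\rho$ inside their box, form a configuration whose probability I control by a single Binomial large-deviation bound over the whole relevant region of volume $\sim N|K|$. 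Concretely, the event $\{|\emp(\epl,\rho)| \ge m\}$ implies there is a union $U$ of boxes with $|U|\ge m$ and $|\ob\cap U|\le \rho|U|$; taking a union bound over the (at most $2^{(\log n)^{O(1)}}$) choices of $U$ and applying the Binomial bound $\P(|\ob\cap U|\le\rho|U|)\le p^{(1-C\rho^{1/2})|U|}$, I get
\[
  \P\big(|\emp(\epl,\rho)| \ge m\big) \le 2^{(\log n)^{O(1)}}\, p^{(1-C\rho^{1/2})m}.
\]
Taking $m = d\log_{1/p}n + \rho^{1/2}\rad^d$ makes the right side at most $2^{(\log n)^{O(1)}} n^{-(1-C\rho^{1/2})}\cdot p^{(1-C\rho^{1/2})\rho^{1/2}\rad^d}$; since $\rho^{1/2}\rad^d = \rho^{1/2}\omega_d^{-1}d\log_{1/p}n(1+o(1)) \gg \rad$ (because $\rho \ge \rad^{-1/2}$ gives $\rho^{1/2}\rad^d \ge \rad^{d-1/4}\gg\rad$ for $d\ge2$), the factor $p^{(1-C\rho^{1/2})\rho^{1/2}\rad^d}$ beats both the entropy factor $2^{(\log n)^{O(1)}}$ and leaves room for $e^{-\rad}$. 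Balancing these terms carefully — in particular checking that $C\rho^{1/2} < 1$ using $\rho < p^2 d^{-100}$, and that the $n^{-1}$ and $e^{-\rad}$ budgets are respected uniformly over the stated range of $(\epl,\rho)$ — is the technical heart of the argument. The outer ``$\P$-probability tending to one'' simply refers to the high-probability event from Theorem~\ref{onecity}/Lemma~\ref{locV-def} on which $v_*$ is well-defined and $B_{(\log n)^\kappa}(v_*)$ lies in the box $B_{\cucc n(\log n)^{-2/d}}(0)$, so that the tiling-box counting above is valid.
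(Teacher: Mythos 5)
Your final step is close in spirit to the paper's argument, but as stated it has a genuine gap: the union bound over ``the (at most $2^{(\log n)^{O(1)}}$) choices of $U$'' has entropy that is far too large to be beaten by the per-$U$ probability. Concretely, the number of boxes of the tiling \eqref{def-smallbox} meeting $B_{(\log n)^\kappa}(v_*)$ is of order $(\log n)^{\kappa d}$ (and could be as large as $(\log n)^{2\kappa d}$ depending on how one counts), so the union over \emph{all} subsets $U$ of boxes contributes a factor $2^{(\log n)^{\kappa d}} = e^{\Theta((\log n)^{\kappa d})}$. On the other hand, the per-$U$ bound $p^{(1-C\rho^{1/2})m}$ with $m = d\log_{1/p}n+\rho^{1/2}\rad^d$ is only $e^{-\Theta(\log n)}$. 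Since $\kappa$ is a large constant (the paper fixes $\kappa\geq 10C_2+C_1+10$, so certainly $\kappa d>1$), one has $(\log n)^{\kappa d}\gg \log n$, and the entropy factor overwhelms the probability bound. Thus the displayed inequality
$\P(|\emp(\epl,\rho)|\geq m)\leq 2^{(\log n)^{O(1)}}\,p^{(1-C\rho^{1/2})m}$
does not produce something that is $\leq e^{-\rad}$, nor even $o(1)$.

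The paper avoids this by exploiting the independence of disjoint boxes: the number of $(\epl\rad,\rho)$-empty boxes meeting a fixed ball $B_{(\log n)^\kappa}(v)$ is a sum of i.i.d.\ indicators, hence dominated by a $\mathrm{Bin}\bigl((\log n)^{2\kappa d}, q\bigr)$ variable with $q\leq \exp\{|K|(\log p - 2\rho\log\rho)\}$, and the relevant tail is estimated by the Chernoff-type bound $\P(\mathrm{Bin}(N,q)\geq M)\leq \binom{N}{M}q^M\leq N^M q^M$. Here the combinatorial factor is $N^M = e^{M\log N}$ with $M\approx \omega_d(2\epl)^{-d}\leq C\rad^{d/2}\approx C(\log n)^{1/2}$ and $\log N\approx \kappa d\log\log n$, so $N^M = e^{O((\log n)^{1/2}\log\log n)} = n^{o(1)}$, which is easily absorbed by $q^M\approx n^{-d}$. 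This is precisely the union bound over the $\binom{N}{M}$ choices of an $M$-tuple of boxes rather than over all $2^N$ unions of boxes; if you restrict your $U$ to be a union of exactly $M = \lceil m/|K|\rceil$ boxes before taking the union bound, your argument closes and coincides with the paper's. The earlier detour through a first-moment estimate and the dichotomy between ``$\epl$ small'' and ``$\epl$ large'' is unnecessary once this correction is made, as the single Binomial tail bound handles the entire range $\epl,\rho\in(\rad^{-1/2},p^2d^{-100})$ uniformly; the final union over $v\in B_{2n}(0)$ (to cover the unknown location of $v_*$) then contributes only a factor $Cn^d$, which is dominated because the exponent $-d(1+\rho\log_p\rho)$ is strictly less than $-d$.
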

\begin{proof}
For all $x \in \Z^d$, a straightforward computation gives that
\begin{align*}
	\P(K_{\lfloor\epl\rad\rfloor}(x) \text{ is $(\epl\rad,\rho)$-empty}) &\leq \sum_{m = 0}^{\lfloor\rho |K_{\lfloor\epl\rad\rfloor}(x)| \rfloor}\binom{|K_{\lfloor\epl\rad\rfloor}(x)|}{m} p^{|K_{\lfloor\epl\rad\rfloor}(x)| - m}\\
	& \leq \rho |K_{\lfloor\epl\rad\rfloor}(x)| \rho^{-\rho |K_{\lfloor\epl\rad\rfloor}(x)|}p^{|K_{\lfloor\epl\rad\rfloor}(x)|(1 - \rho)}\,.
\end{align*}
Since $\rho \leq p$, we have
\begin{equation}
\label{eq:emptyprob}
	\P(K_{\lfloor\epl\rad\rfloor}(x) \text{ is $(\epl\rad,\rho)$-empty})  \leq \exp\{|K_{\lfloor\epl\rad\rfloor}(x)|(\log p  - 2\rho \log\rho)\}\,.
\end{equation}
Note that $B_{(\log n )^{\kappa }}(v)$ contains at most $(\log n)^{2\kappa d}$ boxes of the form \eqref{def-smallbox} for each $v \in B_{2n}(0)$. We let $q = \P(K_{\lfloor\epl\rad\rfloor}(x) \text{ is $(\epl\rad,\rho)$-empty})$. By \eqref{eq:emptyprob} and $\iota^{-d} =O ((\log n)^{1/2})	$(denoting by $\mathrm{Bin} ((\log n)^{2\kappa d},q)$ a Binomial variable with parameter $(\log n)^{2\kappa d}$ and $q$), 
\begin{align*}
	\P\left(\mathrm{Bin} \left((\log n)^{2\kappa d},q \right) \geq  \omega_d(2\epl)^{-d} (1+\frac{4\rho \log \rho}{\log (1/p)}) \right)\leq n^{-d(1 + \rho\log_p\rho)}\,.
\end{align*}
Therefore, by a union bound, with $\P$-probability at least $ 1 - Cn^{-d\rho\log_p\rho} \geq 1 - e^{-\rad}$, no ball in  $\{B_{(\log n )^{\kappa }}(v): v \in B_{2n}(0)\}$ contains more than $\omega_d(2\epl)^{-d} (1+\frac{4\rho \log \rho}{\log (1/p)})$ many $(\epl\rad,\rho)$-empty boxes. 
\end{proof}

 \begin{defn}
 \label{def-eign-f}
  	Recall that $\fregion$ is the connected component in $B_{(\log n)^\kappa}(v_*)$ that contains $v_*$. Let $f$ be the principal eigenfunction of $P|_{\fregion}$ corresponding to $\lambda_\fregion$ such that $\sum_{v \in \fregion}f(v) = 1$. We extend $f$ to $\Z^d$ by letting $f(v) = 0 $ for $v \in  \fregion^c$. For $\epsilon \in (0,1)$, denote \begin{equation*}
	\Omega_\epsilon = \{ v \in \Z^d : f(v) \geq \epsilon \rad^{-d}\}\,.
\end{equation*}
  \end{defn} 
We will show in Lemma \ref{omegaep} that $\Omega_\epsilon$, the set of sites where the eigenfunction value is high, largely coincide with $\emp(\epsilon^2,\epsilon^2)$ defined in Definition \ref{empty-set} and carries most of the weight of $f$ --- this is due to a simple relation between $f$ and $\emp(\epl,\rho)$ as in Lemma \ref{emptyeig}. Combining with Lemma~\ref{empty}, we are then able to provide a lower bound of the principal eigenvalue of $P|_{\Omega_\epsilon}$ in Lemma~\ref{eigen-Omega}.

\begin{lemma}
\label{eigen-first-region}
With $\P$-probability tending to one
\begin{equation}
\label{eq:eigen-first-region}
 	\lambda_{\fregion} \geq e^{-c_*(\log n)^{-2/d} - C(\log n)^{-3/d}}\,.
 \end{equation}
 Then for any $x \in \fregion$ and $t>0$,
 \begin{equation}
 \label{eq:eigen-first-region-p}
   \Pr^{x}(\xi_{\fregion} > t) \geq e^{-(2\log n)^{\kappa d}} e^{-c_*(\log n)^{-2/d}t(1 - C(\log n)^{-1/d})}\,.
 \end{equation}
\end{lemma}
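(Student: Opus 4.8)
The plan is to derive both bounds from the single observation that $\fregion$ contains the cluster $\calC_R(v_*)$ used to define $\lambda(v_*)$, combined with the lower bound on $\lambda_*$ from Lemma~\ref{lambda*}. Throughout I work on the event of $\P$-probability tending to one on which $\locV$ is defined (Lemma~\ref{locV-def}), so that the maximiser $v_*$ of \eqref{eq:variational} is well defined and $v_*\in\locV\subseteq\D_*\cap\calC(0)$.

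To prove \eqref{eq:eigen-first-region}, I would first note that since $\kappa$ is fixed large in Section~\ref{sec:prelim}, for large $n$ we have $R=k_n(\log n)^2\le(\log n)^\kappa$, hence $B_R(v_*)\subseteq B_{(\log n)^\kappa}(v_*)$, so the connected open set $\calC_R(v_*)\ni v_*$ is contained in the connected component $\fregion$. Monotonicity of the principal eigenvalue under set inclusion --- which follows from the entrywise bound $(P|_A)^m(x,y)\le(P|_B)^m(x,y)$ for $A\subseteq B$, $x,y\in A$, together with Gelfand's spectral-radius formula --- then gives $\lambda_\fregion\ge\lambda_{\calC_R(v_*)}=\lambda(v_*)\ge\lambda_*$, using $v_*\in\D_*$ in the last step. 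Plugging in Lemma~\ref{lambda*} gives $\lambda_\fregion\ge 1-c_*(\log n)^{-2/d}-C(\log n)^{-3/d}$, and since $1-x\ge e^{-x-x^2}$ for small $x$ one obtains \eqref{eq:eigen-first-region} after enlarging $C$ to absorb the $O((\log n)^{-4/d})=o((\log n)^{-3/d})$ correction.

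To prove \eqref{eq:eigen-first-region-p} I would use the principal eigenfunction $f$ of $P|_{\fregion}$ from Definition~\ref{def-eign-f}; since $\fregion$ is connected, $P|_{\fregion}$ is irreducible and $f>0$ on $\fregion$ by Perron--Frobenius. Starting from the standard comparison
\[
\Pr^x(\xi_\fregion>t)=\sum_y(P|_{\fregion})^t(x,y)\;\ge\;\frac{1}{\max_z f(z)}\sum_y(P|_{\fregion})^t(x,y)\,f(y)=\frac{\lambda_\fregion^t\,f(x)}{\max_z f(z)},
\]
it remains to bound $f(x)/\max_z f(z)$ below uniformly over $x\in\fregion$. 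For this I would use a one-step Harnack estimate: for neighbours $x\sim x'$ in $\fregion$ the eigenvalue equation gives $\lambda_\fregion f(x)\ge P(x,x')f(x')=\tfrac1{2d}f(x')$, so $f(x)\ge\tfrac1{2d}f(x')$ because $\lambda_\fregion\le1$; iterating along a path in $\fregion$ joining $x$ to a maximiser of $f$, of length at most $|\fregion|-1\le|B_{(\log n)^\kappa}(v_*)|\le(2(\log n)^\kappa+1)^d$, yields $f(x)/\max_z f(z)\ge(2d)^{-|\fregion|}\ge e^{-(2\log n)^{\kappa d}}$, where the last inequality uses that $\kappa$ is large relative to $d$. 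Combining this with $\lambda_\fregion^t\ge(e^{-c_*(\log n)^{-2/d}-C(\log n)^{-3/d}})^t$ from \eqref{eq:eigen-first-region}, writing $\lambda_\fregion^t=e^{t\log\lambda_\fregion}$ and reorganising the exponent to first order in $(\log n)^{-1/d}$, gives \eqref{eq:eigen-first-region-p}; for non-integer $t$ one first replaces $t$ by $\lceil t\rceil\le t+1$, absorbing the resulting bounded power of $\lambda_\fregion$ into the prefactor.

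This is essentially a bookkeeping lemma, so I do not expect a serious obstacle; the two points that need care are the inclusion $\calC_R(v_*)\subseteq\fregion$ together with the eigenvalue monotonicity, and verifying that the sub-polynomial prefactor $e^{-(2\log n)^{\kappa d}}$ really dominates the Harnack loss $(2d)^{-|\fregion|}$ --- which is where the (already guaranteed) largeness of $\kappa$ is used.
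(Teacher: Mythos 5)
Your derivation of \eqref{eq:eigen-first-region} matches the paper's: the inclusion $\calC_R(v_*)\subseteq\fregion$, the membership $v_*\in\locV\subseteq\D_*$, and Lemma~\ref{lambda*} give $\lambda_\fregion\ge\lambda(v_*)\ge\lambda_*\ge 1-c_*(\log n)^{-2/d}-C(\log n)^{-3/d}$, and exponentiating finishes; the monotonicity of the principal eigenvalue under set inclusion that you spell out is exactly the implicit step the paper relies on. For \eqref{eq:eigen-first-region-p} you take a genuinely different route: the paper invokes $\max_y\Pr^{y}(\xi_{\fregion} > t) \geq \lambda_{\fregion}^t$ from \cite[Lemma 3.2]{DX17} and then uses connectivity of $\fregion$ to steer the walk from $x$ to a near-optimal starting point, paying a factor of order $(2d)^{-|\fregion|}$; you instead expand $\Pr^x(\xi_\fregion>t)=\sum_y(P|_\fregion)^t(x,y)$ against the principal eigenfunction and run a one-step Harnack chain to control $f(x)/\max_z f(z)$. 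Both arguments produce the same prefactor $(2d)^{-|\fregion|}$ and hence the same conclusion, so neither buys more quantitatively; yours is self-contained (it avoids the external citation to \cite{DX17}), while the paper's is shorter given the lemma it cites. One point worth flagging: what your argument (and the paper's) actually yields is $\Pr^x(\xi_\fregion>t)\ge e^{-(2\log n)^{\kappa d}}\,e^{-c_*(\log n)^{-2/d}t\,(1+C(\log n)^{-1/d})}$, with a \emph{plus} sign in front of the $C(\log n)^{-1/d}$ correction, because the lower bound on $\lambda_\fregion$ loses an additive $C(\log n)^{-3/d}$ in the exponent; the minus sign printed in \eqref{eq:eigen-first-region-p} would be a strictly stronger claim that cannot follow from $\lambda_\fregion^t$ for large $t$. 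This is almost certainly a typographical slip in the paper (and is harmless in every place \eqref{eq:eigen-first-region-p} is applied, where only the leading $(\log n)^{-2/d}$ rate matters), but you should state the sign you actually prove rather than glossing over it when ``reorganising the exponent.''
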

\begin{proof}
By definitions of $\fregion$ and $C_R(v_*)$, $C_R(v_*) \subseteq\fregion$. Recall \eqref{eq:locV-def} and that $v_* \in \locV$. Lemma \ref{lambda*} gives \eqref{eq:eigen-first-region}. Since $\fregion$ is connected, \eqref{eq:eigen-first-region-p} follows from
   $\max_x\Pr^{x}(\xi_{\fregion} > t) \geq \lambda_{\fregion}^t$ (which can be found in \cite[Lemma 3.2]{DX17}) and $|\fregion| \leq (2 \log n)^{\kappa d}$.
\end{proof}

\begin{lemma}
\label{emptyeig}
For any $\epl,\rho \in (0,1)$ we have
$\sum_{v \in  \emp(\epl,\rho)^c}f(v) \leq C \rho^{-1} \epl^2$.
\end{lemma}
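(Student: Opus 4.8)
The plan is to exploit the eigenvalue equation for $f$ together with the crude survival-probability bound that the box structure gives. Recall that $f$ is the principal eigenfunction of $P|_{\fregion}$ normalized so that $\sum_{v\in\fregion}f(v)=1$, and that $\emp(\epl,\rho)$ is the union of $(\epl\rad,\rho)$-empty boxes intersecting $B_{(\log n)^\kappa}(v_*)$. The key point is that if a box $K_{\lfloor\epl\rad\rfloor}(x)$ of the covering \eqref{def-smallbox} is \emph{not} $(\epl\rad,\rho)$-empty, then it contains at least $\rho|K_{\lfloor\epl\rad\rfloor}(x)|$ obstacles, i.e., a positive density of killing sites; a simple argument then shows that from any vertex in such a box, the random walk killed at $\ob$ survives $t=O((\epl\rad)^2)$ steps only with probability at most $e^{-c\rho}$ (or more precisely, survival in the box up to the exit time costs a factor decaying like a power of $1-\rho$ per obstacle encountered along the way). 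I would quantify this by bounding $\lambda_{K_{\lfloor\epl\rad\rfloor}(x)\setminus\ob}$ away from $1$ by something like $c\rho(\epl\rad)^{-2}$ when the box has obstacle density $\geq\rho$, using the fact that a Dirichlet eigenvalue on a box with a positive fraction of deleted sites is bounded below by a constant times (density)/(diameter squared).

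Next, I would combine this with the eigenvalue relation for $f$. Since $f$ is the principal eigenfunction with eigenvalue $\lambda_\fregion$, for any vertex $v$ we have $\lambda_\fregion f(v)=\sum_{w\sim v}\frac1{2d}f(w)$, and $f$ restricted to a non-empty box $K$ satisfies a sub-mean-value type inequality that forces $f$ to be small on the ``inside'' of $K$: iterating the eigenvalue equation over the box gives $\sum_{v\in K'}f(v)\leq \lambda_\fregion^{-t}\,\lambda_{K\setminus\ob}^{\,t}\,(\text{boundary contribution})$, where $K'$ is a slightly shrunk box. More directly, I would run the random walk from $v\in\emp(\epl,\rho)^c$ and use the probabilistic representation $f(v)=\lambda_\fregion^{-\xi}\Ex^v[f(S_\xi)\cdots]$ together with $\max_x f(x)\leq \lambda_\fregion^{-?}$-type crude bounds coming from Lemma~\ref{eigen-first-region} (which gives $\lambda_\fregion\geq e^{-c_*(\log n)^{-2/d}-C(\log n)^{-3/d}}$, so $\lambda_\fregion^{-t}$ is at most $e^{C(\log n)^{-2/d}t}$ and is negligible for $t=O((\epl\rad)^2)=O((\log n)^{2/d})$). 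Summing $f(v)$ over all $v$ in non-empty boxes intersecting $\fregion\subseteq B_{(\log n)^\kappa}(v_*)$: each such box contributes at most $(\text{its exit-survival cost})\times(\text{mass of }f)$; since the number of boxes is at most $(\log n)^{2\kappa d}$ and each non-empty box is hit with a total $f$-weight decaying like a factor $e^{-c\rho}$ relative to its neighbors, a geometric-series / Harnack-on-the-cluster argument bounds the total mass on $\emp(\epl,\rho)^c$ by $C\rho^{-1}\epl^2$ — the $\epl^2$ coming from the number of steps $(\epl\rad)^2$ the walk must survive to cross a box of side $\epl\rad$, and the $\rho^{-1}$ being the geometric-sum prefactor $1/(1-e^{-c\rho})\sim(c\rho)^{-1}$.

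More concretely, here is the cleanest route. For each non-empty box $K$, the eigenvalue equation gives $\sum_{v\in K}f(v)=\Ex\big[\sum_{\text{excursions}}\cdots\big]$; instead, decompose the mass of $f$ on $\emp(\epl,\rho)^c$ by first entrance into non-empty boxes. Using $\lambda_\fregion f=P|_\fregion f$ and iterating, for $v\in K$ with $K$ non-empty,
\begin{equation}
f(v)=\lambda_\fregion^{-1}\sum_{w\sim v}\tfrac1{2d}f(w)\le \lambda_\fregion^{-\xi_K}\,\Ex^v\!\big[f(S_{\xi_K})\,;\,\xi_K<\tau_\ob\big]+\text{(killed part, }=0\text{)},
\end{equation}
and since from a vertex of a box of obstacle density $\geq\rho$ we have $\Pr^v(\xi_K<\tau_\ob)\leq 1-c\rho$ per crossing while each crossing takes $\lesssim(\epl\rad)^2$ steps, summing over $v\in\emp(\epl,\rho)^c$ and over the at-most-$(\log n)^{2\kappa d}$ relevant boxes yields $\sum_{v\in\emp(\epl,\rho)^c}f(v)\leq C\rho^{-1}\epl^2\sum_{v}f(v)=C\rho^{-1}\epl^2$, absorbing the harmless $\lambda_\fregion^{-(\epl\rad)^2}=1+o(1)$ factor via Lemma~\ref{eigen-first-region}.

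The main obstacle I anticipate is making the ``crossing cost'' argument rigorous without circular reliance on the eigenfunction having no mass inside non-empty boxes: one must argue that $f$ on a non-empty box is controlled by its values on the box boundary, and that the relevant Dirichlet-survival factor on a box of obstacle density $\geq\rho$ really is bounded by $1-c\rho$ (uniformly in $\epl\geq\rad^{-1/2}$ — note $(\epl\rad)^2\geq\rad\to\infty$, so the box is macroscopically large and a spectral/isoperimetric estimate is needed rather than a one-step bound). A clean way to get the $1-c\rho$ bound is: in a box of side $s:=2\lfloor\epl\rad\rfloor+1$ with at least $\rho s^d$ obstacles, a random walk started anywhere, watched for $s^2$ steps, hits an obstacle with probability bounded below by a constant depending only on $d$ and $\rho$ — this follows from the local CLT / Green's function estimates on the box (the expected number of visits to any fixed site in $s^2$ steps is $\Theta(1)$ in the scaling window, so the expected number of obstacle-visits is $\Omega(\rho s^2)\cdot s^{-d+2}=\Omega(\rho)$ when $d\le 2$ and needs a slightly more careful pigeonhole/second-moment argument for $d\ge 3$). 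Getting the correct $\rho^{-1}$ (not $\rho^{-2}$ or worse) prefactor in the final geometric sum is the delicate quantitative point, and handling all $\epl$ in the stated range $(\rad^{-1/2},p^2d^{-100})$ uniformly is where the bookkeeping is heaviest.
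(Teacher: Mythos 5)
You have the right two ingredients — the local-CLT lower bound $\Pr^v(\tau\leq\lfloor\epl\rad\rfloor^2)\geq c\rho$ for $v$ in a box of obstacle density at least $\rho$ (which you sketch in your last paragraph), and the eigenvalue lower bound for $\lambda_\fregion$ from Lemma~\ref{eigen-first-region} — but the step that actually assembles them in the paper is missing from your proposal, and the box-by-box iteration plus geometric series you propose in its place does not close. The paper's proof rests on the exact identity $\sum_{v\in\fregion}f(v)\Pr^v(\xi_\fregion\leq t)=1-\lambda_\fregion^t$ (self-adjointness of $P|_\fregion$ together with the eigenfunction equation, used once at $t=\lfloor\epl\rad\rfloor^2$). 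By \eqref{eq:eigen-first-region} and $\rad^2\asymp(\log n)^{2/d}$ the right-hand side is at most $C\epl^2$; restricting the left-hand side to $v\in\fregion\setminus\emp(\epl,\rho)$ and using $\Pr^v(\xi_\fregion\leq t)\geq\Pr^v(\tau\leq t)\geq c\rho$ there gives the lemma after dividing by $c\rho$. No iteration over boxes, no geometric series, and no Harnack-type argument is needed.

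Three concrete gaps in your version. (i) You attribute the $\epl^2$ to ``absorbing the harmless $\lambda_\fregion^{-(\epl\rad)^2}=1+o(1)$ factor,'' but that multiplicative factor is close to $1$, not to $\epl^2$; what produces $\epl^2$ is the \emph{additive} quantity $1-\lambda_\fregion^{(\epl\rad)^2}$, which your proposal never isolates. (ii) The geometric-series-over-crossings argument does not close: from $f(v)\leq\lambda_\fregion^{-\xi_K}\Ex^v[f(S_{\xi_K});\xi_K<\tau_\ob]$, applying $\Pr^v(\xi_K<\tau_\ob)\leq 1-c\rho$ yields at best a single factor $(1-c\rho)\max f$, and there is no self-improving recursion to sum — the walk can exit into an $(\epl\rad,\rho)$-empty neighboring box, where no further decay factor is available. (iii) The spectral estimate $\lambda_{K\setminus\ob}\leq 1-c\rho(\epl\rad)^{-2}$ is a red herring: any box of side $s$ already has principal Dirichlet eigenvalue $\leq 1-cs^{-2}$ regardless of where the obstacles sit (they could all be in one corner), so the inequality is vacuously true and does not isolate any $\rho$-dependence. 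The quantity that does carry the $\rho$-dependence is the single-time hitting probability $\Pr^v(S_{\lfloor\epl\rad\rfloor^2}\in\ob)\geq c\rho$ as in \eqref{eq:EcCost}, and it is used once, not iterated.
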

\begin{proof}
By \eqref{eq:eigen-first-region} and $e^x \geq 1 + x$,
\begin{align}
\label{eq:emp-eig-1}
	\sum_{v \in \fregion \setminus \emp(\epl,\rho)}f(v)\Pr^v(\xi_{\fregion} \leq \lfloor\epl \rad\rfloor^2)& \leq \sum_{v \in \fregion}f(v)\Pr^v(\xi_{\fregion} \leq \lfloor\epl \rad\rfloor^2) \nonumber\\
	&= 1 - \lambda_{\fregion}^{\lfloor\epl \rad\rfloor^2} \leq C\epl^2\,.
\end{align}
At the same time, for $v \in \fregion \setminus\emp(\epl,\rho)$, there exists at least $\rho(\epl \rad)^2$ obstacles in $B_{\epl \rad}(v)$. Hence,
\begin{equation}
\label{eq:EcCost}
  \Pr^v(\tau \leq \lfloor \epl \rad \rfloor^2)\geq \max(\Pr^v(S_{\lfloor \epl \rad \rfloor^2} \in \ob),\Pr^v(S_{\lfloor \epl \rad \rfloor^2-1} \in \ob)) \geq   c\rho\,.
\end{equation}
Substituting this into \eqref{eq:emp-eig-1} yields (noting that $\xi_{\fregion} \leq \tau$)
\begin{equation*}
	\sum_{v \in \fregion \setminus \emp(\epl,\rho)}f(v) \leq (c\rho)^{-1}\sum_{v \in \fregion \setminus \emp(\epl,\rho)}f(v)\Pr^v(\xi_{\fregion} \leq \lfloor\epl \rad\rfloor^2) \leq c^{-1}\rho^{-1} C \epl^2\,. \qedhere
\end{equation*}
\end{proof}

\begin{lemma}
\label{omegaep}
Consider $\epsilon \in (\rad^{-c},c)$. The following holds with $\P$-probability at least $ 1 - e^{-\rad}$,
\begin{align}
	\sum_{v \in \fregion \setminus \Omega_\epsilon}f(v) & \leq C\epsilon\,, \label{eq:omegaLocaltime}\\
	|\Omega_\epsilon \setminus\emp(\epsilon^2,\epsilon^2)|  \leq C \epsilon \rad^{d}, \quad & |\Omega_\epsilon \cup \emp(\epsilon^2,\epsilon^2)|  \leq d \log_{1/p}n  + C \epsilon \rad^d \label{eq:omegaSizeeee}\,.
\end{align}
\end{lemma}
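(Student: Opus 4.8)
The plan is to derive all three estimates from two inputs already in hand: the entropy bound of Lemma~\ref{empty}, applied with $\epl=\rho=\epsilon^2$, and the eigenfunction tail estimate of Lemma~\ref{emptyeig}, also applied with $\epl=\rho=\epsilon^2$. To use the former we need $\epsilon^2\in(\rad^{-1/2},p^2d^{-100})$, which holds once the constant $c$ in the range $\epsilon\in(\rad^{-c},c)$ is taken small enough; the latter is valid for every $\epl,\rho\in(0,1)$, hence unconditionally (on the fixed high-probability environment for $v_*$ underlying Lemma~\ref{eigen-first-region}). We shall also use the trivial fact $d\log_{1/p}n\le C\rad^d$, which follows from $\rad=\lfloor(\omega_d^{-1}d\log_{1/p}n)^{1/d}\rfloor$ by dropping the floor. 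Throughout we work on the event of Lemma~\ref{empty}, of $\P$-probability at least $1-e^{-\rad}$, on which $|\emp(\epsilon^2,\epsilon^2)|\le d\log_{1/p}n+\epsilon\rad^d\le C\rad^d$; we also record that $\Omega_\epsilon\subseteq\fregion$, since $f\equiv0$ off $\fregion$ while $\epsilon\rad^{-d}>0$.

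For \eqref{eq:omegaLocaltime} I would decompose $\fregion\setminus\Omega_\epsilon$ according to membership in $\emp(\epsilon^2,\epsilon^2)$. The part inside $\emp(\epsilon^2,\epsilon^2)$ consists of sites with $f(v)<\epsilon\rad^{-d}$, so it contributes at most $|\emp(\epsilon^2,\epsilon^2)|\cdot\epsilon\rad^{-d}\le C\epsilon$. The part outside $\emp(\epsilon^2,\epsilon^2)$ is contained in $\emp(\epsilon^2,\epsilon^2)^c$, hence contributes at most $\sum_{v\in\emp(\epsilon^2,\epsilon^2)^c}f(v)\le C(\epsilon^2)^{-1}(\epsilon^2)^2=C\epsilon^2$ by Lemma~\ref{emptyeig}. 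Summing the two bounds gives $\sum_{v\in\fregion\setminus\Omega_\epsilon}f(v)\le C\epsilon$.

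For the first inequality in \eqref{eq:omegaSizeeee}, note that every $v\in\Omega_\epsilon\setminus\emp(\epsilon^2,\epsilon^2)$ satisfies both $f(v)\ge\epsilon\rad^{-d}$ and $v\in\emp(\epsilon^2,\epsilon^2)^c$; therefore $\epsilon\rad^{-d}\,|\Omega_\epsilon\setminus\emp(\epsilon^2,\epsilon^2)|\le\sum_{v\in\emp(\epsilon^2,\epsilon^2)^c}f(v)\le C\epsilon^2$ by Lemma~\ref{emptyeig}, which rearranges to $|\Omega_\epsilon\setminus\emp(\epsilon^2,\epsilon^2)|\le C\epsilon\rad^d$. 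The second inequality in \eqref{eq:omegaSizeeee} is then immediate: $|\Omega_\epsilon\cup\emp(\epsilon^2,\epsilon^2)|=|\emp(\epsilon^2,\epsilon^2)|+|\Omega_\epsilon\setminus\emp(\epsilon^2,\epsilon^2)|\le(d\log_{1/p}n+\epsilon\rad^d)+C\epsilon\rad^d$, using Lemma~\ref{empty} for the first summand and the bound just obtained for the second. A union bound over the (only polylogarithmically many) distinct values taken by the pair $(\emp(\epsilon^2,\epsilon^2),\Omega_\epsilon)$ as $\epsilon$ ranges over $(\rad^{-c},c)$ upgrades this to a simultaneous statement without harming the probability bound.

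I do not anticipate a genuine obstacle: the analytic content sits in Lemmas~\ref{empty} and~\ref{emptyeig}, and what remains is bookkeeping. The one thing to be careful about is that the crude estimate $\sum_{v\in\fregion\setminus\Omega_\epsilon}f(v)\le|\fregion|\,\epsilon\rad^{-d}$ is worthless, since $|\fregion|$ is polylogarithmic in $n$ whereas $\rad^d=\Theta(\log n)$; the argument must be routed through $\emp(\epsilon^2,\epsilon^2)$, whose cardinality is $O(\rad^d)$, together with the eigenfunction tail bound. The remaining subtlety is purely cosmetic — choosing $c$ small enough that $\epsilon^2$ falls in the admissible window of Lemma~\ref{empty}.
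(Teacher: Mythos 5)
Your proof is correct and follows essentially the same route as the paper's: apply Lemmas~\ref{empty} and~\ref{emptyeig} with $\epl=\rho=\epsilon^2$, decompose $\fregion\setminus\Omega_\epsilon$ by membership in $\emp(\epsilon^2,\epsilon^2)$ for \eqref{eq:omegaLocaltime}, use the Markov-style bound $|\Omega_\epsilon\setminus\emp(\epsilon^2,\epsilon^2)|\le\epsilon^{-1}\rad^d\sum_{\emp^c}f$ for the first half of \eqref{eq:omegaSizeeee}, and add $|\emp(\epsilon^2,\epsilon^2)|$ for the second. The bookkeeping and the precaution about keeping $\epsilon^2$ in the admissible window of Lemma~\ref{empty} are both sound; the closing remark about a union bound over $\epsilon$ is harmless but unnecessary since the lemma is stated for a fixed $\epsilon$.
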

\begin{proof} 
Applying Lemmas \ref{empty} and \ref{emptyeig} with $\epl = \rho = \epsilon^2$, we get that with $\P$-probability tending to 1,
	\begin{equation}\label{eq4.12}
		\sum_{v \in \fregion \setminus \emp(\epsilon^2,\epsilon^2)}f(v) \leq C \epsilon^2 \AND
	|\emp(\epsilon^2,\epsilon^2)| - d \log_{1/p}n \leq \epsilon \rad^{d}\,.
\end{equation}
Hence,
\begin{align*}
	\sum_{v \in \fregion \setminus \Omega_\epsilon}f(v) & \leq \sum_{v \in \fregion \setminus \emp(\epsilon^2,\epsilon^2)}f(v) + \sum_{v \in \emp(\epsilon^2,\epsilon^2) \setminus \Omega_\epsilon}f(v)\\
	& \leq \sum_{v \in \fregion \setminus \emp(\epsilon^2,\epsilon^2)}f(v) + |\emp(\epsilon^2,\epsilon^2)|\epsilon \rad^{-d}  \leq C\epsilon\,,
\end{align*}
yielding \eqref{eq:omegaLocaltime}.
Combining \eqref{eq4.12} and the fact that $f(v)\geq \epsilon/\rad^d$ for $v\in \Omega_\epsilon$ gives that
 $$|\Omega_{\epsilon} \setminus\emp(\epsilon^2,\epsilon^2)| \leq \sum_{v \in \fregion \setminus \emp(\epsilon^2,\epsilon^2)}f(v) \epsilon^{-1}\rad^d \leq C \epsilon \rad^{d}\,.$$ Combined with \eqref{eq4.12}, it immediately implies that
$|\Omega_{\epsilon} \cup \emp(\epsilon^2,\epsilon^2)| 
	 					 \leq d \log_{1/p}n + C\epsilon \rad^{d}$. This completes the proof of \eqref{eq:omegaSizeeee} and thus the proof of the lemma.
\end{proof}

\begin{lemma}
\label{eigen-Omega}Consider $\epsilon \in (\rad^{-c},c)$. With $\P$-probability tending to one,
	\begin{equation*}
 	1 - \lambda_{\Omega_\epsilon} \leq c_* (\log n)^{-2/d}(1 + C\epsilon)\,.
 \end{equation*}\end{lemma}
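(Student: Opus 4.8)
The plan is to control $\lambda_{\Omega_\epsilon}$ via the variational characterization
\[
1-\lambda_{\Omega_\epsilon}\;=\;\min_{\substack{g\neq 0\\ \mathrm{supp}(g)\subseteq\Omega_\epsilon}}\ \frac{\tfrac1{4d}\sum_{x\sim y}(g(x)-g(y))^2}{\sum_x g(x)^2}\,,
\]
which holds because $P|_{\Omega_\epsilon}$ is symmetric with respect to the counting measure and, for $g$ supported on $\Omega_\epsilon$, $\langle g,(I-P|_{\Omega_\epsilon})g\rangle=\langle g,(I-P)g\rangle=\tfrac1{4d}\sum_{x\sim y}(g(x)-g(y))^2$. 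The naive candidate $g=f\,\11_{\Omega_\epsilon}$ is a poor choice: it has a jump across $\partial\Omega_\epsilon$, which inflates the Dirichlet energy by far too much. Instead I would use the \emph{contraction}
\[
g:=\big(f-\epsilon\rad^{-d}\big)_+\,,
\]
which is supported exactly on $\Omega_\epsilon=\{f\ge\epsilon\rad^{-d}\}$ and vanishes continuously along its boundary. Since $t\mapsto(t-\epsilon\rad^{-d})_+$ is $1$-Lipschitz, $|g(x)-g(y)|\le|f(x)-f(y)|$ for every edge $\{x,y\}$, whence
\[
\tfrac1{4d}\sum_{x\sim y}(g(x)-g(y))^2\;\le\;\tfrac1{4d}\sum_{x\sim y}(f(x)-f(y))^2\;=\;\langle f,(I-P)f\rangle\;=\;(1-\lambda_\fregion)\,\|f\|_2^2\,,
\]
where the last equality uses $f=0$ off $\fregion$ and $Pf=\lambda_\fregion f$ on $\fregion$ (Definition~\ref{def-eign-f}).

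The second ingredient is a lower bound $\|g\|_2^2\ge(1-C\epsilon)\|f\|_2^2$. Work on the intersection of the high-probability events in Lemmas~\ref{eigen-first-region} and \ref{omegaep}. From \eqref{eq:omegaSizeeee}, $|\Omega_\epsilon|\le d\log_{1/p}n+C\epsilon\rad^d\le C\rad^d$ (recalling $\rad^d\asymp d\log_{1/p}n$); from \eqref{eq:omegaLocaltime} and $\sum_v f(v)=1$ we get $\sum_{v\in\Omega_\epsilon}f(v)\ge1-C\epsilon$, hence $\sum_{v\in\Omega_\epsilon}g(v)\ge 1-C\epsilon-\epsilon\rad^{-d}|\Omega_\epsilon|\ge\tfrac12$ once $\epsilon$ is below a fixed constant. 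By Cauchy--Schwarz over $\Omega_\epsilon$, $\|g\|_2^2\ge\big(\sum_{v\in\Omega_\epsilon}g(v)\big)^2/|\Omega_\epsilon|\ge c\rad^{-d}$. On the other hand $f(v)^2-g(v)^2\le 2\epsilon\rad^{-d}f(v)$ for $v\in\Omega_\epsilon$ and $f(v)^2\le\epsilon\rad^{-d}f(v)$ for $v\notin\Omega_\epsilon$, so $\|f\|_2^2-\|g\|_2^2\le 2\epsilon\rad^{-d}\sum_v f(v)=2\epsilon\rad^{-d}$; combining the two displays, $\|f\|_2^2\le\|g\|_2^2+2\epsilon\rad^{-d}\le(1+C\epsilon)\|g\|_2^2$.

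Putting these together, on the above event
\[
1-\lambda_{\Omega_\epsilon}\le\frac{(1-\lambda_\fregion)\|f\|_2^2}{\|g\|_2^2}\le(1-\lambda_\fregion)(1+C\epsilon)\,,
\]
and Lemma~\ref{eigen-first-region} (with $1-e^{-x}\le x$) gives $1-\lambda_\fregion\le c_*(\log n)^{-2/d}+C(\log n)^{-3/d}=c_*(\log n)^{-2/d}(1+C(\log n)^{-1/d})$. Since $\epsilon\ge\rad^{-c}$ and $\rad=\Theta\big((\log n)^{1/d}\big)$, the factor $(\log n)^{-1/d}$ is at most $C\epsilon$, so it is absorbed, yielding $1-\lambda_{\Omega_\epsilon}\le c_*(\log n)^{-2/d}(1+C\epsilon)$ with $\P$-probability tending to one.

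The only genuinely non-routine point — the main obstacle — is the choice of test function: truncating $f$ sharply at $\partial\Omega_\epsilon$ would create a Dirichlet-energy overhead of order $\rad$ per unit boundary area and ruin the bound, whereas shifting $f$ downward by the threshold makes the new function a Lipschitz contraction of $f$ and costs no extra energy at all. The remaining subtle step is the lower bound $\|f\|_2^2\gtrsim\rad^{-d}$, which is not a priori clear because $\mathrm{supp}(f)\subseteq\fregion$ has only poly-logarithmic volume; it is rescued precisely by Lemma~\ref{omegaep}, which confines almost all the mass of $f$ to a set of volume $O(\rad^d)$, after which Cauchy--Schwarz suffices.
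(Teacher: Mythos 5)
Your proposal is correct and follows essentially the same approach as the paper: same test function $\bar f=(f-\epsilon\rad^{-d})_+$, same 1-Lipschitz contraction argument for the Dirichlet energy, same Cauchy--Schwarz bound via $|\Omega_\epsilon|\lesssim\rad^d$, and the same absorption of the $(\log n)^{-1/d}$ correction using $\epsilon\ge\rad^{-c}$. The only cosmetic difference is that you lower-bound $\|g\|_2^2$ directly rather than first bounding $\|f\|_2^2$ and then the gap, which is an equivalent bookkeeping choice.
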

\begin{proof}
	
	Let $\bar f = (f - \epsilon/\rad^d)_+$. (Recall that $a_+ = a \11_{a\geq 0}$.) Then $\bar f$ is supported on $\Omega_\epsilon$ and 
	\begin{equation}
	\label{eq:fbargradient}
		\sum_{x \sim y} (\bar f(x) - \bar f(y))^2 \leq \sum_{x \sim y} ( f(x) -  f(y))^2\,.
	\end{equation}
By Cauchy's inequality and \eqref{eq:omegaLocaltime}, \eqref{eq:omegaSizeeee},
$$|f|_2^2 \geq |\Omega_\epsilon|^{-1} (\sum_{x \in \Omega_\epsilon}f(x))^2 \geq c \rad^{-d}\,.$$
By \eqref{eq:omegaLocaltime},
	\begin{equation*}
		|\bar f|_2^2  = \sum_{x \in \Omega_\epsilon}(f(x) - \epsilon \rad^{-d})^2\geq |f|_2^2 - 2 \epsilon \rad^{-d} |f|_1 - \sum_{x \not \in \Omega_\epsilon}f^2(x) \geq |f|_2^2  - 2 \epsilon \rad^{-d} - \epsilon^2 \rad^{-d}\,.
	\end{equation*}
Hence,
$
	|\bar f|_2^2 \geq |f|_2^2 (1 - C \epsilon)
$.
 Combined with \eqref{eq:fbargradient} and the fact that
\begin{equation*}
  1 - \lambda_A = \min \big\{\frac{1}{4d}\sum_{x \sim y} ( g(x) -  g(y))^2 :  |g|^2_2 = 1,g(x) = 0 \  \forall x \not \in A \big \} \quad \forall A \subseteq \Z^d\,,
\end{equation*}
 it yields 
\begin{equation*} 
	(1 - \lambda_{\Omega_\epsilon}) \leq (1 - \lambda_{\fregion})(1+C\epsilon)\,.
\end{equation*}
We complete the proof of the lemma by \eqref{eq:eigen-first-region}.
\end{proof}

\subsection{Asymptotic ball}
\label{section:Asymptotic ball - Faber--Krahn}

As discussed earlier, in order to apply the quantitative Faber--Krahn inequality we need to relate the principal eigenvalue  in the continuous and discrete setup. To this end, we first provide Lemma \ref{tOmegaSize} which gives an upper bound on the size of the boundary of $\Omega_\epsilon$ --- this will be used in the proof of Lemma~\ref{Asymptotic shape - Ball}. Define
\begin{equation}
\label{eq:def-Omega+}
	\Omega_\epsilon^+ = \{ v \in \Z^d: \min_{x \in \Omega_\epsilon}|x-v|_\infty \leq 2\}\,.
\end{equation}

\begin{lemma}
\label{tOmegaSize} 
Consider $\epsilon \in (\rad^{-c},c)$. With $\P$-probability tending to one, we have
$|\Omega_{{\epsilon}}^+ \setminus  \Omega_{\epsilon^2}| \leq  C{\epsilon} \rad^d$.	
\end{lemma}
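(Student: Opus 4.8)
\textbf{Proof proposal for Lemma~\ref{tOmegaSize}.}
The plan is to bound $|\Omega_\epsilon^+ \setminus \Omega_{\epsilon^2}|$ by controlling the size of the ``thin shell'' around $\Omega_\epsilon$ where the eigenfunction $f$ has already dropped below the $\epsilon^2\rad^{-d}$ threshold but is still within $\ell_\infty$-distance $2$ of a vertex on which $f\geq\epsilon\rad^{-d}$. The key observation is that every vertex $v\in\Omega_\epsilon^+$ lies in $K_2(x)$ for some $x\in\Omega_\epsilon$, so by the local Harnack-type regularity of the eigenfunction $f$ (which is a positive eigenfunction of $P|_{\fregion}$ with eigenvalue $\lambda_\fregion$ close to $1$ by Lemma~\ref{eigen-first-region}), the value $f(v)$ cannot be too much smaller than $f(x)$ unless $v$ is near an obstacle: indeed, $f(v)\ge \lambda_\fregion^{k}\, c^{k}\, f(x)$ whenever there is an open path of length $k\le 4$ from $x$ to $v$ inside $\fregion$, since $f(v)=\lambda_\fregion^{-1}\sum_{u\sim v}\tfrac{1}{2d}f(u)\ge \tfrac{1}{2d}\lambda_\fregion^{-1} f(u)$ for each open neighbour $u$. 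Thus a vertex $v\in\Omega_\epsilon^+\setminus\Omega_{\epsilon^2}$ either (i) is at $\ell_\infty$-distance $\le 2$ from an obstacle, or (ii) lies in a box $K_{\lfloor\epsilon^2\rad\rfloor}(x)$ (of the covering in~\eqref{def-smallbox} for the parameter $\epsilon^2$) that contains a positive density of obstacles — in either case $v$ lies in a region that is \emph{not} $(\epsilon^2\rad,\epsilon^2)$-empty, i.e. $v\notin\emp(\epsilon^2,\epsilon^2)$, up to a negligible boundary correction.

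First I would make this dichotomy precise. Fix $v\in\Omega_\epsilon^+\setminus\Omega_{\epsilon^2}$ and $x\in\Omega_\epsilon$ with $|x-v|_\infty\le 2$. If the box $K_{\lfloor \epsilon^2\rad\rfloor}(y)$ of~\eqref{def-smallbox} containing $v$ is $(\epsilon^2\rad,\epsilon^2)$-empty, then in particular $B_{10}(x)$ is free of obstacles for $\rad$ large (since the obstacle density in that box is at most $\epsilon^2$ and $x$ is within distance $2+\lfloor\epsilon^2\rad\rfloor$ of the box, so a ball of radius $10$ around $x$ is obstacle-free except on an event whose contribution we absorb into the boundary term); hence there is an open nearest-neighbour path of length $\le 4$ from $x$ to $v$ inside $\fregion$, giving $f(v)\ge (2d)^{-4}\lambda_\fregion^{-4} f(x)\ge (2d)^{-4}\lambda_\fregion^{-4}\,\epsilon\rad^{-d} \ge \epsilon^2\rad^{-d}$ for $\epsilon$ small and $\rad$ large (using $\lambda_\fregion^{-4}\to 1$ by~\eqref{eq:eigen-first-region}), contradicting $v\notin\Omega_{\epsilon^2}$. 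Therefore $v$ lies in a box of~\eqref{def-smallbox} (parameter $\epsilon^2$) that is not $(\epsilon^2\rad,\epsilon^2)$-empty, so $v\notin\emp(\epsilon^2,\epsilon^2)$, modulo the set of $v$ within $\ell_\infty$-distance $\le 12$ of $\partial_i\Omega_\epsilon$, whose size is $O(|\partial_i\Omega_\epsilon|)$.

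It then remains to bound $|\Omega_\epsilon^+\setminus\Omega_{\epsilon^2}|$ by $|\Omega_\epsilon^+\setminus\emp(\epsilon^2,\epsilon^2)|$ plus a surface term. Write $\Omega_\epsilon^+\setminus\emp(\epsilon^2,\epsilon^2)\subseteq (\Omega_\epsilon^+\setminus\Omega_\epsilon)\cup(\Omega_\epsilon\setminus\emp(\epsilon^2,\epsilon^2))$; the second set has size $\le C\epsilon\rad^d$ by~\eqref{eq:omegaSizeeee} of Lemma~\ref{omegaep}, and the first set is the $\ell_\infty$-$2$-neighbourhood of $\Omega_\epsilon$ minus $\Omega_\epsilon$, which has size at most $C|\partial_i\Omega_\epsilon|$. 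So the crux reduces to: \emph{on an event of probability tending to one, $|\partial_i \Omega_\epsilon| \le C\epsilon\rad^d$} (in fact $|\partial_i\Omega_\epsilon| = O(\rad^{d-1})$ would already suffice, since $\rad^{d-1}\ll \epsilon\rad^d$). This is the main obstacle and I expect it to require the following: since $\fregion\subseteq B_{(\log n)^\kappa}(v_*)$ and $\Omega_\epsilon\subseteq\fregion$, a box-covering argument combined with Lemma~\ref{omegaep} (specifically $\Omega_\epsilon$ agrees with $\emp(\epsilon^2,\epsilon^2)$ up to $C\epsilon\rad^d$ vertices, and $\emp(\epsilon^2,\epsilon^2)$ is a union of $(\epsilon^2\rad)$-scale boxes each of volume $\asymp(\epsilon^2\rad)^d$) shows that $\emp(\epsilon^2,\epsilon^2)$ consists of $O(\epsilon^{-2d})$ many $(\epsilon^2\rad)$-boxes, so its internal boundary has size $O(\epsilon^{-2d}\cdot(\epsilon^2\rad)^{d-1}) = O(\epsilon^{-2}\rad^{d-1})$, which is $\le C\epsilon\rad^d$ provided $\epsilon\ge \rad^{-1/3}$; since the hypothesis is $\epsilon\in(\rad^{-c},c)$ we may take $c<1/3$ (or simply invoke the already-established inclusion-type bounds of Lemma~\ref{omegaep} at the scale $\epsilon$ and at the scale $\epsilon^{1/2}$ to transfer the boundary bound from $\emp$ to $\Omega_\epsilon$). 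Assembling these pieces — the eigenfunction regularity dichotomy, the transfer from $\Omega_{\epsilon^2}^c$-membership to $\emp(\epsilon^2,\epsilon^2)^c$-membership, and the surface bound from Lemma~\ref{omegaep} — yields $|\Omega_\epsilon^+\setminus\Omega_{\epsilon^2}|\le C\epsilon\rad^d$ with $\P$-probability tending to one, which is the claim.
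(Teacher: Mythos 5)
Your pointwise-Harnack route diverges from the paper's argument, and as written it has a genuine gap at the step you yourself flag. The paper never propagates lower bounds on $f$ along open paths. Instead it uses the eigenfunction identity $\sum_{v\in\Omega_{\epsilon^2}}(P|_{\fregion})^{t}f(v)=\lambda_{\fregion}^{t}\sum_{v\in\Omega_{\epsilon^2}}f(v)$ for $t\in\{2d,2d+1\}$, compares $(P|_{\fregion})^{t}f$ with the dominating free kernel $P^{t}f$, and invokes \eqref{eq:eigen-first-region} together with \eqref{eq:omegaLocaltime} to obtain $\sum_{v\notin\Omega_{\epsilon^2}}\sum_{u}\bigl(p_{2d}(u,v)+p_{2d+1}(u,v)\bigr)f(u)\leq C\epsilon^{2}$. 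Each $v\in\Omega_\epsilon^+\setminus\Omega_{\epsilon^2}$ has a neighbour $x\in\Omega_\epsilon$ with $|x-v|_\infty\leq 2$, and the \emph{free} walk satisfies $p_{2d}(x,v)+p_{2d+1}(x,v)\geq (2d)^{-2d}$ regardless of obstacles (one of the two times matches the parity of $|x-v|_1\leq 2d$), so each such $v$ contributes at least $(2d)^{-2d}\epsilon\rad^{-d}$ to the left-hand side. That gives $|\Omega_\epsilon^+\setminus\Omega_{\epsilon^2}|\leq C\epsilon\rad^{d}$ in one stroke, with no need for an obstacle-free path from $x$ to $v$, precisely because the comparison is against $P^{t}$ rather than $(P|_{\fregion})^{t}$.

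Your argument, by contrast, needs an open path of length at most $4$ from $x$ to $v$ inside $\fregion$. The claim that an $(\epsilon^2\rad,\epsilon^2)$-empty box forces $B_{10}(x)$ to be obstacle-free is false: such a box may still contain on the order of $\epsilon^{2+2d}\rad^{d}$ obstacles, and nothing prevents them from clustering next to $x$. This particular defect is repairable by counting the vertices within distance $10$ of an obstacle inside $\emp(\epsilon^2,\epsilon^2)$ and noting this set has size at most $C\epsilon^2|\emp(\epsilon^2,\epsilon^2)|\leq C\epsilon^2\rad^{d}$, but you do not carry this out. The unfixable step is the leftover term $|\Omega_\epsilon^+\setminus\Omega_\epsilon|$: you propose to bound $|\partial_i\Omega_\epsilon|$ by ``transferring'' a surface bound from $\emp(\epsilon^2,\epsilon^2)$ via the symmetric-difference estimates of Lemma~\ref{omegaep}, but a small $|A\triangle B|$ gives no control whatsoever on $|\partial_i A|$ in terms of $|\partial_i B|$, and $\Omega_\epsilon$ is merely a super-level set of $f$ with no a priori regularity. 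Worse, the content of the present lemma is exactly that the outer two-step shell of $\Omega_\epsilon$ carries little volume, so invoking a boundary bound for $\Omega_\epsilon$ is essentially circular: you would need some independent global estimate, and the paper's averaged heat-kernel identity is precisely what supplies it.
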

\begin{proof}
Let $t_1 = 2d$ and $t_2 = 2d + 1$. By Lemma \ref{eigen-first-region}, for $i = 1,2$ (recalling that $P$ is the transition kernel for the simple random walk with no killing)
\begin{equation}
\label{eq:sizeomega-1}
	\sum_{v \in \Omega_{{\epsilon^2}}}((P|_{\fregion})^{t_i}f)(v) = \lambda_{\fregion}^{t_i} \sum_{v \in \Omega_{{\epsilon^2}}}f(v) \geq1 - 3dc_* (\log n)^{-2/d}  - \sum_{v \not \in \Omega_{{\epsilon^2}}} f(v)\,.
\end{equation}
At the same time, we have
\begin{align}
\label{eq:sizeomega-2}
	\sum_{v \in \Omega_{{\epsilon^2}}}(P^{t_i}f)(v) &= \sum_{u \in \Z^d} (1 - \Pr^u(S_{t_i} \not \in \Omega_{{\epsilon^2}}))f(u) \nonumber\\
&=1 - \sum_{v \not \in \Omega_{{\epsilon^2}}} \sum_{u \in \Z^d} p_{t_i}(u,v)f(u)\,,
\end{align}
where $p_t(\cdot,\cdot)$ is the $t$-step transition probability for simple random walk on $\Z^d$.
Combining \eqref{eq:sizeomega-1} and \eqref{eq:sizeomega-2} (noting $\sum_{v \in \Omega_{{\epsilon^2}}}((P|_{\fregion})^{t_i}f)(v) \leq \sum_{v \in \Omega_{{\epsilon^2}}}(P^{t_i}f)(v)$), we get that
\begin{equation}
\label{eq:sizeomega-1+2}
	\sum_{v \not \in \Omega_{{\epsilon^2}}} \Big( \sum_{u \in \Z^d} (p_{t_1}(u,v) + p_{t_2}(u,v) )f(u) \Big) \leq 2(3dc_*  (\log n)^{-2/d}  +  \sum_{v \not \in \Omega_{{\epsilon^2}}} f(v)) \leq C {\epsilon^2}\,,
\end{equation}
where the second transition follows from \eqref{eq:omegaLocaltime}.
Now, if $ v \not \in \Omega_{{\epsilon^2}}$ and $|v - x|_\infty \leq 2$ for some $x \in \Omega_{{\epsilon}}$, then
$$ p_{t_1}(x,v) + p_{t_2}(x,v) \geq (2d)^{-2d} \AND f(x) \geq {\epsilon} \rad^{-d}\,.$$
Substituting these bounds in \eqref{eq:sizeomega-1+2} yields the desired result.
\end{proof}
\begin{lemma}
\label{Asymptotic shape - Ball}
Consider $\epsilon \in (\rad^{-c},c)$. With $\P$-probability tending to one, there exists a discrete ball $B_\epsilon$ such that 
\begin{equation}
\label{eq: Bn, Omega, E}
\begin{split}
  &|B_\epsilon \cup \emp(\epsilon^2,\epsilon^2) \cup \Omega_\epsilon| \leq d \log_{1/p} n (1+ C \epsilon^{1/2}) \,, \\
 \qquad  &|B_\epsilon \cap \emp(\epsilon^2,\epsilon^2) \cap \Omega_\epsilon| \geq d \log_{1/p} n (1- C \epsilon^{1/2})\,.
\end{split}
\end{equation}
\end{lemma}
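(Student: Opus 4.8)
\textbf{Proof plan for Lemma~\ref{Asymptotic shape - Ball}.}
The plan is to combine the near-optimality of the eigenvalue of $\Omega_\epsilon$ (Lemma~\ref{eigen-Omega}) with the volume upper bound on $\emp(\epsilon^2,\epsilon^2)$ and on $\Omega_\epsilon\cup\emp(\epsilon^2,\epsilon^2)$ (Lemmas~\ref{empty}, \ref{omegaep}), feed this into the quantitative Faber--Krahn inequality \eqref{eq:FB}, and conclude that the Fraenkel asymmetry $\mathcal A$ of the (continuous avatar of the) intermittent island is small; the optimal ball witnessing this asymmetry will be $B_\epsilon$. First I would pass from the discrete set $\Omega_\epsilon$ (or a slightly enlarged version $\Omega_\epsilon^+$, c.f.\ \eqref{eq:def-Omega+} and Lemma~\ref{tOmegaSize}) to a continuous open set $\Omega_\epsilon^{\mathrm{cont}}\subseteq\R^d$ by taking the union of the unit (half-open) cubes centered at the lattice points of $\Omega_\epsilon$; then $|\Omega_\epsilon^{\mathrm{cont}}| = |\Omega_\epsilon|$, and by the standard comparison between the discrete Dirichlet eigenvalue and the continuous one (via \cite[(38)]{Kuttler70}, or equivalently the Weinberger/P\'olya interpolation estimates cited in the overview of Section~\ref{section:Asymptotic Ball}) one gets $\mu_{\Omega_\epsilon^{\mathrm{cont}}} \le (1-\lambda_{\Omega_\epsilon})(1+o(1))$, with an error that is negligible on the scale $(\log n)^{-2/d}$ since $\Omega_\epsilon$ has diameter at most $(\log n)^\kappa$ and volume of order $\log n$. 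The surface/boundary bound from Lemma~\ref{tOmegaSize} is what guarantees that this cube-filling does not inflate the volume by more than $C\epsilon\varrho_n^d$, so the continuous volume is still $d\log_{1/p}n(1+C\epsilon)$.

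Next I would combine the eigenvalue and volume bounds. By Lemma~\ref{eigen-Omega}, $1-\lambda_{\Omega_\epsilon}\le c_*(\log n)^{-2/d}(1+C\epsilon)$, and by \eqref{eq:omegaSizeeee} we have $|\Omega_\epsilon^{\mathrm{cont}}|\le d\log_{1/p}n(1+C\epsilon) = \omega_d\varrho_n^d(1+C\epsilon)$ (recalling \eqref{eq:def-r}). Writing $B$ for the Euclidean ball of the same volume as $\Omega_\epsilon^{\mathrm{cont}}$, scaling gives $|B|^{2/d}\mu_B = \mu_{B_1}\cdot |B|^{2/d}/\omega_d^{2/d}$ where $B_1$ is the unit ball; and recalling $c_* = \mu_B\big(\tfrac{\omega_d\log(1/p)}{d}\big)^{2/d}$ one checks that $|\Omega_\epsilon^{\mathrm{cont}}|^{2/d}\mu_{\Omega_\epsilon^{\mathrm{cont}}}$ exceeds $|B|^{2/d}\mu_B$ by at most $C\epsilon$ (all the $(\log n)^{-2/d}$ factors cancel against the $\varrho_n^{2}$ factor coming from the volume raised to the $2/d$). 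Plugging into \eqref{eq:FB} yields $\sigma_d\,\mathcal A(\Omega_\epsilon^{\mathrm{cont}})^2\le C\epsilon$, hence $\mathcal A(\Omega_\epsilon^{\mathrm{cont}})\le C\epsilon^{1/2}$. By definition of $\mathcal A$ there is a Euclidean ball $\widetilde B$ with $|\widetilde B| = |\Omega_\epsilon^{\mathrm{cont}}|$ and $|\Omega_\epsilon^{\mathrm{cont}}\bigtriangleup\widetilde B|\le C\epsilon^{1/2}|\widetilde B|\le C\epsilon^{1/2}\varrho_n^d$. Let $B_\epsilon$ be the discrete ball consisting of the lattice points of (a suitably recentered/rescaled copy of) $\widetilde B$ with $|B_\epsilon|$ within $O(\varrho_n^{d-1}) = o(\epsilon^{1/2}\varrho_n^d)$ of $d\log_{1/p}n$; passing from the continuous symmetric difference to the discrete one costs only the boundary term $O(\varrho_n^{d-1})$, which is absorbed.

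Finally I would assemble \eqref{eq: Bn, Omega, E}. We have, up to $o(\epsilon^{1/2}\varrho_n^d)$ errors, $|B_\epsilon\bigtriangleup\Omega_\epsilon|\le C\epsilon^{1/2}\varrho_n^d$; combined with $|\Omega_\epsilon\bigtriangleup\emp(\epsilon^2,\epsilon^2)|\le C\epsilon\varrho_n^d$ from \eqref{eq:omegaSizeeee} (the set $\Omega_\epsilon\setminus\emp(\epsilon^2,\epsilon^2)$ is small by \eqref{eq:omegaSizeeee}, and $\emp(\epsilon^2,\epsilon^2)\setminus\Omega_\epsilon$ is small because any $(\epsilon^2\varrho_n,\epsilon^2)$-empty box sees a near-extremal local eigenvalue, forcing $f$ to be large there --- this is the content of Lemma~\ref{emptyeig} together with Lemma~\ref{omegaep}), the triangle inequality for symmetric differences gives that the three sets $B_\epsilon$, $\emp(\epsilon^2,\epsilon^2)$, $\Omega_\epsilon$ are pairwise within $C\epsilon^{1/2}\varrho_n^d$ of one another. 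Since each has cardinality $d\log_{1/p}n(1+O(\epsilon^{1/2}))$ (for $\Omega_\epsilon$ and $\emp$ by Lemma~\ref{empty}/\ref{omegaep}, for $B_\epsilon$ by construction), both the union bound and the intersection bound in \eqref{eq: Bn, Omega, E} follow: $|A_1\cup A_2\cup A_3|\le |A_1| + |A_1\bigtriangleup A_2| + |A_1\bigtriangleup A_3|$ and $|A_1\cap A_2\cap A_3|\ge |A_1| - |A_1\bigtriangleup A_2| - |A_1\bigtriangleup A_3|$. The main obstacle I anticipate is the discrete-to-continuous eigenvalue comparison: one must verify that the error in $|\Omega|^{2/d}\mu_\Omega$ introduced by replacing the discrete eigenvalue by the continuous one, and by the cube-filling, is genuinely $o(\epsilon)$ and not merely $o(1)$, which requires being careful that the boundary of $\Omega_\epsilon$ has size $O(\epsilon\varrho_n^d)$ (Lemma~\ref{tOmegaSize}) rather than the a priori $O(\varrho_n^{d-1}\cdot\text{poly}\log)$, and that the eigenvalue interpolation error scales like $1/\mathrm{diam}^2 = (\log n)^{-2\kappa}$, comfortably below $\epsilon(\log n)^{-2/d}$ for $\epsilon\ge\varrho_n^{-c}$.
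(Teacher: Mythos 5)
Your plan follows essentially the same route as the paper's actual proof: fatten the discrete set to a continuous open set, compare the discrete and continuous principal eigenvalues via Kuttler/Weinberger, combine the resulting upper bound on $\mu$ with the volume upper bound from Lemma~\ref{empty}/\ref{omegaep}/\ref{tOmegaSize}, apply the quantitative Faber--Krahn inequality \eqref{eq:FB} to get a small Fraenkel asymmetry, and discretize the resulting ball. Your bookkeeping at the end (triangle inequality for symmetric differences) is a slight stylistic variation of the paper's direct volume count, but it is equivalent.

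There is, however, a genuine gap in one of the intermediate justifications. You claim $\emp(\epsilon^2,\epsilon^2)\setminus\Omega_\epsilon$ is small ``because any $(\epsilon^2\varrho_n,\epsilon^2)$-empty box sees a near-extremal local eigenvalue, forcing $f$ to be large there,'' and cite Lemmas~\ref{emptyeig} and~\ref{omegaep}. Neither lemma says this. Lemma~\ref{emptyeig} controls the $f$-mass \emph{outside} $\emp$, not the $f$-values inside $\emp$, and nothing in the paper asserts $f$ is pointwise large on $\emp$; indeed it need not be, since $\emp$ is defined by obstacle density rather than by $f$. Relatedly, you assert at the end that $|\Omega_\epsilon| = d\log_{1/p}n(1+O(\epsilon^{1/2}))$ ``by Lemma~\ref{empty}/\ref{omegaep},'' but those lemmas only give \emph{upper} bounds; the needed lower bound $|\Omega_\epsilon| \ge d\log_{1/p}n(1-C\epsilon^{1/2})$ comes from a second use of Faber--Krahn, namely $|\mathbf{\Omega}_\epsilon^+|^{2/d}\ge \mu_{B}|B|^{2/d}/\mu_{\mathbf{\Omega}_\epsilon^+}$ combined with the eigenvalue upper bound from Lemma~\ref{eigen-Omega}, then passed back down to $|\Omega_\epsilon|$ via Lemma~\ref{tOmegaSize} and a replacement $\epsilon\mapsto\sqrt\epsilon$. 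Once that lower bound is in hand, $|\emp\setminus\Omega_\epsilon|$ is controlled by the volume identity $|\emp\bigtriangleup\Omega_\epsilon| = |\emp| + 2|\Omega_\epsilon\setminus\emp| - |\Omega_\epsilon|$, yielding the bound $C\epsilon^{1/2}\varrho_n^d$ (note: $\epsilon^{1/2}$, not $\epsilon$ as you wrote --- the $\epsilon^{1/2}$ loss is exactly from the Faber--Krahn lower bound). So the overall architecture of your proof is correct, but the volume lower bound on $|\Omega_\epsilon|$ must be extracted explicitly from Faber--Krahn rather than attributed to Lemmas~\ref{empty}/\ref{emptyeig}/\ref{omegaep}.
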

\begin{proof}

The proof of the lemma crucially relies on the Faber--Krahn inequality, the application of which requires to approximate a discrete set in $\mathbb Z^d$ by a continuous set in $\mathbb R^d$. For notation clarity, in the proof we will use boldface to denote a subset in $\mathbb R^d$ (which typically has non-zero Lebesgue measure). Following the notation convention, we define
\begin{equation}
	\mathbf{\Omega}_\epsilon^+ = \{y \in \R^d: \min_{x\in \Omega_\epsilon}  \ |y-x|_{\infty} \leq 2 \}\,.
\end{equation}
Recalling \eqref{eq:def-Omega+}, we see that $\Omega_\epsilon^+ = \mathbf{\Omega}_\epsilon^+ \cap \mathbb Z^d$.
We will consider $\mu_{\mathbf{\Omega}_\epsilon^+}$ where $\mu_\cdot$ is defined as in \eqref{eq-def-first-eigenvalue}.
By \cite[(38)]{Kuttler70} (see also \cite[\S4]{Weinberger56},\cite[\S6]{Weinberger58} and \cite{Polya52}), if $1-\lambda_{\Omega_\epsilon}
$ is less than a sufficiently small constant depending on $d$, then
\begin{equation*}
	\mu_{\mathbf{\Omega}_\epsilon^+}\leq (1 - \lambda_{\Omega_\epsilon }) + C(1 - \lambda_{\Omega_\epsilon})^2\,.
\end{equation*}
Combined with Lemma \ref{eigen-Omega}, it yields that with $\P$-probability tending to 1
\begin{equation}
\label{eq:con-lower}
	\mu_{\mathbf{\Omega}_\epsilon^+}\leq c_*  (\log n)^{-2/d}(1 + C \epsilon)\,.
\end{equation}
At the same time, by \eqref{eq:FB}, we have that
\begin{equation}
\label{eq:FB'}
	\mu_{\mathbf{\Omega}_\epsilon^+}|\mathbf{\Omega}_\epsilon^+|^{2/d} - \mu_{\mathbf{B}} |\mathbf{B}|^{2/d} \geq  c_d \mathcal A(\mathbf{\Omega}_\epsilon^+)^2\,,
\end{equation}
where $ \mathcal A(\mathbf{\Omega}_\epsilon^+)$ is defined as in \eqref{eq-def-mathcal-A}, and $\mathbf B\subseteq \mathbb R^d$ is an arbitrary continuous ball.
Note that $|\Omega_\epsilon^+| = |\{y \in \R^d: \min_{x\in \Omega_\epsilon^+}  \ |y-x|_{\infty} \leq 1/2 \}| = |\{y \in \R^d: \min_{x\in \Omega_\epsilon}  \ |y-x|_{\infty} \leq 2 + 1/2 \}| \geq |\mathbf{\Omega}_\epsilon^+ |$.
By Lemma \ref{tOmegaSize} and \eqref{eq:omegaSizeeee}, we have with $\P$-probability tending to 1
\begin{equation*}
	|\mathbf{\Omega}_\epsilon^+ | \leq |\Omega_\epsilon^+ | \leq |\Omega_\epsilon^+ \setminus\Omega_{\epsilon^2} |+| \Omega_{\epsilon^2} | \leq d \log_{1/p} n (1 + C \epsilon).
\end{equation*}
Combined with \eqref{eq:con-lower}, \eqref{eq:FB'}, it gives that
$\mathcal A(\mathbf{\Omega}_\epsilon^+)^2 \leq C \epsilon$. We denote by $\mathbf{B}_\epsilon$ the ball in $\R^d$ that achieves the minimum in $\mathcal A(\mathbf{\Omega}_\epsilon^+)$. Note that $$|\{y \in \R^d: |y-x|_\infty<1/2\} \setminus \mathbf{B}_\epsilon| \geq c \quad \mbox{ for all } x \not \in \mathbf{B}_\epsilon\,.$$ Since $\{y \in \R^d: |y-x|_\infty<1/2\} \setminus \mathbf{B}_\epsilon$ for $x \in \Omega_\epsilon \setminus \mathbf{B}_\epsilon$ are disjoint subsets of $\mathbf{\Omega}_\epsilon^+ \setminus \mathbf{B}_\epsilon$, a volume calculation yields $|\Omega_\epsilon \setminus \mathbf{B}_\epsilon| \leq C\epsilon^{1/2}|\mathbf{B}_\epsilon|$. Now let $B_\epsilon = \mathbf B_\epsilon \cap \mathbb Z^d$ (so $B_\epsilon$ is a discrete ball). We have\begin{equation}
\label{eq:Om-Bn}
   |B_\epsilon| \leq d\log_{1/p} n (1 + C \epsilon), \quad |\Omega_\epsilon \setminus B_\epsilon| \leq C\epsilon^{1/2}|B_\epsilon|\,.
 \end{equation} 

Also, combining \eqref{eq:con-lower} and \eqref{eq:FB'} yields $ |\mathbf{\Omega}_\epsilon^+| \geq (\mu_{\mathbf{\Omega}_\epsilon^+}^{-1}\mu_{B})^{d/2} |B| \geq d \log_{1/p} n(1  -  C \epsilon)$. Combined with $|\mathbf{\Omega}_\epsilon^+| \leq |\Omega_\epsilon^+| $ and Lemma \ref{tOmegaSize}, it yields that with $\P$-probability tending to 1
$$|\Omega_{\epsilon^2}| \geq |\Omega_\epsilon^+| - |\Omega_\epsilon^+ \setminus \Omega_{\epsilon^2}| \geq d \log_{1/p} n(1  -  C \epsilon)\,.$$ 
We replace $\epsilon$ by $\sqrt{\epsilon}$ in the preceding display and get that with $\P$-probability tending to 1
\begin{equation}
\label{eq:Omega size lb}
|\Omega_\epsilon| \geq  d \log_{1/p} n(1  -  C \epsilon^{1/2})\,.
\end{equation}  
Combined with \eqref{eq:Om-Bn}, \eqref{eq:omegaSizeeee}, it yields that
\begin{align*}
|B_\epsilon \setminus \Omega_\epsilon| &\leq |B_\epsilon| + | \Omega_\epsilon \setminus B_\epsilon| - |\Omega_\epsilon| \,  \leq C \,\epsilon^{1/2} |B_\epsilon|\,,\\
|\emp(\epm^2,\epm^2) \bigtriangleup \Omega_\epsilon| &\leq |\emp(\epm^2,\epm^2)| + 2|\Omega_\epsilon \setminus \emp(\epm^2,\epm^2)| - |\Omega_\epsilon| \,  \leq C\, \epsilon^{1/2}\rad^d\,.
\end{align*}
Combined with \eqref{eq:Om-Bn} and \eqref{eq:Omega size lb}, this completes the proof of the lemma.
\end{proof}
\section{Localization on the intermittent island}
\label{section:Localization on intermittent island}
This section is devoted to the proof for localization on the intermittent island.
Denote 
\begin{equation}\label{eq:def-hbn}
	\tOmega := B_\epsilon \cap \Omega_\epsilon,\quad \hbn := \cup_{x \in \tOmega}B_{\epsilon^{1/10d}\rad}(x)
\end{equation}
where $B_\epsilon$ is the ball chosen in Lemma~\ref{Asymptotic shape - Ball}. 
We will prove that the random walk will be in $\hbn$ (a neighborhood of $\tOmega$)
with high probability at any given time $t$ after hitting $\tOmega$ and thus complete the proof of Theorem \ref{ballthm}. To this end, in Section \ref{sec:survival} we prove a couple of estimates on survival probabilities, building on which in Section~\ref{sec:localization-last} we provide proof for localization.

\subsection{Survival probability estimates}\label{sec:survival}
The main results of this subsection are the following two survival probability estimates: Lemma~\ref{rstepsnt} says the survival probability of random walk staying outside $\tOmega$ is very low, and Lemma~\ref{Survival probability lower bound by eigenvalue} gives a lower bound on the survival probability of the random walk staying in $\fregion$ depending on its starting point (via the principal eigenfunction).
\begin{lemma}
\label{rstepsnt}
Consider $\epsilon \in (\rad^{-c},c)$. With $\P$-probability tending to one, for all $x\in\Z^d$ and $t \geq C \epsilon^{-1/d}\rad^2$,
\begin{equation}
    \Pr^x(\xi_{\fregion \setminus \tOmega} > t) \leq 2\exp({-c \epsilon^{-1/d} \rad^{-2}t})\,.
  \end{equation}
\end{lemma}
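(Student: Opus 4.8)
\textbf{Proof proposal for Lemma~\ref{rstepsnt}.}

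The plan is to show that whenever the random walk spends an amount of time of order $\epsilon^{-1/d}\rad^2$ inside $\fregion$ while avoiding $\tOmega$, it accumulates a definite probability of being killed. The key structural input is Lemma~\ref{Asymptotic shape - Ball}: the discrete ball $B_\epsilon$, the eigenfunction super-level set $\Omega_\epsilon$, and the union of empty boxes $\emp(\epsilon^2,\epsilon^2)$ all pairwise agree up to $O(\epsilon^{1/2}\rad^d)$, so in particular $\fregion\setminus\tOmega = \fregion\setminus(B_\epsilon\cap\Omega_\epsilon)$ is, up to a set of size $O(\epsilon^{1/2}\rad^d)$, contained in $\fregion\setminus\emp(\epsilon^2,\epsilon^2)$. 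That means: apart from a negligible set, every vertex $v$ in the region where the walk is confined lies in a box $K_{\lfloor \epsilon^2\rad\rfloor}(x)$ of the partition \eqref{def-smallbox} that is \emph{not} $(\epsilon^2\rad,\epsilon^2)$-empty, hence contains at least $\epsilon^2|K_{\lfloor\epsilon^2\rad\rfloor}(x)|\geq c\epsilon^2(\epsilon^2\rad)^d$ obstacles within $\ell_\infty$-distance $\epsilon^2\rad$ of $v$. (I would also need to treat the thin exceptional set of size $O(\epsilon^{1/2}\rad^d)$; since it is much smaller than a full box, one can for instance enlarge $\tOmega$ by it or absorb it into the constants — this is a routine bookkeeping point.)

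First I would record the one-step consequence, in the spirit of \eqref{eq:EcCost}: for $v\in\fregion\setminus\tOmega$ (outside the exceptional set), starting the walk at $v$ and running it for $\lfloor\epsilon^2\rad\rfloor^2$ steps, it has probability at least $c\rho = c\epsilon^2$ of landing on an obstacle at time $\lfloor\epsilon^2\rad\rfloor^2$ or $\lfloor\epsilon^2\rad\rfloor^2-1$ (the two parities ensure reachability of the obstacle), hence
\[
\Pr^v\big(\xi_{\fregion\setminus\tOmega} \le \lfloor\epsilon^2\rad\rfloor^2\big)\ \ge\ c\,\epsilon^2\,.
\]
Then I would iterate this via the strong Markov property in blocks of length $\lfloor\epsilon^2\rad\rfloor^2$: conditioned on survival (inside $\fregion\setminus\tOmega$) up to the start of a block, the walk fails to die during that block with probability at most $1-c\epsilon^2$. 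After $k$ blocks,
\[
\Pr^x\big(\xi_{\fregion\setminus\tOmega} > k\lfloor\epsilon^2\rad\rfloor^2\big)\ \le\ (1-c\epsilon^2)^k\ \le\ e^{-c\epsilon^2 k}\,.
\]
Taking $k = \lfloor t/\lfloor\epsilon^2\rad\rfloor^2\rfloor$ and using $\lfloor\epsilon^2\rad\rfloor^2 \le \epsilon^4\rad^2$ gives a bound of the form $2\exp(-c\epsilon^2 t/(\epsilon^4\rad^2)) = 2\exp(-c\epsilon^{-2}\rad^{-2}t)$, which is at least as strong as the claimed $2\exp(-c\epsilon^{-1/d}\rad^{-2}t)$ once $t\ge C\epsilon^{-1/d}\rad^2$ (the prefactor $2$ absorbs the first incomplete block, and here $\epsilon^{-2}\ge\epsilon^{-1/d}$ since $d\ge2$ and $\epsilon<1$). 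The condition $t\ge C\epsilon^{-1/d}\rad^2$ just guarantees $k\ge1$ so that the exponential bound is meaningful.

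The main obstacle I anticipate is not the iteration but making the \emph{uniform-in-$v$} lower bound $\Pr^v(\xi_{\fregion\setminus\tOmega}\le\lfloor\epsilon^2\rad\rfloor^2)\ge c\epsilon^2$ genuinely hold for \emph{all} $v$ in the confinement region: the exceptional set coming from the symmetric differences in Lemma~\ref{Asymptotic shape - Ball} must be handled so that it does not leak probability, and one must be careful that a box which is not $(\epsilon^2\rad,\epsilon^2)$-empty still has its obstacles positioned so that the walk can reach one in $\lfloor\epsilon^2\rad\rfloor^2$ steps with probability $\gtrsim \epsilon^2$ — this uses that the obstacles are within $\ell_\infty$ (hence $\ell_1\le d\epsilon^2\rad$, hence $\ell_1^2\lesssim\epsilon^4\rad^2\le\lfloor\epsilon^2\rad\rfloor^2$ up to constants) distance, together with a standard local central limit / Gaussian lower bound for the simple random walk transition probabilities; this is exactly the estimate implicit in \eqref{eq:EcCost}. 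A secondary point is that $\fregion\setminus\tOmega$ changes as the walk moves, but since $\tOmega$ is a fixed ($\P$-measurable) set the strong Markov argument applies verbatim with the absorbing set $\tOmega\cup\ob$ fixed throughout.
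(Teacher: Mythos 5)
The core of your iteration is sound (it is essentially the argument behind the paper's Lemma~\ref{Travel in Bad Region - 2}), but you have dismissed the real difficulty as ``routine bookkeeping,'' and it is not. The exceptional set $\emp(\epsilon^2,\epsilon^2)\setminus\tOmega$ has size $O(\epsilon^{1/2}\rad^d)$, which is \emph{not} ``much smaller than a full box'': a single box $K_{\lfloor\epsilon^2\rad\rfloor}$ has cardinality of order $\epsilon^{2d}\rad^d$, so for small $\epsilon$ the exceptional set is vastly larger than any one box. It is a region of \emph{low} obstacle density in which the one-step killing estimate $\Pr^v(\tau\le(\epsilon^2\rad)^2)\ge c\epsilon^2$ simply fails, and your block-iteration argument gives nothing if the walk parks itself there. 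You also cannot just ``enlarge $\tOmega$ by it'': the definition \eqref{eq:def-hbn} and Lemma~\ref{Survival probability lower bound by eigenvalue} rely on $\tOmega$ being a subset of $\Omega_\epsilon$, and more to the point adding a low-density region to $\tOmega$ is exactly what the lemma forbids you to do.

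The paper supplies the missing ingredient as a separate lemma (Lemma~\ref{Travel in Bad Region - 1}): because $\emp(\epsilon^2,\epsilon^2)\setminus\tOmega$ has cardinality $\le C\epsilon^{1/2}\rad^d$, a random walk in $\Z^d$ spends expected time only $O(|\cdot|^{2/d})=O(\epsilon^{1/d}\rad^2)$ in any window before the occupation time of that set saturates, and a block/Chernoff argument upgrades this to an exponential bound. Combined with your iteration on the high-density complement $\fregion\setminus\emp(\epsilon^2,\epsilon^2)$ (Lemma~\ref{Travel in Bad Region - 2}), and noting that in any stretch of length $m$ the walk must spend at least $m/3$ steps in one of the two regions, one gets the stated rate. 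This is precisely why the lemma's exponent is $\epsilon^{-1/d}\rad^{-2}$ rather than the $\epsilon^{-2}\rad^{-2}$ you derive: the bottleneck is the time spent in the small empty region, not the killing rate in the dense region. The fact that you arrived at a strictly stronger rate than the one claimed should have been a warning that a non-negligible scenario was being skipped; as written, your proof has a genuine gap that requires the occupation-time estimate of Lemma~\ref{Travel in Bad Region - 1} to close.
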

\begin{remark}
\label{rmk:b near v-*}
 It follows from Lemma \ref{rstepsnt} that $B_\epsilon \subseteq B_{(\log n)^{\kappa/2}}(v_*)$. Because \eqref{eq:lambda-*-property} implies $\Pr^{v_*}(\xi_{\calC_R(v_*)}>t) \geq c(n) \lambda_*^t \gg 2\exp({-c \epsilon^{-1/d} \rad^{-2}t}) \geq \Pr^{v_*}(\xi_{\fregion \setminus \tOmega} > t) $ for sufficiently large $t$. Hence $\tOmega \cap \calC_R(v_*) \not = \varnothing$.
\end{remark}
\begin{lemma}
\label{Survival probability lower bound by eigenvalue}
With $\P$-probability tending to one, for any $v \in \fregion$, 
	\begin{equation}
		\Pr^v( \xi_{\fregion} > t) \geq c \rad^df(v) \lambda_{\fregion}^t\,.
	\end{equation}
\end{lemma}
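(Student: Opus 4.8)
\textbf{Proposal for the proof of Lemma~\ref{Survival probability lower bound by eigenvalue}.}

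The plan is to exploit the spectral expansion of $P|_{\fregion}$ in the eigenbasis. Write $P|_{\fregion}$ in its spectral decomposition; since $P|_{\fregion}$ is a substochastic symmetric (after the usual conjugation, or directly symmetric on $\Z^d$) matrix, it has a real eigenbasis $\{f_j\}$ with eigenvalues $\lambda_{\fregion} = \lambda_1 > \lambda_2 \geq \cdots$, where $f_1 = f / \|f\|_2$ is the normalized principal eigenfunction (recall $f$ is normalized by $\sum_{v\in\fregion} f(v)=1$, so $\|f\|_2 = |f|_2$). Then for any $v \in \fregion$,
\begin{equation*}
\Pr^v(\xi_{\fregion} > t) = \sum_{w \in \fregion} ((P|_{\fregion})^t \delta_v)(w) = \sum_j \lambda_j^t \langle \delta_v, f_j/\|f_j\|_2\rangle \sum_{w\in\fregion} \langle \delta_w, f_j/\|f_j\|_2\rangle\,.
\end{equation*}
The first step is to isolate the top term: this equals $\lambda_{\fregion}^t \frac{f(v)}{|f|_2^2}\sum_{w\in\fregion} f(w) + (\text{lower order})$, and since $\sum_w f(w) = 1$, the top term is $\lambda_{\fregion}^t f(v)/|f|_2^2$. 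The bound $|f|_2^2 \leq C\rad^{-d}$ — which is exactly what I would need to conclude — follows from $|\Omega_\epsilon| \leq d\log_{1/p}n(1+C\epsilon) \leq C\rad^d$ together with $\sum_{v\in\Omega_\epsilon} f(v) \leq 1$ and, more importantly, $\|f\|_\infty$ being controlled: since $f = (P|_\fregion f)/\lambda_\fregion$ and $\lambda_\fregion \geq c$, we get $f(v) \leq \lambda_\fregion^{-1}\max_w f(w)$, which combined with $\max_w f(w) \leq \lambda_\fregion^{-t}\Pr^{\text{argmax}}(\xi_\fregion > t)$ for any $t$... Actually the cleanest route: $|f|_2^2 = \sum_v f(v)^2 \leq \|f\|_\infty \sum_v f(v) = \|f\|_\infty$, and $\|f\|_\infty \leq C\rad^{-d}$ should hold because the principal eigenfunction of a near-optimal-eigenvalue region is spread out over $\asymp \rad^d$ sites (this is essentially the content of Lemmas~\ref{omegaep} and \ref{eigen-Omega}, or can be derived from an elliptic/Harnack-type estimate on $\fregion$). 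Given $|f|_2^2 \leq C\rad^{-d}$, the top term alone is already $\geq c\rad^d f(v)\lambda_\fregion^t$.

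The remaining and genuinely delicate step is to control the contribution of the non-principal eigenvalues, i.e., to show $\sum_{j\geq 2}\lambda_j^t \langle\delta_v,f_j\rangle\langle\mathbf 1_\fregion, f_j\rangle / \|f_j\|_2^2$ does not overwhelm the (positive) leading term. One clean way to avoid a spectral-gap estimate entirely is the standard trick: by the Markov property at time $\lfloor t/2\rfloor$ and Cauchy--Schwarz, or better, use that $\Pr^v(\xi_\fregion > t) \geq \Pr^v(\xi_\fregion > t, S_t = v)\cdot(\text{return bound})$ — but the slickest is to run the walk: force it, in $O(\rad^2)$ steps, from $v$ to a site $v_0$ where $f$ is of order $\rad^{-d}$ (possible by connectivity of $\fregion$ via \cite[Theorem 1.1]{AP96} type chemical-distance bounds, paying $(2d)^{-O(\rad^2)}$), then from $v_0$ onward use $\Pr^{v_0}(\xi_\fregion > t-O(\rad^2)) \geq c$ (from the known survival lower bounds, e.g.\ following from \eqref{eq:eigen-first-region-p} of Lemma~\ref{eigen-first-region} with a matching upper bound, or from a reversibility argument $\Pr^{v_0}(\xi_\fregion>s)\geq f(v_0)^2 |f|_2^{-2}\lambda_\fregion^s \cdot(\ldots)$). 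This reduces everything to the case $v=v_0$ with $f(v_0)\asymp \rad^{-d}$, where we just need $\Pr^{v_0}(\xi_\fregion > t) \gtrsim \lambda_\fregion^t$, a statement of the type already recorded as $\max_x \Pr^x(\xi_A > t)\geq \lambda_A^t$ in \cite[Lemma 3.2]{DX17} but now localized at a specific good site.

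I expect the main obstacle to be precisely this last point: upgrading ``some site has survival probability $\geq \lambda_\fregion^t$'' to ``the site $v_0$ where $f$ is of order $\rad^{-d}$ has survival probability $\geq c\,\lambda_\fregion^t$,'' uniformly in $t$. The natural tool is the reversibility identity $\sum_v \Pr^v(\xi_\fregion>t) f(v) = \lambda_\fregion^t$ combined with the a priori $\ell^\infty$ bound $f \leq C\rad^{-d}$ on $\Omega_\epsilon$ and the smallness of $f$ outside $\Omega_\epsilon$ (Lemma~\ref{omegaep}, $\sum_{v\notin\Omega_\epsilon}f(v)\leq C\epsilon$); these give $\sum_{v\in\Omega_\epsilon}\Pr^v(\xi_\fregion>t)f(v) \geq \lambda_\fregion^t(1-C\epsilon)$, hence $\max_{v\in\Omega_\epsilon}\Pr^v(\xi_\fregion>t) \geq c\rad^d\lambda_\fregion^t$, and then a Harnack-type comparison (again via short deterministic detours through $\fregion$, using that $\Omega_\epsilon$ is connected-ish and has diameter $O(\rad)$) transfers this lower bound from the maximizer to any $v$ with $f(v)\gtrsim\rad^{-d}$, and then the general $v$ is handled by the one-step-at-a-time relation $\Pr^v(\xi_\fregion > t) \geq (2d)^{-1}\Pr^{v'}(\xi_\fregion > t-1)$ for a neighbor $v'$ with $f(v')\geq f(v)$ (such $v'$ exists since $\lambda_\fregion f(v) = \frac{1}{2d}\sum_{v'\sim v}f(v') < \max_{v'\sim v}f(v')$), iterated $O(\rad^2)$ times to reach $\Omega_\epsilon$ — each iteration costing a constant factor and, crucially, $f$ non-decreasing along the chosen path, so the final bound is $\Pr^v(\xi_\fregion>t)\geq (2d)^{-O(\rad^2)}\cdot c\rad^d\lambda_\fregion^{t-O(\rad^2)}\cdot\frac{f(v)}{f(v_0)}$; absorbing the $(2d)^{-O(\rad^2)}\lambda_\fregion^{-O(\rad^2)}$ into constants is fine since $\rad^2 = o(\log n)$ while these are only polynomial-in-$\rad$ losses... wait, $(2d)^{-\rad^2}$ is not absorbable — so in fact one must use the reversibility identity directly at the level of $v$ (not route through a maximizer), i.e.\ I would instead argue $\Pr^v(\xi_\fregion>t)\geq \lambda_\fregion f(v)^{-1}$-weighted... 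The correct and clean final argument: write $\Pr^v(\xi_\fregion > t) = \sum_w (P|_\fregion)^t(v,w)$; by reversibility $(P|_\fregion)^t(v,w) = (P|_\fregion)^t(w,v)$, and $(P|_\fregion)^{2t}(v,v) \geq (\text{spectral}) \geq f(v)^2/|f|_2^2 \cdot \lambda_\fregion^{2t} \geq c\rad^d f(v)^2\lambda_\fregion^{2t}$ using $f_1=f/|f|_2$ and dropping all other (sign-indefinite but summing to $(P|_\fregion)^{2t}(v,v)\geq$ just the $j=1$ term since all terms $\lambda_j^{2t}\langle\delta_v,f_j\rangle^2 \geq 0$ as $2t$ is even!); then $\Pr^v(\xi_\fregion > t) \geq \Pr^v(\xi_\fregion>2t) \geq$... no. The honest resolution — and this is the real heart of the matter — is: $(P|_\fregion)^{2t}(v,v) = \sum_j \lambda_j^{2t}f_j(v)^2 \geq \lambda_\fregion^{2t}f(v)^2/|f|_2^2$, so $\sum_w (P|_\fregion)^t(v,w)^2 \geq c\rad^d f(v)^2\lambda_\fregion^{2t}$; combined with $\sum_w (P|_\fregion)^t(v,w) \leq 1$ and $|\fregion|\leq(2\log n)^{\kappa d}$ this gives only a weak bound. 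Thus the genuine difficulty is that $\ell^2$ control is easy but $\ell^1$ (= survival probability) control requires either the detour argument (which I believe works with polynomial-in-$\rad$, not exponential, loss if one is careful to only move within $\Omega_\epsilon$ where $f\asymp\rad^{-d}$ so the ratio $f(v)/f(v_0)$ is $\Theta(1)$) or a non-negativity trick at even times. I would pursue the detour argument restricted to $\Omega_\epsilon$, and treat general $v\in\fregion\setminus\Omega_\epsilon$ separately using that there $f(v)$ is so small (Lemma~\ref{omegaep}) that the trivial bound $\Pr^v(\xi_\fregion>t)\geq (P|_\fregion)^{2t}(v,v)^{1/2}\cdot(\ldots)$ or even just routing to $\Omega_\epsilon$ in $O(\rad^2)$ steps suffices because the target bound $c\rad^d f(v)\lambda_\fregion^t$ is correspondingly tiny.
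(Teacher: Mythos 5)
Your proposal never arrives at a complete argument; all three routes you try have the gaps you partially diagnose yourself. The spectral expansion of $(P|_\fregion)^t\delta_v$ has sign-indefinite non-principal terms, so dropping them gives no one-sided bound. The even-time trick gives $(P|_\fregion)^{2t}(v,v) \geq f(v)^2/|f|_2^2\cdot\lambda_\fregion^{2t}$, which controls only an $\ell^2$ quantity; passing from $\sum_w (P|_\fregion)^t(v,w)^2$ to the $\ell^1$ quantity $\Pr^v(\xi_\fregion>t)$ would cost a factor of order $|\fregion|^{1/2}\sim(\log n)^{\kappa d/2}$, which is far too much. And the detour argument costs $(2d)^{-D}$ with $D$ a chemical distance inside $\fregion$; even restricted to $\Omega_\epsilon$ where $f\asymp\rad^{-d}$, the diameter is $\Theta(\rad)$ so the loss is at least $e^{-c\rad}$, not polynomial in $\rad$ as you assert, and this wipes out the target factor $\rad^d f(v)=O(1)$. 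Your own mid-paragraph retraction (``$(2d)^{-\rad^2}$ is not absorbable'') is correct, and the later restriction to $\Omega_\epsilon$ does not repair it.

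The missing idea is a very short step you circle around but never write: apply $(P|_\fregion)^t$ to $f$ rather than to $\delta_v$. Since $f$ is the principal eigenfunction,
\begin{equation*}
\lambda_\fregion^t f(v) \;=\; \big((P|_\fregion)^t f\big)(v) \;=\; \sum_{u\in\fregion} (P|_\fregion)^t(v,u)\, f(u) \;\leq\; \Big(\max_{u\in\fregion} f(u)\Big) \sum_{u\in\fregion}(P|_\fregion)^t(v,u) \;=\; \Big(\max_{u} f(u)\Big)\, \Pr^v(\xi_\fregion>t)\,.
\end{equation*}
(The paper writes this as $\sum_u f(u)\Pr^u(S_t=v,\xi_\fregion>t)=\lambda_\fregion^t f(v)$ and then invokes reversibility to turn the sum into $\Pr^v(\xi_\fregion>t)$; it is the same computation, using the symmetry of $P|_\fregion$.) You do correctly identify the other ingredient, $\max_u f(u)\leq C\rad^{-d}$ --- this is Lemma~\ref{MVP}, established in the paper by an optional-stopping mean-value argument rather than via Lemmas~\ref{omegaep} and~\ref{eigen-Omega} as you suggest --- and once you have it, the whole lemma is the one display above. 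No spectral gap, no Harnack inequality, no decomposition of $\fregion$ into good and bad sites, and no detour is needed.
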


\subsubsection{Proof of Lemma \ref{rstepsnt} and Lemma \ref{Survival probability lower bound by eigenvalue}}
Lemma \ref{rstepsnt} is a direct consequence of Lemmas \ref{Travel in Bad Region - 1} and \ref{Travel in Bad Region - 2} (below): Lemma \ref{Travel in Bad Region - 1}  states that the random walk can not spend two much time in $\emp(\epm^2,\epm^2) \setminus \tOmega$ due to its small size, and Lemma \ref{Travel in Bad Region - 2} states that the random walk can not spend two much time in $\fregion \setminus \emp(\epm^2,\epm^2)$ because in this region the density of the obstacles is too high.

\begin{lemma}
\label{Travel in Bad Region - 1}
Consider $\epsilon \in (\rad^{-c},c)$. With $\P$-probability tending to one, for any $v \in \fregion$, $m > C\epsilon^{1/d}\rad^2$,
  \begin{equation}
    \Pr^v(|\{0 \leq t \leq m : S_t \in  \emp(\epm^2,\epm^2) \setminus \tOmega\}| \geq m/3) \leq \exp({-cm \epsilon^{-1/d}\rad^{-2}})\,.
  \end{equation}
\end{lemma}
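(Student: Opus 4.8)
The plan is to bound the time spent in $\emp(\epm^2,\epm^2)\setminus\tOmega$ using the fact that this set is small: by Lemma~\ref{Asymptotic shape - Ball} (with $\epm$ in place of the generic parameter) we have $|\emp(\epm^2,\epm^2)\bigtriangleup\Omega_\epm|\le C\epm^{1/2}\rad^d$ and $|\Omega_\epm\setminus B_\epm|\le C\epm^{1/2}|B_\epm|$, so $\emp(\epm^2,\epm^2)\setminus\tOmega = \emp(\epm^2,\epm^2)\setminus(B_\epm\cap\Omega_\epm)$ has cardinality at most $C\epm^{1/2}\rad^d$. (I would keep $\epm^{1/2}$ here; the exponent $1/d$ in the statement is an even weaker claim since $\epm$ is small, so any polynomial-in-$\epm$ volume bound of this type suffices after relabeling constants.) Call this set $A$, so $|A|\le C\epm^{1/2}\rad^d$, a set contained in $B_{(\log n)^\kappa}(v_*)$.

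The core estimate is then a general fact: a simple random walk (with or without killing — here it does not matter, since we only want an upper bound and can ignore the obstacles) cannot spend a large fraction of its first $m$ steps in a set $A$ whose diameter-normalized size is small. More precisely, I would show that for a set $A\subseteq\Z^d$ and any starting point $v$, $\Pr^v(|\{0\le t\le m: S_t\in A\}|\ge m/3)\le e^{-cm/\ell^2}$ where $\ell$ is a length scale such that $A$ can be covered by boxes of side $\ell$ with a bounded number of boxes — or, more robustly, where $\ell^2$ dominates the expected return-time-type quantity. The cleanest route: partition $[0,m]$ into $\lceil m/T\rceil$ consecutive blocks of length $T:=C\epm^{1/d}\rad^2$. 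If the walk spends $\ge m/3$ of its steps in $A$, then in at least a constant fraction of the blocks it spends $\ge T/6$ steps in $A$. For a single block, $\Ex^{v'}|\{0\le t\le T: S_t\in A\}| = \sum_{t\le T}\Pr^{v'}(S_t\in A)\le \sum_{t\le T}\sup_{z}p_t(v',z)\cdot |A|\le C|A|\,T/T^{d/2} = C|A|T^{1-d/2}$ by the local CLT / heat-kernel bound $p_t(v',z)\le Ct^{-d/2}$. Choosing $T\asymp\epm^{1/d}\rad^2$ makes $|A|T^{1-d/2}\le C\epm^{1/2}\rad^d\cdot(\epm^{1/d}\rad^2)^{1-d/2} = C\epm^{1/2+(1-d/2)/d}\rad^2 = C\epm^{1/d+1/2-1/2}\rad^2\cdot$(exponent bookkeeping) — in any case a quantity $\le \epm^{c'}T\ll T/6$ for small $\epm$, so by Markov within the block the probability of spending $\ge T/6$ steps in $A$ is at most $\epm^{c'}\cdot 6 < 1/2$, \emph{uniformly in the block's starting point}.

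With that uniform per-block bound in hand, I would apply the strong Markov property across the blocks: conditionally on the past, each block independently has probability $\le 1/2$ of being ``heavy'' (spending $\ge T/6$ steps in $A$), so the number of heavy blocks is stochastically dominated by a Binomial$(\lceil m/T\rceil,1/2)$, and a standard Chernoff bound gives $\Pr^v(\text{at least a }\tfrac14\text{ fraction of blocks heavy})\le e^{-c m/T} = e^{-cm\epm^{-1/d}\rad^{-2}}$, which is exactly the claimed bound (with the constant $C\epm^{1/d}\rad^2$ in the hypothesis ensuring $m/T\ge 1$ so the Chernoff estimate is meaningful). The heat-kernel bound $p_t(v',z)\le Ct^{-d/2}$ is standard for simple random walk on $\Z^d$; I would cite it rather than reprove it.

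The main obstacle is purely bookkeeping: getting the exponents of $\epm$ to line up so that the per-block Markov bound is genuinely $<1/2$ (or any constant bounded away from $1$) for all small $\epm$, and making sure the ``small set'' estimate from Lemma~\ref{Asymptotic shape - Ball} is invoked with the right parameter — recall the lemma is stated for a generic $\epsilon$, so I apply it at $\epsilon=\epm$ to control $\emp(\epm^2,\epm^2)\setminus\tOmega$. There is also the mild point that the heat-kernel sup-bound $\sup_z p_t(v',z)\le Ct^{-d/2}$ is only useful for $t\ge 1$; the $t=0$ term contributes $\11_{v'\in A}\le 1$ which is negligible against $T/6$. None of this is deep, but the chain of constants $C$ and small powers of $\epm$ must be tracked carefully so that the final exponential rate $c\epm^{-1/d}\rad^{-2}$ emerges cleanly.
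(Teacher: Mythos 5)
Your plan follows essentially the same route as the paper: bound $|\emp(\epm^2,\epm^2)\setminus\tOmega|$ via Lemma~\ref{Asymptotic shape - Ball}, cut $[0,m]$ into blocks of length $T\asymp\epm^{1/d}\rad^2$, bound the per-block time in the bad set by Markov plus the heat kernel, and combine blocks by strong Markov and a binomial Chernoff bound. However, the central heat-kernel estimate contains a real error. You write
$\sum_{t\leq T}\sup_z p_t(v',z)\leq C\,T\cdot T^{-d/2}$,
which would require each term to be $\lesssim T^{-d/2}$; but $\sup_z p_t(v',z)\leq Ct^{-d/2}$ is much larger than $T^{-d/2}$ for $t\ll T$, and indeed $\sum_{t\geq 1}t^{-d/2}=O(1)$ for $d\geq3$, so bounding naively gives only $\Ex\leq C|A|$, which is far larger than $T$. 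The correct argument caps $\Pr^{v'}(S_t\in A)\leq\min\bigl(1,\,|A|\sup_z p_t(v',z)\bigr)$, splits at $t^*\asymp|A|^{2/d}$, and yields $\Ex\leq C|A|^{2/d}$ (this is exactly the quantity $q$ that appears in the paper's proof). It so happens that $|A|T^{1-d/2}=|A|^{2/d}$ when $T\asymp|A|^{2/d}$, so your wrong formula produces the right number at the chosen $T$; but the derivation does not survive inspection.

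A downstream consequence is that the claimed per-block probability $\leq 6\epm^{c'}$ is also wrong. With the corrected estimate one gets $6\Ex/T\leq 6C|A|^{2/d}/T\leq 6C/C'$ with $T=C'\epm^{1/d}\rad^2$ --- a constant, not a power of $\epm$. That is still enough, but only after adjusting the threshold: you ask for ``at most $1/2$'' while the Chernoff step for $\Pr(\text{at least a }1/5\text{ fraction of blocks heavy})$ needs the per-block probability strictly below $1/5$, which forces you to take $C'$ a definite large multiple of $C$. (The paper handles this bookkeeping by introducing an explicit small parameter $\delta$ and choosing block length $\delta^{-2}q$ and a sub-linear per-block threshold $\delta^{-3/2}q$, which cleanly decouples the constants.) With these two repairs --- the capped heat-kernel bound and the correct per-block probability --- your argument becomes a valid rendering of the paper's proof.
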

\begin{proof}
We denote $q = \lceil | \emp(\epm^2,\epm^2) \setminus \tOmega|^{2/d} \rceil$. Then for sufficiently small constant $\delta >0$ and any $t\geq  \delta^{-1} q$, $x \in \Z^d$
  $$\Pr^{x}(S_{t}  \in \emp(\epm^2,\epm^2)\setminus \tOmega) \leq C \delta^{d/2} \leq \delta\,.$$
Therefore for all $x \in \Z^d$
\begin{equation*}
  \Ex^x \left[ |\{0 \leq t \leq \lfloor \delta^{-2} q \rfloor: S_t \in  \emp(\epm^2,\epm^2) \setminus \tOmega\}|\right] \leq \delta^{-1}q \,.
\end{equation*}
We assume $m > \delta^{-2} q$ and $\delta$ is sufficiently small. For $k = 0,1,...,\lfloor m/\lfloor \delta^{-2} q \rfloor\rfloor$, we define $$\theta_k = \11_{|\{k\lfloor \delta^{-2} q \rfloor \leq t <(k+1)\lfloor \delta^{-2} q \rfloor : S_t \in  \emp(\epm^2,\epm^2) \setminus \tOmega\}| \geq \delta^{-3/2}q }\,.$$ Then by strong Markov property, $\theta_k$'s are dominated by i.i.d.\ Bernoulli random variable with parameter $C\delta^{1/2}$. 
A standard large deviation computation for Binomial random variables gives\begin{equation}
\label{eq:sumtheta - binom}
  \Pr(\sum_{k=0}^{\lfloor m/\lfloor \delta^{-2} q \rfloor\rfloor} \theta_k  \geq \delta^{1/4}\lfloor m/\lfloor \delta^{-2} q \rfloor\rfloor) \leq e^{-c\delta^{5/2}m/q}\,.
\end{equation}
Hence, with $\Pr$-probability at least $1 - e^{-c\delta^{5/2}m/q}$, for $m > \delta^{-2} q$,
\begin{align*}
|\{0 \leq t \leq m : S_t \in  \emp(\epm^2,\epm^2) \setminus \tOmega\}|\leq (1 + \sum_{k=0}^{\mathclap{\lfloor m/\lfloor \delta^{-2} q \rfloor\rfloor}} \theta_k)\cdot\delta^{-2}q +  \lfloor m/\lfloor \delta^{-2} q \rfloor\rfloor \cdot\delta^{-3/2}q \leq 3\delta^{1/4}m\,.
\end{align*}
Since \eqref{eq: Bn, Omega, E} yields $q \leq C \epsilon^{1/d} \rad^2$, we complete the proof of the lemma by choosing a sufficiently small constant $\delta$.
\end{proof}

\begin{lemma}
\label{Travel in Bad Region - 2}
Consider $\epsilon \in (\rad^{-c},c)$. With $\P$-probability tending to one, for any $v \in \fregion$, $m \geq C\epm ^2 \rad^2$,
	\begin{equation}
		\Pr^v(|\{0 \leq t \leq m : S_t \in \fregion \setminus  \emp(\epm^2,\epm^2)\}| \geq m/3,\xi_{\fregion}>m) \leq \exp(-c m\epm^{-2}\rad^{-2})\,.
	\end{equation}
\end{lemma}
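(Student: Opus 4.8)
The plan is to exploit the fact that every box constituting $\fregion\setminus\emp(\epm^2,\epm^2)$ fails the $(\epm^2\rad,\epm^2)$-emptiness condition, so it contains at least $\rho|K_{\lfloor\epl\rad\rfloor}|\asymp \epm^6\rad^d$ obstacles (with $\iota=\rho=\epm^2$ in Definition~\ref{empty-set}); in particular, for any vertex $v\notin\emp(\epm^2,\epm^2)$ there are at least $c\epm^6\rad^d$ obstacles inside $K_{\lfloor\epm^2\rad\rfloor}(v)\subseteq B_{\epm^2\rad}(v)$. First I would record the elementary consequence, already used in \eqref{eq:EcCost}, that from such a vertex the random walk is killed within $\lfloor \epm^2\rad\rfloor^2$ steps with probability at least $c\epm^2$: indeed the density of obstacles in the surrounding box is at least $\epm^2$, hence a uniformly positive fraction of the $(2d)^{\lfloor\epm^2\rad\rfloor^2}$ paths of that length hit $\ob$ (this is where one uses that the walk, run for $\lfloor\epm^2\rad\rfloor^2$ steps, stays in the box and lands on each site with comparable probability). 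The exponent $C\epm^2\rad^2$ in the statement is exactly (a multiple of) this time scale $\lfloor\epm^2\rad\rfloor^2$.

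Next I would set up the standard block argument paralleling the proof of Lemma~\ref{Travel in Bad Region - 1}. Put $\ell=\lfloor\epm^2\rad\rfloor^2$ and partition $[0,m]$ into $\lfloor m/\ell\rfloor$ consecutive blocks of length $\ell$. For the $k$-th block define the indicator $\theta_k=\11$ of the event that the walk is \emph{not} killed during that block \emph{and} spends at least, say, $\ell/2$ of that block inside $\fregion\setminus\emp(\epm^2,\epm^2)$; conditionally on the position at the start of the block and on surviving up to then, strong Markov property together with the previous paragraph shows $\Pr(\theta_k=1\mid \mathcal F_{k\ell})\le 1-c\epm^2$, because if the walk is at time $k\ell$ inside $\fregion\setminus\emp$ it gets killed in the block with probability $\ge c\epm^2$, and if it is outside then to accumulate $\ell/2$ steps in $\fregion\setminus\emp$ within the block it must at some point enter a ``bad'' vertex and then again faces a killing probability $\ge c\epm^2$ over the remaining (at least $\ell/2$) steps — one can afford to cut the block in half and apply the estimate to whichever half contains at least $\ell/4$ bad-region time. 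Hence on the event $\{\xi_\fregion>m\}$ we have all $\theta_k=1$ for $k\le\lfloor m/\ell\rfloor-1$ once $|\{t\le m:S_t\in\fregion\setminus\emp\}|\ge m/3$ — more precisely, $m/3$ bad-region time forces at least a constant fraction of the blocks to have $\ge\ell/2$ bad-region time, and surviving all of $[0,m]$ forces each such block to have $\theta_k=1$. Stochastic domination by i.i.d.\ Bernoulli$(1-c\epm^2)$ variables and a Chernoff bound for the event $\sum_k\theta_k\ge(1-c'\epm^2)\lfloor m/\ell\rfloor$ then give probability at most $e^{-c''\epm^2\lfloor m/\ell\rfloor}=e^{-c m\epm^{-2}\rad^{-2}}$, using $\ell\le C\epm^4\rad^2\le C\epm^2\rad^2$, valid once $m\ge C\epm^2\rad^2$.

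Finally, the $\P$-probability-tending-to-one qualifier is absorbed by working on the high-probability events already established: that $v_*\in\locV$ and $\fregion$ is the component of $v_*$ in $B_{(\log n)^\kappa}(v_*)$ (so $|\fregion|\le(2\log n)^{\kappa d}$), and crucially that for \emph{every} box of the form \eqref{def-smallbox} meeting $B_{(\log n)^\kappa}(v_*)$ either it is $(\epm^2\rad,\epm^2)$-empty or it has $\ge\epm^2|K_{\lfloor\epm^2\rad\rfloor}|$ obstacles — the latter is automatic from the definition, so in fact no extra environmental event beyond those in Section~\ref{section:upper bound on the size} is needed for the per-step killing estimate; one only needs the a.s.\ finiteness of $\fregion$ to make the union bound over starting vertices $v\in\fregion$ harmless. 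I expect the main obstacle to be the bookkeeping in the middle paragraph: carefully arguing that ``$m/3$ of the time in the bad region'' plus ``survival for time $m$'' forces a definite fraction of blocks to be simultaneously bad-heavy and killing-free, so that the Bernoulli domination applies to the right event; splitting blocks in half and being slightly generous with constants (replacing $m/3$ by $m/4$ at intermediate steps, etc.) is the cleanest way around it, exactly as in the proof of Lemma~\ref{Travel in Bad Region - 1}.
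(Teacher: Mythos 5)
Your overall strategy (reduce to the per-visit killing estimate $\Pr^v(\tau\leq\lfloor\epm^2\rad\rfloor^2)\geq c\epm^2$ from \eqref{eq:EcCost}, then iterate via strong Markov and bound by a geometric/exponential decay in the number of visits) is exactly right, and your arithmetic for the final exponent is correct. However, the paper does \emph{not} reuse the deterministic-block structure of Lemma~\ref{Travel in Bad Region - 1}; instead it introduces stopping times $\zeta_k$ that are, by construction, successive visits to $\fregion\setminus\emp(\epm^2,\epm^2)$ separated by at least $\lfloor\epm^2\rad\rfloor^2$ steps. This small change matters, because the killing estimate needs a full $\lfloor\epm^2\rad\rfloor^2$-step window \emph{starting at a bad vertex}; the stopping-time scheme delivers exactly this (each interval $[\zeta_k,\zeta_k+\lfloor\epm^2\rad\rfloor^2]$ begins at a bad vertex and the intervals are disjoint), whereas deterministic blocks do not. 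The bookkeeping reduces to the observation that at least $m/3$ bad-region time up to time $m$ forces $\zeta_{\lfloor m/(10\epm^4\rad^2)\rfloor}<m$, since each $\zeta$-interval contributes at most $\lfloor\epm^2\rad\rfloor^2$ bad-region time.

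The middle paragraph of your proposal has two real gaps. First, the threshold logic is wrong: total bad-region time $\geq m/3$ does \emph{not} force a constant fraction of length-$\ell$ blocks to have $\geq\ell/2$ bad-region time (every block could have exactly $\ell/3$), and your suggested fix of ``replacing $m/3$ by $m/4$'' moves in the wrong direction; you would need to \emph{lower} the per-block threshold (to $\ell/6$, say) rather than lower the target total. Second, even with a corrected threshold, ``cut the block in half'' does not repair the killing estimate: \eqref{eq:EcCost} is proved by spreading the walk over $\lfloor\epm^2\rad\rfloor^2$ steps to reach obstacles at distance up to $O(\epm^2\rad)$, so a half-length window starting mid-block does not obviously yield the same $c\epm^2$ bound, and this is precisely what fails when $S_{k\ell}\notin\fregion\setminus\emp$. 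Both defects are repairable (use blocks of length $2\ell$ and threshold $\ell/6$, or better, switch to the paper's stopping-time scheme), but as written the conditional bound $\Pr(\theta_k=1\mid\mathcal F_{k\ell})\leq 1-c\epm^2$ is not justified in the case you identify as problematic.
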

\begin{proof}

Let $\iota = \rho = \epm^2$, $\zeta_{0} = 0$ and for $m \geq 1$, $$\zeta_m := \inf\{t \geq \xi_{m-1}+\lfloor \epl \rad \rfloor^2:S_t \in \fregion \setminus \emp(\iota,\rho)   \}\,.$$
Note that $|\{0 \leq t \leq m : S_t \in \fregion \setminus \emp(\iota,\rho)\}| \geq m/3$ implies $\zeta_{\lfloor 10^{-1} \iota^{-2} m\rad^{-2} \rfloor}<m$. By strong Markov property at $\zeta_1,\zeta_2,...$ and \eqref{eq:EcCost}
\begin{align*}
	\Pr&(\xi_{\fregion}>m,\zeta_{\lfloor 10^{-1} \iota^{-2} m\rad^{-2} \rfloor}<m) \\
	&\leq \Pr ( S_{[\zeta_{m},\zeta_{m}+ \lfloor \epl \rad \rfloor^2]} \subseteq \ob^c, S_{\zeta_{m}} \in  \fregion \setminus \emp(\iota,\rho)\mathrm{~for}~m = 1,2,..., \lfloor 10^{-1} \iota^{-2} m\rad^{-2} \rfloor-1) \\
	&\leq \exp\{-(10^{-1} \iota^{-2} m\rad^{-2}-2)c\rho\} \,. \qedhere
\end{align*}
\end{proof}

Now we prove Lemma \ref{Survival probability lower bound by eigenvalue}. 
\begin{lemma}
\label{MVP}
For all $v \in \fregion$ 
we have $f(v) \leq C \rad^{-d}$. More generally, for all $l \leq \rad$ we have
$f(v) \leq  Cl^{-d}\sum_{|u-v| \leq l}f(u)$\,.
\end{lemma}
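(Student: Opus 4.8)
\textbf{Proof plan for Lemma~\ref{MVP}.}
The plan is to exploit that $f$ is the principal eigenfunction of $P|_\fregion$, so $f(v) = \lambda_{\fregion}^{-1} (P|_{\fregion}f)(v)$, i.e.\ $f$ is ``almost harmonic'' for the simple random walk with a multiplicative correction very close to $1$ (since by Lemma~\ref{eigen-first-region} we have $\lambda_\fregion \geq e^{-c_*(\log n)^{-2/d}-C(\log n)^{-3/d}}$, hence $\lambda_\fregion^{-1}\leq 1+o(1)$). First I would iterate this identity $t$ times to get $f(v) = \lambda_\fregion^{-t}\,\Ex^v[f(S_t)\,\11_{\xi_\fregion>t}] \leq \lambda_\fregion^{-t}\,\Ex^v[f(S_t)]$. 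Choosing $t\asymp l^2$ with $l\leq\rad$, the correction factor $\lambda_\fregion^{-t}$ is at most $\exp(Cl^2(\log n)^{-2/d})\leq \exp(C)$, a constant, since $l\leq\rad\asymp (\log n)^{1/d}$ and thus $l^2(\log n)^{-2/d}=O(1)$; actually one should be slightly more careful and take $t=\lfloor c_0 l^2\rfloor$ for a small constant $c_0$ so the factor is bounded by, say, $2$.

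Next I would bound $\Ex^v[f(S_t)] = \sum_u p_t(v,u) f(u)$ using a local central limit / Gaussian heat kernel upper bound for the simple random walk on $\Z^d$: $p_t(v,u)\leq C t^{-d/2}$ uniformly. Combined with the normalization $\sum_{u\in\fregion} f(u)\le 1$ (in fact $\sum_u f(u)=1$), this gives $\Ex^v[f(S_t)]\leq Ct^{-d/2}\leq C l^{-d}$, which with the constant correction factor yields the first claim $f(v)\leq C\rad^{-d}$ upon taking $l=\rad$ (using $f(u)=0$ outside $\fregion$ and $|\fregion|\leq(2\log n)^{\kappa d}$ is not even needed). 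For the more refined statement, instead of bounding $\sum_u p_t(v,u)f(u)$ by $Ct^{-d/2}$ globally, I would restrict attention to the contribution from $u$ with $|u-v|\leq l$: using the heat kernel \emph{lower} bound $p_t(v,u)\geq c t^{-d/2}$ for $|u-v|\leq\sqrt t$ (on the appropriate parity), together with $p_t(v,u)\leq Ct^{-d/2}$ for \emph{all} $u$, one gets
\begin{equation*}
f(v) \leq \lambda_\fregion^{-t}\!\!\sum_{|u-v|\leq l}\!\! p_t(v,u)f(u) + \lambda_\fregion^{-t}\!\!\sum_{|u-v|>l}\!\! p_t(v,u)f(u).
\end{equation*}
The first sum is $\leq C l^{-d}\sum_{|u-v|\leq l}f(u)$ as desired; the difficulty is that the second (tail) sum is not obviously dominated by the first. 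To handle it I would use the Gaussian tail bound $\Pr^v(|S_t-v|>l)\leq e^{-cl^2/t}$, which is $\leq e^{-c/c_0}$, small if $c_0$ is chosen small, to argue the tail sum is a small fraction of $\Ex^v[f(S_t)]$, and then close the loop by a standard absorbing/averaging argument (or one more iteration over a ball of radius $l$), so that the near-diagonal part dominates.

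\textbf{Main obstacle.} The genuinely delicate step is the refined inequality: a one-shot heat-kernel bound only shows $f(v)\leq Cl^{-d}$ times the \emph{total} mass, not times the mass inside $B_l(v)$. Getting the localized average requires an argument that the random walk started at $v$ stays within distance $l$ of $v$ by time $\asymp l^2$ with probability bounded below by a constant, and that the excursions outside $B_l(v)$ contribute negligibly — this is where one must be careful that $f$ could a priori be large somewhere far away. The clean way is a maximum-principle / optional-stopping argument: let $\xi=\xi_{B_l(v)\cap\fregion}$, then $f(v)=\lambda_\fregion^{-(t\wedge\xi)}\Ex^v[f(S_{t\wedge\xi})\11_{\xi_\fregion>t\wedge\xi}]$; on $\{\xi\leq t\}$ we bound $f(S_\xi)\leq C\rad^{-d}$ by the already-established first part (since $S_\xi\in\fregion$), and the probability of $\{\xi\leq t\}$ is $\leq e^{-cl^2/t}$, which is a small constant; on $\{\xi>t\}$ we have $S_t\in B_l(v)$ and use the heat-kernel upper bound as above. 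Rearranging (absorbing the $\{\xi\leq t\}$ term, which carries a factor $Cl^{-d}$ times a small constant, provided we also know $\sum_{|u-v|\leq l}f(u)$ is not too degenerate — or simply note it is bounded by the trivial $f(v)\leq C\rad^{-d}\leq Cl^{-d}(\text{something})$, no: better to keep it as $\leq \tfrac12 f(v)$-type absorption) yields $f(v)\leq Cl^{-d}\sum_{|u-v|\leq l}f(u)$. I expect verifying that the $\{\xi\leq t\}$ contribution can genuinely be absorbed (rather than merely bounded) to be the fiddly point, resolved by choosing $t=c_0l^2$ with $c_0$ small enough that $e^{-c/c_0}<\tfrac12$.
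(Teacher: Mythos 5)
Your argument for the first claim ($f(v)\le C\rad^{-d}$) is essentially the paper's, and is fine: iterate the eigenfunction identity, use that $f$ vanishes off $\fregion$, apply the on-diagonal heat kernel bound, and note that $\lambda_\fregion^{-\rad^2}=O(1)$ by Lemma~\ref{eigen-first-region}.

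The gap you flagged in the second claim is genuine and is not repaired by your proposed absorption step. After optional stopping at $\xi\wedge t$ with $\xi=\xi_{B_l(v)\cap\fregion}$, you arrive at an inequality of the form
\[
  f(v)\ \le\ C\,l^{-d}\sum_{|u-v|\le l}f(u)\ +\ C\,\rad^{-d}\,\Pr^v(\xi\le t)\,,
\]
and to absorb the second summand you would need $C\rad^{-d}e^{-c/c_0}\le\tfrac12 f(v)$ (or $\le\tfrac12\cdot Cl^{-d}\sum_{|u-v|\le l}f(u)$). Neither is available: the first part of the lemma only gives an \emph{upper} bound $f(v)\le C\rad^{-d}$, and $f(v)$ (hence $\sum_{|u-v|\le l}f(u)$) can be arbitrarily small, e.g.\ for $v$ near $\partial\fregion$. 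So the tail term need not be a small fraction of anything on the left, and the absorption fails.

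The paper sidesteps this by choosing a stopping time for which the stopped walk is \emph{deterministically} at distance at most $l$ from $v$, so there is no tail term to absorb at all. Concretely, with $\zeta_k=\inf\{t:|S_t-v|_1\ge k\}$ and $U$ uniform on $\{\lfloor l/2\rfloor,\dots,\lfloor l\rfloor-1\}$ independent of everything, set $T=\lfloor l^2\rfloor\wedge\zeta_U$. Since each step changes $|\cdot|_1$ by one, $|S_T-v|\le|S_T-v|_1\le l$ on every sample path. On $\{\xi_\fregion\le T\}$ the martingale value $f(S_{T\wedge\xi_\fregion})$ is zero because $f\equiv0$ off $\fregion$, so that event contributes nothing. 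The one remaining ingredient is the uniform density bound $\Pr^v(S_T=u)\le Cl^{-d}$, and here the \emph{randomization} of the radius $U$ is essential: for a fixed radius $k$, the exit distribution on the $\ell^1$-sphere of radius $k$ has density of order $k^{-(d-1)}$, which is too large; but $S_{\zeta_U}=u$ forces $U=|u-v|_1$, and averaging over the $\sim l/2$ values of $U$ produces the extra factor $l^{-1}$, giving $l^{-d}$. Combined with the standard bound $\Pr^v(S_{\lfloor l^2\rfloor}=u)\le Cl^{-d}$, one gets $\Pr^v(S_T=u)\le Cl^{-d}$ for all $u$, and optional sampling yields $f(v)\le \lambda_\fregion^{-\rad^2}\,Cl^{-d}\sum_{|u-v|\le l}f(u)$ directly. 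If you replace your $\xi_{B_l(v)\cap\fregion}$ by such a $T$ (and add the radius-randomization to control the exit density), your argument goes through without any absorption.
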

\begin{proof}
For any $v \in \fregion$, we consider stopping time $T = \lfloor l^2\rfloor \wedge \zeta_{U}$, where for $k \geq 1$, $\zeta_k := \inf\{t: |S_t - v|_1 \geq k \}$ and $U$ is independent of both the random walk $(S_t)$ and the environment, and has a uniform distribution on $\{\lfloor l/2\rfloor,\lfloor l/2\rfloor+1,...,\lfloor l\rfloor -1\}$. Note that $|S_T -v| \leq |S_T -v|_1 \leq  l$ and that both $\Pr^v(S_{\lfloor l^2\rfloor} = u) \leq C l^{-d}$ and $\Pr^v(S_{\zeta_U} = u) \leq Cl^{-d}$ hold for all $u \in \Z^d$. So
\begin{equation}
\label{eq:unfffff}
	\Pr^v(S_T = u) \leq Cl^{-d} \for |u-v| \leq l\,. 
\end{equation}
By $P|_{\fregion}f = \lambda_{\fregion} f$, we have $\sum_{u:u \sim v}(2d)^{-1}f(u) = \lambda_{\fregion} f(v)$. Then it follows from Markov property that $\lambda_{\fregion}^{-t}f(S_{t\wedge \xi_{\fregion}})$ is a martingale. Then by \eqref{eq:unfffff} and optional sampling theorem,
\begin{equation*}
 	f(v) = \Ex[\lambda_{\fregion}^{-T}f(S_{T\wedge\xi_{\fregion}})] \leq  \lambda_{\fregion}^{-\rad^2}  \sum_{u} \P(S_T = u) f(u) + \Pr(\xi_\fregion \leq T) \cdot 0\leq C\lambda_{\fregion}^{-\rad^2}l^{-d} \sum_{|u-v| \leq l}f(u)\,.
 \end{equation*} 
 We complete the proof of the lemma by Lemma \ref{eigen-first-region}.
 \end{proof}

\begin{proof}[Proof of Lemma \ref{Survival probability lower bound by eigenvalue}]
	Since $f$ is the eigenfunction of eigenvalue $\lambda_{\fregion}$,
	\begin{equation}
	\label{eq:eigenfunction equation}
		\sum_{u \in \fregion}f(u) \Pr^u(S_t = v,\xi_{\fregion } > t) = \lambda_{\fregion}^t f(v)\,.
	\end{equation}
Applying Lemma \ref{MVP} with $l = \rad$ yields that $	f(u) \leq C \rad^{-d}$ for all $u \in \fregion$. 
Plugging this bound into \eqref{eq:eigenfunction equation} and using reversibility yields
\begin{equation*}
  \lambda_{\fregion}^t f(v) \leq C \rad^{-d} \sum_{u \in \fregion} \Pr^u(S_t = v,\xi_{\fregion } > t) =  C \rad^{-d} \Pr^v(\xi_{\fregion } > t)\,. \qedhere
  \end{equation*}
\end{proof}

\subsection{Proof of localization}\label{sec:localization-last}
In this subsection, we first prove the localization result for the end point (Lemma \ref{endpt-U}), which enables us to give an upper bound of the survival probability in $\fregion$ with a constant error factor as in Lemma \ref{sprbd-lpt}. Next, in Lemma \ref{gt-loc-U}, we prove that the random walk will hit $\tOmega$ in poly-$\log n$ steps conditioned on staying in $\fregion$ and prove a  localization result for any fix time point. Finally, we prove Theorem \ref{ballthm} by combining these ingredients in Lemma \ref{balllemma}.

To start, we consider a random walk starting from $z \in \fregion$ conditioned on staying in $\fregion$ up to time $t$. We assume
\begin{equation}
\label{eq:as-z-t}
	\text{either } z \in \tOmega,t \geq 0 \quad \text{   or   }\quad z \in \fregion, t \geq \rad^\iota\,,
\end{equation}
where $\iota$ is a large constant to be determined in the following lemma. The following lemma guarantees that under this assumption we either starting from $\tOmega$, or the time $t$ is long enough so that the random walk can reach $\tOmega$. 
\begin{lemma}
\label{hit tOmega-as}
Consider $\epsilon \in (\rad^{-c},c)$. There exists $\iota>0$ such that for all $z \in \fregion$ and $t \geq \rad^\iota$,
\begin{equation}
  \Pr^z( S_{[0,t]} \cap \tOmega \not = \varnothing \mid  \xi_{\fregion}>t) \leq  C \exp({-c \epsilon^{-1/d}t \rad^{-2}})\,.
\end{equation}
\end{lemma}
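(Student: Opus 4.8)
The plan is to bound the conditional probability $\Pr^z(S_{[0,t]}\cap\tOmega=\varnothing\mid \xi_\fregion>t)$ by writing it as a ratio and controlling the numerator and denominator separately. For the numerator, note that $\{S_{[0,t]}\cap\tOmega=\varnothing,\ \xi_\fregion>t\}$ is exactly the event $\{\xi_{\fregion\setminus\tOmega}>t\}$, so Lemma~\ref{rstepsnt} gives $\Pr^z(\xi_{\fregion\setminus\tOmega}>t)\le 2\exp(-c\epsilon^{-1/d}\rad^{-2}t)$ once $t\ge C\epsilon^{-1/d}\rad^2$; since $\iota$ will be chosen large this lower bound on $t$ is automatic. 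For the denominator, I would use Lemma~\ref{Survival probability lower bound by eigenvalue}, which gives $\Pr^z(\xi_\fregion>t)\ge c\rad^d f(z)\lambda_\fregion^t$. Combining the two,
\begin{equation*}
\Pr^z(S_{[0,t]}\cap\tOmega=\varnothing\mid\xi_\fregion>t)\le \frac{2\exp(-c\epsilon^{-1/d}\rad^{-2}t)}{c\rad^d f(z)\lambda_\fregion^t}\,.
\end{equation*}

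The remaining task is to absorb the three ``bad'' factors in the denominator — namely $f(z)^{-1}$, $\rad^{-d}$, and $\lambda_\fregion^{-t}$ — into the exponential decay, at the cost of slightly weakening the constant $c$. For $\lambda_\fregion^{-t}$: by Lemma~\ref{eigen-first-region} we have $\lambda_\fregion\ge \exp(-c_*(\log n)^{-2/d}-C(\log n)^{-3/d})$, so $\lambda_\fregion^{-t}\le \exp(Ct(\log n)^{-2/d})=\exp(C't\rad^{-2})$ up to the constant relating $(\log n)^{2/d}$ and $\rad^2$; since $\epsilon^{-1/d}\to\infty$ (as $\epsilon\le c$ is small but, more relevantly, $\epsilon^{-1/d}\ge 1$), for $\epsilon$ small enough the gain $c\epsilon^{-1/d}\rad^{-2}t$ strictly dominates the loss $C't\rad^{-2}$, leaving net decay $\tfrac12 c\epsilon^{-1/d}\rad^{-2}t$. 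For $f(z)^{-1}$: if $z\in\tOmega\subseteq\Omega_\epsilon$ then by Definition~\ref{def-eign-f} $f(z)\ge\epsilon\rad^{-d}$, so $f(z)^{-1}\rad^{-d}\le\epsilon^{-1}\rad^{-2d}$, which is only polynomial in $\rad$ and hence swallowed by the exponential once $t\ge\rad^\iota$ with $\iota$ large (say $\iota>2d+2$ suffices after comparing $\exp(-c\epsilon^{-1/d}\rad^{\iota-2})$ against $\rad^{2d}$). For general $z\in\fregion$ (not necessarily in $\tOmega$), $f(z)$ could be as small as $0$; here I instead run the random walk for $\rad^\iota/2$ steps first, use Lemma~\ref{rstepsnt} (or a hitting estimate) to say that conditioned on $\xi_\fregion>\rad^\iota/2$ the walk is in $\tOmega$ — actually this is circular, so more carefully: decompose at the first hitting time of $\tOmega$ and use the strong Markov property, combined with Lemma~\ref{rstepsnt} which bounds the probability of \emph{not} hitting $\tOmega$ within $\rad^\iota/2$ steps while surviving; the complementary event has the walk entering $\tOmega$, at which point $f\ge\epsilon\rad^{-d}$ applies to the restarted walk.

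The main obstacle I anticipate is the bookkeeping in the last case (general $z\in\fregion$): one must carefully avoid circularity by separating ``the walk fails to reach $\tOmega$ while surviving'' (controlled directly by Lemma~\ref{rstepsnt}) from ``the walk reaches $\tOmega$ and then fails to revisit it'' (controlled by restarting at a point of $\tOmega$ where the eigenfunction lower bound $f\ge\epsilon\rad^{-d}$ is available), and then combine via strong Markov property at $\tau_\tOmega$ together with the crude survival lower bound $\Pr^z(\xi_\fregion>t)\ge e^{-(2\log n)^{\kappa d}}\lambda_\fregion^t$ from Lemma~\ref{eigen-first-region}. All the estimates needed are already in hand; the work is choosing $\iota$ (and the smallness threshold on $\epsilon$ relative to the comparison $\rad^2\asymp(\log n)^{2/d}$) so that every polynomial-in-$\rad$ and every $\lambda_\fregion^{-t}$ factor is dominated, yielding the clean bound $C\exp(-c\epsilon^{-1/d}t\rad^{-2})$.
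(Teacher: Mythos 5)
Your numerator bound via Lemma~\ref{rstepsnt} and the overall numerator/denominator structure match the paper, but the detour through Lemma~\ref{Survival probability lower bound by eigenvalue} is unnecessary and is precisely what generates the ``general $z\in\fregion$'' complication you then have to work around. The paper bounds the denominator directly by \eqref{eq:eigen-first-region-p}, namely $\Pr^z(\xi_\fregion>t)\ge e^{-(2\log n)^{\kappa d}}e^{-c_*(\log n)^{-2/d}t(1-C(\log n)^{-1/d})}$, which holds uniformly for every $z\in\fregion$ with no reference to $f(z)$, so there is no case split and no circularity to worry about. Since $(\log n)^{-2/d}$ is comparable to $\rad^{-2}$ and $\epsilon^{-1/d}$ is large for $\epsilon\le c$ small, the numerator decay $c\,\epsilon^{-1/d}t\rad^{-2}$ dominates both the $c_*t\rad^{-2}$ loss coming from $\lambda_\fregion^{-t}$ and the $(2\log n)^{\kappa d}$ prefactor once $t\ge\rad^\iota$ with $\iota$ chosen large, giving the stated bound. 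You do list this crude bound from Lemma~\ref{eigen-first-region} at the very end of your plan as a fallback; it is in fact the only denominator estimate needed, and using it from the outset collapses the argument to two lines and removes the eigenfunction bookkeeping entirely.
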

\begin{proof}
Lemma \ref{rstepsnt} implies
$ \Pr^z( S_{[0,t]} \subseteq \fregion \setminus \tOmega) \leq 2\exp({-c \epsilon^{-1/d}t \rad^{-2}})$. Comparing it with \eqref{eq:eigen-first-region-p} and choosing a sufficiently large $\iota$ yields the desired result.
\end{proof}
\def \expb {b}
By our convention of $c$, we can choose a sufficiently small constant $b>0$ such that $\epsilon := \rad^{-\expb}$ satisfies the condition for $\epsilon$ in all previous lemmas (from Lemma \ref{empty} to Lemma \ref{hit tOmega-as}). We will fix $\epsilon = \rad^{-\expb}$ henceforth.

\begin{lemma}
\label{endpt-U} Recall $\hbn$ as in \eqref{eq:def-hbn}. We assume \eqref{eq:as-z-t}. For $\epsilon>0$ sufficiently small,
  \begin{equation}
\label{eq:Endpoint Localization}
  \Pr^z( S_t \in \hbn \mid \xi_{\fregion}>t) \geq 1 - \exp({-c\rad^{\expb/(10d)}})\,. 
\end{equation}
\end{lemma}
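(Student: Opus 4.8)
\textbf{Proof proposal for Lemma~\ref{endpt-U}.}

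The plan is to show that, conditioned on survival in $\fregion$ up to time $t$, the random walk at time $t$ must lie in the $\epsilon^{1/10d}\rad$-neighborhood of $\tOmega$, by combining an upper bound on the unconditioned probability that $S_t$ lands far from $\tOmega$ while surviving in $\fregion$ with the lower bound on the conditional survival probability coming from Lemma~\ref{Survival probability lower bound by eigenvalue} (via the eigenfunction $f$). First, I would record the denominator: from Lemma~\ref{eigen-first-region}, $\Pr^z(\xi_{\fregion}>t)\geq e^{-(2\log n)^{\kappa d}} e^{-c_*(\log n)^{-2/d}t(1-C(\log n)^{-1/d})}$, and moreover when $z\in\tOmega$ (or when $t\geq\rad^\iota$, so that by Lemma~\ref{hit tOmega-as} with overwhelming probability the walk has visited $\tOmega$ and we may restart there) we can use Lemma~\ref{Survival probability lower bound by eigenvalue} to get $\Pr^z(\xi_{\fregion}>t)\geq c\rad^d f(z)\lambda_{\fregion}^t$; by Lemma~\ref{MVP} together with the lower bound $|f|_2^2\geq c\rad^{-d}$ and the fact that $z\in\tOmega\subseteq\Omega_\epsilon$ forces $f(z)\geq\epsilon\rad^{-d}$, this is at least $e^{-(\log n)^{C}}\lambda_{\fregion}^t$, where $\lambda_{\fregion}\geq e^{-c_*(\log n)^{-2/d}-C(\log n)^{-3/d}}$.

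For the numerator, I would write
\begin{equation*}
\Pr^z(S_t\notin\hbn,\ \xi_{\fregion}>t)=\sum_{w\in\fregion\setminus\hbn}\Pr^z(S_t=w,\ \xi_{\fregion}>t),
\end{equation*}
and split the trajectory at some intermediate time. The key point is that for $w\notin\hbn$ we have $\mathrm{dist}(w,\tOmega)>\epsilon^{1/10d}\rad$, so on the event $\{S_t=w\}$ the walk must spend a substantial fraction of the time interval outside $\tOmega$: concretely, on $\{\xi_{\fregion}>t\}$ the walk stays in $\fregion\subseteq B_{(\log n)^\kappa}(v_*)$, which has diameter poly-logarithmic in $n$, so reaching a point at distance $>\epsilon^{1/10d}\rad$ from $\tOmega$ and being there at time $t$ means that during the last (say) $\tfrac12\epsilon^{1/5d}\rad^2$ steps — or, if $t$ is smaller, during the whole interval — the walk is confined to $\fregion\setminus\tOmega$. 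Applying Lemma~\ref{rstepsnt} (with the chosen $\epsilon=\rad^{-b}$, so $\epsilon^{-1/d}\rad^2=\rad^{2+b/d}$ and $\epsilon^{1/5d}\rad^2$ is comfortably larger than the threshold $C\epsilon^{-1/d}\rad^2$ once $b$ is small) bounds $\Pr^{S_s}(\xi_{\fregion\setminus\tOmega}>t-s)\leq 2\exp(-c\epsilon^{-1/d}\rad^{-2}(t-s))$ for the appropriate restart time $s$; summing over at most $|\fregion|\leq(2\log n)^{\kappa d}$ choices of $w$ and over the (polylog many) values of $s$ costs only a factor $e^{(\log n)^{C}}$. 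The net bound is
\begin{equation*}
\Pr^z(S_t\notin\hbn,\ \xi_{\fregion}>t)\leq e^{(\log n)^{C}}\,\lambda_{\fregion}^{t}\,\exp\!\big(-c\,\epsilon^{-1/d}\rad^{-2}\cdot\epsilon^{1/5d}\rad^{2}\big)\cdot(\text{factor from the } \lambda_{\fregion}^{t} \text{ vs. the walk's own survival rate over the short block}),
\end{equation*}
which after cancelling $\lambda_{\fregion}^t$ against the denominator leaves $e^{(\log n)^{C}}\exp(-c\rad^{-b/d+b/5d}\cdot\rad^{?})$. The care here is to book-keep the survival cost: the $\lambda_{\fregion}^{t-s}$ that the walk pays over the last block is at least as cheap as $\exp(-c_*(\log n)^{-2/d}(t-s))$ up to a $(1+o(1))$ factor, whereas the localization-evading cost $\exp(-c\epsilon^{-1/d}\rad^{-2}(t-s))$ has rate $c\rad^{-2-b/d}$, which is much larger than $c_*(\log n)^{-2/d}\asymp\rad^{-2}$ by a factor $\rad^{b/d}$ wait — here I must be careful: $\epsilon^{-1/d}\rad^{-2}=\rad^{b/d-2}$, so the evasion rate is actually \emph{smaller} than the survival rate by $\rad^{b/d}$, not larger. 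The correct statement is that we compare the evasion cost over a block of length $\ell:=\lceil C\epsilon^{-1/d}\rad^2\rceil=\lceil C\rad^{2-b/d}\rceil$ with the \emph{saving} in survival cost over that same block; choosing the block length $\ell$ so that $\epsilon^{-1/d}\rad^{-2}\ell=\rad^{b/(10d)}$ gives $\ell\asymp\rad^{2-b/d+b/(10d)}$, still $\ll\rad^2$, so the forgone survival probability $\lambda_{\fregion}^{-\ell}\leq\exp(C(\log n)^{-2/d}\ell)=\exp(C\rad^{-2}\ell)=\exp(C\rad^{-b/d+b/(10d)})=\exp(o(\rad^{b/(10d)}))$ is negligible against the gain $\exp(-c\rad^{b/(10d)})$. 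This is exactly the arithmetic that makes the exponent $\rad^{b/(10d)}=\rad^{\expb/(10d)}$ in the statement come out right.

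Assembling: after cancelling the common $\lambda_{\fregion}^{t}$-type factors and absorbing all polylog losses (which are $e^{(\log n)^C}=\exp(o(\rad^{\expb/(10d)}))$ since $\rad\asymp(\log n)^{1/d}$) we obtain $\Pr^z(S_t\notin\hbn\mid\xi_{\fregion}>t)\leq\exp(-c\rad^{\expb/(10d)})$, which is \eqref{eq:Endpoint Localization}. The main obstacle I anticipate is the careful accounting in the second paragraph: one must choose the ``last block'' length $\ell$ small enough that the survival probability forgone over that block, $\lambda_{\fregion}^{-\ell}$, stays subexponential in $\rad^{\expb/(10d)}$, yet large enough that Lemma~\ref{rstepsnt} applies ($\ell\geq C\epsilon^{-1/d}\rad^2$) and gives a gain of order $\exp(-c\rad^{\expb/(10d)})$; threading this, and handling the two regimes $t<\rad^\iota$ versus $t\geq\rad^\iota$ (in the latter, first reaching $\tOmega$ via Lemma~\ref{hit tOmega-as} and restarting, in the former, $z\in\tOmega$ already by \eqref{eq:as-z-t}), is where the real work lies. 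A secondary technical point is ensuring that on $\{\xi_{\fregion}>t\}$ the walk genuinely cannot escape $\fregion\setminus\tOmega$ to $w\notin\hbn$ without spending the full block outside $\tOmega$ — this uses only that $\fregion$ is connected and has polylog diameter while $\mathrm{dist}(w,\tOmega)>\epsilon^{1/10d}\rad$, together with the trivial observation that the walk moves at unit speed, so crossing back from $\tOmega$ to $w$ takes at least $\epsilon^{1/10d}\rad$ steps.
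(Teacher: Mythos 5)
Your proposal correctly mirrors the \emph{first} half of the paper's proof (considering the last visit to $\tOmega$, bounding via Lemma~\ref{rstepsnt}, and comparing to the survival probability via Lemma~\ref{Survival probability lower bound by eigenvalue}), but it has a genuine gap in the middle: you assert that $S_t = w$ with $\mathrm{dist}(w,\tOmega)>\epsilon^{1/(10d)}\rad$ forces the walk to spend the entire ``last block'' of length $\ell\asymp\rad^{2-b/d+b/(10d)}$ in $\fregion\setminus\tOmega$. That implication is false. The walk could visit $\tOmega$ at some time $s$ with $t-\ell< s<t$ and then run to $w$; since $\ell\gg\epsilon^{1/(10d)}\rad$, nothing deterministic rules this out, and Lemma~\ref{rstepsnt} applied to such a short final excursion (of order $\rad^{1-b/(10d)}$ steps, which is far below the threshold $C\epsilon^{-1/d}\rad^2$) gives no useful decay. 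So the estimate you write in your display has not been justified: you have only bounded the event that the last visit to $\tOmega$ occurred before time $t-\ell$, not the event $\{S_t\notin\hbn\}$.

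The missing ingredient is precisely the paper's Step~2: define $T_\star=\inf\{j\geq t-\rad^{2-b/(2d)}:S_j\in\tOmega\}$ and bound the \emph{maximal displacement} of the walk over the remaining $\leq\rad^{2-b/(2d)}$ steps using the standard Gaussian-tail estimate (\cite[Proposition 2.4.5]{Lawler10}), $\Pr(\max_{j\leq m}|S_j|>r)\leq C\exp(-cr^2/m)$. With $m=\rad^{2-b/(2d)}$ and $r=\rad^{1-b/(5d)}<\epsilon^{1/(10d)}\rad$ this gives exactly $\exp(-c\rad^{b/(10d)})$, which is where the exponent $\rad^{\expb/(10d)}$ in the statement comes from --- not from Lemma~\ref{rstepsnt} alone. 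Your exponent arithmetic lands on $\rad^{b/(10d)}$ coincidentally, but without the displacement bound the argument does not close, because it is the Gaussian tail, not the survival-outside-$\tOmega$ bound, that controls the scenario where the walk leaves $\tOmega$ late. The paper's two-step split (long final excursion $\Rightarrow$ Lemma~\ref{rstepsnt}; short final excursion $\Rightarrow$ small displacement) is essential, and the second half needs to be added to your outline, together with a re-weighting by survival over that short block (handled via strong Markov at $T_\star$ and Lemma~\ref{Survival probability lower bound by eigenvalue}) in exactly the spirit you already carried out in Step~1.
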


\begin{proof}
The proof divides into two steps.
In {\bf Step 1} we consider the last visit to $\tOmega$ conditioned on survival: the last excursion should be short because the survival probability (if not coming back to $\tOmega$) decays very fast in light of Lemma \ref{rstepsnt} while Lemma \ref{Survival probability lower bound by eigenvalue} shows that the survival probability starting from $\tOmega$ is comparably large. Then in \textbf{Step 2}, we show that the random walk can not go too far in the last few steps.

\noindent \textbf{Step 1.}  We consider the last visit time $j$ to $\tOmega$ before time $t$. By  Lemma \ref{rstepsnt} and Markov property, we get
	\begin{equation*}
		\Pr^z(S_j \in \tOmega, S_{[j+1,t]} \subset \fregion \setminus \tOmega, \xi_\fregion>t) \leq \Pr^z(S_t \in \tOmega, \xi_\fregion >j) \cdot 2\exp({-c  \rad^{-2+\expb/d}(t-j-1)})\,.
	\end{equation*}
	In addition, we deduce from Lemma \ref{Survival probability lower bound by eigenvalue} and Lemma \ref{eigen-first-region} that
	\begin{align*}
		\Pr^z(S_j \in \tOmega, \xi_\fregion>t) &\geq \Pr^z(S_j \in \tOmega, \xi_\fregion >j) \cdot c \rad^{-b} \lambda_\fregion^{t-j}\\
		 &\geq \Pr^z(S_j \in \tOmega, \xi_\fregion >j) \cdot c \rad^{-b}  \exp({-C\rad^{-2}(t-j)})\,.
	\end{align*}
Combining preceding two inequalities, we see that for $ j \leq t - \rad^{2 - \expb/(2d)}$,
\begin{equation*}
	\Pr^z(S_j \in \tOmega, S_{[j+1,t]} \subset \fregion \setminus \tOmega \mid \xi_\fregion>t) \leq e^{- c\rad^{-\expb/(2d)}}\,.
\end{equation*}
Then a union bound over $0 \leq j \leq t - \rad^{2 - \expb/(2d)}$ and Lemma \ref{hit tOmega-as} gives
\begin{equation}
\label{eq:endpoint-step2}
	\Pr^z(S_{[t - \rad^{2 - \expb/(2d)},t]} \cap \tOmega = \varnothing \mid \xi_\fregion>t) \leq e^{- c\rad^{-\expb/(2d)}}\,.
\end{equation}
\smallskip
\noindent \textbf{Step 2.} We define stopping time $T_{\star} = \inf \{j\geq t - \rad^{2 - \expb/(2d)}: S_j \in \tOmega\}$. We claim that \begin{equation}\label{eq5.13}
  \Pr^z(T_{\star}<t,\max_{T_\star \leq j \leq \rad^{2 - \expb/(2d)} + T_\star}|S_{j} - S_{T_{\star}}| \leq  \rad^{1- \expb/(5d)}\mid \xi_{\fregion}>t) \geq 1 -\exp({-c\rad^{\expb/(10d)}})\,.
\end{equation}
We note that the desired result is a direct consequence of \eqref{eq5.13}. It remains to prove \eqref{eq5.13}. We first see that \eqref{eq:endpoint-step2} implies
\begin{equation*}
	\Pr^z(T_\star \geq t \mid \xi_\fregion>t) \leq e^{- c\rad^{-\expb/(2d)}}\,.
\end{equation*}
In addition, by strong Markov property at $T_\star$, we get that
\begin{align*}
  \Pr^z\Big(&T_{\star}<t,\max_{T_{\star} \leq j \leq T_{\star}+\rad^{2 - \expb/(2d)}}|S_{j} - S_{T_{\star}}| > \rad^{1- \expb/(5d)} , \xi_{\fregion}>t\Big)\\
   \leq &\Ex^z \Big[ \11_{T_{\star}<t,\xi_{\fregion}> T_{\star}} \Pr^{S_{T_{\star}}}\Big(\max_{0 \leq j \leq \rad^{2 - \expb/(2d)}}|S_{j} - S_{T_{\star}}| > \rad^{1- \expb/(5d)}\Big)\Big]\\
   \leq &C\exp({-c\rad^{\expb/(10d)}})\Pr^z(T_{\star}<t,\xi_{\fregion}> T_{\star}) \leq C\exp({-c\rad^{\expb/(10d)}})\Pr^z(\xi_{\fregion}> T_{\star})\,,
\end{align*}
where we used \cite[Proposition 2.4.5]{Lawler10} in the second inequality.
At the same time, by strong Markov property at $T_\star$ and Lemma \ref{Survival probability lower bound by eigenvalue} and \eqref{eq:eigen-first-region}
\begin{align*}
  \Pr^z(\xi_{\fregion}>t) &\geq \Pr^z(\xi_{\fregion}> T_{\star} + \rad^{2 - \expb/(2d)})\\
  &=\Ex^z \left[ \11_{\xi_{\fregion}> T_{\star}} \Pr^{S_{T_{\star}}}\left(\xi_{\fregion} > \rad^{2 - \expb/(2d)}\right)\right] \geq c\epsilon(1 - 2 c_* \rad^{ - \expb/(2d)})\Pr^z(\xi_{\fregion}> T_{\star})\,.
\end{align*}
Combining the last three inequalities, we deduce \eqref{eq5.13} as required, and thus complete the proof of the lemma.

\end{proof}

\begin{lemma}
\label{sprbd-lpt}
Recall $\hbn$ as in \eqref{eq:def-hbn}. For any $x \in \Z^d, t \geq 0$, $\Pr^x(\xi_{\fregion} > t, S_t \in \hbn) \leq C \lambda_{\fregion}^{t}$.
\end{lemma}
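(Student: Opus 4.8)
The plan is to extract everything from the eigenfunction identity for $f$ together with the pointwise bounds on $f$ proved in Section~\ref{section:Localization on intermittent island}. First dispose of the trivial cases: if $x\notin\fregion$ or $t\geq 1$ and $x\notin\fregion$ the probability is $0$, and if $t=0$ it is at most $1=\lambda_\fregion^0$, so assume $x\in\fregion$ and $t\geq 1$. Writing $P|_\fregion$ for the sub-stochastic transition kernel on $\fregion$ and recalling $P|_\fregion f=\lambda_\fregion f$ with $f\equiv 0$ off $\fregion$, the process $\lambda_\fregion^{-s}f(S_{s\wedge\xi_\fregion})$ is a martingale, and since $f(S_{\xi_\fregion})=0$ optional stopping gives the exact identity $\sum_{w\in\fregion}f(w)\,(P|_\fregion)^t(x,w)=\lambda_\fregion^t f(x)$ (this is \eqref{eq:eigenfunction equation} used with starting point $x$). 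Now fix a small absolute constant $c_0\in(0,c)$ and split
\[
\Pr^x(\xi_\fregion>t,S_t\in\hbn)=\Pr^x(\xi_\fregion>t,S_t\in\hbn\cap\Omega_{c_0})+\Pr^x(\xi_\fregion>t,S_t\in\hbn\setminus\Omega_{c_0}).
\]
For the first term, every endpoint $w\in\Omega_{c_0}$ has $f(w)\geq c_0\rad^{-d}$ by definition of $\Omega_{c_0}$, so it is at most $c_0^{-1}\rad^d\sum_{w\in\Omega_{c_0}}f(w)(P|_\fregion)^t(x,w)\leq c_0^{-1}\rad^d\lambda_\fregion^t f(x)\leq C\lambda_\fregion^t$, where the last step uses $f(x)\leq C\rad^{-d}$ from Lemma~\ref{MVP}. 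This already handles the ``bulk'' part with a clean constant.

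The remaining term, $\Pr^x(\xi_\fregion>t,S_t\in\hbn\setminus\Omega_{c_0})$, is the main obstacle: the endpoints now lie in the thin layer of $\hbn$ where $f$ is small. I would control it by reversibility together with the survival lower bound of Lemma~\ref{Survival probability lower bound by eigenvalue}. Since $(P|_\fregion)^t(x,v)=(P|_\fregion)^t(v,x)$, the term equals $\sum_{v\in\hbn\setminus\Omega_{c_0}}\Pr^v(\xi_\fregion>t,S_t=x)$, and by the Markov property $\Pr^v(\xi_\fregion>t,S_t=x)=\Pr^v(\xi_\fregion>t+s,S_t=x)/\Pr^x(\xi_\fregion>s)$ for any $s$ with $\Pr^x(\xi_\fregion>s)>0$; choosing $s=\rad^\iota$ and invoking Lemma~\ref{Survival probability lower bound by eigenvalue} (via $\max_x\Pr^x(\xi_\fregion>s)\geq\lambda_\fregion^s$ and Lemma~\ref{eigen-first-region}, $\Pr^x(\xi_\fregion>s)\gtrsim e^{-(2\log n)^{\kappa d}}\lambda_\fregion^s$) reduces matters to bounding $\sum_{v\in\hbn\setminus\Omega_{c_0}}\Pr^v(\xi_\fregion>t+s)$. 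This sum splits further according to whether $v$ lies in a high-obstacle-density box (where Lemmas~\ref{rstepsnt} and~\ref{Travel in Bad Region - 2} force the survival probability to decay much faster than $\lambda_\fregion^{t+s}$, beating any polynomial-in-$\rad$ prefactor), or in the ``low-density boundary layer'' of $\Omega_\epsilon$; for the latter one uses that, by Lemma~\ref{omegaep}, $\sum_{v\notin\Omega_{c_0}}f(v)\leq Cc_0$, so this portion carries only an $O(c_0)$ fraction of the eigenfunction mass, and a matching heat-kernel upper bound $\Pr^v(\xi_\fregion>r)\leq C\rad^d f(v)\lambda_\fregion^r$ — the converse of Lemma~\ref{Survival probability lower bound by eigenvalue}, provable by the same spectral argument since $f$ is concentrated on the asymptotically ball-shaped $\tOmega$ (so the relaxation time of $P|_\fregion$ is governed by that of a ball of radius $\asymp\rad$, cf.\ Lemmas~\ref{eigen-Omega}, \ref{omegaep}) — then bounds this contribution by $Cc_0\lambda_\fregion^t$.

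Combining the two parts gives $\Pr^x(\xi_\fregion>t,S_t\in\hbn)\leq C\lambda_\fregion^t$. The genuinely delicate point, as flagged above, is that neither the eigenfunction identity alone nor reversibility alone suffices to remove a factor polynomial in $\rad\asymp(\log n)^{1/d}$: the identity loses a factor $(\min_{\hbn\cap\fregion}f)^{-1}\rad^{-d}$, which is not $O(1)$ because the dilation $\hbn$ of $\tOmega$ reaches into the region where $f$ is as small as $\epsilon\rad^{-d}$. It is precisely the two–scale geometric picture of Sections~\ref{section:Asymptotic Ball}–\ref{section:Localization on intermittent island} — $f$ essentially supported on the ball-like intermittent island, high obstacle density everywhere else — that makes the boundary-layer contribution negligible and yields the advertised constant $C$.
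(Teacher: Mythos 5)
The first half of your argument --- handling $\Pr^x(\xi_\fregion>t,\,S_t\in\hbn\cap\Omega_{c_0})$ via the eigenfunction identity $\sum_w f(w)(P|_\fregion)^t(x,w)=\lambda_\fregion^t f(x)$, the pointwise lower bound $f\geq c_0\rad^{-d}$ on $\Omega_{c_0}$, and $f(x)\leq C\rad^{-d}$ from Lemma~\ref{MVP} --- is correct and clean. The second half, however, has a genuine gap. You invoke a ``heat-kernel upper bound'' $\Pr^v(\xi_\fregion>r)\leq C\rad^d f(v)\lambda_\fregion^r$ that is nowhere in the paper and is not a trivial converse of Lemma~\ref{Survival probability lower bound by eigenvalue}: reversing that lemma's proof would require a uniform \emph{lower} bound on $f$, and the spectral-decomposition route bounds the contribution of the higher eigenmodes only by $\lambda_2^r|\fregion|^{1/2}$, which cannot be absorbed into $\lambda_\fregion^r f(v)\rad^d$ when $f(v)$ is tiny unless one controls the spectral gap $\lambda_\fregion-\lambda_2$ --- exactly the information the paper explicitly says it does not have (Section~\ref{sec:open problems}). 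Independently of this, your reduction $\Pr^v(\xi_\fregion>t,S_t=x)=\Pr^v(\xi_\fregion>t+s,S_t=x)/\Pr^x(\xi_\fregion>s)$ forces you to divide by $\Pr^x(\xi_\fregion>s)$, and the only uniform-in-$x$ lower bound available, \eqref{eq:eigen-first-region-p}, costs a prefactor $e^{(2\log n)^{\kappa d}}$. Nothing in your subsequent splitting (high-density boxes vs.\ boundary layer) produces a compensating factor $e^{-(2\log n)^{\kappa d}}$: Lemma~\ref{rstepsnt} controls the exit time from $\fregion\setminus\tOmega$, not from $\fregion$, so points $v\in\hbn\setminus\Omega_{c_0}$ still have $\Pr^v(\xi_\fregion>t+s)$ of order $\lambda_\fregion^{t+s}$ once the walk reaches $\tOmega$. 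So the boundary-layer term is left with an uncancelled superpolynomial-in-$\log n$ factor.

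The paper's own proof avoids all of this. It applies the Markov property at time $\rad^2$ to bound the quantity by $\sum_y p_{\rad^2}(x,y)\,(P|_\fregion)^{t-\rad^2}\11_\hbn(y)$, then uses Cauchy--Schwarz together with $\|(P|_\fregion)^{s}\|_{2\to 2}=\lambda_\fregion^{s}$ to get $\|p_{\rad^2}(x,\cdot)\|_2\cdot\lambda_\fregion^{t-\rad^2}\cdot\|\11_\hbn\|_2\leq C\rad^{-d/2}\cdot\lambda_\fregion^{t-\rad^2}\cdot\rad^{d/2}$, and absorbs $\lambda_\fregion^{-\rad^2}=O(1)$ via \eqref{eq:eigen-first-region}. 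This needs only the local CLT bound $\|p_{\rad^2}(x,\cdot)\|_\infty\leq C\rad^{-d}$ and $|\hbn|\leq C\rad^d$ --- none of the geometric structure (intermittent island vs.\ boundary layer) that your argument leans on, and no lower bound on survival probability or control of the higher eigenvalues at all.
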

\begin{proof} 
By \eqref{eq:eigen-first-region}, it suffice to consider $t \geq \rad^2$. By Markov property at $\rad^2$,
\begin{align*}
  \Pr^x(\xi_{\fregion} > t, S_t \in \hbn) &\leq \sum_{y}p_{\rad^2}(x,y)\Pr^y(\xi_{\fregion} > t-\rad^2, S_{t-\rad^2} \in \hbn)\\
  & = p_{\rad^2}(x,\cdot)(P|_{\fregion})^{t- \rad^2}\11_{\hbn} \leq C\lambda_{\fregion}^{t-\rad^2}\,.
\end{align*}
where in the last inequality we used $|p_{\rad^2}(x,\cdot)|_2 \leq 1 \times |p_{\rad^2}(x,\cdot)|_\infty \leq C\rad^{-d}$ and $|\hbn| \leq C\rad^d$. We complete the proof of the lemma by \eqref{eq:eigen-first-region}.
\end{proof}

\begin{lemma}
\label{gt-loc-U}
We assume \eqref{eq:as-z-t} and $m\geq t$. Then
\begin{align}
	&\Pr^z( \tau_{\tOmega} \leq\rad^\iota \mid \xi_{\fregion}>m) \geq 1 - \exp(-\rad)\,,\label{eq:hit-tOmega-2}\\
	&\Pr^z(S_{t} \in \hbn\mid \xi_{\fregion}>m) \geq 1 - \exp({-c\rad^{\expb/(10d)}})\label{eq:loc-t}\,. 
\end{align}
\end{lemma}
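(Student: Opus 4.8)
The plan is to derive both bounds in Lemma~\ref{gt-loc-U} from the survival estimates already established, together with Lemma~\ref{endpt-U} and Lemma~\ref{sprbd-lpt}. The key identity throughout is the eigenfunction reversibility bound: conditioned on $\xi_{\fregion}>m$, the walk is essentially running under the $h$-transform by $f$, and Lemma~\ref{Survival probability lower bound by eigenvalue} says the survival probability from any $v\in\fregion$ up to time $m$ is at least $c\rad^d f(v)\lambda_{\fregion}^m$, which combined with \eqref{eq:eigen-first-region} gives a usable lower bound on $\Pr^z(\xi_{\fregion}>m)$.

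For \eqref{eq:hit-tOmega-2}: I would decompose on the event $\{\tau_{\tOmega}>\rad^\iota\}=\{S_{[0,\rad^\iota]}\subseteq \fregion\setminus\tOmega\}$. By the strong Markov property at time $\rad^\iota$ and then running the walk to time $m$,
\begin{align*}
\Pr^z(\tau_{\tOmega}>\rad^\iota,\ \xi_{\fregion}>m) &\leq \Pr^z(\xi_{\fregion\setminus\tOmega}>\rad^\iota)\cdot \max_{y\in\fregion}\Pr^y(\xi_{\fregion}>m-\rad^\iota)\\
&\leq 2\exp(-c\epsilon^{-1/d}\rad^{-2}\rad^\iota)\cdot \lambda_{\fregion}^{m-\rad^\iota}\,,
\end{align*}
using Lemma~\ref{rstepsnt} for the first factor (choosing $\iota$ large so that $\rad^\iota\geq C\epsilon^{-1/d}\rad^2$) and $\max_y\Pr^y(\xi_{\fregion}>s)\leq C\lambda_{\fregion}^{s}$ for the second (this follows from $\lambda_{\fregion}$ being the principal eigenvalue; e.g.\ Lemma~\ref{sprbd-lpt} with $\hbn$ replaced by $\fregion$, or directly). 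On the other hand $\Pr^z(\xi_{\fregion}>m)\geq \Pr^z(\xi_{\fregion}>m)$, and I need a lower bound on this not involving $f(z)$ when $z$ is arbitrary in $\fregion$. For $z\in\tOmega$ this is trivially fine. For $z\in\fregion$ with $t\geq\rad^\iota$: run the walk for $\rad^\iota/2$ steps inside $\fregion$ — by irreducibility of $P|_{\fregion}$ on a connected set of polylog size the walk reaches $\tOmega$ (which is nonempty and intersects $\calC_R(v_*)$ by Remark~\ref{rmk:b near v-*}) with probability at least $e^{-(\log n)^C}$, and from there Lemma~\ref{Survival probability lower bound by eigenvalue} plus $f\geq \epsilon\rad^{-d}$ on $\tOmega$ (since $\tOmega=B_\epsilon\cap\Omega_\epsilon\subseteq\Omega_\epsilon$) gives survival up to $m$ with probability at least $c\rad^d\cdot\epsilon\rad^{-d}\cdot\lambda_{\fregion}^{m}\geq e^{-(\log n)^C}\lambda_{\fregion}^m$. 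Dividing, the ratio is at most $\exp(-c\epsilon^{-1/d}\rad^{\iota-2})\cdot e^{(\log n)^C}\leq \exp(-\rad)$ once $\iota$ is large enough, using $\epsilon=\rad^{-b}$ and $\rad\asymp(\log n)^{1/d}$.

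For \eqref{eq:loc-t}: apply the Markov property at time $t$. Write
\begin{align*}
\Pr^z(S_t\notin\hbn,\ \xi_{\fregion}>m)&=\sum_{y\in\fregion\setminus\hbn}\Pr^z(S_t=y,\ \xi_{\fregion}>t)\,\Pr^y(\xi_{\fregion}>m-t)\\
&\leq C\lambda_{\fregion}^{m-t}\sum_{y\in\fregion\setminus\hbn}\Pr^z(S_t=y,\ \xi_{\fregion}>t)\\
&= C\lambda_{\fregion}^{m-t}\,\Pr^z(S_t\notin\hbn,\ \xi_{\fregion}>t)\,.
\end{align*}
By Lemma~\ref{endpt-U} (valid under \eqref{eq:as-z-t}), $\Pr^z(S_t\notin\hbn\mid \xi_{\fregion}>t)\leq \exp(-c\rad^{b/(10d)})$, and $\Pr^z(\xi_{\fregion}>t)\leq C\lambda_{\fregion}^t$; hence $\Pr^z(S_t\notin\hbn,\ \xi_{\fregion}>t)\leq C\lambda_{\fregion}^t\exp(-c\rad^{b/(10d)})$. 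Combining, $\Pr^z(S_t\notin\hbn,\ \xi_{\fregion}>m)\leq C\lambda_{\fregion}^m\exp(-c\rad^{b/(10d)})$. Dividing by the lower bound $\Pr^z(\xi_{\fregion}>m)\geq e^{-(\log n)^C}\lambda_{\fregion}^m$ obtained exactly as in the previous paragraph gives $\Pr^z(S_t\notin\hbn\mid\xi_{\fregion}>m)\leq \exp(-c\rad^{b/(10d)})$ after absorbing the polylogarithmic prefactor into the stretched-exponential, since $\rad^{b/(10d)}$ dominates any power of $\log n$... wait, it does not — $\rad\asymp(\log n)^{1/d}$ so $\rad^{b/(10d)}=(\log n)^{b/(10d^2)}$ is itself only polylogarithmic. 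So I must be slightly more careful: the ratio bound is $C\lambda_{\fregion}^m\exp(-c(\log n)^{b/(10d^2)})/(e^{-(\log n)^{C}}\lambda_{\fregion}^m)$. This is fine only if the lower-bound exponent $(\log n)^C$ is in fact much smaller — indeed the reaching probability $e^{-(\log n)^C}$ should be replaced by the sharper $\Pr^z(\xi_\fregion>m)\geq c\rad^d f(z)\lambda_\fregion^m$ when we have control of $f(z)$, or for general $z$ by noting the walk reaches $\tOmega$ within $O(\rad^2)$ steps with probability at least $e^{-C\rad^2}=e^{-C(\log n)^{2/d}}$ (diffusive estimate, since $\fregion$ has diameter $\leq (\log n)^\kappa$ but $\tOmega$ is a macroscopic ball of radius $\asymp\rad$ near $v_*$, so actually $O(\rad^2)$ is too optimistic — use $O((\log n)^{2\kappa})$ steps giving $e^{-C(\log n)^{2\kappa}}$). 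The main obstacle is therefore precisely this bookkeeping: ensuring the stretched-exponential gain $\exp(-c\rad^{b/(10d)})$ from Lemma~\ref{endpt-U}, which is only $\exp(-c(\log n)^{b/(10d^2)})$, genuinely beats the polylogarithmic loss in the survival lower bound. This forces the constant $b$ (chosen small but fixed) and the conditioning time to be arranged so that the loss in $\Pr^z(\xi_\fregion>m)$ is at most $\exp((\log n)^{o(1)})$ with exponent strictly below $b/(10d^2)$; this is achievable because when $t\geq\rad^\iota$ with $\iota$ large the walk has ample time to reach $\tOmega$ and the survival lower bound becomes $c\rad^{-b}\lambda_\fregion^m$ up to the $\exp(-c\rad^{-b/d}\rad^\iota)$-type penalty which is a gain, not a loss. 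Putting these together — carefully tracking that all polylog prefactors are subsumed — completes the proof of both displays.
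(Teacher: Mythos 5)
Your approach has a genuine gap that you half-recognize but do not resolve, and there is a second gap you do not notice.

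The first and more serious issue is your lower bound on $\Pr^z(\xi_{\fregion}>m)$. You propose to reach $\tOmega$ from an arbitrary $z\in\fregion$ by a direct path argument, paying a cost on the order of $e^{-(\log n)^{C}}$ (or worse: $\fregion$ has diameter up to $(\log n)^\kappa$ while $\tOmega$ is confined to $B_{(\log n)^{\kappa/2}}(v_*)$, so the chemical distance from $z$ to $\tOmega$ can be of order $(\log n)^\kappa$, and the reaching cost is then $e^{-c(\log n)^\kappa}$). The gain from Lemma~\ref{endpt-U} is only $\exp(-c\rad^{b/(10d)}) = \exp(-c(\log n)^{b/(10d^2)})$, and since $b$ is small and $\kappa\geq 10$, the loss $e^{(\log n)^\kappa}$ dwarfs the gain. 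Your closing remark that ``the conditioning time can be arranged so that the loss has exponent strictly below $b/(10d^2)$'' is not achievable: the reaching cost is fixed by the geometry of $\fregion$, not by $t$ or $m$. The paper avoids paying any reaching cost at all: instead of bounding $\Pr^z(\xi_\fregion>m)$ from scratch, it proves the \emph{relative} estimate \eqref{eq:gt-loc-U -lb}, namely $\Pr^z(\xi_{\fregion}>m) \geq c\epsilon\,\lambda_{\fregion}^{m-t}\,\Pr^z(\xi_{\fregion}>t)$, whose multiplicative loss is only $(c\epsilon)^{-1} = C\rad^b = C(\log n)^{b/d}$, which \emph{is} absorbed by $\exp(-c\rad^{b/(10d)})$. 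This relative bound is obtained via the stopping time $T'_\star = \inf\{j\geq t-\rad^2: S_j\in\tOmega\}$: the localization result \eqref{eq:endpoint-step2} (established in the proof of Lemma~\ref{endpt-U}) guarantees that $T'_\star<t$ with probability $\geq 1/2$ under $\Pr^z(\,\cdot\mid\xi_\fregion>t)$, so the walk is already near $\tOmega$ by time $t$ for free, and Lemma~\ref{Survival probability lower bound by eigenvalue} with $f\geq\epsilon\rad^{-d}$ on $\tOmega$ finishes it.

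The second issue is your claimed bound $\max_y\Pr^y(\xi_\fregion>s)\leq C\lambda_\fregion^s$ with a \emph{constant} prefactor. This does not follow from Lemma~\ref{sprbd-lpt} ``with $\hbn$ replaced by $\fregion$'': the proof of that lemma relies precisely on $|p_{\rad^2}(x,\cdot)|_2\cdot|\11_{\hbn}|_2 \leq C\rad^{-d/2}\cdot\rad^{d/2}=C$, and replacing $\hbn$ by $\fregion$ makes the second factor $|\fregion|^{1/2}\asymp(\log n)^{\kappa d/2}$, a polylog. The paper uses Lemma~\ref{sprbd-lpt} exactly as stated: it restricts to the event $S_m\in\hbn$, getting the constant prefactor $C\lambda_\fregion^{m-t}$ for the extension from $t$ to $m$, and then controls the complementary event $S_m\notin\hbn$ separately by Lemma~\ref{endpt-U} applied at time $m$. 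You should adopt this decomposition.
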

\begin{proof}
We first give a lower bound on $\Pr^z(\xi_{\fregion}>m)$. Define stopping time $T'_{\star} = \inf \{j\geq t - \rad^2: S_j \in \tOmega\}$. By strong Markov property at $T'_{\star}$ and Lemma \ref{Survival probability lower bound by eigenvalue} and \eqref{eq:eigen-first-region}
\begin{align*}
  \Pr^z(\xi_{\fregion}>m) &\geq \Pr^z(\xi_{\fregion}>T'_{\star} + \rad^2 + m-t)\\ &= \Ex^z[\11_{\xi_{\fregion}>T'_{\star}}\Pr^{S_{T'_{\star}}}(\xi_{\fregion}>\rad^2 + m-t)]\\
&\geq c\epsilon \lambda_{\fregion}^{\rad^2 + m-t} \Pr^z(\xi_{\fregion}>T'_{\star}) \geq  c\epsilon \lambda_{\fregion}^{m-t} \Pr^z(\xi_{\fregion}>t,T'_{\star} \leq t)\,.
\end{align*}
Applying \eqref{eq:endpoint-step2} to $t$ gives $
  \Pr^z(T'_{\star}<t \mid \xi_{\fregion}>t)\geq 1/2$. Hence
\begin{equation}
\label{eq:gt-loc-U -lb}
  \Pr^z(\xi_{\fregion}>m) \geq c\epsilon \lambda_{\fregion}^{m-t} \Pr^z(\xi_{\fregion}>t)\,.
\end{equation}

We are now ready to prove \eqref{eq:hit-tOmega-2}. First, by Lemma \ref{endpt-U}, $
  \Pr^z(S_m \not \in \hbn \mid  \xi_{\fregion}>m) \leq \exp({-c\rad^{\expb/(10d)}}).$
Second, using Markov property at time $\rad^\iota$ and \eqref{sprbd-lpt}, 
$$ \Pr^z(\tau_{\tOmega} > \rad^\iota,S_m \in \hbn,\xi_{\fregion}>m)\leq \Pr^z(\tau_{\tOmega} > \rad^\iota, \xi_{\fregion}>\rad^\iota) C \lambda_\fregion^{m - \rad^\iota}\,.$$
Then by Lemma \ref{hit tOmega-as}, this is bounded above by $$ C \exp({-c \rad^{\expb/d}\rad^{-2} \rad^{\iota}}) \cdot \Pr^z(\xi_{\fregion}> \rad^\iota) \lambda_\fregion^{m - \rad^\iota}\,.$$ Now \eqref{eq:hit-tOmega-2} follows from setting $t = \rad^\iota$ in \eqref{eq:gt-loc-U -lb} and comparing it with this bound.

We next prove \eqref{eq:loc-t}. The case $t=m$ has been treated by Lemma \ref{endpt-U}. We assume $t < m$. By Markov property at time $t$ and Lemma \ref{sprbd-lpt}
\begin{equation*}
\label{eq:gt-loc-1}
   \begin{split}
\Pr^z( S_t \not \in \hbn, S_m \in \hbn,\xi_{\fregion}>m)
 &\leq C \Pr^z( S_t \not \in \hbn,\xi_{\fregion}>t) \lambda_{\fregion}^{m-t} \\
 & \leq \exp({-c\rad^{\expb/(10d)}}) \Pr^z(\xi_{\fregion}>t)\lambda_{\fregion}^{m-t}\,,
\end{split}
 \end{equation*} 
where in the last inequality we used Lemma \ref{endpt-U}. Combining with \eqref{eq:gt-loc-U -lb}, we get
$$ \Pr^z( S_t \not \in \hbn, S_m \in \hbn\mid \xi_{\fregion}>m) \leq \exp({-c\rad^{\expb/(10d)}})\,.$$
We complete the proof of \eqref{eq:loc-t} by Lemma \ref{endpt-U}.
\end{proof}
Theorem \ref{ballthm} follows from the following lemma.
\begin{lemma}
\label{balllemma}
Conditioned on the origin being in the infinite open cluster, with  $\P$-probability tending to one, we have that
\begin{align*}
	&\Pr(\tau_{\tOmega} \geq  C|v_*| \mid \tau>n) \leq e^{-\rad^c}\,,\\
	&\Pr(S_{t \vee \tau_{\tOmega}} \not \in \hbn \mid \tau>n) \leq e^{-\rad^c} ~~\text{for all}~t \leq n\,.
\end{align*}
\end{lemma}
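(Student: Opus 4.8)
The plan is to derive Lemma~\ref{balllemma} by combining the one-city theorem (Theorem~\ref{onecity}) with the localization-on-intermittent-island estimates established in Section~\ref{section:Localization on intermittent island}, in particular Lemma~\ref{gt-loc-U}. First I would recall from Theorem~\ref{onecity} and its proof (culminating in \eqref{eq:INU}) that with $\P$-probability tending to one,
\[
\Pr\big(T \leq C|v_*|,\ \{S_t : t\in[T,n]\}\subseteq \fregion \mid \tau>n\big) \geq 1 - e^{-(\log n)^c}\,,
\]
where $T = \ft_{v_*} = \tau_{B_{(\log n)^{\kappa/2}}(v_*)}$ and $\fregion$ is the connected component of $B_{(\log n)^\kappa}(v_*)$ containing $v_*$. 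On this event the random walk enters $\fregion$ by time $C|v_*|$ and then stays in $\fregion$ for the remaining (at least $n - C|v_*| \geq n/2$) steps. Since $\tau > n$ forces $\xi_\fregion > n$ once the walk is inside $\fregion$ and never leaves, I can now condition on the entrance point $z = S_T \in \fregion$ and apply the estimates of Lemma~\ref{gt-loc-U} with the roles $m \leftrightarrow n - T$ (or rather a running count of remaining time), using that $n - T \geq n/2 \gg \rad^\iota$ so that assumption~\eqref{eq:as-z-t} is satisfied.

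For the first bound, note that on the one-city event the walk is in $\fregion$ from time $T \leq C|v_*|$ onward; applying \eqref{eq:hit-tOmega-2} (with starting point $z=S_T$ and $m$ equal to the remaining time, which exceeds $t=\rad^\iota$) via the strong Markov property at $T$ gives that conditioned on $\{S_{[T,n]}\subseteq\fregion\}$, the walk hits $\tOmega$ within $\rad^\iota$ further steps except with probability $e^{-\rad}$. Hence $\tau_{\tOmega} \leq T + \rad^\iota \leq C|v_*| + \rad^\iota \leq C'|v_*|$ (absorbing $\rad^\iota = (\log n)^{O(1)}$ into the linear term since $|v_*|\geq n^{2/3}$ on the relevant event, or simply enlarging the constant), except with probability $e^{-(\log n)^c} + e^{-\rad}\leq e^{-\rad^c}$. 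This gives the first displayed inequality. I must also use Remark~\ref{rmk:b near v-*} to know $B_\epsilon \subseteq B_{(\log n)^{\kappa/2}}(v_*)$ so that $\tOmega = B_\epsilon\cap\Omega_\epsilon$ is genuinely inside the relevant ball around $v_*$.

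For the second bound, fix $t \leq n$ and split into the cases $t \leq \tau_{\tOmega}$ and $t > \tau_{\tOmega}$; in either case I evaluate the walk at time $t\vee\tau_{\tOmega}$. On the one-city event, set $z = S_T\in\fregion$ and apply \eqref{eq:loc-t} via strong Markov property at $T$: if $\tau_{\tOmega} \leq T + \rad^\iota \leq t\vee\tau_{\tOmega}$ (which holds outside an $e^{-\rad}$-probability set by the first part), then from the first time the walk is in $\tOmega$ we may re-apply \eqref{eq:as-z-t} with $z\in\tOmega$ and $t' = (t\vee\tau_{\tOmega}) - (\text{that hitting time})\geq 0$, $m' = n - (\text{that hitting time})\geq t'$, to conclude $\Pr(S_{t\vee\tau_{\tOmega}}\in\hbn \mid \xi_\fregion > n)\geq 1 - e^{-c\rad^{b/(10d)}}$. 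Collecting the exceptional probabilities ($e^{-(\log n)^c}$ from one-city, $e^{-\rad}$ from the hitting estimate, $e^{-c\rad^{b/(10d)}}$ from the localization estimate) and noting $\rad = (\log n)^{\Theta(1)}$ so all are $\leq e^{-\rad^c}$ for a suitable $c$, yields the second inequality. Finally, translating $\xi_\fregion > n$ back to $\tau > n$: on the one-city event the walk stays in $\fregion\subseteq B_{(\log n)^\kappa}(v_*)$ during $[T,n]$, and $\tau > n$ is equivalent there to $\fregion$ being visited without hitting an obstacle, i.e.\ to $\xi_{\fregion\setminus\ob}>n$; since $\fregion$ is already obstacle-free ($\fregion\subseteq\calC_R(v_*)$'s host ball minus $\ob$, being a component of $B_{(\log n)^\kappa}(v_*)\setminus\ob$), this matches $\xi_\fregion > n$.

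The main obstacle I anticipate is the careful bookkeeping of \emph{which} time plays the role of $m$ and $t$ in Lemma~\ref{gt-loc-U} after conditioning on the one-city event: the walk reaches $\fregion$ at a random time $T\leq C|v_*|$, so one must apply the strong Markov property at $T$, verify that the residual time budget $n-T$ still dominates $\rad^\iota$ (true since $T = (\log n)^{O(1)}\cdot$distance but $n-T\geq n - Cn(\log n)^{-2/d}\geq n/2$), and then possibly apply the strong Markov property a second time at $\tau_{\tOmega}$ to reduce to the case $z\in\tOmega$ in \eqref{eq:as-z-t}. A secondary subtlety is ensuring the constant $C$ in $C|v_*|$ can absorb the additive $\rad^\iota$; this is harmless because on the support of the relevant events $|v_*|\geq n^{2/3}\gg \rad^\iota$. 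Everything else is a direct assembly of already-proved lemmas, so no new probabilistic input is needed.
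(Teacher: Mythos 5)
Your proposal follows the same route as the paper's proof: use the one-city estimate \eqref{eq:INU}, apply the strong Markov property at $T = \ft_{v_*}$ together with \eqref{eq:hit-tOmega-2} to deduce $\tau_{\tOmega} \leq T + \rad^\iota$ with high conditional probability, then apply the strong Markov property at $\tau_{\tOmega}$ together with \eqref{eq:loc-t} (noting $S_{\tau_{\tOmega}}\in\tOmega$, so \eqref{eq:as-z-t} holds with $z\in\tOmega$) for the time-$t$ localization; Remark~\ref{rmk:b near v-*} and the absorption of the additive $\rad^\iota$ term into $C|v_*|$ (using $|v_*|\geq n k_n^{-100d}\gg \rad^\iota$) are handled identically. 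The only cosmetic difference is that the paper conditions on the one-city event and compares unnormalized probabilities rather than working with conditional probabilities directly, but the bookkeeping you describe is equivalent.
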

\begin{proof}
Recall that $T = \ft_{v_*} = \tau_{B_{(\log n)^{\kappa/2}}(v_*)}$ as  in Theorem \ref{onecity} . 
By Remark \ref{rmk:b near v-*}, $\tau_\tOmega > T$. We first prove that $\tau_\tOmega \leq  T + \rad^\iota$. By strong Markov property at $T$,
  \begin{align*}
    \Pr(&\tau>T,T \leq C|S_T|, \tau_\tOmega >  T + \rad^\iota, S_{[T,n]} \subseteq \fregion)
    \\& = \Ex \big[\11_{\tau>T,T \leq C|S_T|}\Pr^{S_{T}}(\tau_\tOmega > \rad^\iota, \xi_{\fregion}> n-T)\big]\,.
  \end{align*}
By \eqref{eq:hit-tOmega-2} (since $n - C|S_T|>\rad^\iota$), this is bounded from above by
\begin{equation*}
\exp(-\rad)\Ex \big[\11_{\tau>T,T \leq C|S_T|}\Pr^{S_{T}}(\xi_{\fregion}> n-T)\big] = \exp(-\rad)\Pr(T \leq C|S_T|,S_{[T,n]} \subseteq \fregion, \tau > n)\,.
\end{equation*}
Combining with \eqref{eq:INU}, we have that $\Pr(\tau_\tOmega \leq  T + \rad^\iota,T \leq C|S_T| \mid \tau>n) \geq 1 - e^{-\rad^c}$. Hence $$ \Pr(\tau_{\tOmega} < C|v_*| \mid \tau>n) \geq 1 - e^{-\rad^c}\,.$$Next, by strong Markov property at $\tau_\tOmega$,
  \begin{align*}
    \Pr&(\tau>\tau_\tOmega,S_{t\vee \tau_\tOmega} \not\in \hbn, S_{[\tau_\tOmega,n]} \subseteq \fregion)
     = \Ex \big[\11_{\tau>\tau_\tOmega}\Pr^{S_{\tau_\tOmega}}(S_{(t-\tau_\tOmega)_+} \not \in \hbn, \xi_{\fregion}> n-\tau_\tOmega)\big]\,.
  \end{align*}
By \eqref{eq:loc-t} (Since $S_{\tau_\tOmega} \in \tOmega$), this is bounded from above by
\begin{equation*}
e^{-\rad^c}\Ex \big[\11_{\tau>\tau_\tOmega}\Pr^{S_{\tau_\tOmega}}(\xi_{\fregion}> n-\tau_\tOmega)\big] = e^{-\rad^c}\Pr(S_{[\tau_\tOmega,n]} \subseteq \fregion, \tau > n)\,.
\end{equation*}
We complete the proof of the lemma by \eqref{eq:INU}.
\end{proof}

\small

\end{document}